\documentclass[10pt,english]{amsart}
\usepackage{amssymb}
\usepackage{lipsum}
\usepackage{amsthm}

\newtheorem{theorem}{Theorem}[section]
\newtheorem{remark}[theorem]{Remark}
\newtheorem{question}[theorem]{Question}
\newtheorem{lemma}[theorem]{Lemma}
\newtheorem{proposition}[theorem]{Proposition}
\newtheorem{corollary}[theorem]{Corollary}
\newtheorem{Remark}[theorem]{Remark}
\newtheorem{Definition}[theorem]{Definition}

\setcounter{tocdepth}{1}

\newcommand{\undt}[1]{\underline{t}_{#1}}
\newcommand{\ring}[1]{\mathcal{R}_{#1}}
\newcommand{\calu}{\mathcal{U}}
\newcommand{\calv}{\mathcal{V}}
\newcommand{\calh}{\mathcal{H}}
\newcommand{\nalpha}{N_\alpha}

\newcommand{\ZZ}{\mathbb{Z}}
\newcommand{\FF}{k}
\newcommand{\CC}{\mathbb{C}}

\newcommand{\RR}{\mathbb{R}}

\newcommand{\NN}{\mathbb{N}}

\newcommand{\TT}{\mathbb{T}}

\newcommand{\KPhat}{\mathfrak K_P}
\newcommand{\APhat}{\mathfrak A_P}

\makeatletter
\g@addto@macro{\endabstract}{\@setabstract}
\newcommand{\authorfootnotes}{\renewcommand\thefootnote{\@fnsymbol\c@footnote}}
\makeatother
\begin{document}

\title[$L$-series values in Tate algebras]{Arithmetic of positive characteristic \\ $L$-series values in Tate algebras\textsuperscript{1}}
\author[B. Angl\`es \and F. Pellarin \and F. Tavares Ribeiro]{B. Angl\`es\textsuperscript{2} \and F. Pellarin\textsuperscript{3,4}   \and F. Tavares Ribeiro\textsuperscript{2}}

\address{
Universit\'e de Caen Basse-Normandie, 
Laboratoire de Math\'ematiques Nicolas Oresme, UMR 6139, 
Campus II, Boulevard Mar\'echal Juin, 
B.P. 5186, 
14032 Caen Cedex, France.
}
\email{bruno.angles@unicaen.fr, floric.tavares-ribeiro@unicaen.fr}\email[\emph{author of the appendix}]{florent.demeslay@unicaen.fr}

\address{ 
Institut Camille Jordan, UMR 5208
Site de Saint-Etienne,
23 rue du Dr. P. Michelon,
42023 Saint-Etienne, France
}
\email{federico.pellarin@univ-st-etienne.fr}

\maketitle

\begin{center}{\sc With an appendix by F. Demeslay\textsuperscript{2}.}\end{center}  

\footnotetext[1]{MSC Classification 11F52, 14G25, 14L05. Keywords: $L$-values in positive characteristic, log-algebraic theorem, class modules, Bernoulli-Carlitz fractions.}
\footnotetext[2]{Universit\'e de Caen Basse-Normandie, 
Laboratoire de Math\'ematiques Nicolas Oresme, UMR 6139, 
Campus II, Boulevard Mar\'echal Juin, 
B.P. 5186, 
14032 Caen Cedex, France.}
\footnotetext[3]{Institut Camille Jordan, UMR 5208
Site de Saint-Etienne,
23 rue du Dr. P. Michelon,
42023 Saint-Etienne, France.}
\footnotetext[4]{Supported by the ANR HAMOT}

\begin{abstract} The second author has recently introduced a new class of $L$-series in the arithmetic theory of function fields over finite fields. We show that the values at one of these  $L$-series encode arithmetic information of a generalization of Drinfeld modules defined over Tate algebras that we introduce (the coefficients can be chosen in a Tate algebra). This enables us to generalize Anderson's log-algebraicity Theorem and an analogue of 
Herbrand-Ribet Theorem recently obtained by Taelman.\end{abstract}
  
\tableofcontents

\section{Introduction}
We fix a finite field $\FF$ with $q$ elements; we denote by $p$ its characteristic. We further set $A=\FF[\theta]$ (polynomial ring with coefficients in $\FF$ in an indeterminate $\theta$) and $K=\FF(\theta)$  
(the field of fractions of $A$). We also consider the field $K_\infty=\FF((\frac{1}{\theta}))$, the completion of $K$ with respect to the place at infinity;
we write $|\cdot|$ for the absolute value of $K_\infty$ normalized by setting $|\theta|=q$. 
We denote by $\CC_\infty$ 
the completion 
of a fixed algebraic closure of $K_\infty$ and we denote by $K^{ac}$
the algebraic closure  of $K$ in $\CC_\infty$. More generally, for any subfield $L$ of $\CC_\infty$, we denote by $L^{ac}$
the algebraic closure of $L$ in $\CC_\infty$.

Carlitz \cite{CAR} introduced the so-called {\em Carlitz zeta values}
$$\zeta_C(n):=\sum_{a\in A_+}a^{-n}\in K_\infty,\quad n>0, \quad \text{$n$ integer,}$$
as some analogues, up to a certain extent, of the classical zeta values $$\zeta(n)=\sum_{k>0}k^{-n}\in\RR$$ ($n>1$ integer). In the definition of $\zeta_C(n)$, $A_+$ denotes the set of monic polynomials in $A$
and provides a kind of substitute of the set of positive integers. The Carlitz zeta values offer interesting analogies with the classical zeta values. Let us look at the archimedean example of the divergent series 
\begin{equation}\label{zetaone}
\zeta(1)=\sum_{k\geq 1}k^{-1}=\prod_{p}\left(1-\frac{1}{p}\right)^{-1}=\infty\end{equation}
which we have developed as a divergent eulerian product (running over the prime numbers $p$). 
For a commutative ring $R$ and a functor $G$ from 
$R$-algebras to $R$-modules,  we denote by $\operatorname{Lie}(G)$ the functor from $R$-algebras
to $R$-modules defined, for $B$ an $R$-algebra, by:
$$\operatorname{Lie}(G)(B)=\operatorname{Ker}(G(B[\epsilon]/(\epsilon^2))\rightarrow G(B)).$$

The local factor at $p$ in (\ref{zetaone}) is
$$\left(1-\frac{1}{p}\right)^{-1}=\frac{p}{p-1}=\frac{|\operatorname{Lie}(\mathbb{G}_m)(\ZZ/p\ZZ)|}{|\mathbb{G}_m(\ZZ/p\ZZ)|},$$ where $|\cdot|$ denotes here the cardinality of a set. The above 
cardinalities can also be seen as positive generators of {\em Fitting ideals} of finite $\ZZ$-modules (the background on Fitting ideals is recalled in \S \ref{localfactors}). 

In parallel, let $C$ be the {\em Carlitz functor}
from $A$-algebras to $A$-modules (see \S \ref{carlitzexponential} for 
the background on the Carlitz module). Then, for $P$ a {\em prime} of $A$ (\footnote{A prime of $A$ is an irreducible monic polynomial of $A$.}) the module $C(A/PA)$ is
a finite $A$-module and one can easily prove (in different ways; read Goss, \cite[Theorem 3.6.3]{GOS}, Taelman, \cite[Proposition 1]{TAE1}, see also Anderson and Thakur's paper \cite[Proposition 1.2.1]{AND&THA}) that $P-1$ is the monic generator of the Fitting ideal of $M$. For a finitely generated and torsion $A$-module $M$, $[M]_A$ denotes the monic generator of its
Fitting ideal. Then, $$[C(A/PA)]_A=P-1$$ and 
\begin{equation}
\label{localfactor}
\zeta_C(1)=\prod_{P}\left(1-\frac{1}{P}\right)^{-1}=\prod_{P}\frac{P}{P-1}=\prod_P\frac{[\operatorname{Lie}(C)(A/PA)]_A}{[C(A/PA)]_A}.
\end{equation}

The tensor powers of the Carlitz module functor introduced by Anderson and Thakur \cite{AND&THA} provide a way to interpret the values $\zeta_C(n)$ as well, and this can be viewed as
one of the main sources of analogies between the theory of the Carlitz zeta values and the values of the Riemann zeta function at integers $n\geq 2$.

Carlitz proves that 
for all $n>0$ divisible by $q-1$, $\zeta_C(n)$ is, up to a scalar factor of $K^\times$ (the multiplicative group of $K$), proportional to $\widetilde{\pi}^n$, where the quantity 
$\widetilde{\pi}$ is defined \cite[definition 3.2.7]{GOS} by
\begin{equation}\label{productpi}
\widetilde{\pi} =\sqrt[q-1]{\theta-\theta^q}\prod_{ i\geq 1} \left(1-\frac{\theta^{q^i}-\theta}{\theta^{q^{i+1}}-\theta}\right)\in
\theta\sqrt[q-1]{-\theta}\left(1+\frac{1}{\theta} \FF\left[\left[\frac{1}{\theta}\right]\right]\right),\end{equation}
unique up to multiplication by an element of $\FF^\times$, see Goss, \cite[Chapter 3]{GOS}.  We consider the {\em Carlitz exponential} $\exp_C$ (see \S \ref{carlitzexponential} for the background about this function). Carlitz proved the formula
\begin{equation}\label{formulaofcarlitz}
\exp_C(\zeta_C(1))=1.
\end{equation} 
Knowing that $1$ belongs to the domain of convergence of the Carlitz logarithm $\log_C$, composition inverse of $\exp_C$ (see \S \ref{carlitzlogdef}) and comparing 
the absolute values of $\log_C(1),\zeta_C(1),\widetilde{\pi},$ we see that 
the above formula is equivalent to
\begin{equation}\label{formulaofcarlitz2}\zeta_C(1)=\log_C(1).\end{equation}

Taelman \cite{TAE1} recently exhibited an appropriate setting to interpret the above formula as an instance of the {\em class number formula}. His approach, involving a new type of trace formula for certain variants of 
bounded continuous operators,
also relies on the formula (\ref{localfactor}).
He did this in the broader framework of {\em Drinfeld modules} defined over the ring of integers $R$ of a finite extension $L$ of $K$. Taelman associated, to such a Drinfeld 
module $\phi$, a finite $A$-module called the {\em class module} (of $\phi$ over $L$), and a finitely generated $A$-module called the {\em unit module} (of $\phi$ over $L$).
An {\em $L$-series value} $L(\phi/R)$ that he also defines is then equal to the product of the monic generator of the Fitting ideal of the class module times the regulator of the unit module (see Theorem 1 of loc. cit.).

In the case of $\phi=C$ the Carlitz module, and $L=K$, the $L$-series value is equal to $\zeta_C(1)$, the class module is trivial, 
and the regulator of the unit module is $\log_C(1)$, the Carlitz logarithm of $1$, yielding (\ref{formulaofcarlitz}).

\subsection{Drinfeld modules} We introduce and discuss a generalization of Drinfeld modules.

Classically, a Drinfeld module $\phi$ of rank $r$ is the datum of an injective $\FF$-algebra homomorphism 
$$\phi:A\rightarrow\operatorname{End}_{\FF-\text{lin}}(\CC_\infty),$$ uniquely defined by the image of $\theta$, that is, the value $\phi_\theta$ of $\phi$ at $\theta$, which is of the form
\begin{equation}\label{formdrinfeld}
\phi_\theta=\theta+\alpha_1\tau+\cdots+\alpha_r\tau^r,\end{equation}
where the {\em parameters} $\alpha_1,\ldots,\alpha_r$ are elements of $\CC_\infty$ with $\alpha_r\neq0$.

The {\em Tate algebra} $\mathbb{T}_s$ of dimension $s$ is the completion of the polynomial algebra $\CC_\infty[t_1,\ldots,t_s]$
for the {\em Gauss norm} (see \S \ref{tatealgebras}) and we have $\mathbb{T}_0=\CC_\infty$.

Observe that the automorphism $\CC_\infty\rightarrow\CC_\infty$ defined by $x\mapsto x^q$ extends to a continuous homomorphism of $\FF[t_1,\ldots,t_s]$-algebras $\tau:\mathbb T_s\rightarrow \mathbb T_s$ and  that $\FF[t_1,\ldots,t_s]=\{ f\in \mathbb T_s, \tau(f)=f\}.$

We use the $\FF[t_1,\ldots,t_s]$-linear automorphism $\tau$ of
$\mathbb{T}_s$ to define {\em Drinfeld $A[t_1, \ldots, t_s]$-modules of rank $r$ over $\mathbb{T}_s$}; a Drinfeld $A[t_1, \ldots, t_s]$-module $\phi$ of rank $r$ over $\mathbb{T}_s$ is an injective 
$\FF[t_1,\ldots,t_s]$-algebra homomorphism
$$\phi:A[t_1,\ldots,t_s]\rightarrow\operatorname{End}_{\FF[t_1,\ldots,t_s]-\text{lin.}}(\mathbb{T}_s),$$ with $\phi_\theta$ as in (\ref{formdrinfeld})  but where the parameters $\alpha_1,\ldots,\alpha_r$ are now allowed to be chosen in $\mathbb{T}_s$ and,
of course, $\alpha_r\neq0$; see \S \ref{DrinfeldmodulesoverTs}.

\subsubsection{$L$-series values in the case of rank one}

$L$-series values at one can be associated to Drinfeld $A[t_1,\ldots,t_s]$-modules 
defined over $A[t_1,\ldots,t_s]$ by extending the construction of Taelman in \cite{TAE2} by using 
Fitting ideals on the model of the eulerian product (\ref{localfactor}). 
Typically, the $L$-series values of Taelman are elements of $\CC_\infty$ while our $L$-series values are elements of Tate algebras of any dimension.

The construction of these values is explained in \S \ref{localfactors} in the case of a Drinfeld $A[t_1,\ldots,t_s]$-module $\phi$ of rank one; 
we momentarily denote by $\mathcal{L}(\phi)$ the Taelman $L$-series value of $\phi$ at one (a slightly different notation will be used later in the text).
In this case, and with the unique parameter $\alpha=\alpha_1$ in $A[t_1,\ldots,t_s]$, the construction of $L$-series values becomes in fact very explicit.
We consider the maps
$$\rho_\alpha:A\rightarrow\FF[t_1,\ldots,t_s]$$
defined by $$\rho_\alpha(b)=\operatorname{Res}_\theta(b,\alpha),$$ where $\alpha$ is a polynomial of $A[t_1,\ldots,t_s]\setminus\{0\}$ and where $\operatorname{Res}_\theta(P,Q)$ denotes  the resultant of two polynomials $P,Q$ in $\theta$ (\cite[\S IV.8]{LAN}). 
We recall that if $F$ is a field and $X$ is an indeterminate over $F,$ if $P(X) =\sum_{i=1}^n p_{n-i}X^i,Q(X)=\sum_{j=1}^m q_{m-j}X^j$ are elements in $ F[X]$ then $\operatorname{Res}_X(P,Q)=(-1)^{nm}q_0^n\prod_{j=1}^mP(\beta_j),$ where $ \beta_1, \ldots, \beta_m$ are the roots of $Q(X)$ in some algebraic closure of $F$ (\cite[\S IV.8, Proposition 8.3]{LAN}). For example,  if $\alpha =(t_1-\theta)\cdots (t_s-\theta),$ for $b\in A$ we get $\rho_\alpha(b)=b(t_1)\cdots b(t_s).$ We view the map $\rho_\alpha$
as a kind of higher dimensional generalization of a Dirichlet character.

Then, we set:
\begin{equation}\label{ourlseries}
L(n,\phi)=\sum_{b\in A_+}\operatorname{Res}_\theta(b,\alpha)b^{-n}=\prod_P\left(1-\frac{\rho_\alpha(P)}{P^n}\right)^{-1}\end{equation}
(the product is taken over the primes $P$ of $A$). It is easy to see that this series converges in the Tate algebra $\mathbb{T}_s$; see \S\ref{Lseriesvalues}. 
We show in  Proposition \ref{prodexpansion} that
$$L(1,\phi)=\mathcal{L}(\phi).$$
If $\alpha=(t_1-\theta)\cdots (t_s-\theta),$ then we recover Goss abelian $L$-functions by specializing the variables $t_i$ to elements in $k^{\text{ac}}.$ 
With our definition of $L$-series values, we will cover already many 
$L$-series values studied by Goss, as well as in \cite{PEL2,GOS2,PER1} and \cite{ANG&PEL}. 
In \S \ref{connection} we have included some further remarks on the link 
existing between our $L$-series values and the global $L$-functions
of Goss, Taguchi-Wan and B\"ockle-Pink (see \cite{GOS,BOE, BOE&PIN, TAG&WAN}). These remarks may be of help for the reader to 
orientate in the literature.

\subsection{The main results }
The $L$-series values that we study being elements of the Tate algebras $\mathbb{T}_s$, they have the double status of ``numbers" and ``functions". As numbers, the indeterminates
$t_1,\ldots,t_s$ are unspecified and the series $L(n,\phi)$ are handled as 
elements of the ring $\mathbb{T}_s$.  As functions, the variables $t_1,\ldots,t_s$ can be specialized and the analytic 
properties of the functions $L(n,\phi)$ can be used to obtain arithmetic information
e.g. on Carlitz zeta values.
\subsubsection{$L$-series values as ``numbers"} 
Let $\phi$ be a Drinfeld $A[t_1, \ldots, t_s]$-module of rank one over $\mathbb{T}_s$ of parameter $\alpha$
(that is, $\phi_\theta=\theta+\alpha\tau$) in $A[t_1,\ldots,t_s]$. The module $\phi$ is a global object defined over $A[t_1,\ldots,t_s]$. Considering it over $\mathbb{T}_s$ means looking at its realization at the place $\infty$. 
We introduce, in \S\ref{classunitmodules},
the {\em class $A[t_1,\ldots,t_s]$-module} $H_\phi$ and the {\em unit $A[t_1,\ldots,t_s]$-module} $U_\phi$ 
associated to $\phi$. This provides, in a certain way,
a generalization of the constructions of Taelman paper \cite{TAE2}. Note that, while Taelman's class modules are vector spaces of finite dimension over $\FF$, the class module  $H_\phi$ is of finite rank over $\FF[t_1,\ldots,t_s]$. 
We said ``in a certain way" because we only deal here
with the case of rank $r=1$. The general case, however, can be handled 
as well, as Demeslay does, in a forthcoming work.
In  the case of $\alpha=(t_1-\theta)\cdots(t_s-\theta)$, the class modules are ``generic" in the sense that they can be used to interpolate 
Taelman's class modules (over the cyclotomic extensions).

In the following, we denote the set of variables $\{t_1,\ldots,t_s\}$ by $\undt{s}$ (or by $\undt{}$ 
when the number of variables is understood in the context and when there is a real need to simplify the notation).

The $\FF(\undt{s})$-vector 
space $$\calh_\phi=\FF(\undt{s})\otimes_{\FF[\undt{s}]} H_\phi$$ is of finite dimension
and endowed with the structure of module over the ring $$\FF(\undt{s})[\theta]=\FF(\undt{s})\otimes_\FF A$$ that we denote by $\mathcal{R}$ to simplify certain formulas (see Corollary \ref{corollaryHszerobis}). Let $$[\calh_\phi]_{\ring{}}\in \ring{}$$ be 
the monic generator of its Fitting ideal.
We will see (Proposition \ref{proposition2}) that the vector space $$\calu_\phi=\FF(\undt{s})\otimes_{\FF[\undt{s}]} U_\phi$$ is a free $\ring{}$-module of rank one,
to which we can associate a {\em regulator} $$[\ring{}: \calu_\phi]_{\ring{}}.$$
Then, the class number formula for the $L$-series value $L(1,\phi)$ (Theorem \ref{theorem2})
can be obtained (the notation will be made more precise
later in this text):

\medskip
 {\em 

$$L(1, \phi) =[\calh_{\phi}]_{\ring{}} [\ring{}: \calu_\phi]_{\ring{}}.$$
}

This result  is deduced from 
Theorem \ref{theoremA1} of the Appendix, by F. Demeslay. 
The originality of our approach is the use we make of the above class number formula.

The properties of the {\em exponential function} $\exp_\phi$ associated to $\phi$ (\S\ref{explog}) strongly 
influence the properties of $L(1,\phi)$, $H_\phi$ and $U_\phi$. By the results obtained in \S\ref{sectioncnf}, \ref{abeliandrinfeld}, and  \ref{definedoverA}, for $\phi$ a Drinfeld $A[\undt{s}]$-module 
of rank one defined over  $A[\undt{s}]$, with $\phi_\theta=\theta+\alpha\tau$ with $\alpha \in A[\undt{s}]\setminus \{0\},$ we have that  $\exp_\phi(L(1,\phi))\in\mathbb{T}_s$ belongs in fact to $A[\undt{s}]$. It is a  torsion point for the structure of $A[\undt{s}]$-module induced by $\phi$
if and only if the opposite of the parameter $-\alpha\in A[\undt{s}]$ is a monic polynomial in $\theta$ of degree $r\equiv1\pmod{q-1}$. We will see that this latter  condition  is equivalent to the fact that the  function $\exp_\phi$ is surjective, and its kernel has non-trivial intersection with  $\mathbb{T}_s(K_\infty)$, the completion of $K_\infty[\undt{s}]$ for the 
Gauss norm. Furthermore by the results of \S\ref{subsectionb} and Theorem \ref{theorem4}, we have:
 \medskip

 {\em 
If $-\alpha$ is monic as a polynomial in $\theta$, of degree $r\equiv1\pmod{q-1}$
with $r\geq 2$, then $\exp_\phi(L(1,\phi))=0$. In the special case of $\phi$ 
of parameter $\alpha=(t_1-\theta)\cdots(t_s-\theta)$,
 for $s\geq 2, s\equiv 1\pmod{q-1},$ we have the formula:
$$L(1,\phi)=\frac{\widetilde{\pi}\mathbb{B}_s}{\omega(t_1)\cdots\omega(t_s)},$$
where $\mathbb{B}_s\in A[\undt{s}]$ is the monic generator of the Fitting ideal of the $A[\undt{s}]$-module
$H_\phi$ and $\omega$ is the Anderson-Thakur function introduced in \cite{AND&THA}.}

\medskip

We have few explicit examples of polynomial $\mathbb{B}_s$. 
Here are some:
\begin{eqnarray*}
\mathbb{B}_q&=&1,\\
\mathbb{B}_{2q-1}&=&\theta-\sum_{1\leq i_1<\cdots<i_{q}\leq 2q-1}\prod_{k=1}^{q}t_{i_k},\quad (q>2).
\end{eqnarray*}
 Let $\phi$ be the Drinfeld $A[t_1]$-module of parameter $\alpha = t_1-\theta,$
if we set $\mathbb{B}_1=\frac{1}{\theta-t_1},$ by Lemma \ref{corr=beta=1}, we have the formula:
$$L(1,\phi)=\frac{\widetilde{\pi}\mathbb{B}_1}{\omega(t_1)}.$$

As a consequence of the class number formula, we shall also mention the 
{\em log-algebraicity Theorem of Anderson}, in the case of the Carlitz module, see \cite[Theorem 3 and Proposition 8 (I)]{AND2} (so, surprisingly, the class number formula implies Anderson's log-algebraic theorem for the Carlitz module).
In fact, we prove in \S\ref{anderson} a result which can be interpreted as an {\em operator theoretic version}, thus a refinement of Anderson Proposition 8 (I) loc. cit. 
 We introduce a class of formal series in infinitely many  indeterminates $X_i,\tau(X_i),\ldots,Z,\tau(Z),\ldots$ ($i=1,\ldots,r$) by setting:
$$\mathcal{L}_r(X_1,\ldots,X_r;Z)=\sum_{d\geq 0}\left(\sum_{a\in A_{+,d}}C_a(X_1)\cdots C_a(X_r)a^{-1}\right)\tau^d(Z),$$
where $A_{+,d}$ denotes the set of monic polynomials of degree $d$ and $C_a(X_i)$ denotes a certain
polynomial in $X_i,\tau(X_i),\ldots$ obtained from
the action of the Carlitz module evaluated at $a$ on the indeterminate $X_i$; for example, $C_\theta(X_1)=
\theta X_1+\tau(X_1)$.
We have (Theorem \ref{theoanderson}):

\medskip

{\em  ${\exp}_C(\mathcal{L}_r(X_1,\ldots,X_r;Z))\in A[X_i,\tau(X_i),\ldots, Z,\tau(Z),\ldots,\, i=1,\ldots,r].$ }

\medskip

If we substitute, in the above result, $X_1=\cdots=X_r=X$ and $\tau^n(X)=X^{q^n},\tau^n(Z)=Z^{q^n}$ for all $n\geq 0$, we recover Anderson's original result asserting that 
$$\exp_C\left(\sum_{a\in A_+}Z^{q^{\deg_\theta(a)}}a^{-1}C_a(X)^r\right)\in A[X,Z].$$ 
\medskip 

\subsubsection{ $L$-series values as "functions"} The evaluation of $L$-series values is 
the necessary step to deduce from the above results, arithmetic results on the  values of Goss abelian $L$-series. One of the main novelties of our work is that we are able to study the isotypic components of Taelman's class modules in families.
Let $\chi$ be a Dirichlet character of {\em type} $s$ such that 
$s\equiv1\pmod{q-1}$ and conductor $a\in A_+$ (see \S \ref{somesettings}). Let us denote by $\FF_a$ the subfield of $\CC_\infty$ obtained adjoining to $\FF$ the roots of
$a$, by $K_a$ the $a$-th {\em cyclotomic field} and by $\Delta_a$ the Galois group 
of $K_a$ over $K$. We denote by $H_a$ the {\em Taelman class $A$-module}
associated to the Carlitz module and relative to the extension $K_a/K$. This
is a finite $\FF[\Delta_a]$-module. 
Let $e_\chi$ be the {\em idempotent element} of $\FF_a[\Delta_a]$
associated to $\chi$. Then, the $\chi$-isotypic component 
$$H_\chi=e_\chi( H_a\otimes_{\FF}\FF_a)$$
is a finite $A[\FF_a]$-module endowed with a suitable structure of
$\FF_a[\Delta_a]$-module. The evaluation map $\operatorname{ev}_\chi$  is described 
in \S\ref{evaluationdirichletcharacters}; it is obtained by substituting 
the variables $t_i$ (for $i=1,\ldots,s$) by appropriate roots of unity
chosen among the roots of the conductor $a$ in $\FF^{ac}.$ By Corollary  \ref{theorem6}, the Fitting ideal of the $A[\FF_a]$-module $H_\chi$ is generated by   $\operatorname{ev}_\chi({\mathbb{B}_s}).$ A similar result is obtain when the type $s$ satisfies $s\not \equiv 1 \mod q-1$, see Theorem \ref{theorem7}. In \S\ref{bernoullicarlitzgen} we associate to 
our character $\chi$ certain {\em generalized Bernoulli-Carlitz numbers}
denoted by $\operatorname{BC}_{i,\chi^{-1}}$. These are elements
of the compositum $K(\FF_a)$ of $\FF_a$ and $K$ in $\CC_\infty$.

Let us write $$\chi=\vartheta_P^N\widetilde{\chi},$$ where $P$ is a 
prime dividing the conductor $a$ of $\chi$ (so that $a=Pb$ with 
$P$ not dividing $b$), $\widetilde{\chi}$ is a Dirichlet character
of conductor $b$, $\vartheta_P$ is the Teichm\"uller character
associated to $P$, and $N$ is an integer between $0$ and $q^d-2$,
with $d$ the degree of $P$. 

Let $P$ be a prime and $\KPhat$ the completion of $K$ at $P$.
If $\APhat$ is the valuation ring of $\KPhat$, 
the valuation ring of the 
field $\KPhat(\FF_a)$ is $\APhat[\FF_a]$. 
We obtain  a generalization of 
Herbrand-Ribet-Taelman Theorem of \cite{TAE2} (Theorem \ref{theorem8}).
Here, we suppose that $N\geq 2$ if $\widetilde{\chi}=1$:

\medskip

{\em  The $\APhat[\FF_a][\Delta_a]$-module $e_\chi(H_a\otimes_A\APhat[\FF_a])$ is  non-trivial if and only 
if $$\operatorname{BC}_{q^d-N,\widetilde{\chi}^{-1}}\equiv0\pmod{P}.$$}

\medskip

The original result of Taelman \cite[Theorem 1]{TAE2} corresponds to the case in which $\widetilde{\chi}$ 
is the trivial character. Our demonstration of Theorem \ref{theorem8} is inspired by  the alternative proof of  Herbrand-Ribet Theorem for function fields  given in   \cite{ANG&TAE}.

\section{Notation and background}

The basic list of notation of this paper is the following:

\bigskip

\begin{itemize}
\item $\NN$: the set of non-negative integers.
\item $k$: a fixed finite field with $q$ elements.
\item $p$: the characteristic of $\FF$.
\item $\theta$: an indeterminate over $\FF$.
\item $A$: the polynomial ring $\FF[\theta]$.
\item $A_+$: the set of monic elements in $A.$
\item For $n\in \mathbb N,$ $A_{+,n}$ denotes the set of monic elements in $A$ of degree $n.$
\item $K=\FF(\theta)$: the fraction field of $A$.
\item $\infty$: the unique place of $K$ which is a pole of $\theta$ and $v_{\infty}$ is the discrete valuation on $K$ corresponding to the place $\infty.$ The valuation $v_{\infty}$ is normalized such that $v_{\infty}(\theta)=-1.$
\item $K_{\infty}=\FF((\frac{1}{\theta}))$: the completion of $K$ at $\infty.$
\item $\CC_{\infty}$: a fixed  completion of an algebraic closure of $K_{\infty}$. The unique valuation on $\CC_{\infty}$ that extends  $v_{\infty}$ will still be denoted by $v_{\infty}.$
\item $|\cdot|$: the absolute value of $\CC_\infty$ defined by 
$|\alpha| =q^{-v_{\infty}(\alpha)}$ for $\alpha \in \mathbb C_{\infty}.$ 
\item $L^{\text{ac}}$: the algebraic closure in $\CC_\infty$ of a field $L\subset\mathbb C_{\infty}.$
\item $R^\times$: the group of invertible elements of a ring $R$.
\item $\operatorname{Frac}(R)$: the fraction field of a domain $R$.
\end{itemize}

\bigskip

In all the following we keep using a set of variables $\{t_1,\ldots,t_s\}$ for various choices of $s\geq 0$. We recall that this set is denoted by $\undt{s}$ or by $\undt{}$ if the value of $s$ is understood. For example,
$\FF(\undt{s})$ denotes the field $\FF(t_1,\ldots,t_s)$. In particular, we have $\undt{0}=\emptyset$ and 
$\FF(\undt{0})=\FF$. If $s=1$, we will often write $t$ instead of $t_1$ and $\undt{1}$. We will also use the following notation, where $R$ is a $\FF$-algebra:

\bigskip

\begin{itemize}
\item $R[\undt{s}]$: the ring $\FF[\undt{s}]\otimes_\FF R.$ We observe that $R[\undt{0}]=R$.
\item $\mathbb T_s:$  the Tate algebra in the variables $t_1, \ldots, t_s$ with coefficients in $\mathbb C_{\infty}$. We observe that $\TT_0=\CC_\infty$.
\item $\mathbb T_s(K_{\infty})$: the ring $\mathbb T_s \cap K_{\infty}[[\undt{s}]].$
\item $K(\undt{s})_{\infty}$: the field $\FF(\undt{s})((\frac{1}{\theta}))$. 
\item $\ring{}$: the ring $\FF(\undt{s})[\theta]=\FF(\undt{s})\otimes_\FF A$ (this notation will not be used systematically).
\end{itemize}

\bigskip

We observe that $$K_{\infty}[\undt{s}]\subsetneq \mathbb T_s(K_{\infty})\subsetneq \operatorname{Frac}(\mathbb T_s(K_{\infty}))\subsetneq 
K(\undt{s})_{\infty}.$$

In fact, $K(\undt{s})_{\infty}$ is the completion of the fraction field of $\TT_s(K_\infty)$.

\subsection{The Carlitz exponential.}\label{carlitzexponential} The Carlitz exponential is the function
$$\exp_C:\CC_\infty\rightarrow\CC_\infty$$
defined by 
$$\exp_C(X)=\sum_{i\geq 0} \frac{X^{q^i}}{D_i},\quad X\in\CC_\infty,$$
where $(D_i)_{i\geq 0}$ is the sequence of $A$ defined by $D_0=1$ and, for $i\geq 1$, $$D_i=(\theta^{q^i}-\theta)D_{i-1}^q.$$
This function, $\FF$-linear, is entire from the identity $|D_i|=q^{iq^i}.$  In particular,  $\exp_C$ is surjective. 

The kernel of $\exp_C$
is the $A$-module $\widetilde{\pi}A,$ where $\widetilde{\pi}$ is defined by the infinite product 
(\ref{productpi}), see \cite[Corollary 3.2.9]{GOS}. We have
$$|\widetilde{\pi}|=q^{\frac{q}{q-1}}.$$
We can expand the function $\exp_C$ in a convergent infinite product:
$$\exp_C(X)=X\prod_{a\in A\setminus\{0\}}\left(1-\frac{X}{\widetilde{\pi}a}\right),\quad X\in\CC_\infty.$$ We observe  that $\exp_C$ induces an isometric $\FF$-automorphism of the disk $$D_{\CC_\infty}(0,q^{\frac{q}{q-1}})=\{z\in\CC_\infty;|z|<q^{\frac{q}{q-1}}\}.$$

\subsubsection{The Carlitz module} 
The $\CC_\infty$-algebra of the $\FF$-linear algebraic endomorphisms of 
$\mathbb{G}_{a,\CC_\infty}$
$$\operatorname{End}_{\FF-\text{lin.}}(\mathbb{G}_{a, \CC_\infty})$$ can be identified with the skew polynomial ring $\CC_\infty[\tau]$ whose elements 
are the finite sums $\sum_{i\geq 0}c_i\tau^i$ with the $c_i$'s in $\CC_\infty$, subject to the product rule defined 
by $\tau x=x^q\tau$ for all $x\in\CC_\infty$. If $X\in\CC_\infty$ and $P=\sum_{i=0}^dP_i\tau^i$ is an element of $\CC_\infty[\tau]$, the {\em evaluation} of $P$ at $X$ is defined by setting $$P(X)=\sum_{i=0}^dP_iX^{q^i}.$$ For example, 
the evaluation of $\tau$ at $X$ is $\tau(X)=X^q$.

The {\em Carlitz module}
is the unique $\FF$-algebra homomorphism $$C:A\rightarrow\operatorname{End}_{\FF}(\mathbb{G}_{a,\mathbb{C}_{\infty}})$$
determined by $$C_\theta=\theta+\tau.$$  
If $a\in A_{+,d}$, we denote by $C_a$ the image of $a$ via $C$. We have $$C_a=a_0\tau^0+a_1\tau^1+\cdots+a_{d-1}\tau^{d-1}+\tau^d$$ with $a_0=a$, and
if $X\in\CC_\infty$, we will write, in particular,
$$C_a(X)=a_0X+a_1X^q+\cdots+a_{d-1}X^{q^{d-1}}+X^{q^d}.$$ This endows $\CC_\infty$ with a structure of $A$-module
that will be denoted by $C(\CC_\infty)$ and we have $$C_a(\exp_C(X))=\exp_C(aX)$$ for all $a\in A$ and $X\in\CC_\infty$.
The Carlitz module $C$ allows to make the exact sequence of $\FF$-vector spaces $0\rightarrow\widetilde{\pi}A\rightarrow\CC_\infty\rightarrow\CC_\infty\rightarrow0$ induced by $\exp_C$ into an 
exact sequence of $A$-modules $$0\rightarrow\widetilde{\pi}A\rightarrow\CC_\infty\rightarrow C(\CC_\infty)\rightarrow0.$$

\subsubsection{The Carlitz logarithm}\label{carlitzlogdef} The Carlitz logarithm is the rigid analytic function defined, for $X\in \CC_\infty$ such that $|X|<q^{\frac{q}{q-1}}$,
by the convergent series
$$\log_C(X)=\sum_{i\geq 0}\frac{X^{q^i}}{l_i},$$
where $(l_i)_{i\geq 0}$ is the sequence defined by $l_0=1$ and, for $i\geq 1$, $$l_i=(\theta - \theta^{q ^i})l_{i-1}.$$

The convergence property is due to the fact that $|l_i|=q^{q\frac{q^i-1}{q-1}}$.
We then have, for all $X\in \CC_\infty$ such that $|X|<q^{\frac{q}{q-1}}$, 
\begin{equation}\label{isometrycarlitz1}|X|=|\exp_C(X)|=|\log_C(X)|\end{equation}
and
\begin{equation}\label{isometrycarlitz2}\log_C(\exp_C(X))= \exp_C(\log_C(X))=X.\end{equation}

\subsubsection{The Carlitz torsion.}\label{Carlitztorsion}
For $a\in A_{+},$ we set $$\lambda_a=\exp_C\left(\frac{\widetilde{\pi}}{a}\right)\in\CC_\infty.$$ 
The subfield of $\CC_\infty$
$$K_a=K(\lambda_a),$$ a finite extension of $K$, will be called the $a$-th {\em cyclotomic function field}. A reference for the basic theory of these 
fields is \cite[Chapter 12]{ROS}. Here, we recall that $K_a/K$ is a finite abelian extension unramified outside $a$ and $\infty$.
Its Galois group $$\Delta_a=\operatorname{Gal}(K_a/K)$$ is isomorphic to the unit group $$\left(\frac{A}{aA}\right)^\times.$$ 
Then, the isomorphism is explicitly given in the following way: if $b\in A$  is relatively prime with $a,$ there exists a unique $\sigma_b \in \Delta_a$ such that
\begin{equation}\label{sigma}
\sigma_b(\lambda_a) =C_b(\lambda_a).
\end{equation}

\subsection{Tate algebras}\label{tatealgebras}
We use the above conventions and notation. Let $s$ be in $\NN$. Let  $L$ be an extension of $K_{\infty}$ in $\CC_\infty$ such that $L$ is complete with respect to $v_{\infty}|_L.$ Let us consider a polynomial $f\in L[\undt{s}]$, expanded as a finite sum $$f=\sum_{i_1, \ldots,i_s\in \mathbb N}x_{i_1, \ldots, i_s}t_1^{i_1}\cdots t_s^{i_s},\quad x_{i_1, \ldots, i_s}\in L.$$ We set 
$$v_{\infty}(f) =\inf\{ v_{\infty}(x_{i_1, \ldots, i_s}), \,  i_1, \ldots, i_s\in \mathbb N\}.$$
We then have, for $f,g\in L[\undt{s}]$:
$$v_{\infty} (f+g)\geq\inf(v_{\infty}(f), v_{\infty}(g)).$$ Furthermore, we have $$v_{\infty}(fg)=v_{\infty}(f)+v_{\infty} (g)$$ so that $v_\infty$ is a valuation, called the {\em Gauss valuation}. 

Let us set, for $f\in L[\undt{s}]$, $\|f\|=q^{-v_\infty(f)}$ if $f\neq 0$ and $\| 0\| =0$. We have $\|f+g\|\leq\max\{\|f\|,\|g\|\}$, $\|fg\|=\|f\|\|g\|$ and $\|f\|=0$ if and only if $f=0$; the function $\|\cdot\|$ is an $L$-algebra norm on $L[\undt{s}]$ and an absolute value, called the {\em Gauss absolute value}.  

We denote by $\mathbb T_s(L)$ the completion of $L[\undt{s}]$ with respect to $\|.\|.$
When $s=1$, we also write $\mathbb{T}(L)$ for $\mathbb{T}_1(L)$ and we observe that $\mathbb{T}_0(L)=L$. 
Equipped with the Gauss norm, $\mathbb{T}_s(L)$ is an $L$-Banach algebra  that can be identified with the set of formal series of $f\in L[[\undt{s}]]$ such that, writing $$f=\sum_{i_1, \ldots,i_s\in \mathbb N}x_{i_1, \ldots, i_s}t_1^{i_1}\cdots t_s^{i_s},\quad x_{i_1, \ldots, i_s}\in L,$$ we have $$\lim_{i_1+\cdots +i_s\rightarrow +\infty}x_{i_1, \ldots, i_s}=0.$$ 
The Gauss norm of $f$ as above is then given by
$$\|f\|=\sup\{|x_{i_1, \ldots, i_s}|,(i_1,\ldots,i_s)\in\NN^s\},$$ and the supremum is a maximum.
When $L=\CC_\infty$, we shall write $\mathbb{T}_s,\mathbb{T}$ instead of $\mathbb{T}_s(\CC_\infty),\mathbb{T}_1(\CC_\infty)$.
We refer the reader to \cite[Chapter 3]{PUT} for the basic properties of Tate algebras.

We denote by $\mathfrak{o}_L$ the valuation ring of $L$ (whose elements 
$x$ are characterized by the fact that $|x|\leq 1$). We denote by $\mathfrak{m}_{L}$ 
the maximal ideal of $\mathfrak{o}_L$ whose elements $x$ are such that $|x|<1.$ Then, the field $\overline{L}= L\cap k^{ac}$ satisfies $\overline{L}\simeq \frac{\mathfrak{o}_L}{\mathfrak{m}_L}.$ We further denote by $\mathfrak{o}_{\mathbb{T}_s(L)}$ the subring of elements $f\in\mathbb{T}_s(L)$ such that $\|f\|\leq 1$ and by $\mathfrak{m}_{\mathbb{T}_s(L)}$
the prime ideal of $\mathfrak{o}_{\mathbb{T}_s(L)}$ whose elements are the $f$ such $\|f\|<1$. Then, we have that
$$\overline{\mathbb{T}_s(L)}:=\frac{\mathfrak{o}_{\mathbb{T}_s(L)}}{\mathfrak{m}_{\mathbb{T}_s(L)}}\simeq\overline{L}[\undt{s}].$$

If $L/K_{\infty}$ is a finite extension which is complete, let $\pi_L$ be a uniformizer of $L$. 
Then, we have that $L=\overline{L}((\pi_L))$, $\mathfrak{o}_L=\overline{L}[[\pi_L]]$. 
In particular:
$$\mathbb T_s(L)=\overline{L}[\undt{s}]((\pi_L)).$$

\subsubsection{$\FF[\undt{s}]$-linear endomorphisms of $\mathbb{T}_s$.}  We denote by $\tau$ the unique
$\FF[\undt{s}]$-linear automorphism $\TT_s\rightarrow\TT_s$ such that the restriction $\tau|_{\CC_\infty}$ is the automorphism of $\CC_\infty$ defined by $x\mapsto x^q$.
Explicitly, it can be computed as follows. For $f\in \mathbb T_s$ with 
$$f=\sum_{i_1, \ldots,i_s\in \mathbb N}x_{i_1, \ldots, i_s}t_1^{i_1}\cdots t_s^{i_s},\quad x_{i_1, \ldots, i_s}\in \mathbb C_{\infty},$$  we set
$$\tau(f)=\sum_{i_1, \ldots,i_s\in \mathbb N}x_{i_1, \ldots, i_s}^qt_1^{i_1}\cdots t_s^{i_s}.$$ This is a $\FF[\undt{s}]$-linear
automorphism of $\mathbb{T}_s$. In fact, $\tau$ is also an automorphism for the structure of $\FF[\undt{s}]$-algebra of $\mathbb{T}_s$.  If we set $$\mathbb{T}_s^{\tau=1}= \{ f\in \mathbb T_s, \tau (f)=f\},$$ we have  $\mathbb{T}_s^{\tau=1}=\FF[\undt{s}].$ Observe that:
$$\|\tau^n(f)\|=\|f\|^{q^n},\quad n\geq 0,\quad f\in\mathbb{T}_s.$$

With the action of $\tau$ on $\mathbb{T}_s$, we have the non-commutative skew polynomial rings $\mathbb{T}_s[\tau]$  and $\mathbb{T}_s[[\tau]]$. The latter is, as a set, constituted of the formal series $\sum_{i\geq 0}f_i\tau^i$ with $f_i\in\mathbb{T}_s$ for all $i$, and 
the elements of the former are the formal series whose sequences of coefficients are eventually zero. The 
commutation rule defining the product is given by $$\tau f=\tau(f)\tau,$$ for $f\in\mathbb{T}_s$. Moreover, the ring $\mathbb{T}_s[\tau]$ acts on $\mathbb{T}_s$: if $P=\sum_{i=0}^dP_i\tau^i\in \mathbb{T}_s[\tau]$ and $f\in\mathbb{T}_s$, then
we set
$$P(f)=\sum_{i=0}^dP_i\tau^i(f)\in\mathbb{T}_s.$$


\section{Drinfeld $A[t_1, \ldots,t_s]$-modules over $\mathbb{T}_s$.}\label{DrinfeldmodulesoverTs}

 \begin{Definition}\label{drinfelddef}{\em Let $r\geq 1$ be an integer. A {\em Drinfeld $A[\undt{s}]$-module  over $\mathbb T_s$} is a homomorphism of 
 $\FF[\undt{s}]$-algebras $$\phi:A[\undt{s}]\rightarrow \mathbb{T}_s[\tau]$$ defined by $$\phi_{\theta}= \theta+\alpha_1\tau+\cdots+\alpha_r\tau^r,$$ for an integer $r>0$ and elements $\alpha_1,\ldots,\alpha_r \in \mathbb T_s$ with $\alpha_r$ non-zero. The integer $r$ is the {\em rank} of $\phi$.
 The vector  $$\underline{\alpha}=(\alpha_1,\ldots,\alpha_r)\in\mathbb{T}_s^r$$ is the {\em parameter} of $\phi.$
 If $r=1$, we identify the parameter with its unique entry $\alpha_1$.
}\end{Definition}

Given a Drinfeld $A[\undt{s}]$-module $\phi$ of rank $r$ over $\mathbb{T}_s$,
if $M$ is a sub-$\FF[\undt{s}]$-module of $\mathbb T_s$ such that $\phi_{\theta}(M)\subset M,$  we denote by $\phi(M)$ the $\FF[\undt{s}]$-module $M$ equipped with the $A[\undt{s}]$-module structure induced by $\phi.$ In particular, we will often work
in the module $\phi(\mathbb{T}_s)$. This notation should not lead to confusion since $\phi(A[\undt{s}])$ will \emph{always} denote $A[\undt{s}]$ equipped with the new module structure, and \emph{not} the image of $\phi$ in $\mathbb{T}_s[\tau]$.

If $s\leq s'$ then we have the embedding $\mathbb{T}_s\subset\mathbb{T}_{s'}$ induced by the inclusion
$$\FF[\undt{s}]\subset\FF[t_1,\ldots,t_s,t_{s+1},\ldots,t_{s'}]=\FF[\undt{s'}].$$
Every Drinfeld $A[\undt{s}]$-module over $\mathbb T_s$
can be extended in a natural way to a Drinfeld $A[\undt{s'}]$-module over $\mathbb T_{s'}$ of the same rank, which will be denoted again by $\phi$
for the sake of simplicity.

\begin{Definition}\label{defdrinfeldmodule}
{\em Let $\phi, \phi'$ be two Drinfeld $A[\undt{s}]$-modules over $\mathbb T_s.$ We 
say that $\phi$ is {\em isomorphic} to $\phi'$ if there exists $u\in \mathbb T_s^\times$ ($\mathbb{T}_s^\times$ denotes the multiplicative group of the units of $\mathbb{T}_s$) such that, in $\mathbb T_s[\tau]$:
$$\phi_{\theta} u= u\phi'_{\theta}.$$ If $\phi$ and $\phi'$ are isomorphic Drinfeld modules, they must have the same rank and
we shall also write $\phi\cong\phi'$.}\end{Definition} 
\begin{Remark}{\em When two Drinfeld $A[\undt{s}]$-modules are isomorphic, it is understood that they are isomorphic {\em over $\mathbb{T}_s$}.}\end{Remark}

Let $\phi,\phi'$ be Drinfeld modules of rank $r>0$ over $\mathbb{T}_s$ of respective parameters $$\underline{\alpha}=(\alpha_1,\ldots,\alpha_r),\quad\underline{\alpha'}=(\alpha_1',\ldots,\alpha_r')\in\mathbb{T}_s.$$Then, the condition
$\phi\cong\phi'$ amounts to the existence of $u\in\mathbb{T}_s^\times$ such that
$$\alpha_i\tau^i(u)=\alpha_i'u,\quad i=1,\ldots,r.$$

\begin{Remark}{\em 
If $s=0$, all the Drinfeld $A$-modules of rank one are isomorphic over $\CC_\infty$ to the Carlitz module $C$.
This is no longer true for Drinfeld $A[\undt{s}]$-modules of rank one if $s\geq 1$; for example, the Drinfeld modules of rank $1$ of parameters $\alpha=1$ (Carlitz module) and $\alpha=t$ (both defined over $\TT_1=\TT$) are not
isomorphic. }

\end{Remark}

From now on, we will be focused on Drinfeld modules of rank $1$.

\begin{Definition}\label{genericmodule}{\em 
We will denote by $C_s$ the Drinfeld module of rank one over $\mathbb{T}_s$ with parameter $$\alpha=(t_1-\theta)\cdots(t_s-\theta).$$}\end{Definition}

Of course, if $s=0$, we get $C_0=C$, the Carlitz module. 
\subsection{Exponential and logarithm.}\label{explog} 
Let $\phi$ be a Drinfeld $A[\undt{s}]$-module of rank one defined over $\mathbb{T}_s$ 
with parameter $\alpha\in\mathbb{T}_s$. We also set $$\tau_{\alpha }= \alpha \tau\in \mathbb T_s[\tau].$$ Explicitly, for any $n\geq 0$, we have
$$\tau_{\alpha }^n= \alpha \tau(\alpha)\cdots\tau^{n-1}(\alpha)\tau^n.$$ 

We will be particularly interested in the formal series of $\mathbb{T}_s[[\tau]]$
\begin{eqnarray*}
\exp_{\phi} &=&\sum_{n\geq 0} \frac{1}{D_n}{\tau_{\alpha}^n},\\
\log_{\phi}&= &\sum_{n\geq 0}\frac{1}{l_n}{\tau_{\alpha}^n},
\end{eqnarray*}
respectively called the {\em exponential series} and the {\em logarithm series}
associated to $\phi$. 

It is easy to show that, in $\mathbb T_s[[\tau]],$ we have:
$$\exp_{\phi} \log_{\phi}=\log_{\phi} \exp_{\phi}=1,\quad \exp_{\phi} \theta = \phi_{\theta} \exp_{\phi}.$$
A routine computation also shows the identities in $\mathbb T_s[[\tau]]$:
$$\phi_a\exp_\phi=\exp_\phi a,\quad \log_\phi\phi_a=a\log_\phi,\quad\text{ for all }a\in A[\undt{s}].$$

We observe that
$$\|D_n^{-1}\alpha \tau(\alpha)\cdots\tau^{n-1}(\alpha)\|=\|\alpha\|^{\frac{q^n-1}{q-1}}q^{-nq^n}$$
so that for all $f\in\mathbb{T}_s$, the series
$$\exp_\phi(f):=\sum_{n\geq0}\frac{\tau_\alpha^n(f)}{D_n}=\sum_{n\geq 0}\frac{\alpha\tau(\alpha)\cdots\tau^{n-1}(\alpha)}{D_n}\tau^n(f)$$
converges in $\mathbb{T}_s$ (\footnote{The reader is warned that we are using the same symbols to denote completely different entities. Indeed, at once, $\exp_\phi$ denotes a formal series of $\mathbb{T}_s[[\tau]]$
and a continuous endomorphism of $\mathbb{T}_s$. The same remark can be made for $\log_\phi$. This should not lead to 
confusion and contributes to easily manageable notation.}).
The $\FF[\undt{s}]$-linear map $$\exp_\phi:\mathbb{T}_s\rightarrow\mathbb{T}_s$$
defined by $f\mapsto\exp_\phi(f)$
is called the {\em exponential function} of $\phi$. It is open and continuous, as 
the reader can easily check.  Also, if $\alpha\in\mathbb{T}_s(K_\infty)$, then $\exp_\phi$ induces a $\FF[\undt{s}]$-linear 
 map $\mathbb{T}_s(K_\infty)\rightarrow\mathbb{T}_s(K_\infty)$.

If $B$ is a normed ring with ultrametric norm $\|\cdot\|$, and if $r\geq 0$, we shall denote by $D_B(0,r)$
(resp. $\overline{D}_B(0,r)$) the set $\{z\in B;\|z\|<r\}$ (resp.  $\{z\in B;\|z\|\leq r\}$). We notice that, 
for all $r\geq0$, the sets $D_{\mathbb{T}_s}(0,r)$ and  $\overline{D}_{\mathbb{T}_s}(0,r)$ are
$\FF[\undt{s}]$-submodules of $\mathbb{T}_s$. 
We observe that
$$\|l_n^{-1}\alpha \tau(\alpha)\cdots\tau^{n-1}(\alpha)\|=\|\alpha\|^{\frac{q^n-1}{q-1}}q^{-q\frac{q^n-1}{q-1}}.$$ Let us set $r=-v_\infty(\alpha)$.
For all $f\in\mathbb{T}_s$ such that $v_{\infty}(f)> \frac{r-q}{q-1}$ (that is, $f\in D_{\mathbb{T}_s}(0,q^{\frac{q-r}{q-1}})$), the series
$$\log_\phi(f):=\sum_{n\geq0}\frac{\tau_\alpha^n(f)}{l_n}$$ also
converges in $\mathbb{T}_s$. The $\FF[\undt{s}]$-linear map $$\log_\phi:D_{\mathbb{T}_s}(0,q^{\frac{q-r}{q-1}})\rightarrow\mathbb{T}_s$$
defined by $f\mapsto\log_\phi(f)$
is called the {\em logarithm function} of $\phi$. As a consequence of the above discussion we have the next Lemma.
\begin{lemma}\label{Nalpha}
The functions $\exp_\phi,\log_\phi$ induce isometric automorphisms of $$D_{\mathbb{T}_s}(0,q^{\frac{q-r}{q-1}}),$$ inverse of each other.
\end{lemma}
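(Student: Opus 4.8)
Write $r=-v_\infty(\alpha)$, so $\|\alpha\|=q^{r}$, and keep the two Gauss-norm computations recorded just before the statement, namely
$$\|D_n^{-1}\alpha\tau(\alpha)\cdots\tau^{n-1}(\alpha)\|=q^{\,r\frac{q^n-1}{q-1}-nq^n},\qquad \|l_n^{-1}\alpha\tau(\alpha)\cdots\tau^{n-1}(\alpha)\|=q^{\,(r-q)\frac{q^n-1}{q-1}}.$$
The plan is: (i) show that on $D:=D_{\mathbb{T}_s}(0,q^{\frac{q-r}{q-1}})$ each of $\exp_\phi$ and $\log_\phi$ is an \emph{isometry} — which in particular forces both to map $D$ into itself and to be injective; and (ii) promote the formal identities $\exp_\phi\log_\phi=\log_\phi\exp_\phi=1$ in $\mathbb{T}_s[[\tau]]$ to genuine identities of the two functions restricted to $D$, whence they are mutually inverse bijections of $D$.

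For (i), fix $f\in D$ with $f\neq 0$. Since $q^n-1\geq 1$ for $n\geq 1$, the (strict) inequality $\|f\|<q^{\frac{q-r}{q-1}}$ gives the strict bound $\|f\|^{q^n-1}<q^{(q-r)\frac{q^n-1}{q-1}}$; combining this with $\|\tau^n(f)\|=\|f\|^{q^n}$ and writing $\|f\|^{q^n}=\|f\|\cdot\|f\|^{q^n-1}$, I would estimate, for $n\geq1$,
$$\left\|\frac{\tau_\alpha^n(f)}{l_n}\right\|=q^{\,(r-q)\frac{q^n-1}{q-1}}\,\|f\|\,\|f\|^{q^n-1}<\|f\|,$$
the powers of $q$ cancelling; and likewise
$$\left\|\frac{\tau_\alpha^n(f)}{D_n}\right\|=q^{\,r\frac{q^n-1}{q-1}-nq^n}\,\|f\|\,\|f\|^{q^n-1}<\|f\|\,q^{\,q\frac{q^n-1}{q-1}-nq^n}\leq\|f\|,$$
where the last step uses $q\frac{q^n-1}{q-1}=q+q^2+\cdots+q^n\leq nq^n$. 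As the $n=0$ terms of $\exp_\phi(f)$ and of $\log_\phi(f)$ both equal $f$ (because $D_0=l_0=1$), the ultrametric inequality forces $\|\exp_\phi(f)\|=\|\log_\phi(f)\|=\|f\|$. Hence $\exp_\phi$ and $\log_\phi$ restrict to isometries $D\to D$.

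For (ii): both series converge on $D$, and substituting one convergent $\tau$-series into the other produces a double sum $\sum_{n,m}c_n\tau^n(d_m)\,\tau^{n+m}(f)$ whose general term tends to $0$ in the complete ultrametric algebra $\mathbb{T}_s$; it may therefore be summed in any order, and its value is read off from the formal product of the two $\tau$-series. Applying this to $\exp_\phi\log_\phi=1$ and to $\log_\phi\exp_\phi=1$ yields $\exp_\phi(\log_\phi(f))=\log_\phi(\exp_\phi(f))=f$ for all $f\in D$; since both composites are the identity of $D$, the two maps are bijective and mutually inverse, completing the proof.

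The computations in (i) are routine bookkeeping with the Gauss norm, so the only genuinely non-formal point is step (ii), namely that the identity holding in $\mathbb{T}_s[[\tau]]$ transports to the composition of the associated functions on $D$; this is the standard rearrangement argument for convergent series over a complete nonarchimedean ring, and it is precisely the isometry of step (i) that guarantees $\exp_\phi(D)\subseteq D$, so that the composition $\log_\phi\circ\exp_\phi$ is even defined on $D$.
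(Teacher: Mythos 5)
Your proposal is correct and follows essentially the route the paper intends: the lemma is stated there as an immediate consequence of the two displayed Gauss-norm computations for $D_n^{-1}\tau_\alpha^n$ and $l_n^{-1}\tau_\alpha^n$, combined with the formal identity $\exp_\phi\log_\phi=\log_\phi\exp_\phi=1$ in $\mathbb{T}_s[[\tau]]$. You have simply made explicit the strict dominance of the $n=0$ term (isometry, hence stability of the disk and injectivity) and the rearrangement argument transporting the formal inverse relation to the convergent series, both of which are the details the paper leaves to the reader.
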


\subsubsection{The modules $\nalpha $}\label{sectionNalpha} 
We denote by $u(\alpha)$ the maximum of the lower integer part of $\frac{r-q}{q-1}$ and zero:
 $$u(\alpha)=\max\left\{0,\lfloor\frac{r-q}{q-1}\rfloor\right\}.$$
Here we assume that $\alpha \in \mathbb T_s(K_{\infty}).$  Because this will be needed in the computations of 
\S \ref{sectioncnf}, we give  some elementary properties of the $\FF[\undt{s}]$-module
 $$\nalpha =\{f\in\TT_s(K_\infty);v_\infty(f)\geq u(\alpha)+1\}.$$ 
In particular, we note that $\nalpha =\mathfrak{m}_{\mathbb{T}_s(K_\infty)}$ if $r<2q-1$.

We observe that we have a direct sum of $\FF[\undt{s}]$-modules:
 $$\mathbb T_s(K_{\infty})= A[\undt{s}]\oplus \mathfrak{m}_{\mathbb{T}_s(K_\infty)}.$$

We notice that $u(\alpha)>0$ if and only if $r\geq2q-1$.  The proof of the next Lemma is easy and left to the reader.
  \begin{lemma} 
 \label{lemma1}
 For all $\alpha$ such that $r=-v_\infty(\alpha)\geq 1$, we have a direct sum of $\FF[\undt{s}]$-modules:
 $${\mathfrak{m}_{\mathbb{T}_s(K_\infty)}}=\nalpha \oplus\theta^{-u(\alpha)}\langle 1,\ldots,\theta^{u(\alpha)-1}\rangle_{\FF[\undt{s}]},$$
 where $\langle\cdots\rangle_{\FF[\undt{s}]}$ denotes 
 the $\FF[\undt{s}]$-span of a set of elements of $\mathbb{T}_s$.
 \end{lemma}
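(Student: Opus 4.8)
The plan is to exploit the explicit Laurent-series description of $\mathbb{T}_s(K_\infty)$ recorded above. Since $K_\infty=\FF((\frac{1}{\theta}))$ has residue field $\FF$ and uniformizer $\frac{1}{\theta}$, one has $\mathbb{T}_s(K_\infty)=\FF[\undt{s}]((\frac{1}{\theta}))$; concretely, every $f\in\mathbb{T}_s(K_\infty)$ has a unique expansion $f=\sum_{n\geq n_0}c_n\theta^{-n}$ with $c_n\in\FF[\undt{s}]$, and $v_\infty(f)=\min\{n:c_n\neq 0\}$. I would begin by making this presentation precise: for a fixed power $\theta^{-n}$ only finitely many monomials in $\undt{s}$ can occur, because the $\undt{s}$-coefficients of $f$ have Gauss norm tending to $0$, hence $\theta$-valuation tending to $+\infty$; this is what guarantees that the $\theta$-adic coefficients $c_n$ are genuinely polynomials and not merely power series.

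Granting this, $\mathfrak{m}_{\mathbb{T}_s(K_\infty)}$ is exactly the set of $f$ with $v_\infty(f)\geq 1$, that is, with $f=\sum_{n\geq 1}c_n\theta^{-n}$. The whole argument is then to split this tail at the index $u(\alpha)$: write $f=f_{\mathrm{lo}}+f_{\mathrm{hi}}$ with $f_{\mathrm{lo}}=\sum_{n=1}^{u(\alpha)}c_n\theta^{-n}$ and $f_{\mathrm{hi}}=\sum_{n>u(\alpha)}c_n\theta^{-n}$. Then $f_{\mathrm{hi}}\in\nalpha$, since $v_\infty(f_{\mathrm{hi}})\geq u(\alpha)+1$, while $f_{\mathrm{lo}}$ lies in $\theta^{-u(\alpha)}\langle 1,\ldots,\theta^{u(\alpha)-1}\rangle_{\FF[\undt{s}]}=\langle\theta^{-1},\ldots,\theta^{-u(\alpha)}\rangle_{\FF[\undt{s}]}$ by construction. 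Both of these are sub-$\FF[\undt{s}]$-modules of $\mathfrak{m}_{\mathbb{T}_s(K_\infty)}$ ($\nalpha$ because it is closed under addition and under multiplication by $\FF[\undt{s}]$, which cannot decrease $v_\infty$; the other by definition of the span), so we obtain $\mathfrak{m}_{\mathbb{T}_s(K_\infty)}=\nalpha+\theta^{-u(\alpha)}\langle 1,\ldots,\theta^{u(\alpha)-1}\rangle_{\FF[\undt{s}]}$.

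Finally, for directness of the sum: an element in the intersection lies in $\nalpha$, so its expansion has $c_n=0$ for all $n\leq u(\alpha)$; but it is also an $\FF[\undt{s}]$-combination of $\theta^{-1},\ldots,\theta^{-u(\alpha)}$, so $c_n=0$ for all $n>u(\alpha)$; hence it is $0$. (Uniqueness of the $\theta$-adic expansion is what powers both halves of this step.) The degenerate case $u(\alpha)=0$, which by $\frac{r-q}{q-1}<1$ is exactly $r<2q-1$, is consistent: the second summand is the zero module and the claim collapses to the equality $\nalpha=\mathfrak{m}_{\mathbb{T}_s(K_\infty)}$ noted just above. There is no real obstacle here, as the authors indicate; the only points demanding a little care are the justification that $\mathbb{T}_s(K_\infty)=\FF[\undt{s}]((\frac{1}{\theta}))$ with polynomial coefficients $c_n$, and the edge behaviour at $u(\alpha)=0$.
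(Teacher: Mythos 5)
Your proof is correct, and it is essentially the argument the authors intend (they leave the proof to the reader): it rests on the presentation $\mathbb{T}_s(K_\infty)=\FF[\undt{s}]((\frac{1}{\theta}))$ already recorded in the paper, splitting the $\theta^{-1}$-adic expansion of an element of $\mathfrak{m}_{\mathbb{T}_s(K_\infty)}$ at the index $u(\alpha)$ and using uniqueness of that expansion for directness. Your attention to why the coefficients $c_n$ are genuine polynomials and to the degenerate case $u(\alpha)=0$ is exactly the right level of care.
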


 We denote by $M_\alpha$ the module  
\begin{equation}\label{moduleM}
M_\alpha=\theta^{-u(\alpha)}\langle1,\ldots,\theta^{u(\alpha)-1}\rangle_{\FF[\undt{s}]},
\end{equation} (if $r<2q-1$, we set $M_\alpha=\{0\}$).
Then, for all $\alpha$ such that $r\geq 1$,
\begin{equation}\label{decompositionTs}
\mathbb{T}_s(K_\infty)=A[\undt{s}]\oplus M_\alpha\oplus \nalpha .
\end{equation}

\subsection{An example of $\exp_\phi$ injective and not surjective}\label{examplet} We shall 
consider here the case of 
$\alpha=t\in\mathbb{T}$ and describe  some properties 
of the associated exponential function $\exp_\phi$, given by
$$\exp_\phi=\sum_{i\geq 0}\frac{t^i}{D_i}\tau^i.$$
This map $\exp_{\phi}: \mathbb T\rightarrow \mathbb T$  is obviously injective. Moreover, it is not surjective. To see this, 
let us extend $\exp_\phi$ to $\CC_\infty[[t]].$ For $z=\sum_{n\geq 0} c_n t^n, c_n\in \mathbb C_{\infty},$ we set:
$$\exp_{\phi} (z)=\sum_{n\geq 0} \exp_{\phi}(c_n)t^n \in \mathbb C_{\infty}[[t]].$$
Then $\exp_{\phi}: \CC_\infty[[t]]\rightarrow \CC_\infty[[t]]$ is $\FF[[t]]$-linear. We  have:

\begin{lemma}
Let $y$ be an element of $\CC_\infty$. 
There exists a unique formal series $x=\sum_{i\geq 0}x_nt^n\in\CC_\infty[[t]]$ such that
$\exp_\phi(x)=y$. Furthermore, 
let $\epsilon\in\RR$ be such that 
$$|y|=q^{\frac{q-\epsilon}{q-1}}.$$
Then, for all $n\geq0$,
$$|x_n|=q^{\frac{q-q^n\epsilon}{q-1}}.$$
In particular, $x\in\mathbb{T}$ if and only if $|y|<q^{\frac{q}{q-1}}$. 
\end{lemma}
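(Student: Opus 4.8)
The plan is to convert the functional equation $\exp_\phi(x)=y$ into a recursion on the coefficients $x_n$, which yields existence and uniqueness simultaneously, and then to follow the valuations through that recursion. Since $\tau$ fixes $t$ and acts on $\CC_\infty$ by $z\mapsto z^q$, for $x=\sum_{n\ge0}x_nt^n\in\CC_\infty[[t]]$ one has $\tau^i(x)=\sum_{n\ge0}x_n^{q^i}t^n$, so that
\[
\exp_\phi(x)=\sum_{i\ge0}\frac{t^i}{D_i}\,\tau^i(x)=\sum_{N\ge0}\Bigl(\sum_{i=0}^N\frac{x_{N-i}^{q^i}}{D_i}\Bigr)t^N .
\]
Comparing coefficients with the constant series $y$ and using $D_0=1$ gives $x_0=y$ and, for every $N\ge1$,
\[
x_N=-\sum_{i=1}^N\frac{x_{N-i}^{q^i}}{D_i}.
\]
Each $x_N$ is thus uniquely determined by $x_0,\dots,x_{N-1}$, which proves the existence and uniqueness of the formal series $x\in\CC_\infty[[t]]$ with $\exp_\phi(x)=y$.

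Next I would establish the valuation formula $v_\infty(x_n)=\frac{q^n\epsilon-q}{q-1}$ by strong induction on $n$, where $\epsilon$ is defined by $v_\infty(y)=\frac{\epsilon-q}{q-1}$; the case $y=0$ is trivial, so assume $y\neq0$, and then the case $n=0$ is just the definition of $\epsilon$. For $N\ge1$, using $v_\infty(D_i)=-iq^i$ and the induction hypothesis, the $i$-th summand in the recursion has valuation
\[
q^i\,v_\infty(x_{N-i})+iq^i=\frac{q^N\epsilon+g(i)}{q-1},\qquad g(i):=q^i\bigl((q-1)i-q\bigr).
\]
The crucial elementary point is that $g$ attains its minimum over $i\ge1$ uniquely at $i=1$, with $g(1)=-q$: indeed, for $i\ge2$ one has $(q-1)i-q\ge 2(q-1)-q=q-2\ge0$, hence $g(i)\ge0>-q$. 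Consequently the term $i=1$ in the recursion strictly dominates all the others in the ultrametric sense, so $v_\infty(x_N)$ equals the valuation of that single term, namely $\frac{q^N\epsilon-q}{q-1}$, which completes the induction.

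Finally, $x\in\mathbb{T}$ if and only if $x_n\to0$, i.e. $v_\infty(x_n)=\frac{q^n\epsilon-q}{q-1}\to+\infty$, and since $\frac{q^n\epsilon-q}{q-1}$ tends to $+\infty$ precisely when $\epsilon>0$, this happens if and only if $\epsilon>0$, that is, if and only if $|y|=q^{(q-\epsilon)/(q-1)}<q^{q/(q-1)}$. I do not expect a serious obstacle here; the only delicate point is the combinatorial minimization of $g(i)$ and the observation that the minimum is \emph{strict}, which is exactly what upgrades the ultrametric inequality to the equality needed to pin down $v_\infty(x_N)$ at each step.
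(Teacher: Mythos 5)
Your proof is correct and follows essentially the same route as the paper: you derive the same coefficient recursion $x_0=y$, $x_N=-\sum_{i=1}^{N} x_{N-i}^{q^i}D_i^{-1}$, which gives existence and uniqueness at once, and then establish $|x_n|=q^{\frac{q-q^n\epsilon}{q-1}}$ by induction on $n$, exactly as the paper indicates (it leaves the induction to the reader). Your explicit minimization of $g(i)=q^i\bigl((q-1)i-q\bigr)$, with the strict minimum at $i=1$, correctly supplies the omitted detail that pins down the valuation at each step.
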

\begin{proof}
Let $x=\sum_{i\geq 0}x_it^i\in\CC_\infty[[t]]$ such that $\exp_\phi(x)=y.$ Then:$$x_0=y,$$
and for $n\geq 1:$
$$x_n=-(x_0^{q^n}D_n^{-1}+x_1^{q^{n-1}}D_{n-1}^{-1}+\cdots+x_{n-1}^qD_1^{-1}).$$
One can then prove that $|x_n|=q^{\frac{q-q^n\epsilon}{q-1}}$ by induction on $n$. 
\end{proof}

\subsection{Entire operators}\label{entirefunctions} Let $$f=\sum_{i_1,\ldots,i_s}f_{i_1,\ldots,i_s}t_1^{i_1}\cdots t_s^{i_s}$$ be an element of 
$\mathbb{T}_s$ (the coefficients $f_{i_1,\ldots,i_s}$ lie in $\CC_\infty$).  
We say that $f$ is an {\em entire function} if $$\lim_{i_1+\cdots +i_s \rightarrow +\infty}\, \frac{v_\infty(f_{i_1,\ldots,i_s})}{i_1+\cdots+i_s}=+\infty.$$
The subset $\mathbb{E}_s$ of entire functions of $\mathbb{T}_s$ is a subring containing the subring of polynomials $\CC_\infty[\undt{s}]$. Observe that $\tau (\mathbb E_s) \subset \mathbb E_s.$ 

Let us  consider a sequence of entire functions $(F_n)_{n\geq 0}$ and an 
operator
$$F=\sum_{n\geq 0}F_n\tau^n\in\mathbb{E}_s[[\tau]].$$
We say that $F$ is an {\em entire operator} if $\lim_{n\rightarrow\infty}v_\infty(F_n)q^{-n}=+\infty$. In particular, for all  $f\in \mathbb{T}_s,$  $F(f)=\sum_{n\geq 0}F_n\tau^n(f)$ converges in $\mathbb{T}_s.$
 
\begin{lemma}\label{lemmaentire}
Let $F=\sum_{n\geq 0}F_n\tau^n$ be an entire operator. Then: $F(\mathbb{E}_s)\subset\mathbb{E}_s$.
\end{lemma}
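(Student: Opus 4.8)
The plan is to show directly that applying an entire operator to an entire function produces an entire function, by estimating the growth of the coefficients of the output. Write $f=\sum_{\underline i}f_{\underline i}\,\underline t^{\,\underline i}\in\mathbb E_s$, where $\underline i=(i_1,\ldots,i_s)$ and $\underline t^{\,\underline i}=t_1^{i_1}\cdots t_s^{i_s}$, and set $|\underline i|=i_1+\cdots+i_s$. Since $f$ is entire, the quantity $\delta_f(d):=\inf\{v_\infty(f_{\underline i}):|\underline i|\ge d\}/d$ tends to $+\infty$; more conveniently, for every real $M>0$ there is a constant $c_M$ with $v_\infty(f_{\underline i})\ge M|\underline i|-c_M$ for all $\underline i$. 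Applying $\tau^n$ multiplies each coefficient $v_\infty$ by $q^n$, so $\tau^n(f)$ has coefficients with $v_\infty\ge q^n(M|\underline i|-c_M)$. Writing $F=\sum_{n\ge 0}F_n\tau^n$ with $F_n=\sum_{\underline j}F_{n,\underline j}\underline t^{\,\underline j}\in\mathbb E_s$, the product $F_n\tau^n(f)$ has, in each multidegree $\underline m$, a coefficient which is a finite sum of products $F_{n,\underline j}\cdot(f_{\underline i})^{q^n}$ with $\underline j+\underline i=\underline m$, hence $v_\infty$ of that coefficient is $\ge\min_{\underline j+\underline i=\underline m}\big(v_\infty(F_{n,\underline j})+q^n v_\infty(f_{\underline i})\big)$.

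First I would fix a target $M>0$ and exploit the entireness hypotheses in three places: the entireness of each $F_n$ gives $v_\infty(F_{n,\underline j})\ge M|\underline j|-c_n$ for suitable constants $c_n$; the entireness of $f$ gives $v_\infty(f_{\underline i})\ge |\underline i|-c'$ (using say $M'=1$, which suffices since it gets multiplied by $q^n$); and the entireness of the operator $F$ gives $v_\infty(F_n)q^{-n}\to+\infty$, i.e. $\min_{\underline j}v_\infty(F_{n,\underline j})\ge \lambda_n q^n$ with $\lambda_n\to+\infty$. The coefficient of $\underline t^{\,\underline m}$ in $F(f)=\sum_n F_n\tau^n(f)$ is an infinite sum over $n$, but I must check it converges in $\CC_\infty$ and estimate its valuation: for each $n$ its $n$-th term has $v_\infty\ge\min_{\underline j+\underline i=\underline m}(M|\underline j|+q^n|\underline i|-c_n-q^nc')$. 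Splitting into the range of small $n$ (where $M|\underline j|$ does the work, since only finitely many $n$ are involved for a uniform lower bound on that part) and large $n$ (where the term $q^n|\underline i|\ge 0$ together with $\lambda_nq^n\to\infty$ forces $v_\infty\to+\infty$, giving both convergence and a good bound), I can arrange $v_\infty$ of the whole coefficient of $\underline t^{\,\underline m}$ to be $\ge M|\underline m|-C$ for a constant $C=C(M)$ independent of $\underline m$.

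The key point to make precise is the uniformity: I need a single constant $C(M)$ working for all multidegrees $\underline m$ simultaneously. For a fixed $\underline m$, only $n$ with $q^n|\underline i|$ small matter for the part of the sum not already controlled by $\lambda_nq^n$; but the cleanest route is to observe that for $|\underline m|$ large the constraint $\underline j+\underline i=\underline m$ with $|\underline j|$ small forces $|\underline i|$ large, so $q^n|\underline i|$ is large unless $n=0$, and the $n=0$ term is just $F_0 f$, a product of two entire functions, hence entire by the already-noted fact that $\mathbb E_s$ is a ring. So I would isolate the $n=0$ summand, handle it by the ring property of $\mathbb E_s$, and for $n\ge 1$ use that $v_\infty(F_{n,\underline j})+q^n v_\infty(f_{\underline i})\ge \lambda_n q^n + q^n(|\underline i|-c')\ge q^n(\lambda_n-c')\to\infty$ uniformly in $\underline j,\underline i$; choosing $M$ and then $c'$, this gives a uniform bound and convergence. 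The main obstacle is precisely this interchange-of-summation-and-uniformity bookkeeping — making sure the coefficient-wise infinite sum over $n$ converges in $\CC_\infty$ and that the resulting valuation estimate is uniform in the multidegree $\underline m$ — but it is routine once the $n=0$ term is separated off and the operator-entireness bound $v_\infty(F_n)q^{-n}\to+\infty$ is brought to bear on the tail. Conclude that $\lim_{|\underline m|\to\infty}v_\infty\big((F(f))_{\underline m}\big)/|\underline m|=+\infty$, i.e. $F(f)\in\mathbb E_s$.
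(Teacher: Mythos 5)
Your setup and the basic estimate $v_\infty\bigl((F(f))_{\underline m}\bigr)\ge\inf_{n,\,\underline j+\underline i=\underline m}\bigl(v_\infty(F_{n,\underline j})+q^nv_\infty(f_{\underline i})\bigr)$ are exactly the paper's, and isolating $n=0$ via the ring property of $\mathbb{E}_s$ is fine. The gap is in the treatment of the terms with $n\ge 1$: your final bound there is $v_\infty(F_{n,\underline j})+q^nv_\infty(f_{\underline i})\ge q^n(\lambda_n-c')$, which tends to $+\infty$ with $n$ but is \emph{independent of the multidegree} $\underline m$. A lower bound independent of $\underline m$ only re-proves that the coefficientwise sums converge, i.e. $F(f)\in\mathbb{T}_s$; entireness requires $v_\infty\bigl((F(f))_{\underline m}\bigr)\ge M|\underline m|-C(M)$ for \emph{every} $M$, and your estimate gives at best $\min\bigl(M|\underline m|-C_0,\ \inf_{n\ge1}q^n(\lambda_n-c')\bigr)$, which is a constant once $|\underline m|$ is large. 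The dangerous regime is not the one you dismiss ($|\underline j|$ small): it is $n\ge1$ with $\underline i$ small (even $\underline i=0$) and $|\underline j|=|\underline m|$ large, where one needs $v_\infty(F_{n,\underline j})\ge M|\underline j|-C$ with $C$ \emph{uniform in $n$}; the entireness of each individual $F_n$ only gives constants $c_{n,M}$ depending on $n$, and the operator bound $v_\infty(F_n)\ge\lambda_nq^n$ does not grow with $|\underline j|$ at all.

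This is not mere bookkeeping: from the stated hypotheses alone the conclusion can actually fail. Take $s=1$, $f=1$ and $F_n=\theta^{-nq^n}t_1^{\,nq^n}$ for $n\ge1$: each $F_n$ is a monomial, hence entire, and $v_\infty(F_n)q^{-n}=n\to+\infty$, so $F=\sum_{n\ge1}F_n\tau^n$ is an entire operator in the sense of the definition; yet $F(1)=\sum_{n\ge1}\theta^{-nq^n}t_1^{\,nq^n}$ has coefficient of $t_1^{k}$ with $v_\infty$ equal to $k$ along $k=nq^n$, so $F(1)\notin\mathbb{E}_1$. Hence any correct argument must invoke a uniformity in $n$ of the entireness of the $F_n$ that is not contained in the three separate limits you (and, for that matter, the paper's own very terse proof) rely on. For the operators to which the lemma is actually applied, namely $F_n=\alpha\tau(\alpha)\cdots\tau^{n-1}(\alpha)D_n^{-1}$ with $\alpha\in\mathbb{E}_s$, this uniformity does hold: writing $v_\infty(\alpha_{\underline i})\ge M|\underline i|-c_M$ one gets $v_\infty(F_{n,\underline j})\ge M|\underline j|+q^n\bigl(n-\tfrac{c_M}{q-1}\bigr)$ for all $n$ and $\underline j$. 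With that extra input your scheme (finitely many small $n$ handled with $n$-dependent constants, large $n$ handled by the displayed bound together with $M|\underline j|+q^nM|\underline i|\ge M|\underline m|$ up to a constant) does go through; so the way to repair your proof is to establish and use such a uniform estimate for the specific family $(F_n)$ at hand, rather than the non-uniform entireness of the individual $F_n$.
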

\begin{proof}
With $\underline{i}$ we shall denote here a multi-index $(i_1,\ldots,i_s)$
whose entries are non-negative integers. We denote by $|\underline{i}|$ the integer $i_1+\cdots+i_s$, 
and if $\underline{i},\underline{j}$ are such multi-indices, then $\underline{i}+\underline{j}$ denotes 
their component-wise sum.
We also write $\underline{t}^{\underline{i}}$ for the monomial $t_1^{i_1}\cdots t_s^{i_s}$. Hence,
we have $f=\sum_{\underline{j}}f_{\underline{j}}\underline{t}^{\underline{j}}$.
We expand each entire function $F_n$ in series 
$$F_n=\sum_{\underline{i}}F_{n,\underline{j}}\underline{t}^{\underline{j}},$$
where by hypothesis, $\lim_{|\underline{i}|\rightarrow +\infty}\frac{v_\infty(F_{n,\underline{i}})}{|\underline{i}|}=+\infty.$ 
Now, we verify easily that $F(f)=\sum_{\underline{k}}c_{\underline{k}}\underline{t}^{\underline{k}}\in\mathbb{T}_s$
where $$c_{\underline{k}}=\sum_{\underline{i}+\underline{j}=\underline{k}}\sum_{n\geq 0}F_{n,\underline{i}}f_{\underline{j}}^{q^n}.$$
Since
$$v_\infty(c_{\underline{k}})\geq\inf_{\underline{i}+\underline{j}=\underline{k},n\geq 0}(v_{\infty}(F_{n,\underline{i}})+q^nv_\infty(f_{\underline{j}})),$$
and since 
\begin{eqnarray*}
\lim_{n\rightarrow\infty}v_\infty(F_n)q^{-n}&=&\lim_{|\underline{i}|\rightarrow +\infty}v_\infty(f_{\underline{i}})|\underline{i}|^{-1}\\
&=&\lim_{|\underline{j}|\rightarrow +\infty}v_\infty(F_{n_0,\underline{j}})|\underline{j}|^{-1}\\
&=& +\infty\end{eqnarray*} for all $n_0\in \mathbb N$, we get $\lim_{|\underline{k}|\rightarrow +\infty}v_{\infty}(c_{\underline{k}})|\underline{k}|^{-1}=+\infty$ and  thus $F(f)\in\mathbb{E}_s$.
\end{proof}
Let $\alpha$ be an element of $\mathbb{E}_s$. Then, $\alpha\tau(\alpha)\cdots\tau^{i-1}(\alpha)$ is also entire for all $i$ and
$$\lim_{i\rightarrow\infty}v_\infty(\alpha\tau(\alpha)\cdots\tau^{i-1}(\alpha)D_i^{-1})q^{-i}=+\infty.$$ Therefore we deduce from Lemma \ref{lemmaentire} the following  Proposition which will be of some help later on in this paper:
\begin{proposition}\label{entireness}
Let $\phi$ be a Drinfeld $A[\undt{s}]$-module of rank one over $\mathbb{T}_s$ and let $\alpha$ be its parameter. Let us assume that $\alpha\in \mathbb E_s.$ Then:
$$\exp_{\phi}(\mathbb E_s)\subset \mathbb E_s.$$\end{proposition}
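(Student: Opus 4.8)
The plan is to recognize $\exp_\phi=\sum_{n\ge 0}D_n^{-1}\tau_\alpha^n=\sum_{n\ge0}F_n\tau^n$, with $F_n=D_n^{-1}\,\alpha\tau(\alpha)\cdots\tau^{n-1}(\alpha)$, as an \emph{entire operator} in the sense of \S\ref{entirefunctions}, and then to conclude directly by Lemma \ref{lemmaentire}. Concretely, two points must be checked: first, that each coefficient $F_n$ lies in $\mathbb{E}_s$; second, that $\lim_{n\to\infty}v_\infty(F_n)\,q^{-n}=+\infty$.

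For the first point, since $\alpha\in\mathbb{E}_s$ and $\mathbb{E}_s$ is a subring of $\mathbb{T}_s$ stable under $\tau$, the product $\alpha\tau(\alpha)\cdots\tau^{n-1}(\alpha)$ is entire; multiplying by the scalar $D_n^{-1}\in K\subset\mathbb{C}_\infty^\times$ does not affect entireness (it merely shifts every coefficient valuation by the constant $-v_\infty(D_n)$), so $F_n\in\mathbb{E}_s$. For the second point, using $v_\infty(\tau^k(f))=q^kv_\infty(f)$ and $v_\infty(D_n)=-nq^n$ (from $|D_n|=q^{nq^n}$), and writing $r=-v_\infty(\alpha)$, one gets $v_\infty(F_n)=nq^n-r\,\frac{q^n-1}{q-1}$, hence $v_\infty(F_n)\,q^{-n}=n-r\,\frac{1-q^{-n}}{q-1}\to+\infty$. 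This is exactly the norm estimate $\|D_n^{-1}\alpha\tau(\alpha)\cdots\tau^{n-1}(\alpha)\|=\|\alpha\|^{\frac{q^n-1}{q-1}}q^{-nq^n}$ already recorded in \S\ref{explog}, now with $\|\alpha\|$ finite since $\alpha\in\mathbb{E}_s\subset\mathbb{T}_s$. Therefore $\exp_\phi$ is an entire operator, and Lemma \ref{lemmaentire} yields $\exp_\phi(\mathbb{E}_s)\subset\mathbb{E}_s$.

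There is no genuine obstacle here; the statement is essentially a corollary of Lemma \ref{lemmaentire} and the estimates preceding it. The only subtlety worth flagging is why the definition of entire operator requires decay with respect to $q^{-n}$ rather than $1/n$: it is precisely the factor $q^n$ in $v_\infty(D_n)$ that absorbs the contribution $r\,\frac{q^n-1}{q-1}$ coming from $\alpha\tau(\alpha)\cdots\tau^{n-1}(\alpha)$, so the hypothesis $\alpha\in\mathbb{E}_s$ really enters only through coefficientwise entireness of the $F_n$, while the growth condition on the $\tau$-expansion already holds for any $\alpha\in\mathbb{T}_s$.
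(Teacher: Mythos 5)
Your proposal is correct and follows exactly the paper's route: the paper also verifies that each coefficient $D_n^{-1}\alpha\tau(\alpha)\cdots\tau^{n-1}(\alpha)$ is entire and that $\lim_{n\to\infty}v_\infty\bigl(D_n^{-1}\alpha\tau(\alpha)\cdots\tau^{n-1}(\alpha)\bigr)q^{-n}=+\infty$, and then invokes Lemma \ref{lemmaentire}. Your valuation computation and the closing remark that the hypothesis $\alpha\in\mathbb{E}_s$ enters only through entireness of the coefficients are both accurate.
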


\section{$L$-series values}\label{Lseriesvalues}

In this section we consider a Drinfeld $A[\undt{s}]$-module $\phi$ of rank one over $\mathbb{T}_s$ with parameter $\alpha\in A[\undt{s}]\setminus\{ 0\}.$ We are going to associate to such a parameter $\alpha$  an {\em $L$-series value}.

\subsection{Definition of $L$-series values}

Let $\alpha$ be an element of $A[\undt{s}]\setminus\{0\}$.
In a fixed algebraic closure $\FF(\undt{s})^{ac}$ of $\FF(\undt{s})$ we can find elements $x_1,\ldots,x_r$ and $\beta\in\FF[\undt{s}]\setminus\{ 0\}$ so that, in $\FF(\undt{s})^{ac}[\theta]$,
\begin{equation}\label{alpha2}
\alpha=\beta(x_1-\theta)\cdots(x_r-\theta).
\end{equation}

We
define $$\rho_{\alpha}:A\rightarrow \FF(\undt{s})^{ac}$$  by $\rho_\alpha(0)=0$ and
$$\rho_{\alpha}(a)= \beta^{\deg_\theta(a)}a(x_1)\cdots a(x_r),\quad a\in A\setminus\{ 0\}.$$
An alternative way to write it is:
$$\rho_\alpha(a)=\operatorname{Res}_\theta(a,\alpha)\in \FF[\undt{s}],$$ where $\operatorname{Res}_\theta(P,Q)$ denotes the resultant of 
two polynomials in the indeterminate $\theta$ (\footnote{We recall that if $P=P_0\theta^d+P_1\theta^{d-1}+\cdots+P_d$ and $Q=Q_0\theta^r+Q_1\theta^{r-1}+\cdots+Q_r$
are polynomials with roots respectively $\zeta_i$ and $x_j$, then, for the resultant $\operatorname{Res}_\theta(P,Q)$, we have the identity 
$\operatorname{Res}_\theta(P,Q)=P_0^r\prod_iQ(\zeta_i)=(-1)^{dr}Q_0^d\prod_jP(x_j)$.}).
In particular, with $P$ a prime of $A$ (we recall that a {\em prime} of $A$ is monic irreducible element in $A_+$), $\rho_{\alpha}(P)=0$ if and only if $P$ divides $\alpha$ in $A[\undt{s}]$.

If $a,b\in A$, then $\rho_{\alpha}(ab)=\rho_\alpha(a)\rho_\alpha(b)$ and if $\alpha_1,\alpha_2$ are polynomials of $A[\undt{s}]$,
then
$$\rho_{\alpha_1\alpha_2}(a)=\rho_{\alpha_1}(a)\rho_{\alpha_2}(a),\quad a\in A.$$

\begin{Definition}{\em 
Let $\phi$ be the Drinfeld $A[\undt{s}]$-module of rank one of parameter $\alpha\in A[\undt{s}]$, let $n\geq 1$ be an integer. 
The {\em $L$-series value at $n$ associated to $\phi$} is the unit of norm one of $\mathbb{T}_s(K_\infty)$
defined by (\ref{ourlseries}).}
\end{Definition}

By \cite[Lemma 4]{ANG&PEL}, we also have that $L(-n,\phi):=\sum_{d\geq 0}\sum_{a\in A_{+,d}}\rho_\alpha(a)a^n$ converges in $\mathbb T_s$ for $n\geq 0$  and is in fact  in $A[\undt{s}].$
Note that the above definition is \emph{not} invariant under isomorphism of Drinfeld $A[\undt{s}]$-modules.

\begin{Remark}{\em We can also associate $L$-series values $L(n,\phi) \in \FF(\undt{s})((\frac 1\theta))$ to 
Drinfeld $\FF(\undt{s})[\theta]$-modules of rank one defined over $\FF(\undt{s})[\theta]$. 
 In the sequel, we will also work with such modules and $L$-series values, but the 
most interesting examples discussed here will arise from the case of $A[\undt{s}]$-modules defined over $A[\undt{s}]$.
}\end{Remark}
The value $L(1, \phi)$ will be one of the main objects of interest of the present paper.
\subsection{Examples.}
\subsubsection{Examples with $s=0$.} If $s=0$ and $\alpha=1$, we have $\phi=C$ and
$$L(n,\phi)=L(n,C)=\zeta_C(n),$$
where $\zeta_C(n)$ is, for $n>0$, the {\em Carlitz zeta value} 
$$\zeta_C(n)=\sum_{a\in A^+}a^{-n}\in 1+\theta^{-1}\FF[[\theta^{-1}]].$$
If $\alpha\in A\setminus\{0\}$, then we can write $\alpha=\beta\mathfrak{p}_1^{\nu_1}\cdots\mathfrak{p}_m^{\nu_m}$
with $\beta\in\FF^\times$,
for primes $\mathfrak{p}_1,\ldots,\mathfrak{p}_m$ of respective degrees $d_1,\ldots,d_m$
so that $\sum_id_i\nu_i=r=\deg_\theta(\alpha)$, we have, for $a\in A^+$,
$$\rho_\alpha(a)=\beta^{\deg_\theta(a)}\prod_{i=1}^m\prod_{j=1}^{d_i}a(\zeta_{i,j})^{\nu_i},$$
where $\zeta_{i,1},\ldots,\zeta_{i,d_i}$ are the zeros of $\mathfrak{p}_i$ in $\FF^{ac}$ for all $i$.
This implies, in the case $\beta=1$ (that is, $\alpha\in A^+$), that the series $L(n,\phi)$ is the special value of a 
Dirichlet $L$-series:
$$L(n,\phi)=\sum_{a\in A_+}a^{-n}\prod_{i=1}^m\prod_{j=1}^{d_i}a(\zeta_{i,j})^{\nu_i}\in K_\infty.$$

\subsubsection{Case of $\alpha=t$} It is understood here that $s=1$ so that we are in $\mathbb{T}=\mathbb{T}_1$. 
This case directly refers to the example of Drinfeld module $\phi$ treated in \ref{examplet}. We have then
$$L(n,\phi)=\sum_{d\geq 0}t^d\sum_{a\in A_{+,d}}a^{-n}\in\mathbb{T}\cap K[[t]]$$
if $n>0$. It is easy to see that
$$L(1,\phi)=\sum_{i\geq0}t^i\ell_i^{-1}=\log_\phi(1)\in\TT.$$
We have that the series $$L(-j,\phi):=\sum_{d\geq 0}t^d\sum_{a\in A_{+,d}}a^{j}$$
defines an element of $A[t]$ for $j\geq 0,$
and 
$$L(-j,\phi)=z(t^{-1},-j)$$
where the function $z$ is defined as in Goss' book \cite[Remark 8.12.1]{GOS}. In loc. cit. Goss computes recursively the polynomial
$z(t^{-1},-j)\in A[t]$ for all $j\geq 0$.

\subsubsection{Case in which $\alpha=(t_1-\theta)\cdots(t_s-\theta)$}
\begin{Definition}{\em We will denote by $C_s$ the Drinfeld module of rank one over $\TT_s$ with parameter
$$\alpha=(t_1-\theta)\cdots(t_s-\theta).$$ We notice that $C_0=0$, the Carlitz module.}\end{Definition}
We have:
$$L(n,C_s)=L(\chi_{t_1}\cdots\chi_{t_s},n)=\sum_{a\in A_+}\frac{\chi_{t_1}(a)\cdots\chi_{t_s}(a)}{a^n}\in\mathbb{T}_s(K_\infty)^\times$$ with $L(\chi_{t_1}\cdots\chi_{t_s},n)$ the functions studied in 
\cite{ANG&PEL} and where for all $a\in A, \chi_{t_i}(a)=a(t_i).$ The case $s=1$ and $\alpha=t-\theta$ yields the 
functions $L(\chi_t,n)$ of \cite{PEL2}.

\subsubsection{A further example}\label{analyticity}
We shall also trace a connection with the Goss zeta functions, especially the functions considered by Goss in 
\cite{GOS2}, see also \cite[Section 2.1]{ANG&PEL}. We recall, from Section 2.1 of loc. cit. the definition of the $L$-series $L(\chi_{t_1}\cdots\chi_{t_s},x,y)$, with $(x,y)$ in 
the topological group $\mathbb{C}_\infty^\times\times\mathbb{Z}_p$ 
denoted by $\mathbb{S}_\infty$ therein: 
$$L(\chi_{t_1}\cdots\chi_{t_s},x,y)=\sum_{k\geq 0}x^{-k}\sum_{a\in A_{k,+}}\chi_{t_1}(a)\cdots\chi_{t_s}(a)\langle a \rangle^{-y},$$
where $\langle a\rangle$ is the $1$-unit $a/\theta^{\deg_\theta(a)}$, and its $p$-adic exponentiation by $-y$ is well defined.
For fixed $(x,y)\in\mathbb{S}_\infty$, the above series is a well defined unit element of $\mathbb{T}_s$.
Thanks to \cite[Proposition 6]{ANG&PEL}, we know that the above series in $\mathbb{T}_s$
also defines an entire function $\CC_\infty^s\rightarrow\CC_\infty$.

We have, for $\beta\in\FF^\times$:
\begin{eqnarray*}
L(\chi_{t_1}\cdots\chi_{t_s},\beta^{-1}\theta^n,n)&=&\sum_{k\geq 0}\beta^k\theta^{-kn}\sum_{a\in A_{k,+}}\chi_{t_1}(a)\cdots\chi_{t_s}(a)\theta^{kn}a^{-n}\\
&=&\sum_{k\geq 0}\beta^k\sum_{a\in A_{k,+}}\chi_{t_1}(a)\cdots\chi_{t_s}(a)a^{-n}.
\end{eqnarray*}
This equals $L(n,C_s)$ if $\beta=1$.

\section{The class number formula}\label{sectioncnf}

In this section, the integer $s\geq 0$ is fixed. Hence, we more simply write $\undt{s}=\undt{}=\{t_1,\ldots,t_s\}$. 

We introduce the {\em class module} and the {\em unit module} associated to a given Drinfeld $A[\undt{}]$-module of rank one 
of parameter $\alpha\in A[\undt{}]\setminus\{0\} $. We then give a {\em class number formula} which relates $L(1, \phi)$ to these objects.

\subsection{Class and unit modules}\label{classunitmodules} Let $\phi$ be a Drinfeld $A[\undt{}]$-module of rank one with parameter $\alpha\in A[\undt{}]\setminus \{ 0\}$. Recall that $r=-v_{\infty}(\alpha)\in \mathbb N.$  The definitions below are inspired by 
Taelman's work \cite{TAE2,TAE3}.

\subsubsection{The class module.} We define the class module $H_{\phi}$ as the quotient of $A[\undt{}]$-modules:
 $$H_{\phi}:= \frac{{\phi}(\mathbb T_s(K_{\infty}))}{\exp_{\phi}(\mathbb T_s(K_{\infty}))+{\phi}(A[\undt{}])}$$
 where we recall that $\phi(A[\undt{}])$ is the $k[\undt{}]$-module $A[\undt{}]$ equipped with the $A[\undt{}]$-module structure induced by $\phi$.

\subsubsection{The unit module} It is the $A[\undt{}]$-submodule of $\mathbb T_s(K_{\infty})$  defined by:
$$U_{\phi}:=\{ f\in  \mathbb T_s(K_{\infty}); \exp_{\phi}(f)\in A[\undt{}]\}.$$

\begin{lemma}\label{lemmanu} For all $r\geq 1$, the exponential function $\exp_\phi:\mathbb{T}_s(K_\infty)\rightarrow\mathbb{T}_s(K_\infty)$ induces an injective 
homomorphism of $\FF[\undt{}]$-modules
$$\frac{\mathbb{T}_s(K_\infty)}{U_\phi\oplus \nalpha }\rightarrow\frac{\mathbb{T}_s(K_\infty)}{A[\undt{}]\oplus \nalpha }$$
whose cokernel is $H_\phi$.
\end{lemma}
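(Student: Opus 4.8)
The plan is to unwind the definitions and produce the displayed map and its cokernel by a diagram chase. First I would record the decomposition \eqref{decompositionTs}, and more importantly the coarser direct sum $\mathbb{T}_s(K_\infty) = A[\undt{}] \oplus \mathfrak{m}_{\mathbb{T}_s(K_\infty)}$, together with the key fact that $\nalpha$ is an $\FF[\undt{}]$-submodule of $\mathfrak{m}_{\mathbb{T}_s(K_\infty)}$ on which $\exp_\phi$ restricts to an isometric automorphism: indeed by Lemma \ref{Nalpha} the map $\exp_\phi$ induces an isometric automorphism of $D_{\mathbb{T}_s}(0, q^{\frac{q-r}{q-1}})$, and since $u(\alpha)+1 > \frac{r-q}{q-1}$ one checks $\nalpha \subset D_{\mathbb{T}_s}(0, q^{\frac{q-r}{q-1}})$, and in fact $\exp_\phi(\nalpha) = \nalpha$ because $\exp_\phi$ preserves $\mathbb{T}_s(K_\infty)$ (as $\alpha \in A[\undt{}] \subset \mathbb{T}_s(K_\infty)$) and preserves $v_\infty$ on this disk. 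Note $A[\undt{}] \cap \nalpha = 0$ and $U_\phi \cap \nalpha$: since $\nalpha = \exp_\phi(\nalpha) \subset \nalpha$ we do \emph{not} automatically have $\exp_\phi(\nalpha) \subset A[\undt{}]$, so I must argue that $U_\phi \cap \nalpha = 0$; this follows because if $f \in \nalpha$ with $\exp_\phi(f) \in A[\undt{}]$, then $\exp_\phi(f) \in A[\undt{}] \cap \nalpha = 0$, hence $f = 0$ by injectivity of $\exp_\phi$ on the disk. So the sums $U_\phi \oplus \nalpha$ and $A[\undt{}] \oplus \nalpha$ appearing in the statement are genuinely direct.

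Next I would define the candidate map. The exponential function $\exp_\phi : \mathbb{T}_s(K_\infty) \to \mathbb{T}_s(K_\infty)$ is $\FF[\undt{}]$-linear; by definition of $U_\phi$ it sends $U_\phi$ into $A[\undt{}]$, and by the previous paragraph it sends $\nalpha$ onto $\nalpha$. Hence it induces an $\FF[\undt{}]$-linear map
$$\overline{\exp_\phi} : \frac{\mathbb{T}_s(K_\infty)}{U_\phi \oplus \nalpha} \longrightarrow \frac{\mathbb{T}_s(K_\infty)}{A[\undt{}] \oplus \nalpha}.$$
For injectivity: suppose $f \in \mathbb{T}_s(K_\infty)$ with $\exp_\phi(f) \in A[\undt{}] \oplus \nalpha$. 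Write $\exp_\phi(f) = b + n$ with $b \in A[\undt{}]$, $n \in \nalpha$. Since $\exp_\phi(\nalpha) = \nalpha$, choose $m \in \nalpha$ with $\exp_\phi(m) = n$; then $\exp_\phi(f - m) = b \in A[\undt{}]$, so $f - m \in U_\phi$ by definition, whence $f \in U_\phi \oplus \nalpha$. This shows $\overline{\exp_\phi}$ is injective.

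Finally, the cokernel. By definition the cokernel of $\overline{\exp_\phi}$ is
$$\frac{\mathbb{T}_s(K_\infty)}{\exp_\phi(\mathbb{T}_s(K_\infty)) + A[\undt{}] + \nalpha},$$
using that $\exp_\phi(U_\phi \oplus \nalpha) \subset \exp_\phi(\mathbb{T}_s(K_\infty))$ and that the image of $\overline{\exp_\phi}$ is the image of $\exp_\phi(\mathbb{T}_s(K_\infty))$ in the quotient by $A[\undt{}] \oplus \nalpha$. Since $\nalpha = \exp_\phi(\nalpha) \subset \exp_\phi(\mathbb{T}_s(K_\infty))$, the term $\nalpha$ is absorbed into $\exp_\phi(\mathbb{T}_s(K_\infty))$, so this cokernel equals
$$\frac{\mathbb{T}_s(K_\infty)}{\exp_\phi(\mathbb{T}_s(K_\infty)) + A[\undt{}]} = \frac{\phi(\mathbb{T}_s(K_\infty))}{\exp_\phi(\mathbb{T}_s(K_\infty)) + \phi(A[\undt{}])} = H_\phi,$$
which is the asserted description. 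The main point to be careful about — the only step that is more than bookkeeping — is the claim $\exp_\phi(\nalpha) = \nalpha$ and the disjointness $U_\phi \cap \nalpha = 0$, both of which rest on Lemma \ref{Nalpha} (isometry on the relevant disk) and on the inclusion $\nalpha \subset D_{\mathbb{T}_s}(0, q^{\frac{q-r}{q-1}})$; everything else is a formal consequence of the definitions of $H_\phi$ and $U_\phi$.
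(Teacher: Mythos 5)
Your proof is correct and takes essentially the same approach as the paper, whose one-line argument cites exactly the two facts you isolate: that $\exp_\phi$ is an isometric isomorphism of the disk $D_{\mathbb{T}_s}(0,q^{\frac{q-r}{q-1}})\supset \nalpha$ (Lemma \ref{Nalpha}) and that $\exp_\phi^{-1}(A[\undt{}])\cap\mathbb{T}_s(K_\infty)=U_\phi$, your write-up simply making the resulting bookkeeping explicit. The only small point to add is that the surjectivity half of $\exp_\phi(\nalpha)=\nalpha$ uses that the inverse $\log_\phi$ also preserves $\mathbb{T}_s(K_\infty)$ (because $\alpha\in A[\undt{}]\subset\mathbb{T}_s(K_\infty)$), which is exactly what Lemma \ref{Nalpha} together with the remarks of \S\ref{explog} provide.
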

\begin{proof} This is plain by the fact that $\exp_\phi$ restricted to $D_{\mathbb{T}_s}(0,q^{\frac{q-r}{q-1}})$ is an isometric isomorphism (Lemma \ref{Nalpha}) and the fact that $\exp_\phi^{-1}(A[\undt{}])\cap \mathbb T_s(K_{\infty})=U_\phi$.
\end{proof}

If $M$ is a finitely generated $\FF[\undt{}]$-module, we will use the term \emph{rank of $M$} for its generic rank, that is, $\dim_{\FF(\undt{})} M\otimes_{\FF[\undt{}]}\FF(\undt{})$.

\begin{corollary}\label{corollaryHszero}
For all Drinfeld modules $\phi$ as above, $H_\phi$ is a finitely generated $\FF[\undt{}]$-module of rank $\leq u(\alpha)$.
\end{corollary}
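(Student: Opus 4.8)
Corollary \ref{corollaryHszero} asserts that $H_\phi$ is a finitely generated $\FF[\undt{}]$-module of rank $\leq u(\alpha)$. My plan is to extract both statements from Lemma \ref{lemmanu}, which identifies $H_\phi$ with the cokernel of the map
\[
\overline{\exp_\phi}\colon \frac{\mathbb{T}_s(K_\infty)}{U_\phi\oplus \nalpha}\longrightarrow\frac{\mathbb{T}_s(K_\infty)}{A[\undt{}]\oplus \nalpha},
\]
so everything reduces to understanding the target module $\mathbb{T}_s(K_\infty)/(A[\undt{}]\oplus \nalpha)$. The key point is that the decomposition (\ref{decompositionTs}) gives $\mathbb{T}_s(K_\infty)=A[\undt{}]\oplus M_\alpha\oplus \nalpha$, so this quotient is canonically isomorphic, as an $\FF[\undt{}]$-module, to $M_\alpha=\theta^{-u(\alpha)}\langle 1,\ldots,\theta^{u(\alpha)-1}\rangle_{\FF[\undt{}]}$ from (\ref{moduleM}). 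That module is free of rank exactly $u(\alpha)$ over $\FF[\undt{}]$ (and is $\{0\}$ when $r<2q-1$, i.e. when $u(\alpha)=0$).

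First I would record that, since $H_\phi$ is a quotient of the finitely generated $\FF[\undt{}]$-module $\mathbb{T}_s(K_\infty)/(A[\undt{}]\oplus \nalpha)\cong M_\alpha$, it is itself finitely generated over $\FF[\undt{}]$ (quotients of finitely generated modules over the Noetherian ring $\FF[\undt{}]$ are finitely generated). Next, for the rank bound, I would tensor the defining presentation with $\FF(\undt{})$. Tensoring is right exact, so $H_\phi\otim_{\FF[\undt{}]}\FF(\undt{})$ is a quotient of $M_\alpha\otimes_{\FF[\undt{}]}\FF(\undt{})$, which is an $\FF(\undt{})$-vector space of dimension $u(\alpha)$; hence
\[
\operatorname{rank} H_\phi=\dim_{\FF(\undt{})}\bigl(H_\phi\otimes_{\FF[\undt{}]}\FF(\undt{})\bigr)\leq u(\alpha),
\]
which is exactly the asserted bound. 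The finite generation of the source and target quotients as $\FF[\undt{}]$-modules is immediate from (\ref{decompositionTs}): both are isomorphic to submodules of, or equal to, $M_\alpha$ (for the source one uses that $U_\phi\supset A[\undt{}]$ is not automatic, so one argues instead that the source surjects onto the target plus $H_\phi$, or simply that $H_\phi$, being the cokernel, is a subquotient of $M_\alpha$).

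I do not expect a serious obstacle here, since the heavy lifting has been done in Lemma \ref{Nalpha}, Lemma \ref{lemmanu}, and the decomposition (\ref{decompositionTs}); the proof is essentially a bookkeeping argument identifying the relevant quotient with $M_\alpha$ and invoking right-exactness of $-\otimes_{\FF[\undt{}]}\FF(\undt{})$. The one point requiring a little care is to make sure that one is genuinely allowed to say ``$H_\phi$ is a subquotient of $M_\alpha$'': it is a \emph{quotient} of the cokernel's ambient target $\mathbb{T}_s(K_\infty)/(A[\undt{}]\oplus\nalpha)$ only after one knows the image of $\overline{\exp_\phi}$ lands inside it, which is automatic as it is the whole target of that map; so $H_\phi$ is literally a quotient of $\mathbb{T}_s(K_\infty)/(A[\undt{}]\oplus\nalpha)\cong M_\alpha$, and both claims follow at once. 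The case $u(\alpha)=0$ deserves a one-line remark: then $M_\alpha=\{0\}$, so $H_\phi=0$ and in particular has rank $0=u(\alpha)$, consistent with the bound.
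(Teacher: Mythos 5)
Your proof is correct and follows essentially the same route as the paper: the corollary is stated as an immediate consequence of Lemma \ref{lemmanu}, and the identification of $\mathbb{T}_s(K_\infty)/(A[\undt{}]\oplus\nalpha)$ with the free module $M_\alpha$ of rank $u(\alpha)$, from which $H_\phi$ inherits finite generation and the rank bound as a quotient, is exactly the reasoning the paper spells out in Remark \ref{exactseq1}. The only superfluous part is your aside about the source module $\mathbb{T}_s(K_\infty)/(U_\phi\oplus\nalpha)$, which is not needed for this corollary.
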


\begin{Remark}\label{exactseq1}{\em We have constructed a 
short exact sequence of $A[\undt{}]$-modules
\begin{equation}\label{shortexactsequence}
0\rightarrow\frac{\mathbb{T}_s(K_\infty)}{U_\phi\oplus \nalpha }\rightarrow\frac{\phi (\mathbb{T}_s(K_\infty))}{\phi (A[\undt{}])\oplus \nalpha }\rightarrow H_\phi\rightarrow 0.
\end{equation}  On the other hand, there is an isomorphism of $\FF[\undt{}]$-modules between $M_\alpha$ 
(the module defined in (\ref{moduleM}))
and $$\frac{\mathbb{T}_s(K_\infty)}{A[\undt{}]\oplus \nalpha }.$$ Therefore, the  $\FF[\undt{}]$-modules $\frac{\mathbb{T}_s(K_\infty)}{U_\phi \oplus \nalpha }$ and $H_\phi$ are finitely generated, and their ranks add up to
$u(\alpha)$, which is the rank of $M_\alpha$. This tells us in particular that $U_\phi$ is non-zero. If $r<2q-1$, we have that $H_\phi=\{0\}$.
}\end{Remark}

\subsection{Modules over $\ring{}$}\label{aroundcalr} We
observe that $\mathbb  T_s(K_{\infty}) \subset K(\undt{})_{\infty}$  (\footnote{We recall that $K(\undt{})_{\infty}=\FF(\undt{})((\frac{1}{\theta}))$.}) and that $\tau$ extends to a continuous homomorphism of $k(\undt{})$-algebras again denoted by $\tau: K(\undt{})_{\infty}\rightarrow K(\undt{})_{\infty}.$  If $M\subset \mathbb T_s(K_{\infty})$ is a $\FF[\undt{}]$-module, we denote by $\FF(\undt{})M\subset K(\undt{})_{\infty}$ the $\FF(\undt{})$-module  generated by $M.$ We also set, for $\phi$ as above,
$$\calu_\phi=\FF(\undt{})U_\phi,\quad \calh_\phi=\FF(\undt{})\otimes_{\FF[\undt{}]}H_\phi.$$
The vector spaces $\calu_\phi,\calh_\phi$ are $\FF(\undt{})[\theta]$-modules. From here to the end of this section, we are going to make extensive study of modules over $\FF(\undt{})[\theta]$ so that we denote this ring by $\ring{}$.
\begin{proposition}
 \label{proposition2} The following properties hold.
 \begin{enumerate}
\item The $\ring{}$-module $\calu_\phi$ is free of rank one.
\item We have:
 $$\calu_\phi=\{ f\in K(\undt{})_{\infty},\, \exp_{\phi}(f) \in \ring{}\}.$$
\end{enumerate}
 \end{proposition}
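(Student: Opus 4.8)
The plan is to establish the two claims together, working over the big field $K(\undt{})_\infty$. First I would prove (2), which identifies $\calu_\phi$ with the full set of solutions $f\in K(\undt{})_\infty$ of $\exp_\phi(f)\in\ring{}$; this is the geometric heart of the matter and (1) will follow from it plus the rank count already secured in Remark \ref{exactseq1}. Call the right-hand side $\calu_\phi'=\{f\in K(\undt{})_\infty:\exp_\phi(f)\in\ring{}\}$. The inclusion $\calu_\phi\subseteq\calu_\phi'$ is formal: $U_\phi\subseteq\calu_\phi'$ since $A[\undt{}]\subseteq\ring{}$ and $\exp_\phi$ is $\FF[\undt{}]$-linear, and $\calu_\phi'$ is an $\FF(\undt{})$-vector space (indeed an $\ring{}$-module, using $\exp_\phi\theta=\phi_\theta\exp_\phi$ together with the fact that $\FF(\undt{})$ is fixed by $\tau$, so multiplication by $\FF(\undt{})$ commutes with $\exp_\phi$), hence it contains $\calu_\phi=\FF(\undt{})U_\phi$.

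For the reverse inclusion $\calu_\phi'\subseteq\calu_\phi$, the key point is that $K(\undt{})_\infty$ is the completion of the fraction field of $\TT_s(K_\infty)$, and more usefully that any $f\in K(\undt{})_\infty$ can be cleared of denominators: there is a nonzero $\delta\in\FF[\undt{}]$ with $\delta f\in\TT_s(K_\infty)$. Actually one must be a little careful, since $K(\undt{})_\infty$ is larger than $\operatorname{Frac}(\TT_s(K_\infty))$; but an element $f=\sum_{i\ll +\infty} f_i\theta^{-i}$ with $f_i\in\FF(\undt{})$ lies in $K(\undt{})_\infty$, and if $\exp_\phi(f)\in\ring{}=\FF(\undt{})[\theta]$ we may first choose $\delta\in\FF[\undt{}]\setminus\{0\}$ so that $\delta\cdot\exp_\phi(f)\in\FF[\undt{}][\theta]=A[\undt{}]$ and simultaneously (enlarging $\delta$) so that $\delta f$ has all coefficients $\delta f_i\in\FF[\undt{}]$. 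Then $g:=\delta f$ satisfies $\exp_\phi(g)=\delta\exp_\phi(f)\in A[\undt{}]$; the series $\exp_\phi(g)=\sum_n \tau_\alpha^n(g)/D_n$ converges in $K(\undt{})_\infty$, and since $g$ has bounded coefficient-denominators (in fact none) its Gauss-type norm is finite, i.e.\ $g\in\TT_s(K_\infty)$. Hence $g\in U_\phi$ and $f=\delta^{-1}g\in\FF(\undt{})U_\phi=\calu_\phi$. This gives $\calu_\phi'=\calu_\phi$ and proves (2).

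For (1): by Remark \ref{exactseq1} we already know $U_\phi$ is a nonzero finitely generated $\FF[\undt{}]$-module whose rank, added to $\operatorname{rank} H_\phi$, equals $u(\alpha)=\operatorname{rank} M_\alpha$. Tensoring the short exact sequence \eqref{shortexactsequence} with $\FF(\undt{})$ over $\FF[\undt{}]$ (which is exact, $\FF(\undt{})$ being flat) shows that $\calu_\phi$ sits inside $\FF(\undt{})\otimes \bigl(\TT_s(K_\infty)/(U_\phi\oplus\nalpha)\bigr)$, but more to the point $\calu_\phi$ is a torsion-free $\ring{}$-module (it embeds in the field $K(\undt{})_\infty$) that is finitely generated over $\ring{}$ (being spanned over $\FF(\undt{})$ by finitely many elements, hence over $\ring{}$ by those same elements). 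A finitely generated torsion-free module over the PID $\ring{}=\FF(\undt{})[\theta]$ is free, so it remains to compute the rank. I would argue that $\exp_\phi$ induces, via part (2) and the defining exact sequence, an $\ring{}$-module isomorphism between $K(\undt{})_\infty/(\calu_\phi\oplus\FF(\undt{})\nalpha)$ and a submodule of $K(\undt{})_\infty/(\ring{}\oplus\FF(\undt{})\nalpha)$ with cokernel $\calh_\phi$; since $K(\undt{})_\infty/(\ring{}\oplus\FF(\undt{})\nalpha)\cong\FF(\undt{})\otimes M_\alpha$ is $\ring{}$-torsion (it is killed by $\theta^{u(\alpha)}$, indeed $M_\alpha\subset\theta^{-u(\alpha)}A[\undt{}]/A[\undt{}]$ roughly), the source $K(\undt{})_\infty/(\calu_\phi\oplus\cdots)$ is torsion too, forcing $K(\undt{})_\infty=\calu_\phi\otimes_{\ring{}}K(\undt{})_\infty\oplus\cdots$ to have $\calu_\phi$ of $\ring{}$-rank exactly $1$ (the rank of $K(\undt{})_\infty$ over $\ring{}$ minus the rank of the manifestly torsion complement). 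More simply: $\calu_\phi$ contains $U_\phi$ which is nonzero, and any two $\FF(\undt{})$-independent elements $f_1,f_2$ of $\calu_\phi\subset K(\undt{})_\infty$ would be $\ring{}$-independent, contradicting that $\calh_\phi$ and the quotient in \eqref{shortexactsequence} are finitely generated over $\FF[\undt{}]$ while $M_\alpha$ has rank $u(\alpha)<\infty$ only if $\operatorname{rank}_{\ring{}}\calu_\phi\le 1$; combined with nonvanishing this pins the rank at $1$.

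The main obstacle I anticipate is the reverse inclusion in (2): one must make sure that an $f\in K(\undt{})_\infty$ — a priori in a field strictly larger than $\operatorname{Frac}(\TT_s(K_\infty))$ — with $\exp_\phi(f)$ a polynomial actually forces $f$ into $\TT_s(K_\infty)$ (not merely into $K(\undt{})_\infty$). The clearing-denominators trick handles the $\FF(\undt{})$ versus $\FF[\undt{}]$ discrepancy, but one still needs the analytic input that $\exp_\phi$ restricted to the small disk $D_{\TT_s}(0,q^{(q-r)/(q-1)})$ is an isometric bijection (Lemma \ref{Nalpha}, via $\log_\phi$) to recover $g$ from $\exp_\phi(g)$ when the latter is integral: decompose $\exp_\phi(g)=P+m$ with $P\in A[\undt{}]$ and $m\in\mathfrak m_{\TT_s(K_\infty)}$ small, apply $\log_\phi$ to $m$, and track that $g-$(corresponding preimage of $P$) lands in $\TT_s(K_\infty)$. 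This is the one place where genuine estimates, rather than formal module theory, are needed.
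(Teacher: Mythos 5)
Your reduction of part (2) to a denominator-clearing step is where the real gap lies. An element $f=\sum_{i\geq i_0}f_i\theta^{-i}$ of $K(\undt{})_{\infty}$ has \emph{infinitely many} coefficients $f_i\in\FF(\undt{})$, and in general no single $\delta\in\FF[\undt{}]\setminus\{0\}$ clears all of their denominators (for $s\geq 1$ consider $f=\sum_{i\geq 1}(t_1^i-1)^{-1}\theta^{-i}$): the existence of such a $\delta$ is \emph{equivalent} to $f\in\FF(\undt{})\TT_s(K_\infty)$, which is exactly the nontrivial content of part (2). So the step ``enlarging $\delta$ so that $\delta f_i\in\FF[\undt{}]$ for all $i$'' assumes what has to be proved. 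Your closing paragraph senses the problem, but the repair you sketch still starts from the already-cleared $g$ and tries to recover $g$ from $\exp_\phi(g)$; this is not available, since $\exp_\phi$ need not be injective on $K(\undt{})_{\infty}$ (in the torsion case its kernel meets $\TT_s(K_\infty)$) and $\log_\phi$ only inverts it on a small disk. The working argument (the paper's) decomposes $f$ itself rather than $\exp_\phi(f)$: by density of $\FF(\undt{})\TT_s(K_\infty)$ in $K(\undt{})_{\infty}$ one writes $f=g+h$ with $g\in\FF(\undt{})\TT_s(K_\infty)$ and $h\in\mathfrak{m}_{K(\undt{})_{\infty}}^{u(\alpha)+1}$; then $\exp_\phi(h)=\exp_\phi(f)-\exp_\phi(g)$ lies in $\FF(\undt{})\TT_s(K_\infty)\cap\mathfrak{m}_{K(\undt{})_{\infty}}^{u(\alpha)+1}=\FF(\undt{})\mathfrak{m}_{\TT_s(K_\infty)}^{u(\alpha)+1}$, and since $\exp_\phi$ restricts to isometric automorphisms of $\mathfrak{m}_{K(\undt{})_{\infty}}^{u(\alpha)+1}$ and of $\nalpha=\mathfrak{m}_{\TT_s(K_\infty)}^{u(\alpha)+1}$ (Lemma \ref{Nalpha}), one deduces $h\in\FF(\undt{})\mathfrak{m}_{\TT_s(K_\infty)}^{u(\alpha)+1}$, hence $f\in\FF(\undt{})\TT_s(K_\infty)$; only at this point does your clearing of denominators apply and yield $f\in\calu_\phi$.

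Part (1) as you argue it also does not hold together. Remark \ref{exactseq1} asserts finite generation of the quotient $\TT_s(K_\infty)/(U_\phi\oplus\nalpha)$ and of $H_\phi$, not of $U_\phi$: since $U_\phi$ is an $A[\undt{}]$-module, it contains $fA[\undt{}]$ for any nonzero $f\in U_\phi$ and so has infinite rank over $\FF[\undt{}]$; likewise $\calu_\phi$ is infinite-dimensional over $\FF(\undt{})$, so ``spanned over $\FF(\undt{})$ by finitely many elements, hence finitely generated over $\ring{}$'' is unfounded. Moreover $\FF(\undt{})$-independence does not imply $\ring{}$-independence ($f$ and $\theta f$), and $K(\undt{})_{\infty}$ has infinite rank as an $\ring{}$-module, so the rank bookkeeping at the end has no content. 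The route that works, and is the paper's, is metric rather than rank-theoretic: the same isometry on $\mathfrak{m}_{K(\undt{})_{\infty}}^{u(\alpha)+1}$, combined with $\ring{}\cap\mathfrak{m}_{K(\undt{})_{\infty}}=\{0\}$, gives $\calu_\phi\cap\mathfrak{m}_{K(\undt{})_{\infty}}^{u(\alpha)+1}=\{0\}$, i.e.\ $\calu_\phi$ is discrete; choosing $f\in\calu_\phi\setminus\{0\}$ of minimal norm and using the decomposition $K(\undt{})_{\infty}=\ring{}\oplus\mathfrak{m}_{K(\undt{})_{\infty}}$ to write any $g\in\calu_\phi$ as $g=hf+b$ with $h\in\ring{}$ and $\|b\|<\|f\|$ forces $b=0$, so $\calu_\phi=f\ring{}$ is free of rank one.
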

 \begin{proof} (1). Since  $U_\phi$ is non-trivial,
there exists an element $f\in \calu_\phi$ with $\|f\|>0$ minimal. 
Indeed, by the fact that $\exp_\phi$ induces an isometric isomorphism of $D_{\mathbb{T}_s}(0,q^{\frac{q-r}{q-1}})$, $\calu_\phi$ is discrete, that is,
$\calu_\phi\cap\mathfrak{m}_{K(\undt{})_{\infty}}^n=\{0\}$ for $n$ big enough.

Let $g$ be another element of $\calu_\phi$.
Then, since $ K(\undt{})_{\infty}=\ring{}\oplus \mathfrak{m}_{K(\undt{})_{\infty}}$, there exists a polynomial $h$ of $\ring{}$ such that
$g=hf+b$ where $b\in K(\undt{})_{\infty}$ is such that $\|b\|<\|f\|$. Since $\calu_\phi$ is an $\ring{}$-module, we get $b\in\calu_\phi$ so that $b=0$.
This means that $\calu_\phi$ is free of rank one.  
\medskip

\noindent (2). 
Observe that $\FF(\undt{})\mathbb T_s(K_{\infty})$ is dense in $K(\undt{})_{\infty},$ thus:
 $$K(\undt{})_{\infty}=\FF(\undt{})\mathbb T_s(K_{\infty})+ \mathfrak{m}_{K(\undt{})_{\infty}}^{u(\alpha)+1}.$$

It is clear that $\calu_\phi\subset \{ f\in K(\undt{})_{\infty},\, \exp_{\phi}(f) \in \ring{}\}.$ Now, let $f\in K(\undt{})_{\infty}$ be such that $\exp_{\phi} (f) \in\ring{}.$ 
 We can write $f$ as a sum $g+h,$ where $g\in \FF(\undt{})\mathbb T_s(K_{\infty})$ and $h\in  \mathfrak{m}_{K(\undt{})_{\infty}}^{u(\alpha)+1}.$ We get:
 $$\exp_{\phi} (h) = \exp_{\phi}(f)-\exp_{\phi}(g) \in \FF(\undt{})\mathbb T_s(K_{\infty}).$$
 This implies that:
 $$\exp_{\phi} (h) \in \FF(\undt{})\mathfrak{m}_{\mathbb{T}_s(K_\infty)}^{u(\alpha)+1}=  \mathfrak{m}_{K(\undt{})_{\infty}}^{u(\alpha)+1}\cap \FF(\undt{})\mathbb T_s(K_{\infty}).$$ 
 Therefore $h\in \FF(\undt{})\mathfrak{m}_{\mathbb{T}_s(K_\infty)}^{u(\alpha)+1}$ and thus $f\in \FF(\undt{})\mathbb T_s(K_{\infty}).$ We conclude that $f\in \calu_\phi.$
\end{proof}

\begin{corollary}\label{remarkfloric}
The $A[\undt{}]$-module $U_\phi$ is free of rank one.
\end{corollary}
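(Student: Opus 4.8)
The plan is to bootstrap from Proposition \ref{proposition2}. That proposition gives that $\calu_\phi=\FF(\undt{})\,U_\phi$ is free of rank one over $\ring{}=\FF(\undt{})[\theta]$, but this is \emph{not} by itself enough — a non‑principal ideal of $A[\undt{}]$ (e.g.\ $(\theta,t_1)$ when $s\ge 1$) becomes principal after inverting $\FF[\undt{}]\setminus\{0\}$ — so one must use the concrete realization of $U_\phi$ inside $\mathbb{T}_s(K_\infty)$ and a content argument. First I would record the elementary facts: since $\overline{K_\infty}=\FF$ one has $\mathbb{T}_s(K_\infty)=\FF[\undt{}]((\theta^{-1}))$, which is an integral domain; inside $K(\undt{})_\infty=\FF(\undt{})((\theta^{-1}))$ one has $\ring{}\cap\mathbb{T}_s(K_\infty)=A[\undt{}]$; and every nonzero $c\in\FF[\undt{}]$ has Gauss norm $\|c\|=1$, so $\FF(\undt{})^\times$ acts isometrically. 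Combining the first two with Proposition \ref{proposition2}(2) and the fact that $\exp_\phi$ preserves $\mathbb{T}_s(K_\infty)$ yields $U_\phi=\calu_\phi\cap\mathbb{T}_s(K_\infty)$; and from $\phi_a\exp_\phi=\exp_\phi\,a$ together with $\phi_a(A[\undt{}])\subseteq A[\undt{}]$ one sees that $U_\phi$ is stable under ordinary multiplication by $A[\undt{}]$ (this is also how $\calu_\phi$ is an $\ring{}$-module in the proof of Proposition \ref{proposition2}).

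Next I would choose a good generator. Since $\FF(\undt{})^\times$ is norm‑preserving and $\calu_\phi=\FF(\undt{})\,U_\phi$, the minimal positive value of $\|\cdot\|$ on $\calu_\phi$ is attained at some $u\in U_\phi$, and the argument in the proof of Proposition \ref{proposition2}(1) then gives $\calu_\phi=\ring{}\,u$. Viewing $u$ as a Laurent series in $\theta^{-1}$ with coefficients in the UFD $\FF[\undt{}]$ and replacing $u$ by $u/\operatorname{cont}(u)$ — which still lies in $\calu_\phi\cap\mathbb{T}_s(K_\infty)=U_\phi$, has the same norm, hence still generates $\calu_\phi$ — I may assume $u$ has unit content, i.e.\ its coefficients have no common irreducible factor. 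Then $A[\undt{}]\,u\subseteq U_\phi$, and $A[\undt{}]\,u$ is free of rank one because $\mathbb{T}_s(K_\infty)$ is a domain and $u\ne 0$, so it only remains to prove $U_\phi\subseteq A[\undt{}]\,u$.

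For this inclusion, take $g\in U_\phi\subseteq\calu_\phi=\ring{}\,u$ and write $g=Pu$ with $P\in\ring{}$. Choose $b\in\FF[\undt{}]\setminus\{0\}$ with $Q:=bP\in A[\undt{}]$ such that $b$ and the $\theta$‑content of $Q$ are coprime (possible after dividing $b$ and $Q$ by their gcd); then $bg=Qu$ in $\mathbb{T}_s(K_\infty)$. If $b$ were not a unit, fix an irreducible $\pi\mid b$; since $b/\pi\in\FF[\undt{}]$ and $g\in\mathbb{T}_s(K_\infty)$ we get $bg\in\pi\,\mathbb{T}_s(K_\infty)$, so reducing $Qu=bg$ modulo $\pi$ in the integral domain $\mathbb{T}_s(K_\infty)/\pi\mathbb{T}_s(K_\infty)\cong(\FF[\undt{}]/\pi)((\theta^{-1}))$ gives $\overline Q\,\overline u=0$; as $u$ has unit content, $\overline u\ne 0$, hence $\overline Q=0$, i.e.\ $\pi$ divides the $\theta$‑content of $Q$, contradicting the coprimality of $b$ and $Q$. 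Therefore $b\in\FF^\times$, so $P=Q/b\in A[\undt{}]$ and $g\in A[\undt{}]\,u$, as wanted. The technical heart is this content/reduction‑mod‑$\pi$ step together with the normalization of $u$; the rest is formal once the identification $\mathbb{T}_s(K_\infty)=\FF[\undt{}]((\theta^{-1}))$ and the stability properties are in place, and I expect the only real care to be needed in checking that the successive replacements of $u$ and of $b$ keep us inside $U_\phi$ and $A[\undt{}]$ respectively.
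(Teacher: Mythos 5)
Your argument is correct and is essentially the paper's own proof: both start from Proposition \ref{proposition2}, normalize the generator of $\calu_\phi$ so that no non-unit of $\FF[\undt{}]$ divides it in $\mathbb{T}_s(K_\infty)$ (your ``unit content'' condition), and then show a denominator $b\in\FF[\undt{}]$ must be a unit by a divisibility argument in $\mathbb{T}_s(K_\infty)=\FF[\undt{}]((\theta^{-1}))$. Your reduction modulo an irreducible $\pi$ is just an explicit verification of the primality/UFD fact the paper invokes directly, and your extra checks (that the minimal-norm generator can be taken in $U_\phi=\calu_\phi\cap\mathbb{T}_s(K_\infty)$) fill in steps the paper leaves implicit.
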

\begin{proof} By Proposition \ref{proposition2}, we have  
$\calu_\phi=f\ring{}$, with $f\in\mathbb{T}_s(K_\infty)$. Without 
loss of generality, we can also suppose that if
$h$ divides $f$ in $\mathbb{T}_s(K_\infty)$, with $h\in\FF[\undt{}]$, then $h\in\FF^\times$. Clearly, $U_\phi\supset fA[\undt{}]$. Let us consider now
an element $g$ of $U_\phi$. We have that $g\in f\ring{},$ and  we can write $g=af/\delta$ where $a\in A[\undt{}]$
and $\delta\in\FF[\undt{}]\setminus\{ 0\}$. This means that $\delta$ divides
$af$ in $\mathbb{T}_s(K_\infty)$ which is a unique factorization domain.  So $\delta$ must divide $a$ in $A[\undt{}]$ and we get $g\in f A[\undt{}].$
\end{proof}

 \subsection{Local factors of the $L$-series values}\label{localfactors}
 
Let $R$ be a unitary  commutative ring. Let $M$ be a finitely generated $R$-module. 
As {\em Fitting ideal} of $M$ we mean the {\em initial Fitting ideal} as defined in \cite[Chapter XIX]{LAN}.
By Chapter XIX, Corollary 2.9 of loc. cit., if $M$ is a finite direct sum of cyclic modules, $$M=\bigoplus_{i=1}^n\frac{R}{\mathfrak{a}_i},\quad \mathfrak{a}_i\text{ ideal of }R,$$
then $$\operatorname{Fitt}_R(M)=\mathfrak{a}_1\cdots\mathfrak{a}_n.$$

Let $\theta$ be an indeterminate over a field $F$. We write $R=F[\theta]$, and we consider an $R$-module $M$ which also is an $F$-vector space of finite dimension.
Let $e_\theta$ be the endomorphism of $M$ induced by the multiplication by $\theta$.
Then, we write 
$$[M]_R=\det{}_{R}(Z-e_\theta|M)|_{Z=\theta}\in R$$ for the characteristic polynomial of $e_\theta$, where the indeterminate 
$Z$ is replaced with $\theta$. This is a monic polynomial in $\theta$ of $R=F[\theta]$ and it is the monic generator of $\operatorname{Fitt}_{R}(M)$.

Let $\alpha$ be an element of $\ring{}\setminus\{ 0\}$ (we recall that $\ring{}=\FF(\undt{})[\theta]$) and let us consider the Drinfeld $\ring{}$-module of 
rank one and parameter $\alpha$, that is, 
the injective homomorphism of $\FF(\undt{})$-algebras
$$\phi: \ring{}\rightarrow \operatorname{End}_{\FF(\undt{})-\text{lin.}}(K(\undt{})_{\infty})$$ given by $\phi_{\theta}= \theta +\alpha\tau$. For all $a\in A$, the resultant $\rho_\alpha(a)=\operatorname{Res}_\theta(a,\alpha)$ is a well defined element of $\FF(\undt{})$ making the series (and the corresponding eulerian product)
$$L(n,\phi)=\sum_{a\in A_+}\rho_\alpha(a)a^{-n}=\prod_P\left(1-\frac{\rho_\alpha(P)}{P^n}\right)^{-1},\quad n>0$$
 convergent in $K(\undt{})_{\infty}.$

 \begin{lemma}
 \label{lemmaA3} Let $P$ be a prime of $A$  of degree $d.$ Then, the following congruence holds in $\ring{}[\tau]:$
 $$\phi_P\equiv \rho_\alpha(P) \tau^d\pmod{P\ring{}[\tau]}.$$
 \end{lemma}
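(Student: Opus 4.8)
The plan is to work in the skew polynomial ring $\ring{}[\tau]$ and reduce everything modulo the two-sided ideal generated by $P$, exploiting that $\ring{}/P\ring{}$ is a finite field of cardinality $q^d$ (since $P$ is irreducible of degree $d$ over the field $\FF(\undt{})$, although strictly $\ring{}/P\ring{}$ is an $\FF(\undt{})$-algebra of dimension $d$; what matters is that it is a product of fields in which Frobenius-type identities hold). First I would observe that $\phi_P = C_P'$ for the ``twisted Carlitz'' action, i.e. $\phi_a = \sum_i a_i \tau_\alpha^i$ when $C_a = \sum_i a_i \tau^i$ is the usual Carlitz expansion, but it is cleaner to argue directly: $\phi$ is an $\FF(\undt{})$-algebra homomorphism $\ring{}\to\ring{}[\tau]$, so reduction mod $P$ gives an $\FF(\undt{})$-algebra homomorphism $\bar\phi\colon \ring{}/P\ring{}\to (\ring{}/P\ring{})[\tau]$ determined by $\bar\phi_\theta = \theta + \bar\alpha\tau$ where $\bar\theta,\bar\alpha$ denote images in $\ring{}/P\ring{}$.

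The key step is then to compute $\bar\phi_P$, i.e. the image of the element $P(\theta)=0$ of $\ring{}/P\ring{}$. Since $\bar\phi$ is a ring homomorphism and $P(\theta)\equiv 0 \pmod{P}$, we get $\bar\phi_P = 0$ in $(\ring{}/P\ring{})[\tau]$ — but that is not quite the statement; rather, one must be careful that $\phi_P$, computed in $\ring{}[\tau]$, reduces mod $P$ to something one can identify. The cleanest route: by the general theory of Drinfeld modules (or by direct induction on $\deg_\theta a$ using $\phi_\theta=\theta+\alpha\tau$), for $a\in A_{+,d}$ one has $\phi_a = \sum_{i=0}^d c_i(a)\tau^i$ with $c_0(a)=a$ and $c_d(a) = \alpha\tau(\alpha)\cdots\tau^{d-1}(\alpha)$ (the leading coefficient, coming from iterating the $\alpha\tau$ term). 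Now reduce mod $P$: the constant term $c_0(P)=P\equiv 0$, and I claim every intermediate coefficient $c_i(P)$ for $0<i<d$ also vanishes mod $P$. This is the heart of the matter and is the analogue of the classical fact that $C_P\equiv\tau^d\pmod P$ for the Carlitz module; one proves it by the same argument as in \cite[Proposition 1.2.1]{AND&THA} or \cite[Theorem 3.6.3]{GOS}: the recursion $\phi_P$ satisfies (writing $P = \theta^d + \text{lower}$) is compatible with the fact that, modulo $P$, the operator $\phi_P$ is the image of $0\in\ring{}/P\ring{}$ under $\bar\phi$ composed appropriately — more concretely, $\bar\phi$ factors through and $\bar\phi_P$ acts as the zero operator on the residue module, forcing all coefficients but the leading one to die. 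Finally, identify the surviving leading coefficient: $c_d(P)\equiv \alpha\tau(\alpha)\cdots\tau^{d-1}(\alpha)\pmod P$, and since $\tau$ acts on $\ring{}/P\ring{}$ (a finite-type $\FF(\undt{})$-algebra) and one checks $\alpha\tau(\alpha)\cdots\tau^{d-1}(\alpha)\equiv \operatorname{Res}_\theta(P,\alpha) = \rho_\alpha(P)\pmod P$ — this last congruence is exactly the resultant formula $\operatorname{Res}_\theta(P,\alpha)=\prod_{j}\alpha(\zeta_j)$ over the roots $\zeta_j$ of $P$, matched against the norm from $\ring{}/P\ring{}$ to $\FF(\undt{})$, i.e. $N(\bar\alpha) = \bar\alpha\cdot\tau(\bar\alpha)\cdots\tau^{d-1}(\bar\alpha)$ when $\tau$ generates the relevant Frobenius.

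The main obstacle I anticipate is the middle claim: showing the intermediate coefficients $c_i(P)$ vanish mod $P$ and correctly matching $\alpha\tau(\alpha)\cdots\tau^{d-1}(\alpha)$ with $\operatorname{Res}_\theta(P,\alpha)$ modulo $P$. For the first part, the slick argument is that $\phi$ induces on the finite $\ring{}/P\ring{}$-module $\ring{}/P\ring{}$ (via $\phi$) an action in which $P$ acts by $0$ (because $P\equiv 0$ there and $\phi_P$ is $\phi$ of the zero element), yet $\phi_P$ as an operator $\sum c_i(P)\tau^i$ must then kill everything; combined with the fact that $\tau$ acts as an automorphism (Frobenius) on $\ring{}/P\ring{}$, one extracts that $\sum_{i=0}^{d-1} c_i(P)\tau^i \equiv 0$ as an operator, and a degree/linear-independence argument (the $q^{d}$-dimensional space $\ring{}/P\ring{}$ over $\FF(\undt{})$, on which $1,\tau,\dots,\tau^{d-1}$ are linearly independent) forces each $c_i(P)\equiv 0$ for $i<d$. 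For the resultant matching, I would just invoke the formula recalled in the footnote after \eqref{alpha2}: $\operatorname{Res}_\theta(P,\alpha) = \prod_{P(\zeta)=0}\alpha(\zeta)$ (up to the stated sign, which here works out because $P$ is monic), and note that $\{\zeta, \zeta^q,\dots,\zeta^{q^{d-1}}\}$ are exactly the roots of $P$, so $\prod_j \alpha(\zeta^{q^j}) = \alpha(\zeta)\tau(\alpha)(\zeta)\cdots$ recovers $\alpha\tau(\alpha)\cdots\tau^{d-1}(\alpha)$ evaluated at $\zeta$, which is the image of that product in $\ring{}/P\ring{}\cong \FF(\undt{})(\zeta)$. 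This completes the identification and hence the congruence $\phi_P\equiv\rho_\alpha(P)\tau^d\pmod{P\ring{}[\tau]}$.
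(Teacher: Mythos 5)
Your overall skeleton (expand $\phi_P$, show that only the top term survives modulo $P$, and identify the leading coefficient with $\operatorname{Res}_\theta(P,\alpha)$ by evaluating at the roots $\zeta,\zeta^q,\dots,\zeta^{q^{d-1}}$ of $P$) is the same as the paper's, and your resultant identification at the end is correct. But the step you yourself call the heart of the matter is argued incorrectly. The reduction of $\phi$ modulo $P$ does \emph{not} factor through $\ring{}/P\ring{}$: such a factorization would require $\phi_P\in P\ring{}[\tau]$, which is exactly what the lemma denies (modulo $P$ it is $\rho_\alpha(P)\tau^d\neq 0$ whenever $P\nmid\alpha$), so "$\phi_P$ is $\phi$ of the zero element" has no meaning here. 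Consequently your premise that $P$ acts by $0$ on $\phi(\ring{}/P\ring{})$ is false: writing $L=\ring{}/P\ring{}$, the automorphism induced by $\tau$ on $L$ has order $d$, so $\tau^d=\mathrm{id}_L$ and $P$ acts on $\phi(L)$ as multiplication by $\rho_\alpha(P)$ (compare Lemma \ref{propositionA1}); for the Carlitz module it acts as the identity, which is precisely why $[C(A/PA)]_A=P-1$ and not $P$. Worse, if the operator $\sum_{i=0}^{d}c_i(P)\tau^i$ really annihilated $L$, then Dedekind independence of the distinct automorphisms $1,\tau,\dots,\tau^{d-1}$ of $L$, together with $\tau^d=\mathrm{id}_L$ and $c_0(P)=P\equiv 0$, would force $c_d(P)=\alpha\tau(\alpha)\cdots\tau^{d-1}(\alpha)\equiv 0\pmod{P}$, contradicting the very statement you want to prove. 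So the module-theoretic route cannot deliver the vanishing of the intermediate coefficients.

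What is needed (and what the paper does) is the computational argument you only gesture at via Anderson--Thakur/Goss: expand $\phi_P$ in powers of $\tau_\alpha=\alpha\tau$ rather than $\tau$, say $\phi_P=\sum_{i=0}^d(P)_i\tau_\alpha^i$. The relation $\phi_P\phi_\theta=\phi_\theta\phi_P$ gives the recursion $(P)_i=\bigl(\tau((P)_{i-1})-(P)_{i-1}\bigr)/(\theta^{q^i}-\theta)$ with $(P)_0=P$, so the $(P)_i$ are the ordinary Carlitz coefficients: they lie in $A$, are independent of $\alpha$, and $(P)_d=1$. Induction, using that $P$ divides $\tau((P)_{i-1})-(P)_{i-1}$ whenever it divides $(P)_{i-1}$ (the coefficients of $P$ lie in $\FF$, so $\tau(P)=P^q$) and that $P\nmid\theta^{q^i}-\theta$ for $i<d$, gives $(P)_i\equiv 0\pmod{P}$ for $i<d$. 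Hence $\phi_P\equiv\tau_\alpha^d=\alpha\tau(\alpha)\cdots\tau^{d-1}(\alpha)\tau^d\pmod{P\ring{}[\tau]}$, and your evaluation-at-the-roots computation then correctly yields $\alpha\tau(\alpha)\cdots\tau^{d-1}(\alpha)\equiv\rho_\alpha(P)\pmod{P\ring{}}$ (using that $P$ remains irreducible over $\FF(\undt{})$, so vanishing at one root gives divisibility by $P$). With this replacement of your middle step, your outline becomes the paper's proof.
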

 \begin{proof}We recall that $\tau_\alpha=\alpha\tau$.
 Let $a\in A_+$ be of degree $d$. We expand in $\ring{}[\tau_\alpha]$:
$$\phi_a= \sum_{i=0}^d(a)_i\tau_\alpha^i,$$ where it is easy to see that
$(a)_0=a$.
From the relation $\phi_a\phi_{\theta}=\phi_{\theta}\phi_a$ we get, by induction on $i=1,\ldots,d-1$,

$$(a)_i= \frac{\tau((a)_{i-1})-(a)_{i-1}}{\theta^{q^i}-\theta}$$
and $(a)_i\in A$ for $i=0, \ldots, d$ with $(a)_d=1$ (so these are the 
coefficients of the classical Carlitz multiplication by $a$). 
Since a prime $P$ of degree $d$ does not divide $\theta^{q^i}-\theta$ if $i<d$,  we get for $i=0, \ldots,d-1$,
$(P)_i\equiv0\pmod{P\ring{}}.$ This implies that 
$$\phi_P\equiv\tau_\alpha^d\equiv\alpha\tau(\alpha)\cdots\tau^{d-1}(\alpha)\tau^d\pmod{P\ring{}[\tau]}.$$
Now, we observe that, if $\zeta_1,\ldots,\zeta_d$ are the roots of $P$ in $\FF^{ac}$ and if $\zeta$ is one of these roots,
\begin{eqnarray*}
\rho_\alpha(P)&=&\operatorname{Res}_\theta(P,\alpha)\\
&=&\prod_{j=1}^d\alpha|_{\theta=\zeta_j}\\
&=&\alpha\tau(\alpha)\cdots\tau^{d-1}(\alpha)|_{\theta=\zeta}\\
&\equiv&\alpha\tau(\alpha)\cdots\tau^{d-1}(\alpha)\pmod{P\ring{}}.
\end{eqnarray*}
\end{proof}
If $L$ is a ring with an endomorphism $\sigma$, we denote by $L^{\sigma=1}$ the subring $\{ x\in L, \sigma (x)=x\}$.
\begin{lemma}
\label{lemmaA5} Let $L$ be a  field and let $\sigma \in {\rm Aut} (L).$ We set $F=L^{\sigma=1}.$ Let $r\geq 0$ be an integer strictly less than the order of $\sigma$, let $a_0,\ldots, a_{r-1}$ be $r$ elements in $L$. Then  
$$V=\left\{ x\in L; \sigma^r(x)+ \sum_{i=0}^{r-1} a_i \sigma^i (x) =0\right\}$$  is an $F$-vector space of dimension not exceeding $r$.
\end{lemma}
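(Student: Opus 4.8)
The statement is a classical-flavored fact: a linear $\sigma$-difference equation of order $r$ over a field has at most $r$ linearly independent solutions over the constant field $F = L^{\sigma=1}$. The natural approach is to pick solutions $x_1,\ldots,x_{r+1} \in V$ and show they are $F$-linearly dependent; the standard tool is a Wronskian-type (Casorati) determinant argument adapted to the automorphism $\sigma$ in place of a derivation. I would argue by contradiction: suppose $\dim_F V \geq r+1$ and choose $x_1,\ldots,x_{r+1} \in V$ that are $F$-linearly independent, chosen moreover to be of minimal length, i.e.\ so that $x_1,\ldots,x_r$ are $F$-linearly independent but no nontrivial $F$-combination among fewer of them vanishes.

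First I would form the vectors $\xi_j = (x_j, \sigma(x_j), \ldots, \sigma^{r-1}(x_j)) \in L^r$ for $j = 1,\ldots,r+1$. Since each $x_j$ satisfies $\sigma^r(x_j) = -\sum_{i=0}^{r-1} a_i \sigma^i(x_j)$, the higher shifts $\sigma^r(x_j), \sigma^{r+1}(x_j),\ldots$ are all $L$-linear combinations (with the \emph{same} coefficients, after applying powers of $\sigma$ to the $a_i$) of $x_j,\ldots,\sigma^{r-1}(x_j)$. Now $r+1$ vectors in the $r$-dimensional $L$-vector space $L^r$ must be $L$-linearly dependent, so there exist $c_1,\ldots,c_{r+1} \in L$, not all zero, with $\sum_j c_j \xi_j = 0$; by minimality of the chosen family we may take $c_{r+1} = 1$ and all $c_j \neq 0$, and in particular $\sum_j c_j \sigma^i(x_j) = 0$ for $i = 0,\ldots,r-1$. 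The key step is then to apply $\sigma$ to the relation with $i=0$: from $\sum_j c_j x_j = 0$ we get $\sum_j \sigma(c_j)\sigma(x_j) = 0$, while the relation with $i = 1$ gives $\sum_j c_j \sigma(x_j) = 0$. Subtracting, $\sum_j (\sigma(c_j) - c_j)\sigma(x_j) = 0$. Using the relations with $i = 1,\ldots,r-1$ similarly produces an $L$-relation among $\sigma(x_1),\ldots,\sigma(x_{r+1})$ — equivalently among the vectors $(\sigma(x_j),\ldots,\sigma^r(x_j))$, which via the difference equation lie back in the span of the $\xi_j$'s — with coefficients $\sigma(c_j) - c_j$, and with the $(r+1)$-st coefficient equal to $\sigma(1) - 1 = 0$. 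Minimality of the family forces $\sigma(c_j) - c_j = 0$ for all $j$, i.e.\ $c_j \in L^{\sigma=1} = F$. But then $\sum_j c_j x_j = 0$ is a nontrivial $F$-linear relation among $x_1,\ldots,x_{r+1}$, contradicting their $F$-independence.

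I should double-check the one place where the hypothesis ``$r$ strictly less than the order of $\sigma$'' is used: it guarantees that $\mathrm{id}, \sigma, \ldots, \sigma^{r-1}$ are genuinely distinct and, more to the point, that one cannot collapse the $r$-dimensional ambient space by some hidden periodicity — in particular it is what makes the reduction ``$r+1$ vectors in $L^r$ are dependent'' meaningful together with the subsequent descent. (If $\sigma$ had order $\le r$ one could have many more solutions, e.g.\ $\sigma = \mathrm{id}$ makes $V$ the whole kernel of a scalar, either $0$ or all of $L$.) I expect the main obstacle to be making the ``minimal-length'' descent argument fully rigorous, i.e.\ carefully justifying that the coefficient relation $\sum_j (\sigma(c_j)-c_j)\sigma(x_j) = 0$ really is, after translating through the difference equation, a relation of the same form among the original solutions so that minimality applies; this is the classical Wronskian-for-difference-equations manoeuvre and is where one must be attentive to how $\sigma$ acts on the coefficients $a_i$. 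An alternative, cleaner route would be to set up the $r \times (r+1)$ Casorati matrix $(\sigma^{i}(x_j))$ directly and argue that its rank is $r$, then run the same descent; I would present whichever is shorter, but the substance is identical.
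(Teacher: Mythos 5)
Your overall route is the same as the paper's: send $x\mapsto{}^t(x,\sigma(x),\ldots,\sigma^{r-1}(x))$ into $L^r$ and reduce everything to the statement that solutions of a first-order system which are $L$-linearly dependent are already $F$-linearly dependent; the paper simply quotes the proof of Lemma 1.7 of van der Put--Singer for that descent, while you reprove it by the minimal-relation (Casoratian) manoeuvre. The one genuine gap is exactly the step you yourself flag. After subtracting, you only obtain $\sum_j d_j\,\sigma^i(x_j)=0$ for $i=1,\ldots,r-1$, where $d_j=\sigma(c_j)-c_j$; this is a relation among the \emph{shifted} vectors $(\sigma(x_j),\ldots,\sigma^{r-1}(x_j))$ (or, once completed, $(\sigma(x_j),\ldots,\sigma^{r}(x_j))$), not among the vectors $\xi_j=(x_j,\ldots,\sigma^{r-1}(x_j))$ for which minimality was set up, so "minimality forces $d_j=0$" does not yet follow. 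Moreover, translating back through the difference equation amounts to writing the shifted vector as $C\xi_j$ with $C$ the companion matrix, and cancelling $C$ requires $\det C=\pm a_0\neq 0$; when $a_0=0$ this translation genuinely fails (one would instead have to lower the order of the equation and induct).

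The clean repair uses that $\sigma$ is an \emph{automorphism}, not the companion matrix. First complete the missing component: applying $\sigma$ to the level-$(r-1)$ relation gives $\sum_j\sigma(c_j)\sigma^r(x_j)=0$, and the difference equation together with the original relations gives $\sum_j c_j\sigma^r(x_j)=-\sum_{i=0}^{r-1}a_i\sum_j c_j\sigma^i(x_j)=0$; hence $\sum_j d_j\,\sigma^i(x_j)=0$ for \emph{all} $i=1,\ldots,r$. Now apply $\sigma^{-1}$ to this whole system to get $\sum_j\sigma^{-1}(d_j)\,\sigma^i(x_j)=0$ for $i=0,\ldots,r-1$, i.e.\ a relation $\sum_j\sigma^{-1}(d_j)\,\xi_j=0$ of the original shape, whose support omits the normalized index. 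Minimality then gives $\sigma^{-1}(d_j)=0$, so $c_j\in F$ and $\sum_j c_jx_j=0$ contradicts the $F$-independence, with no case distinction on $a_0$. (A side remark: the hypothesis that $r$ is smaller than the order of $\sigma$ is never needed in this descent -- if the order $n$ of $\sigma$ satisfies $n\le r$ one has $\dim_F V\le\dim_F L=n\le r$ anyway by Artin's theorem -- so your speculation about its role is not the right explanation; it is simply harmless.)
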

\begin{proof} A sketch of proof will be enough as this is essentially well known, see \cite[\S 1.2]{PUT2}.
Let $n\geq 1$ be an integer and let  $\mathcal{A}$ be a matrix with entries in $L$. Let $v_1, \ldots, v_r$ be vectors of $L^n$ such that $\sigma (v_i) =\mathcal{A}v_i, i=1, \ldots, r.$ Then by the proof of \cite[Lemma 1.7]{PUT2}, if the vectors $v_1, \ldots, v_r$ are linearly dependent over $L$, they are also linearly dependent over $F.$ This implies that the $F$-vector space $W=\{ v\in L^n, \sigma (v) = \mathcal{A}v\}$ satisfies:
$$\dim_{F}(W)\leq n.$$
\noindent Let $\mathcal{A}$ be  the  companion matrix  of the equation $$\sigma^r(x)+ \sum_{i=0}^{r-1} a_i \sigma^i (x) =0$$ (see \cite[p. 8]{PUT2}).  Let $W=\{ v\in L^r, \sigma (v) =\mathcal{A}v\}.$ Then the map $V\mapsto W,$ $$x\mapsto{}^t (x,\sigma(x), \cdots, \sigma^{r-1}(x))$$ (the sign $\cdot{}^t$ means transposition) is an isomorphism of $F$-vector spaces. 
\end{proof}

\begin{lemma}
 \label{propositionA1}
 Let  $\phi$ be  a Drinfeld $\ring{}$-module of rank one over $K(\undt{})_{\infty}$ with parameter 
$\alpha\in \ring{}\setminus\{ 0\}$. Let  $P$ be a prime of $A$ of degree $d$. Then, we have an isomorphism of $\ring{}$-modules:
 $$ \phi\left(\frac{\ring{}}{P\ring{}}\right)\cong \frac{\ring{}}{(P-\rho_{\alpha} ( P ))\ring{}}.$$
 \end{lemma}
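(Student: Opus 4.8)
The plan is to mimic the classical proof that, for the Carlitz module $C$, the $A$-module $C(A/PA)$ is isomorphic to $A/(P-1)$, which is already cited in the introduction (Goss, Taelman, Anderson--Thakur). First I would observe that the $\ring{}$-module $\phi(\ring{}/P\ring{})$ has the underlying set $\ring{}/P\ring{}$, which is an $r_P$-dimensional $\FF(\undt{})[\tau]$-module where $r_P$ is... rather, it is the quotient field $\FF(\undt{})[\theta]/(P)$, a finite field extension $\calh_P$ of $\FF(\undt{})$ of degree $d$, on which $\tau$ acts as (the reduction of) the $q$-power Frobenius. By Lemma \ref{lemmaA3} we have $\phi_P\equiv\rho_\alpha(P)\tau^d\pmod{P\ring{}[\tau]}$, and since $\tau^d$ acts as the identity on $\calh_P$ (its residue field has $q^d$ elements, so the $q^d$-Frobenius is trivial there, and $\rho_\alpha(P)$ is a \emph{unit} in $\calh_P$ because $P\nmid\alpha$), the action of $\phi_P$ on $\phi(\ring{}/P\ring{})$ is multiplication by the scalar $\rho_\alpha(P)\in\calh_P^\times$.

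Next I would compute the characteristic polynomial $[\phi(\ring{}/P\ring{})]_{\ring{}}$ in the sense of \S\ref{localfactors}, i.e. the characteristic polynomial of the multiplication-by-$\theta$ operator on $\phi(\ring{}/P\ring{})$ as a finite-dimensional $\FF(\undt{})$-vector space, with $Z$ replaced by $\theta$. On $\phi(\ring{}/P\ring{})$, multiplication by $\theta$ is by definition the operator $\phi_\theta=\theta+\alpha\tau$ acting on $\calh_P$. I would argue that the $\ring{}$-module $\phi(\ring{}/P\ring{})$ is cyclic: take any nonzero $v\in\calh_P$ not killed by all of the $\FF(\undt{})$-span issue — concretely, the map $\ring{}\to\phi(\ring{}/P\ring{})$, $a\mapsto\phi_a(1)$, is $\ring{}$-linear and surjective (its image is an $\FF(\undt{})[\tau]$-submodule of $\calh_P$ containing $1$, hence all of $\calh_P$ since $\tau$ generates the field extension), so $\phi(\ring{}/P\ring{})\cong\ring{}/\mathfrak{a}$ for an ideal $\mathfrak{a}$, and since $\dim_{\FF(\undt{})}\phi(\ring{}/P\ring{})=d$ the ideal $\mathfrak{a}$ is generated by a monic polynomial of degree $d$, namely $\mathfrak{a}=([\phi(\ring{}/P\ring{})]_{\ring{}})$. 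Thus it remains to identify this monic degree-$d$ polynomial.

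Finally I would pin down the polynomial by the two facts: (a) $\phi_P$ acts as $\rho_\alpha(P)$ on $\phi(\ring{}/P\ring{})$, so for the generator $v$ we have $\phi_{P-\rho_\alpha(P)}(v)=0$, i.e. $P-\rho_\alpha(P)$ annihilates the cyclic module, hence $[\phi(\ring{}/P\ring{})]_{\ring{}}$ divides $P-\rho_\alpha(P)$; and (b) both are monic of degree $d$. Therefore $[\phi(\ring{}/P\ring{})]_{\ring{}}=P-\rho_\alpha(P)$ and $\mathfrak{a}=(P-\rho_\alpha(P))\ring{}$, giving the claimed isomorphism $\phi(\ring{}/P\ring{})\cong\ring{}/(P-\rho_\alpha(P))\ring{}$. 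The main obstacle I anticipate is the cleanest justification of cyclicity and, relatedly, checking that $P-\rho_\alpha(P)$ has degree exactly $d$ — one must note $\deg_\theta\rho_\alpha(P)=0$ since $\rho_\alpha(P)=\operatorname{Res}_\theta(P,\alpha)\in\FF(\undt{})$, and that $P-\rho_\alpha(P)$ is monic of degree $d$ because $d\geq1$ so the constant term $\rho_\alpha(P)$ does not interfere with the leading term; these are routine but should be spelled out. Alternatively, one can bypass cyclicity entirely by invoking Lemma \ref{propositionA1}'s analogue of Lemma \ref{lemmaA5}: the operator $\phi_P-\rho_\alpha(P)\equiv\alpha\tau(\alpha)\cdots\tau^{d-1}(\alpha)(\tau^d-1)\pmod{P}$ has kernel on $K(\undt{})_{\infty}$, or on $\calh_P$, of $\FF(\undt{})$-dimension at most $d$, and dimension count plus annihilation forces equality — but the cyclic-module argument is the more direct route to the stated isomorphism.
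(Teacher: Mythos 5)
There is a genuine gap in your main argument: the map $\ring{}\to\phi(\ring{}/P\ring{})$, $a\mapsto\phi_a(1)$, is \emph{not} surjective in general, i.e.\ $1$ need not generate the cyclic module. Concretely, take $\alpha=-\theta$ and any prime $P\neq\theta$ of degree $d\geq 2$: in $L=\ring{}/P\ring{}$ one has $\phi_\theta(1)=\bar\theta+\bar\alpha\,\tau(1)=\bar\theta-\bar\theta=0$, so the image of $a\mapsto\phi_a(1)$ is just $\FF(\undt{})\cdot 1$, of dimension $1<d$, even though $\rho_\alpha(P)=\pm P(0)\neq 0$ (the lemma's conclusion still holds, but with a generator other than $1$). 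Your justification fails at two points: the image is an $\ring{}$-submodule but there is no reason for it to be stable under $\tau$ alone, and even a $\tau$-stable $\FF(\undt{})$-subspace containing $1$ need not be all of $L$ (e.g.\ $\FF(\undt{})\cdot 1$ is $\tau$-stable, since $\tau$ is $\FF(\undt{})$-linear; note also that $L\cong\FF(\undt{})\otimes_{\FF}\FF_{q^d}$ is an infinite field and $\tau$ is $\mathrm{id}\otimes\mathrm{Frob}_q$, not a $q$-power map on $L$, although your key fact $\tau^d=\mathrm{id}$ on $L$ is correct because $\tau$ generates the cyclic group $\operatorname{Gal}(L/\FF(\undt{}))$ of order $d$).

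Your closing ``alternative'' is the right direction — it is essentially the paper's proof — but as stated it is also incomplete: knowing that $b=P-\rho_\alpha(P)$ annihilates $L$ and that $\dim_{\FF(\undt{})}\operatorname{Ker}(\phi_b)\leq\deg_\theta b=\dim_{\FF(\undt{})}L$ does not yet determine the module structure (a module annihilated by $f^2$ of dimension $2\deg_\theta f$ could be $\ring{}/f\ring{}\oplus\ring{}/f\ring{}$). The paper applies Lemma \ref{lemmaA5} to get $\dim_{\FF(\undt{})}\operatorname{Ker}(\phi_a)\leq\deg_\theta a$ for \emph{every} $a\in\ring{}$, in particular for the divisors of $b$, and then the standard invariant-factor argument over the principal ideal domain $\ring{}$ (as in Goss's proof of Theorem 6.3.2) forces $\phi(L)=\operatorname{Ker}(\phi_b)\cong\ring{}/b\ring{}$. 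So: keep your first step (annihilation by $P-\rho_\alpha(P)$ via Lemma \ref{lemmaA3} and $\tau^d=\mathrm{id}$, which matches the paper), discard the cyclicity-via-$1$ argument, and complete the divisor/invariant-factor argument to obtain cyclicity.
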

 \begin{proof} By Lemma \ref{lemmaA3}, we have:
$$(P-\rho_{\alpha} ( P )).\phi\left(\frac{\ring{}}{ P\ring{}}\right)=\{0\}.$$ 
We set $L=\ring{}/P\ring{}.$  Then $\tau$ induces an automorphism of $L$ and $L^{\tau=1}=\FF(\undt{})$, so that $\tau \in {\rm Gal}(L/\FF(\undt{}))$ is  of order  $d$. Also, $\phi$ induces a morphism of $\FF(\undt{})$-algebras $\phi: \ring{} \rightarrow L[\tau].$ For $a$ in $\ring{}$ we set $\operatorname{Ker}(\phi_a)=\{ x\in L;\phi_a(x)=0\}$. We write $b=P-\rho_{\alpha}(P)$. We notice that $d=\dim_{\FF(\undt{})}(L)= \deg_{\theta}(b)$ and $L=\operatorname{Ker}(\phi_b)$. We have, by Lemma \ref{lemmaA5}, for all $a\in \ring{}$, $\dim_{\FF(\undt{})}(\operatorname{Ker}(\phi_a))\leq \deg_\theta(a)$. This implies (see for example \cite[proof of Theorem 6.3.2]{GOS}) that we have an isomorphism of $\ring{}$-modules
$\operatorname{Ker}(\phi_b)\cong \ring{}/b\ring{}$. 
 \end{proof}
 
Let $P$ be a prime of $A$. By Lemma \ref{propositionA1}, we have $$\left[\phi\left(\frac{\ring{}}{P\ring{}}\right)\right]_{\ring{}}=P-\rho_\alpha(P).$$
The $L$-series attached to $\phi/\ring{},$ denoted by $\mathcal{L}(\phi/\ring{}),$ is the  infinite product running over the primes $P$ of $A$:
\begin{equation}\label{defprodL}
\prod_{P}\, \left[\frac{\ring{}}{P\ring{}}\right]_{\ring{}}\left[\phi\left(\frac{\ring{}}{P\ring{}}\right)\right]_{\ring{}}^{-1}.\end{equation}
 
 \begin{proposition}\label{prodexpansion} Let $\phi$ be a Drinfeld $A[\undt{}]$-module of rank one of parameter $\alpha\in A[\undt{}]\setminus\{ 0\}$.
 The product $\mathcal{L}(\phi/\ring{})$ in (\ref{defprodL}) converges in $K(\undt{})_{\infty}$ and we have
 $$\mathcal{L}(\phi/\ring{})=L(1,\phi).$$
 \end{proposition}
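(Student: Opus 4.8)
The plan is to make each local factor of the product (\ref{defprodL}) completely explicit, to identify it with the corresponding factor of the eulerian product in (\ref{ourlseries}) with $n=1$, and then to handle convergence and the passage from the product to the Dirichlet series inside the complete field $K(\undt{})_{\infty}$. Throughout, the Drinfeld $A[\undt{}]$-module $\phi$ of parameter $\alpha\in A[\undt{}]\setminus\{0\}$ is viewed over $\ring{}=\FF(\undt{})[\theta]$ by extension of scalars, so that Lemma \ref{propositionA1} applies.

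First I would compute the two Fitting invariants. The $\ring{}$-module $\ring{}/P\ring{}$ is cyclic, so $\operatorname{Fitt}_{\ring{}}(\ring{}/P\ring{})=P\ring{}$ and $[\ring{}/P\ring{}]_{\ring{}}=P$. By Lemma \ref{propositionA1} we have an isomorphism of $\ring{}$-modules $\phi(\ring{}/P\ring{})\cong\ring{}/(P-\rho_\alpha(P))\ring{}$; this module is again cyclic and $P-\rho_\alpha(P)$ is monic in $\theta$ (since $P$ is monic and $\rho_\alpha(P)\in\FF[\undt{}]$ has $\theta$-degree $0$), hence $[\phi(\ring{}/P\ring{})]_{\ring{}}=P-\rho_\alpha(P)$. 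Therefore the factor at $P$ in (\ref{defprodL}) is
$$\frac{[\ring{}/P\ring{}]_{\ring{}}}{[\phi(\ring{}/P\ring{})]_{\ring{}}}=\frac{P}{P-\rho_\alpha(P)}=\left(1-\frac{\rho_\alpha(P)}{P}\right)^{-1},$$
which is exactly the local factor at $P$ in the eulerian product (\ref{ourlseries}) for $n=1$; for the finitely many primes $P$ dividing $\alpha$ one has $\rho_\alpha(P)=0$ and both factors equal $1$.

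Next I would check convergence in $K(\undt{})_{\infty}=\FF(\undt{})((\frac{1}{\theta}))$. A nonzero element of $\FF[\undt{}]$ has $v_\infty=0$, so for $P\nmid\alpha$ we get $v_\infty(\rho_\alpha(P)/P)=\deg_\theta P$; since $\FF$ is finite there are finitely many primes of each degree, hence $\rho_\alpha(P)/P\to 0$. By ultrametricity the infinite products $\prod_P(1-\rho_\alpha(P)/P)$ and $\prod_P(1-\rho_\alpha(P)/P)^{-1}$ both converge in $K(\undt{})_{\infty}$, which gives the convergence of $\mathcal L(\phi/\ring{})$ asserted in the statement.

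Finally, the identity $\mathcal L(\phi/\ring{})=L(1,\phi)$ is the standard Euler-product expansion: writing each factor as $(1-\rho_\alpha(P)/P)^{-1}=\sum_{k\geq 0}\rho_\alpha(P)^k/P^k=\sum_{k\geq 0}\rho_\alpha(P^k)/P^k$ and multiplying over all primes $P$, unique factorisation in $A$ together with the multiplicativity $\rho_\alpha(ab)=\rho_\alpha(a)\rho_\alpha(b)$ yields $\prod_P(1-\rho_\alpha(P)/P)^{-1}=\sum_{b\in A_+}\rho_\alpha(b)b^{-1}=L(1,\phi)$, the equality being read in $K(\undt{})_{\infty}$ via the natural embedding $\mathbb T_s(K_\infty)\hookrightarrow K(\undt{})_{\infty}$. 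The only slightly delicate point, and the one I would write out carefully, is to justify this rearrangement in the non-archimedean topology, namely that truncating the product to primes of degree $\leq N$ and the sum to monic polynomials of degree $\leq N$ differ by a quantity whose $v_\infty$ tends to $+\infty$ with $N$; everything else is bookkeeping already provided by the factor-by-factor computation above.
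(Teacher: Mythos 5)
Your argument is correct and follows essentially the same route as the paper: both compute the local factor at each prime $P$ via Lemma \ref{propositionA1} as $\left(1-\frac{\rho_\alpha(P)}{P}\right)^{-1}$ and then identify the product with the eulerian product defining $L(1,\phi)$. The extra verifications you include (monicity of $P-\rho_\alpha(P)$, the degree count giving convergence, and the product-to-Dirichlet-series rearrangement via multiplicativity of $\rho_\alpha$) are details the paper simply delegates to its definition of $L(1,\phi)$ in (\ref{ourlseries}), so they are a welcome but not divergent addition.
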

 \begin{proof}
 We have, by Lemma \ref{propositionA1}, that the quotient 
 $$\frac{\left[\frac{\ring{}}{P\ring{}}\right]_{\ring{}}}{\left[\phi\left(\frac{\ring{}}{P\ring{}}\right)\right]_{\ring{}}}$$ is equal to:
 $$\frac{P}{P-\rho_\alpha(P)}=\left(1-\frac{\rho_\alpha(P)}{P}\right)^{-1}.$$
 The factors of the infinite product defining $\mathcal{L}(\phi/\ring{})$ agree with the eulerian factors of $L(1,\phi)$.
 Since the product $L(1,\phi)$ converges in $K(\undt{})_{\infty}$, this implies that the product $\mathcal{L}(\phi/\ring{})$ converges to $L(1,\phi)$ in $K(\undt{})_{\infty}$.
 \end{proof}

\subsection{The class number formula}\label{classnumberformula}
 An element $$f=\sum_{i\geq i_0}f_i\theta^{-i}\in K(\undt{})_{\infty}\setminus\{ 0\},\quad f_i\in \FF(\undt{}), f_{i_0}\not =0,$$
 is {\em monic} if the leading coefficient $f_{i_0}$ is equal to one.
 We shall write $$[\ring{}:\calu_\phi]_{\ring{}}$$ ($\ring{}=\FF(\undt{})[\theta]$) for the unique monic element $f$ in $K(\undt{})_{\infty}$ such that $\calu_\phi =f\ring{}$,
the existence of which is guaranteed by the first part of Proposition \ref{proposition2}.

\subsubsection{Estimating the dimension of $\calv_\phi$}\label{calv}
Let us consider the following $\ring{}$-module:
$$\calv_{\phi}=\frac{\phi(K(\undt{})_{\infty})}{\phi(\ring{})+\exp_{\phi}(K(\undt{})_{\infty})}.$$

Just as in the proof of the second part of Proposition \ref{proposition2}, we see in fact that for all $n\geq 1$, $$K(\undt{})_{\infty}=\FF(\undt{})\mathbb{T}_s(K_\infty)+\mathfrak{m}_{K(\undt{})_{\infty}}^n.$$
For all $n$ big enough, $\exp_\phi$ induces an isometric automorphism of $\mathfrak{m}_{K(\undt{})_{\infty}}^n$ (for instance, it suffices 
to take $n\geq u(\alpha)+1$). Therefore, for such a choice of $n$, we have the isomorphism of $\FF(\undt{})$-vector spaces
$$
\calv_{\phi}=
\frac{\FF(\undt{})\mathbb{T}_s(K_\infty)+\mathfrak{m}_{K(\undt{})_{\infty}}^n}{\ring{}+\exp_{\phi}(\FF(\undt{})\mathbb{T}_s(K_\infty))+\mathfrak{m}_{K(\undt{})_{\infty}}^n}\cong
\frac{\FF(\undt{})\mathbb{T}_s(K_\infty)}{\ring{}+\exp_{\phi}(\FF(\undt{})\mathbb{T}_s(K_\infty))}.$$
This  implies that we have an isomorphism of $\ring{}$-modules:
\begin{equation}\label{secondisom}
\calv_{\phi}\simeq H_\phi\otimes_{\FF[\undt{}]}\FF(\undt{})=\calh_\phi.
\end{equation}
 Note that $\dim_{\FF(\undt{})}(\calh_\phi)\leq u(\alpha)$ by Corollary \ref{corollaryHszero}.
This yields:
\begin{corollary}\label{corollaryHszerobis} The $\ring{}$-module $\calv_{\phi}$ is a finite dimensional $\FF(\undt{})$-vector space
of dimension at most $u(\alpha)$.\end{corollary}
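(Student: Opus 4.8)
The plan is to establish the isomorphism $\calv_\phi \simeq \calh_\phi$ of $\ring{}$-modules and then invoke Corollary \ref{corollaryHszero}, which already bounds $\dim_{\FF(\undt{})}\calh_\phi$ by $u(\alpha)$. The displayed computation preceding the statement does essentially all the work, so my proof will mostly consist of justifying each step of that chain of isomorphisms carefully. First I would recall that $\FF(\undt{})\mathbb{T}_s(K_\infty)$ is dense in $K(\undt{})_\infty$ — this was already used in the proof of Proposition \ref{proposition2}(2) — so that $K(\undt{})_\infty = \FF(\undt{})\mathbb{T}_s(K_\infty) + \mathfrak{m}_{K(\undt{})_\infty}^n$ for every $n\geq 1$.

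Next I would fix $n\geq u(\alpha)+1$ so that, by Lemma \ref{Nalpha} (applied over $K(\undt{})_\infty$, or more directly by the estimate $\|D_m^{-1}\alpha\tau(\alpha)\cdots\tau^{m-1}(\alpha)\| = \|\alpha\|^{(q^m-1)/(q-1)}q^{-mq^m}$), the exponential function $\exp_\phi$ restricts to an isometric automorphism of $\mathfrak{m}_{K(\undt{})_\infty}^n$. With this choice, I would write
$$
\calv_\phi = \frac{\FF(\undt{})\mathbb{T}_s(K_\infty)+\mathfrak{m}_{K(\undt{})_\infty}^n}{\ring{}+\exp_\phi(\FF(\undt{})\mathbb{T}_s(K_\infty))+\mathfrak{m}_{K(\undt{})_\infty}^n},
$$
using that both numerator and denominator are unchanged (the numerator equals $K(\undt{})_\infty$ by density, and in the denominator $\exp_\phi(\mathfrak{m}_{K(\undt{})_\infty}^n)=\mathfrak{m}_{K(\undt{})_\infty}^n$). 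A standard isomorphism theorem argument then collapses the $\mathfrak{m}_{K(\undt{})_\infty}^n$ from both numerator and denominator, yielding
$$
\calv_\phi \cong \frac{\FF(\undt{})\mathbb{T}_s(K_\infty)}{\ring{}+\exp_\phi(\FF(\undt{})\mathbb{T}_s(K_\infty))};
$$
one must check that the terms removed are genuinely inside the pieces being quotiented, which follows since $\mathfrak{m}_{K(\undt{})_\infty}^n \subset \mathfrak{m}_{K(\undt{})_\infty}^{u(\alpha)+1} = \FF(\undt{})\mathfrak{m}_{\mathbb{T}_s(K_\infty)}^{u(\alpha)+1} \subset \FF(\undt{})\mathbb{T}_s(K_\infty)$ — the same identity exploited in Proposition \ref{proposition2}.

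Finally I would identify the right-hand side with $\calh_\phi$. Since localization $\FF(\undt{})\otimes_{\FF[\undt{}]}(-)$ is exact and commutes with $\exp_\phi$ (as $\exp_\phi$ is $\FF[\undt{}]$-linear), applying it to the definition $H_\phi = \phi(\mathbb{T}_s(K_\infty))/(\exp_\phi(\mathbb{T}_s(K_\infty))+\phi(A[\undt{}]))$ gives precisely
$$
\calh_\phi = H_\phi\otimes_{\FF[\undt{}]}\FF(\undt{}) \cong \frac{\phi(\FF(\undt{})\mathbb{T}_s(K_\infty))}{\exp_\phi(\FF(\undt{})\mathbb{T}_s(K_\infty))+\phi(\ring{})},
$$
which is the $\ring{}$-module just obtained for $\calv_\phi$. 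Combining, $\calv_\phi \simeq \calh_\phi$ as $\ring{}$-modules, and then $\dim_{\FF(\undt{})}\calv_\phi = \dim_{\FF(\undt{})}\calh_\phi \leq u(\alpha)$ by Corollary \ref{corollaryHszero}. The one point requiring genuine care — the main obstacle — is the bookkeeping in the collapsing step: making sure that $\exp_\phi(\FF(\undt{})\mathbb{T}_s(K_\infty)) + \mathfrak{m}_{K(\undt{})_\infty}^n = \exp_\phi(\FF(\undt{})\mathbb{T}_s(K_\infty))$ once $n$ is large, equivalently that $\mathfrak{m}_{K(\undt{})_\infty}^n \subset \exp_\phi(\FF(\undt{})\mathbb{T}_s(K_\infty))$, which uses that $\exp_\phi$ is an isometric \emph{surjection} of $\mathfrak{m}_{K(\undt{})_\infty}^n$ onto itself and the inclusion $\mathfrak{m}_{K(\undt{})_\infty}^n \subset \FF(\undt{})\mathfrak{m}_{\mathbb{T}_s(K_\infty)}^n$; everything else is formal.
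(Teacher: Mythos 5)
Your overall route is exactly the paper's: collapse $\mathfrak{m}_{K(\undt{})_\infty}^n$ using the density identity $K(\undt{})_\infty=\FF(\undt{})\mathbb{T}_s(K_\infty)+\mathfrak{m}_{K(\undt{})_\infty}^n$ and the fact that $\exp_\phi$ is an isometric automorphism in the relevant range, identify the resulting quotient with $\calh_\phi$ by flatness of $\FF[\undt{}]\to\FF(\undt{})$, and invoke Corollary \ref{corollaryHszero}. However, the justification you give for the collapsing step rests on a false identification: you claim $\mathfrak{m}_{K(\undt{})_\infty}^{u(\alpha)+1}=\FF(\undt{})\mathfrak{m}_{\mathbb{T}_s(K_\infty)}^{u(\alpha)+1}$, hence $\mathfrak{m}_{K(\undt{})_\infty}^{n}\subset\FF(\undt{})\mathbb{T}_s(K_\infty)$ and $\exp_\phi(\FF(\undt{})\mathbb{T}_s(K_\infty))+\mathfrak{m}_{K(\undt{})_\infty}^{n}=\exp_\phi(\FF(\undt{})\mathbb{T}_s(K_\infty))$. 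These statements are not true: $\FF(\undt{})\mathbb{T}_s(K_\infty)$ consists of those series $\sum_i x_i\theta^{-i}$ whose coefficients $x_i\in\FF(\undt{})$ admit a common denominator in $\FF[\undt{}]$, whereas $K(\undt{})_\infty=\FF(\undt{})((\frac{1}{\theta}))$ is the completion and contains series with denominators of unbounded degree; any such series with $v_\infty\geq n$ lies in $\mathfrak{m}_{K(\undt{})_\infty}^n$ but not in $\FF(\undt{})\mathbb{T}_s(K_\infty)$, hence not in $\exp_\phi(\FF(\undt{})\mathbb{T}_s(K_\infty))$ either. The paper itself only records the intersection identity $\FF(\undt{})\mathfrak{m}_{\mathbb{T}_s(K_\infty)}^{u(\alpha)+1}=\mathfrak{m}_{K(\undt{})_\infty}^{u(\alpha)+1}\cap\FF(\undt{})\mathbb{T}_s(K_\infty)$ (proof of Proposition \ref{proposition2}), and that is the correct statement.

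The step is repairable, and the repaired argument is the paper's. Set $X=\FF(\undt{})\mathbb{T}_s(K_\infty)$, $M=\mathfrak{m}_{K(\undt{})_\infty}^n$ with $n\geq u(\alpha)+1$, and $D=\ring{}+\exp_\phi(X)\subset X$. Then $\calv_\phi=(X+M)/(D+M)$ (here you do use, correctly, that $\exp_\phi$ is an isometric automorphism of $M$, so $\ring{}+\exp_\phi(K(\undt{})_\infty)=D+M$), and the second isomorphism theorem gives $\calv_\phi\cong X/\left(X\cap(D+M)\right)$. What is actually needed is therefore only $X\cap(D+M)=D$, and this follows from $X\cap M=\FF(\undt{})\mathfrak{m}_{\mathbb{T}_s(K_\infty)}^n=\exp_\phi\left(\FF(\undt{})\mathfrak{m}_{\mathbb{T}_s(K_\infty)}^n\right)\subset\exp_\phi(X)$, where the middle equality uses that $\exp_\phi$ and $\log_\phi$ preserve $\mathbb{T}_s(K_\infty)$ and are mutually inverse isometries on $\mathfrak{m}_{\mathbb{T}_s(K_\infty)}^n$ (Lemma \ref{Nalpha}); no inclusion of the full $M$ into $\exp_\phi(X)$ is needed, nor is one available. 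With this correction, your final identification of $X/D$ with $\calh_\phi$ by exactness of localization and the appeal to Corollary \ref{corollaryHszero} are fine, and the proof coincides with the one in the paper.
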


\subsubsection{The formula}
The next Theorem directly follows from Theorem \ref{theoremA1}, proved in the appendix by Florent Demeslay,
by means of Proposition  \ref{prodexpansion} and the isomorphism (\ref{secondisom}).

\begin{theorem}[The class number formula]
\label{theorem2}
\noindent Let $\phi$ be a Drinfeld $A[\undt{}]$-module of rank one of parameter $\alpha\in A[\undt{}]\setminus\{ 0\}$.
The following equality holds in $K(\undt{})_{\infty}$:
$$L(1, \phi) =[\calh_{\phi}]_{\ring{}} [\ring{}: \calu_\phi]_{\ring{}}.$$
\end{theorem}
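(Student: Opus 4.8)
The plan is to obtain Theorem~\ref{theorem2} by assembling three facts that are, by this point, already available: the Euler product expansion of Proposition~\ref{prodexpansion}, the abstract class number formula of Theorem~\ref{theoremA1} (Demeslay's appendix), and the identification of class modules given by the isomorphism~(\ref{secondisom}). First I would translate the statement into one about the Drinfeld $\ring{}$-module attached to $\phi$: since $\alpha\in A[\undt{}]\subset\ring{}$, we may regard $\phi$ as a Drinfeld $\ring{}$-module of rank one over $K(\undt{})_{\infty}$, with $\phi_\theta=\theta+\alpha\tau$, exactly as in \S\ref{localfactors}. By Proposition~\ref{prodexpansion}, the infinite product $\mathcal{L}(\phi/\ring{})$ of~(\ref{defprodL}) converges in $K(\undt{})_{\infty}$ and is equal to $L(1,\phi)$; hence it suffices to prove the equality
$$\mathcal{L}(\phi/\ring{})=[\calh_\phi]_{\ring{}}\,[\ring{}:\calu_\phi]_{\ring{}}.$$

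Next I would invoke Theorem~\ref{theoremA1}. Its applicability to the present $\phi$ rests on two regularity facts established earlier: that $\calv_\phi$ is a finite-dimensional $\FF(\undt{})$-vector space, which is Corollary~\ref{corollaryHszerobis} (so that $[\calv_\phi]_{\ring{}}$ is defined as in \S\ref{localfactors}), and that $\calu_\phi$ is a free $\ring{}$-module of rank one, which is Proposition~\ref{proposition2}(1) (so that the regulator $[\ring{}:\calu_\phi]_{\ring{}}$ is a well-defined monic element of $K(\undt{})_{\infty}$); Proposition~\ref{proposition2}(2) moreover describes $\calu_\phi$ as $\{f\in K(\undt{})_{\infty};\exp_\phi(f)\in\ring{}\}$, which is the form of the unit module used in the appendix. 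Granting these inputs, Theorem~\ref{theoremA1} gives
$$\mathcal{L}(\phi/\ring{})=[\calv_\phi]_{\ring{}}\,[\ring{}:\calu_\phi]_{\ring{}}.$$

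Finally, I would replace $\calv_\phi$ by the class module. The isomorphism of $\ring{}$-modules $\calv_\phi\simeq\calh_\phi$ recorded in~(\ref{secondisom}) shows that $[\calv_\phi]_{\ring{}}=[\calh_\phi]_{\ring{}}$, since the monic generator of the Fitting ideal depends only on the isomorphism class of the module; substituting this into the previous display yields $L(1,\phi)=[\calh_\phi]_{\ring{}}\,[\ring{}:\calu_\phi]_{\ring{}}$, as claimed. The genuinely substantive step here is Theorem~\ref{theoremA1} itself --- a Taelman-type trace/determinant formula for the relevant nuclear operators, carried out over $\ring{}=\FF(\undt{})[\theta]$ in the appendix --- while the work proper to this section is the bookkeeping that packages $\phi$ as a Drinfeld $\ring{}$-module satisfying the hypotheses of that result and that converts the ``analytic'' class module $\calv_\phi$ into the finitely generated class module $\calh_\phi$; the only point requiring care on this side is making sure the finiteness of $\calv_\phi$ and the rank-one freeness of $\calu_\phi$ hold for \emph{every} nonzero parameter $\alpha\in A[\undt{}]$, which is precisely what Corollary~\ref{corollaryHszerobis} and Proposition~\ref{proposition2} provide.
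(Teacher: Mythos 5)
Your proposal is correct and follows essentially the same route as the paper: the paper deduces Theorem \ref{theorem2} directly from Demeslay's Theorem \ref{theoremA1} via Proposition \ref{prodexpansion} and the isomorphism (\ref{secondisom}), with Proposition \ref{proposition2} supplying the identification of $\calu_\phi$ with $\exp_\phi^{-1}(\ring{})$ and its rank-one freeness, exactly as you assemble the pieces.
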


Let $\phi$ be a Drinfeld $A[\undt{}]$-module of rank one over $\mathbb{T}_s$ with parameter $\alpha\in A[\undt{}]\setminus\{ 0\}$.
The following Corollary to the class number formula will be crucial:
\begin{corollary} 
\label{lemma2}
Let $\phi$ be a Drinfeld $A[\undt{}]$-module of rank one of parameter $\alpha\in A[\undt{}]\setminus\{ 0\}$.
We have:
$$\exp_{\phi}(L(1, \phi))\in A[\undt{}].$$
\end{corollary}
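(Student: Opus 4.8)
The plan is to read off $\exp_{\phi}(L(1,\phi))\in\ring{}$ directly from the class number formula, and then to sharpen membership in $\ring{}=\FF(\undt{})[\theta]$ to membership in $A[\undt{}]$ by an integrality argument that uses the finiteness of $\FF$. First I would note that the two factors on the right of Theorem~\ref{theorem2} are of a very controlled nature: $[\calh_{\phi}]_{\ring{}}$ is by definition the characteristic polynomial in $\theta$ of multiplication by $\theta$ on the finite-dimensional $\FF(\undt{})$-vector space $\calv_{\phi}\cong\calh_{\phi}$ (Corollary~\ref{corollaryHszerobis} and the isomorphism (\ref{secondisom})), so $[\calh_{\phi}]_{\ring{}}\in\ring{}$; and $[\ring{}:\calu_\phi]_{\ring{}}$ is, by Proposition~\ref{proposition2}(1) together with the definition in \S\ref{classnumberformula}, a generator of the free rank-one $\ring{}$-module $\calu_\phi$. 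Consequently $L(1,\phi)=[\calh_{\phi}]_{\ring{}}\,[\ring{}:\calu_\phi]_{\ring{}}$ lies in $\ring{}\cdot\calu_\phi=\calu_\phi$, and the description $\calu_\phi=\{f\in K(\undt{})_{\infty}:\exp_{\phi}(f)\in\ring{}\}$ of Proposition~\ref{proposition2}(2) gives $\exp_{\phi}(L(1,\phi))\in\ring{}$.

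To conclude I would bring $\exp_{\phi}(L(1,\phi))$ into the Tate algebra. By construction $L(1,\phi)$ is the unit of norm one of $\mathbb{T}_s(K_{\infty})$ defined by (\ref{ourlseries}), and since $\alpha\in A[\undt{}]\subset\mathbb{T}_s(K_{\infty})$ the exponential $\exp_{\phi}$ maps $\mathbb{T}_s(K_{\infty})$ into itself; hence $\exp_{\phi}(L(1,\phi))\in\mathbb{T}_s(K_{\infty})$. The corollary then reduces to the identity $\mathbb{T}_s(K_{\infty})\cap\ring{}=A[\undt{}]$ inside $K(\undt{})_{\infty}=\FF(\undt{})((\frac{1}{\theta}))$. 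To obtain it, recall from \S\ref{tatealgebras} that $\mathbb{T}_s(K_{\infty})=\FF[\undt{}]((\frac{1}{\theta}))$ and that Laurent expansions in $\frac{1}{\theta}$ with coefficients in $\FF(\undt{})$ are unique: an element lying in $\ring{}=\FF(\undt{})[\theta]$ has no negative powers of $\theta$, while lying in $\FF[\undt{}]((\frac{1}{\theta}))$ forces every one of its $\theta$-coefficients into $\FF[\undt{}]$, so such an element lies in $\FF[\undt{}][\theta]=A[\undt{}]$.

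The argument is short, and the only point deserving real attention is this last one; it is worth stressing that it genuinely uses the finiteness of $\FF$, since over an infinite coefficient field a power series in $\undt{}$ with bounded but nonvanishing coefficients need not be a polynomial, and then $\mathbb{T}_s(K_{\infty})\cap\ring{}$ would be strictly larger than $A[\undt{}]$. Apart from this, everything is straightforward bookkeeping of the class number formula together with the definitions of $\calh_{\phi}$ and $\calu_\phi$, so no further obstacle is expected.
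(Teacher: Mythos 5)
Your argument is correct and is essentially the paper's own proof: the class number formula gives $L(1,\phi)\in\calu_\phi$, part (2) of Proposition \ref{proposition2} gives $\exp_\phi(L(1,\phi))\in\ring{}$, and combining this with $\exp_\phi(\mathbb{T}_s(K_\infty))\subset\mathbb{T}_s(K_\infty)$ you conclude via $\ring{}\cap\mathbb{T}_s(K_\infty)=A[\undt{}]$, which you justify (where the paper leaves it implicit) through $\mathbb{T}_s(K_\infty)=\FF[\undt{}]((\frac{1}{\theta}))$ and uniqueness of Laurent expansions in $\frac{1}{\theta}$. One small correction that does not affect validity: your closing claim that this last step genuinely uses the finiteness of $\FF$ is mistaken, since the coefficients of an element of the Tate algebra tend to zero in the Gauss norm, which is trivial on the residue field, so they are eventually zero (and the intersection is still $A[\undt{}]$) whether or not $\FF$ is finite.
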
 
 \begin{proof} By definition, $[\ring{}:\calu_\phi]_{\ring{}}\ring{}= \calu_\phi$ and obviously,
 $[\calh_\phi]_{\ring{}}\in \ring{}$. Thus:
 $$L(1,\phi)\in \calu_\phi.$$ By Part (2) of Proposition \ref{proposition2}, 
 $\exp_\phi(L(1,\phi))\in \ring{}$. At once, by construction, $L(1,\phi)\in\mathbb{T}_s(K_\infty)$
 so that $\exp_\phi(L(1,\phi))\in \mathbb{T}_s(K_\infty)$. But then,
 $$\exp_\phi(L(1,\phi))\in \ring{}\cap\mathbb{T}_s(K_{\infty})=A[\undt{}].$$
 \end{proof}

\begin{Remark}\label{caserinfq-1}{\em If $\alpha$ is as in (\ref{alpha2}) and $0\leq r\leq q-1$ ($r=-v_{\infty}(\alpha)$) we have that $L(1,\phi)-1\in\mathfrak{m}_{\mathbb{T}_s(K_\infty)}$ (see \S \ref{sectionNalpha}).
 Since $\exp_\phi$ is an isometric automorphism of $D_{\mathbb{T}_s}(0,q^{\frac{q-r}{q-1}})$ we also get 
 $\exp_\phi(L(1,\phi))-1\in\mathfrak{m}_{\mathbb{T}_s(K_\infty)}$ but $\ring{}\cap \mathfrak{m}_{\mathbb{T}_s(K_\infty)}=\{0\}$.
 We have obtained the identity \begin{equation}\label{identityrinfquminusone}\exp_\phi(L(1,\phi))=1.\end{equation}
 This can be rewritten as
 \begin{equation}\label{identityrinfquminusone2}L(1,\phi)=\log_\phi(1)\end{equation}
 because $1=\|1\|<q^{\frac{q-r}{q-1}}$ thanks again to the hypothesis on $r$. Similar formulas have been observed
 by Perkins in \cite{PER1}.}\end{Remark}

\subsubsection{The circular unit module} This is the sub-$A[\undt{}]$-module
 $$U^c_\phi= L(1, \phi)A[\undt{}]\subset U_\phi.$$ 
 \begin{proposition}\label{lemma3} 
 Let $\phi$ be a Drinfeld $A[\undt{}]$-module of rank one of parameter $\alpha\in A[\undt{}]\setminus\{ 0\}$.
The modules 
 $\frac{ U_{\phi}}{U^c_\phi}$ and $H_{\phi}$ are  finitely generated $\FF[\undt{}]$-modules  of equal rank at most $u(\alpha).$
 \end{proposition}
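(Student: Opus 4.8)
\emph{Plan of proof.} The plan is to realize both $\frac{U_\phi}{U^c_\phi}$ and $H_\phi$ — the latter after extension of scalars — as the \emph{same} quotient $A[\undt{}]/v_0A[\undt{}]$ for a suitable $v_0\in A[\undt{}]$, and then to use the class number formula (Theorem \ref{theorem2}) to identify $v_0$, up to an $\FF^\times$-scalar, with the characteristic polynomial $[\calh_\phi]_{\ring{}}$. Once $v_0$ is known to be a scalar multiple of a \emph{monic} polynomial in $\theta$, finite generation over $\FF[\undt{}]$, the equality of the two ranks, and the bound $u(\alpha)$ all drop out at once.

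First I would fix, by Corollary \ref{remarkfloric}, a generator $\epsilon\in\mathbb{T}_s(K_\infty)$ of the free rank-one $A[\undt{}]$-module $U_\phi$, so that $U_\phi=\epsilon A[\undt{}]$. Since $L(1,\phi)\in\mathbb{T}_s(K_\infty)$ and $\exp_\phi(L(1,\phi))\in A[\undt{}]$ by Corollary \ref{lemma2}, we have $L(1,\phi)\in U_\phi$ (this is also built into the definition of $U^c_\phi$), hence $L(1,\phi)=\epsilon v_0$ for a unique $v_0\in A[\undt{}]\setminus\{0\}$, and the isomorphism $A[\undt{}]\xrightarrow{\sim}U_\phi$, $a\mapsto\epsilon a$, carries $v_0A[\undt{}]$ onto $U^c_\phi$, whence $\frac{U_\phi}{U^c_\phi}\cong A[\undt{}]/v_0A[\undt{}]$. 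Extending scalars gives $\calu_\phi=\FF(\undt{})U_\phi=\epsilon\ring{}$. Writing $\epsilon=\sum_{j\geq j_0}e_j\theta^{-j}$ with $e_j\in\FF[\undt{}]$ (legitimate because $\mathbb{T}_s(K_\infty)=\FF[\undt{}]((\frac{1}{\theta}))$, cf. \S\ref{tatealgebras}) and $e:=e_{j_0}\neq 0$, the monic generator of $\calu_\phi$ is $[\ring{}:\calu_\phi]_{\ring{}}=e^{-1}\epsilon$. Substituting this into the class number formula $L(1,\phi)=[\calh_\phi]_{\ring{}}[\ring{}:\calu_\phi]_{\ring{}}$ and cancelling $\epsilon$ in the domain $K(\undt{})_\infty$ yields $v_0=e^{-1}[\calh_\phi]_{\ring{}}$.

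The heart of the argument — and the step I expect to be the main obstacle — is to show that $e\in\FF^\times$, equivalently that the leading coefficient of $v_0$ as a polynomial in $\theta$ is a \emph{unit} of $\FF[\undt{}]$ and not merely an element of it; without this, $A[\undt{}]/v_0A[\undt{}]$ need not even be finitely generated over $\FF[\undt{}]$ (for instance $\FF[t][\theta]/(t\theta+1)$ is not). Here I would use that $[\calh_\phi]_{\ring{}}$ is monic in $\theta$, of degree $d:=\dim_{\FF(\undt{})}\calh_\phi$: the relation $v_0=e^{-1}[\calh_\phi]_{\ring{}}$ then shows that the leading $\theta$-coefficient of $v_0$ is exactly $e^{-1}$, and since $v_0\in\FF[\undt{}][\theta]$ we get $e^{-1}\in\FF[\undt{}]$; combined with $e\in\FF[\undt{}]$ this forces $e\in\FF[\undt{}]^\times=\FF^\times$. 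Consequently $v_0$ is an $\FF^\times$-multiple of the monic polynomial $[\calh_\phi]_{\ring{}}$ (in particular $[\calh_\phi]_{\ring{}}\in A[\undt{}]$), and $A[\undt{}]/v_0A[\undt{}]=\FF[\undt{}][\theta]/\bigl([\calh_\phi]_{\ring{}}\bigr)$ is a \emph{free} $\FF[\undt{}]$-module with basis the classes of $1,\theta,\dots,\theta^{d-1}$.

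It then remains to assemble the conclusions. By the previous paragraph $\frac{U_\phi}{U^c_\phi}$ is finitely generated over $\FF[\undt{}]$, of rank $d$. On the other hand $H_\phi$ is finitely generated over $\FF[\undt{}]$ by Corollary \ref{corollaryHszero}, and its rank equals $\dim_{\FF(\undt{})}\calh_\phi=d$ as well, since the degree of a characteristic polynomial is the dimension of the underlying space. Finally Corollary \ref{corollaryHszero} gives $d=\dim_{\FF(\undt{})}\calh_\phi\leq u(\alpha)$, which is the asserted bound, and this completes the plan.
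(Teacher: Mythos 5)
Your argument is correct, but it follows a genuinely different route from the paper's. The paper proves Proposition \ref{lemma3} without the class number formula, by purely structural means: from Lemma \ref{lemma1} and the fact that $L(1,\phi)$ is a unit of norm one it gets the decomposition $\mathbb{T}_s(K_\infty)=\theta^{-u(\alpha)}U^c_\phi\oplus \nalpha$, hence the exact sequence (\ref{anotherusefulexactsequence}) whose middle term is free of rank $u(\alpha)$, and compares it with the exact sequence of Remark \ref{exactseq1} coming from $\exp_\phi$; additivity of ranks across the two sequences (which share the term $\mathbb{T}_s(K_\infty)/(U_\phi\oplus\nalpha)$) gives equality of the ranks of $U_\phi/U^c_\phi$ and $H_\phi$ and the bound $u(\alpha)$. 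You instead invoke Theorem \ref{theorem2} together with Corollaries \ref{remarkfloric} and \ref{lemma2} to write $U_\phi/U^c_\phi\cong A[\undt{}]/v_0A[\undt{}]$ with $v_0=e^{-1}[\calh_\phi]_{\ring{}}$, and your leading-coefficient argument (using $\mathbb{T}_s(K_\infty)=\FF[\undt{}]((\tfrac1\theta))$ and monicity of $[\calh_\phi]_{\ring{}}$) correctly forces $e\in\FF^\times$; this is not circular, since Theorem \ref{theorem2} is deduced from the appendix independently of this proposition. The trade-off: your route leans on the deep class number formula but delivers more — freeness of $U_\phi/U^c_\phi$ over $\FF[\undt{}]$, the integrality $[\calh_\phi]_{\ring{}}\in A[\undt{}]$, and in substance Corollary \ref{theorem41} and Remark \ref{floric} all come out of the same computation — whereas the paper's proof is elementary, keeps this proposition logically independent of Theorem \ref{theorem2}, and reserves the class number formula for the subsequent deduction of Corollary \ref{theorem41} (it also only gets torsion-freeness here, upgrading to freeness later in Remark \ref{floric}).
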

 \begin{proof} By Lemma \ref{lemma1}, we have a direct sum of $\FF[\undt{}]$-modules:
  $$\mathbb T_s(K_{\infty}) = \theta^{-u(\alpha)}U^c_\phi \oplus \nalpha .$$
  Then, we have an exact sequence  of $\FF[\undt{}]$-modules:
  \begin{equation}\label{anotherusefulexactsequence}0\rightarrow \frac{U_{\phi}}{U^c_\phi}\rightarrow \frac{\theta^{-u(\alpha)}U^c_\phi \oplus \nalpha }{U^c_\phi \oplus \nalpha }\rightarrow \frac{\mathbb T_s(K_{\infty})}{ U_{\phi}\oplus \nalpha } \rightarrow 0.
\end{equation} 
 Observe that the $\FF[\undt{}]$-module in the middle is free of rank $u(\alpha).$ Thus $\frac{U_{\phi}}{U^c_\phi}$ is a finitely generated torsion-free $\FF[\undt{}]$-module. of rank not bigger than $u(\alpha).$
 
Recall from Remark \ref{exactseq1} that $\exp_{\phi}$ induces an exact sequence of finitely generated $\FF[\undt{}]$-modules:
  $$0\rightarrow \frac{\mathbb T_s(K_{\infty})}{U_{\phi}\oplus \nalpha }\rightarrow \frac{\mathbb T_s(K_{\infty})}{A[\undt{}]\oplus \nalpha }\rightarrow H_{\phi} \rightarrow 0.$$ Since there is an isomorphism of $\FF[\undt{}]$-modules
  $$\frac{\theta^{-u(\alpha)}U^c_\phi \oplus \nalpha }{U^c_\phi \oplus \nalpha }\cong \frac{\mathbb T_s(K_{\infty})}{A[\undt{}]\oplus \nalpha }$$
the modules $\frac{U_{\phi}}{U^c_\phi}$ and $H_{\phi}$ have the same rank over $\FF[\undt{}]$. 
\end{proof}
\begin{Remark}{\em In particular, we obtain $U_\phi=U_\phi^c$ if $r<2q-1$.}\end{Remark}

We deduce, from Theorem \ref{theorem2} (with $\ring{}=\FF(\undt{})[\theta]$, $\calh_\phi=\FF(\undt{})\otimes_{\FF[\undt{}]}H_\phi$ and $\calu_\phi^c=\FF(\undt{})U_\phi^c$), the following Corollary.

\begin{corollary}
 \label{theorem41} Let $\phi$ be a Drinfeld $A[\undt{}]$-module of rank one of parameter $\alpha\in A[\undt{}]\setminus\{ 0\}$.
We have
 $$[\calh_{\phi}]_{\ring{}}= [\calu_\phi:\calu^c_\phi]_{\ring{}}.$$
  \end{corollary}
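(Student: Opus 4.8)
The plan is to apply the class number formula (Theorem \ref{theorem2}) twice, once with the full unit module $\calu_\phi$ and once after rescaling to compare with the circular unit module $\calu_\phi^c=\FF(\undt{})U_\phi^c=L(1,\phi)\ring{}$. The point is that $L(1,\phi)$ is itself, by construction, a generator of the free rank-one $\ring{}$-module $\calu_\phi^c$, so we already know $L(1,\phi)$ up to a unit of $\ring{}$; Theorem \ref{theorem2} tells us what it is up to the monic normalization inside $\calu_\phi$.

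First I would recall that by Proposition \ref{proposition2}(1) the module $\calu_\phi$ is free of rank one over $\ring{}$, with monic generator $[\ring{}:\calu_\phi]_{\ring{}}$, i.e. $\calu_\phi=[\ring{}:\calu_\phi]_{\ring{}}\cdot\ring{}$. On the other hand $\calu_\phi^c=L(1,\phi)\ring{}$ by definition of the circular unit module. Since $\calu_\phi^c\subseteq\calu_\phi$ and both are free of rank one, the quotient $\calu_\phi/\calu_\phi^c$ is a torsion $\ring{}$-module whose Fitting ideal is generated by the ratio of generators; that is, $[\calu_\phi:\calu_\phi^c]_{\ring{}}$ is, up to $\FF(\undt{})^\times$, equal to $L(1,\phi)/[\ring{}:\calu_\phi]_{\ring{}}$, and one normalizes it to be monic. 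Concretely, writing $L(1,\phi)=c\cdot[\ring{}:\calu_\phi]_{\ring{}}$ with $c\in\ring{}$ (possible since $L(1,\phi)\in\calu_\phi$ by the proof of Corollary \ref{lemma2}), the module $\calu_\phi/\calu_\phi^c$ is isomorphic to $\ring{}/c\ring{}$, so its Fitting ideal is $c\ring{}$ and $[\calu_\phi:\calu_\phi^c]_{\ring{}}$ is the monic normalization of $c$.

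Then I would substitute the class number formula $L(1,\phi)=[\calh_\phi]_{\ring{}}[\ring{}:\calu_\phi]_{\ring{}}$ from Theorem \ref{theorem2}. Comparing with $L(1,\phi)=c\cdot[\ring{}:\calu_\phi]_{\ring{}}$ and using that $[\ring{}:\calu_\phi]_{\ring{}}\neq 0$, we get $c=[\calh_\phi]_{\ring{}}$ exactly (not merely up to a scalar), since both sides are polynomials in $\ring{}$ and the cancellation is legitimate in the domain $\ring{}$. As $[\calh_\phi]_{\ring{}}$ is already monic (it is a characteristic polynomial, by the description of $[M]_R$ in \S\ref{localfactors}), it coincides with the monic normalization of $c$, which is $[\calu_\phi:\calu_\phi^c]_{\ring{}}$. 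This gives the desired equality $[\calh_\phi]_{\ring{}}=[\calu_\phi:\calu_\phi^c]_{\ring{}}$.

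The only slightly delicate point is bookkeeping with monic normalizations: one must be careful that $[\ring{}:\calu_\phi]_{\ring{}}$ lives in $K(\undt{})_{\infty}$ (it is monic as a Laurent series in $1/\theta$, not necessarily a polynomial), while $[\calh_\phi]_{\ring{}}$ and $[\calu_\phi:\calu_\phi^c]_{\ring{}}$ are monic polynomials in $\theta$; the product of the first two must land back in $\calu_\phi\subset\mathbb{T}_s(K_\infty)$, which is consistent because $L(1,\phi)\in\calu_\phi$. Tracking that $c=L(1,\phi)/[\ring{}:\calu_\phi]_{\ring{}}$ is genuinely an element of $\ring{}$ (a polynomial), rather than just of $K(\undt{})_{\infty}$, is where one uses that $\calu_\phi^c\subseteq\calu_\phi$ as $\ring{}$-modules — and this is exactly the content already extracted in the proof of Corollary \ref{lemma2}. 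Everything else is a formal manipulation of Fitting ideals of cyclic modules over the PID-like ring $\ring{}$, using the multiplicativity recalled at the start of \S\ref{localfactors}.
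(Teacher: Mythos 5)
Your proposal is correct and follows essentially the same route as the paper: the paper likewise writes $[\ring{}:\calu_\phi]_{\ring{}}=[\ring{}:\ring{}L(1,\phi)]_{\ring{}}\,[\calu_\phi/\ring{}L(1,\phi)]_{\ring{}}^{-1}$ (using that $L(1,\phi)$ is monic, so $[\ring{}:\ring{}L(1,\phi)]_{\ring{}}=L(1,\phi)$) and then cancels against Theorem \ref{theorem2}. Your extra bookkeeping about $c=L(1,\phi)/[\ring{}:\calu_\phi]_{\ring{}}$ lying in $\ring{}$ and the monic normalizations is exactly the content the paper leaves implicit, so there is no gap.
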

 \begin{proof} We have:
  $$[\ring{}: \calu_\phi]_{\ring{}}=[\ring{}: \ring{}L(1, \phi)]_{\ring{}} \left[\frac{\calu_\phi}{\ring{}L(1, \phi)}\right]_{\ring{}}^{-1}.$$
 Then, by Theorem \ref{theorem2}, we obtain:
 $$\left[\frac{\calu_{\phi}}{\ring{}L(1, \phi)}\right]_{\ring{}}=[\calh_{\phi}]_{\ring{}}.$$
 \end{proof}

\begin{Remark}\label{floric}
{\em Observe that, by Corollary  \ref{remarkfloric} and since $L(1, \phi)\in \mathbb T_s(K_{\infty})^{\times},$ the $\FF[\undt{}]$-module $\frac{ U_{\phi}}{U^c_\phi}$ is free. Therefore, by Corollary \ref{theorem41}, we have:
$$[\calh_{\phi}]_{\ring{}}\in  A[\undt{}]\cap \mathbb T_s(K_{\infty})^{\times}.$$}
\end{Remark}
To proceed further, we need a precise characterization of the Drinfeld $A[\undt{}]$-modules of rank one whose exponential function is injective. This is in fact closely related to the 
non-surjectivity of $\exp_\phi$ and will be investigated in the next Section.

\section{Uniformizable Drinfeld modules of rank one.}\label{abeliandrinfeld}
Again in this section, the integer $s$ is fixed so that we can write $\undt{}$ instead of $\undt{s}$.
In this Section we consider general Drinfeld $A[\undt{}]$-modules of rank one defined over $\mathbb{T}_s$.
If $\Phi\in \mathbb T_s[\tau],$ we set
$$\mathbb T_s^{\Phi=1}= \{g\in \mathbb T_s, \Phi(g)=g\}.$$ This is a $\FF[\undt{}]$-submodule of $\mathbb{T}_s$.
Observe that in the case where $\phi=C$ is the Carlitz module over $\mathbb{T}_s$ (this is equivalent to $\alpha =1$; note that this does not 
impose any constraint on $s$), then
$\exp_C: \mathbb T_s\rightarrow \mathbb T_s$ is a surjective homomorphism of $\FF[\undt{}]$-modules. Furthermore
$$\operatorname{Ker}(\exp_C) = \widetilde{\pi} A[\undt{}],$$
and 
$$\mathbb T_s^{\tau=1}=\FF[\undt{}].$$
We are going to study a class of Drinfeld $A[\undt{}]$-modules of rank one defined over $\mathbb{T}_s$ which 
have similar properties.

\begin{Definition}
{\em Let $\phi$ be a Drinfeld $A[\undt{}]$-module of rank one over $\mathbb{T}_s$. We say that $\phi$ is {\em uniformizable} if $\exp_{\phi}$ is surjective on $\mathbb T_s.$}\end{Definition}

\begin{proposition}
\label{proposition1}
\noindent  Let $\phi$ be a Drinfeld $A[\undt{s}]$-module of rank one over $\mathbb T_s$ and let $\alpha\in\mathbb{T}_s\setminus\{0\}$ be its parameter. Recall that $\tau_{\alpha}=\alpha \tau \in \mathbb T_s[\tau ].$ The following conditions are equivalent.
\begin{enumerate}
\item $\phi$ is uniformizable,
\item $\mathbb T_s^{\tau_{\alpha}=1}\neq\{0\}$,
\item $\alpha \in \mathbb T_s^\times$,
\item $\phi$ is isomorphic to the Carlitz module over $\TT_s$.
\end{enumerate}
\end{proposition}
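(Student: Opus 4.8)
The plan is to establish the cycle of implications $(3)\Rightarrow(4)\Rightarrow(1)\Rightarrow(2)\Rightarrow(3)$, which is the most economical route since several of these steps are nearly immediate from the formalism already set up.

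First I would prove $(3)\Rightarrow(4)$. If $\alpha\in\mathbb{T}_s^\times$, I want to produce $u\in\mathbb{T}_s^\times$ with $\alpha\tau(u)=u$, because the isomorphism criterion recorded just before Definition \ref{genericmodule} says that $\phi\cong C$ (parameter $1$) is equivalent to the existence of such a $u$. Writing the desired relation as $\tau(u)=\alpha^{-1}u$, one builds $u$ as a convergent product $u=\prod_{i\geq 0}\tau^i(\alpha^{-?})$ after normalizing so that $\|\alpha\|=1$ (one may absorb a suitable power of $\theta$ into $\alpha$, or argue via the fact that units of $\mathbb{T}_s$ have a leading term in $\CC_\infty^\times$ times a $1$-unit, and $\tau$ contracts $\mathfrak{m}_{\mathbb{T}_s}$). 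The key point is that once $\|\alpha\|=1$, $\alpha^{-1}$ is a $1$-unit up to a constant, and the telescoping product converges in the Gauss norm; I expect this normalization-and-convergence argument to be the main technical obstacle, though it is routine in spirit.

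Next, $(4)\Rightarrow(1)$ is immediate: if $\phi\cong C$ via $u\in\mathbb{T}_s^\times$, then $\phi_\theta u=u C_\theta$ in $\mathbb{T}_s[\tau]$, hence $\exp_\phi = u\exp_C u^{-1}$ as elements of $\mathbb{T}_s[[\tau]]$ (both sides satisfy the same functional equation $F\theta=\phi_\theta F$ with constant term $u u^{-1}=1$, and such $F$ is unique), so $\exp_\phi(\mathbb{T}_s)=u\exp_C(u^{-1}\mathbb{T}_s)=u\exp_C(\mathbb{T}_s)=u\mathbb{T}_s=\mathbb{T}_s$ using surjectivity of $\exp_C$. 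For $(1)\Rightarrow(2)$: if $\exp_\phi$ is surjective it cannot be injective, because $\exp_\phi$ restricted to the big disk $D_{\mathbb{T}_s}(0,q^{\frac{q-r}{q-1}})$ is already a bijection onto that disk (Lemma \ref{Nalpha}), so if it were globally injective it would be a bijection $\mathbb{T}_s\to\mathbb{T}_s$ carrying a proper subset onto all of $\mathbb{T}_s$, a contradiction; hence $\ker(\exp_\phi)\neq\{0\}$. Picking $0\neq\lambda\in\ker(\exp_\phi)$, the identity $\exp_\phi\,\theta=\phi_\theta\exp_\phi$ gives $\exp_\phi(\phi_a(\lambda))=0$ for all $a\in A[\undt{}]$, so the kernel is an $A[\undt{}]$-module; more usefully, directly from $\exp_\phi=\sum_n D_n^{-1}\tau_\alpha^n$ and $\log_\phi=\sum_n l_n^{-1}\tau_\alpha^n$ being formal inverses one checks that $\ker(\exp_\phi)$ is stable under $\tau_\alpha$ — indeed $\exp_\phi\circ\tau_\alpha=\tau_\alpha\circ\exp_\phi$ on $\mathbb{T}_s$ since the two skew series commute — and a nonzero element of smallest norm in the kernel produces, after scaling by $\log_\phi$ of something of comparable norm or by a direct leading-term comparison, a nonzero fixed point of $\tau_\alpha$. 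More cleanly: if $0\neq\lambda\in\ker(\exp_\phi)$, then $\tau_\alpha(\lambda)\in\ker(\exp_\phi)$ too, and since the kernel is a discrete $\FF[\undt{}]$-module on which $\tau_\alpha$ acts, comparing Gauss norms ($\|\tau_\alpha(\lambda)\|=\|\alpha\|\,\|\lambda\|^q$) forces, after suitable normalization, an eigenvector relation yielding $\mathbb{T}_s^{\tau_\alpha=1}\neq\{0\}$; I would phrase this via the explicit kernel description that will presumably be available, namely $\ker(\exp_\phi)=\widetilde\pi\,\omega_\alpha^{-1}A[\undt{}]$ for an appropriate $\omega_\alpha$, but even without it the norm bookkeeping suffices.

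Finally $(2)\Rightarrow(3)$: suppose $0\neq g\in\mathbb{T}_s$ satisfies $\tau_\alpha(g)=g$, i.e. $\alpha\tau(g)=g$. Taking Gauss norms, $\|\alpha\|\,\|g\|^q=\|g\|$, so $\|\alpha\|=\|g\|^{1-q}$; in particular $\alpha\neq 0$ and $g\neq 0$ give $\|\alpha\|>0$. Now from $\alpha\tau(g)=g$ we get $g\mid \alpha\tau(g)$, and iterating, $g\mid \alpha\tau(\alpha)\cdots\tau^{n-1}(\alpha)\,\tau^n(g)$ for all $n$; but actually the cleanest route is: $\alpha=g/\tau(g)$, and since $\mathbb{T}_s$ is a domain in which $g$ and $\tau(g)$ generate the same ideal (as $\tau$ is a ring automorphism, $\|\tau(g)\|=\|g\|^q$... hmm, not the same ideal). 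Instead, invoke that $\mathbb{T}_s$ is a unique factorization domain (noted in the proof of Corollary \ref{remarkfloric}) and that $\tau$ permutes irreducibles while scaling norms by $q$-th powers; from $\alpha\tau(g)=g$ one reads off that every irreducible factor of $g$ with its multiplicity matches one in $\alpha\tau(g)$, and chasing the $\tau$-orbits (which are infinite unless the factor is $\tau$-fixed, i.e. lies in $\FF[\undt{}]$, but $\FF[\undt{}]$-irreducibles $p$ satisfy $\tau(p)=p$ so would have to divide $\alpha$ and $g$ equally, reducible to the unit case) forces $g$, hence $\alpha$, to be a unit. I expect this UFD-and-$\tau$-orbit argument, together with the normalization issue in $(3)\Rightarrow(4)$, to be where the real work lies; everything else is formal manipulation of the skew series $\exp_\phi,\log_\phi$ already introduced in \S\ref{explog}.
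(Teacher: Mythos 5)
Your steps $(3)\Rightarrow(4)$, $(4)\Rightarrow(1)$ and $(2)\Rightarrow(3)$ are sound and essentially coincide with the paper's proof (leading-term normalization and a convergent telescoping product for $(3)\Rightarrow(4)$; conjugation of $\exp_\phi$ by the unit for $(4)\Rightarrow(1)$; iterating $\alpha\tau(g)=g$ to get $\alpha\tau(\alpha)\cdots\tau^{n-1}(\alpha)\tau^n(g)=g$ and contradicting the finiteness of the set of irreducible factors of $g$ in the UFD $\mathbb{T}_s$ for $(2)\Rightarrow(3)$). The genuine gap is in $(1)\Rightarrow(2)$, at both stages of your argument. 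First, the claim that surjectivity of $\exp_\phi$ forces non-injectivity ``because $\exp_\phi$ is a bijection of $D_{\mathbb{T}_s}(0,q^{\frac{q-r}{q-1}})$ onto itself'' is a non sequitur: a global bijection of $\mathbb{T}_s$ restricting to a bijection of that disk onto itself is perfectly consistent set-theoretically, so you have not produced a nonzero element of $\operatorname{Ker}(\exp_\phi)$. (That surjectivity implies a nontrivial kernel is a \emph{consequence} of the proposition, not an input.) Second, even granting $0\neq\lambda\in\operatorname{Ker}(\exp_\phi)$, your route to a nonzero $\tau_\alpha$-fixed point does not work as stated: $\exp_\phi$ and $\tau_\alpha$ do \emph{not} commute (one has $\exp_\phi\tau_\alpha=\sum_n D_n^{-1}\tau_\alpha^{n+1}$ while $\tau_\alpha\exp_\phi=\sum_n D_n^{-q}\tau_\alpha^{n+1}$), so the kernel is not obviously $\tau_\alpha$-stable, and the ``norm bookkeeping'' is not spelled out; moreover the explicit description $\operatorname{Ker}(\exp_\phi)=\frac{\widetilde{\pi}}{\omega_\alpha}A[\undt{s}]$ that you propose to invoke presupposes the existence of $\omega_\alpha$, hence $\alpha\in\mathbb{T}_s^\times$, i.e.\ exactly the conclusion $(3)$ — using it here would be circular.

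For comparison, the paper's $(1)\Rightarrow(2)$ avoids the kernel altogether: surjectivity of $\exp_\phi$ gives surjectivity of $f\mapsto\phi_\theta(f)$, and dividing by $\lambda_\theta$ (with $\lambda_\theta^{q-1}=-\theta$) converts this into surjectivity of $\tau_\alpha-1$ on $\mathbb{T}_s$; after normalizing $\|\alpha\|=1$ by conjugating with a suitable $x\in\CC_\infty^\times$, one assumes $\mathbb{T}_s^{\tau_\alpha=1}=\{0\}$, so $\tau_\alpha-1$ is bijective and induces an automorphism of $\{f:\|f\|=1\}$; reducing modulo $\mathfrak{m}_{\mathbb{T}_s}$ yields the map $\overline{f}\mapsto\overline{\alpha}\tau(\overline{f})-\overline{f}$ on $\FF^{ac}[\undt{s}]$, which is easily seen not to be an automorphism — a contradiction. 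You would need to supply an argument of this kind (or a correct direct construction of a $\tau_\alpha$-fixed point from surjectivity) to close the cycle.
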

\begin{proof} 
We begin by proving that (3) implies (4). Since $\alpha \in \mathbb T_s^\times,$ 
there exists $x\in \mathbb C_{\infty}^\times$ such that $v_{\infty}(\alpha-x)> v_{\infty} (\alpha).$  Observe that:
$$v_{\infty}\left(\frac{\tau^i(\alpha)}{x^{q^i}}-1\right)\geq q^i(v_{\infty}(\alpha-x)-v_{\infty}(\alpha)).$$
Thus the product $\prod_{i\geq 0}\left(\frac{x^{q^i}}{\tau^i (\alpha )}\right)$ converges in $\mathbb T_s^\times.$ Now let us choose 
an element $\gamma \in \mathbb C_{\infty}^\times$ such that:
$$\gamma^{q-1}= x.$$
We set:
\begin{equation}\label{omegaalpha}
\omega_{\alpha}= \gamma \prod_{i\geq 0}\left(\frac{x^{q ^i }}{\tau^i(\alpha)}\right) \in \mathbb T_s^\times.\end{equation}
At first sight, these functions depend on the choice of $x$ but it is easy to 
show that they are defined up to a scalar factor of $\FF^\times$. 
We also notice that, for $\alpha_1,\alpha_2\in\mathbb{T}_s^\times$,
$$\omega_{\alpha_1\alpha_2}\in\FF^\times\omega_{\alpha_1}\omega_{\alpha_2},\quad \alpha_1,\alpha_2\in \mathbb T_s^\times.$$
Then:
\begin{equation}\label{differenceequationomalpha}
\tau (\omega_{\alpha} )= \alpha \omega_{\alpha}.\end{equation}
This implies that, in $\mathbb T_s[\tau],$ we have:
$$C_{\theta} \omega_{\alpha}= \omega_{\alpha}\phi_{\theta},$$
that is, $\phi$ and $C$ are isomorphic.

\medskip

In fact, it is also easy to show that (4) implies (3). Indeed, assuming that the Drinfeld module  of rank one $\phi$ is isomorphic to $C$, we see directly that
the parameter $\alpha$ of $\phi$ must satisfy $\tau(u)/u=\alpha$ for a unit $u$ of $\mathbb{T}_s$, but this implies 
that $\alpha$ is a unit as well.

\medskip

Next, we prove that (4) implies (1). By hypothesis, there exists $\vartheta \in \mathbb T_s^\times$ such that, in $\mathbb T_s[[ \tau]]:$
$$\vartheta \tau_{\alpha} = \tau \vartheta$$
We get in $\mathbb T_s[[\tau]]:$
$$\exp_{\phi} =\vartheta^{-1} \exp_{C}\vartheta.$$
Since $\exp_{C}$ is surjective on $\mathbb T_s,$ we obtain that $\exp_\phi$ is also surjective.

\medskip

We prove that (1) implies (2).
Let us then suppose that $\exp_{\phi}$ is surjective. The  map $\mathbb T_s\rightarrow \mathbb T_s$ defined by $f\mapsto \phi_{\theta}(f)$ is surjective. Explicitly,
for all $f\in \mathbb T_s$ there exists $g\in \mathbb T_s$ such that:
$$\alpha \tau(g)+\theta g=f.$$
Recall that we have set $\lambda_\theta=\exp_C(\widetilde{\pi}/\theta).$ Since $\lambda_{\theta}\not =0$ and $C_{\theta}(\lambda_{\theta})=\exp_C(\widetilde{\pi})=0,$ we have $\lambda_{\theta}^{q-1}=-\theta.$ Therefore:
$$\alpha \tau\left(\frac{g}{\lambda_\theta}\right)- \frac{g}{\lambda_\theta}= \frac{f}{\lambda_\theta^q}=-\frac{f}{\lambda_\theta\theta}$$
This implies that the map $\tau_{\alpha}-1$ is surjective on $\mathbb T_s.$

Furthermore, there exists $x\in \mathbb C_{\infty}^*$ such that $\|\alpha x^{q-1}\|=1.$ Observe that $$x^{-1}(\tau_{\alpha}-1)x=\tau_{\alpha x^{q-1}}-1.$$    Hence,  we can assume, without loss of generality, that $\|\alpha\|=1.$
Let us suppose, by contradiction, that $\mathbb{T}_s^{\tau_\alpha =1}=\{0\}$. Then, the map $f\mapsto \tau_\alpha(f)-f$ is 
an isomorphism of $\FF[\undt{}]$-modules which satisfies $\|\alpha\tau(f)-f\|<1$ if and only if
$\|f\|<1$ and $\|\alpha\tau(f)-f\|>1$ if and only if
$\|f\|>1$. In particular, this map induces an automorphism of the $\FF[\undt{}]$-module $\{f\in\mathbb{T}_s:\|f\|=1\}$.

Reducing modulo $\mathfrak{m}_{\mathbb{T}_s}$, the above map induces the $\FF[\undt{}]$-linear endomorphism of $\FF^{ac}[\undt{}]$
given by $\overline{f}\mapsto \overline{\alpha}\tau(\overline{f})-\overline{f}$ where $\overline{\alpha}\neq 0$ is the image of $\alpha$ by the reduction map in 
$\FF^{ac}[\undt{}]$ and $\overline{f}\in\FF^{ac}[\undt{}]$. One can easily verify that this endomorphism is not 
an automorphism. 

This constitutes a contradiction with the assumption that $\mathbb{T}_s^{\tau_\alpha =1}=\{0\}$.

\medskip

We finally prove that (2) implies (3). Let $g$ be a non-zero element of  $\mathbb{T}_s^{\tau_\alpha =1}$.
By $\alpha\tau(g)=g$ we deduce that $\alpha\tau(\alpha)\cdots\tau^{n-1}(\alpha)\tau^n(g)=g$ for all $n$.
If $\alpha$ were not a unit, this would contradict the finiteness of the number of irreducible factors of $g$.\end{proof}

\begin{Remark}\label{remarkabelian}{\em 
The following observation will be extensively used in the rest of this paper.
Let $\phi$ be a uniformizable Drinfeld $A[\undt{}]$-module of rank one over $\mathbb{T}_s$ of parameter $\alpha\in\mathbb{T}_s^\times$. Then the function $\exp_{\phi}$ induces an exact sequence of $A[\undt{}]$-modules:
$$0\rightarrow \frac{\widetilde{\pi}}{\omega_{\alpha}}A[\undt{}]\rightarrow \mathbb T_s \rightarrow \phi( \mathbb T_s)\rightarrow 0,$$
where $\omega_\alpha$ is defined as in (\ref{omegaalpha}).
In this case, the module $\mathbb{T}_s^{\tau_\alpha=1}$ is obviously given by:
 $$\mathbb T_s^{\tau_{\alpha}=1}=\frac{1}{\omega_{\alpha}}\FF[\undt{}].$$}
 
  \end{Remark}

\begin{Definition}
{\em Let $\phi$ be a Drinfeld $A[\undt{}]$-module of rank one over $\mathbb T_s.$ Then $\mathbb T_s$ is an $A[\undt{}]$-module via $\phi.$ Thus if $f\in A[\undt{}]\setminus\{0\},$ we define the {\em $A[\undt{}]$-module of $f$-torsion}  $\phi[f]$ by:
$$\phi[f]=\{g\in \mathbb T_s, \phi_{f}(g)=0\}.$$}
\end{Definition}

\begin{corollary} Let $\phi$ be a Drinfeld $A[\undt{}]$-module of rank one over $\mathbb{T}_s$ of parameter $\alpha\in\mathbb{T}_s\setminus\{ 0\}$.
\label{corolllary1}
\noindent The following assertions are equivalent:
\begin{enumerate}
\item $\phi$ is uniformizable,
\item $\mathbb{T}_s^{\tau_\alpha=1}$ is a $\FF[\undt{}]$-module of rank one,
\item for all $f\in A[\undt{}]\cap \mathbb T_s^\times$ we have an isomorphism of $A[\undt{}]$-modules:
$$\phi[f]\simeq \frac{A[\undt{}]}{fA[\undt{}]},$$
\item there exists $f\in A[\undt{}]\cap \mathbb T_s^\times$ such that $\phi[f]\not =\{0\}.$
\end{enumerate}
\end{corollary}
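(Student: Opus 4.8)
The plan is to deduce the equivalences from Proposition~\ref{proposition1} by a cyclic chain $(1)\Rightarrow(2)\Rightarrow(3)\Rightarrow(4)\Rightarrow(1)$. The implication $(1)\Rightarrow(2)$ is already contained in Proposition~\ref{proposition1}: if $\phi$ is uniformizable then $\alpha\in\mathbb{T}_s^\times$, hence $\omega_\alpha$ is well defined and, by Remark~\ref{remarkabelian}, $\mathbb{T}_s^{\tau_\alpha=1}=\frac{1}{\omega_\alpha}\FF[\undt{}]$, which is free of rank one over $\FF[\undt{}]$. For $(2)\Rightarrow(3)$, the hypothesis forces $\mathbb{T}_s^{\tau_\alpha=1}\neq\{0\}$, so again by Proposition~\ref{proposition1} we have $\alpha\in\mathbb{T}_s^\times$ and $\phi$ is uniformizable. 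Then for $f\in A[\undt{}]\cap\mathbb{T}_s^\times$ the exact sequence of Remark~\ref{remarkabelian} shows $\exp_\phi$ maps $\frac{1}{f}\frac{\widetilde\pi}{\omega_\alpha}A[\undt{}]$ onto $\phi[f]$ with kernel $\frac{\widetilde\pi}{\omega_\alpha}A[\undt{}]$, yielding an isomorphism of $A[\undt{}]$-modules $\phi[f]\cong \frac{1}{f}A[\undt{}]/A[\undt{}]\cong A[\undt{}]/fA[\undt{}]$. (Here one should check $\exp_\phi$ converges on these elements; since $f$ is a unit of $\mathbb{T}_s$, $\frac{1}{f}\frac{\widetilde\pi}{\omega_\alpha}A[\undt{}]\subset\mathbb{T}_s$, so the domain is fine.)

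**Completing the loop.** The implication $(3)\Rightarrow(4)$ is essentially trivial once we know such an $f$ exists: any non-constant monic $f\in A$ (for instance $f=\theta$) lies in $A[\undt{}]\cap\mathbb{T}_s^\times$ since constants of $A\setminus\FF$ are units in $\mathbb{T}_s$ (they have norm $\neq 0$ and are invertible in $\CC_\infty\subset\mathbb{T}_s$), and then by $(3)$, $\phi[f]\cong A[\undt{}]/fA[\undt{}]$ is non-zero because $f\notin\FF^\times$. So $(4)$ holds. For $(4)\Rightarrow(1)$: suppose there is $f\in A[\undt{}]\cap\mathbb{T}_s^\times$ with $\phi[f]\neq\{0\}$, say $0\neq g\in\mathbb{T}_s$ with $\phi_f(g)=0$. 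Writing $\phi_f=\sum_{i=0}^m c_i\tau_\alpha^i$ with $c_0=f$ and leading coefficient a unit (as $f\in\mathbb{T}_s^\times$ and the top coefficient of $\phi_f$ in $\mathbb{T}_s[\tau]$ is $\alpha\tau(\alpha)\cdots\tau^{m-1}(\alpha)$ times the top $\tau$-coefficient, all invertible once we know $\alpha$ is a unit — but we do not yet know that), we argue instead as in the last line of the proof of Proposition~\ref{proposition1}: if $\alpha$ were not a unit, then iterating $\phi_\theta$ or directly exploiting the $f$-torsion relation would force $g$ to be divisible by arbitrarily high powers of some irreducible factor of $\alpha$, contradicting unique factorization in $\mathbb{T}_s$. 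Hence $\alpha\in\mathbb{T}_s^\times$ and, by Proposition~\ref{proposition1}~$(3)\Rightarrow(1)$, $\phi$ is uniformizable.

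**The main obstacle.** The only delicate point is the step $(4)\Rightarrow(1)$, where one must extract from the mere non-vanishing of $\phi[f]$ that $\alpha$ is a unit; the cleanest route is to observe that $\phi_f(g)=0$ with $g\neq 0$ gives, after clearing the unit $f$, a relation expressing $g$ in terms of $\tau_\alpha$ applied to $g$, and then mimic the finiteness-of-irreducible-factors argument already used at the end of Proposition~\ref{proposition1}'s proof for the implication $(2)\Rightarrow(3)$. All other implications are formal consequences of Proposition~\ref{proposition1} and Remark~\ref{remarkabelian}, with only routine convergence and norm bookkeeping to verify.
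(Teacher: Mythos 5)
Your proposal is correct and follows essentially the same route as the paper: (1)$\Leftrightarrow$(2) via Proposition \ref{proposition1} and Remark \ref{remarkabelian}, the torsion isomorphism in (3) from $\exp_\phi^{-1}(\phi[f])=\frac{\widetilde{\pi}}{f\omega_\alpha}A[\undt{}]$, and (4)$\Rightarrow$(1) by showing $\alpha\tau(\alpha)\cdots\tau^n(\alpha)$ divides $g$ for all $n$ and invoking unique factorization in $\mathbb{T}_s$. Only a wording nit: the contradiction is that $g$ would acquire unboundedly many irreducible factors (counted with multiplicity), not necessarily arbitrarily high powers of a single irreducible factor of $\alpha$, but the argument you cite from the end of the proof of Proposition \ref{proposition1} is exactly the one needed.
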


\begin{proof}
\noindent The equivalence of the properties (1) and (2) is already covered by the proof of Proposition \ref{proposition1} and by the Remark \ref{remarkabelian}.

\medskip

We show that (1) implies (3).
By Remark \ref{remarkabelian} we have that  
$$\operatorname{Ker}(\exp_\phi) =\frac{\widetilde{\pi}}{\omega_\alpha} A[\undt{}].$$ Notice also that $\exp_{\phi}$ is surjective so that if $f\in A[\undt{}]\cap  \mathbb T_s^\times,$
we also have that
$$\exp_{\phi}^{-1} (\phi[f])= \frac{\widetilde{\pi}}{f\omega_\alpha} A[\undt{}].$$
It is obvious that (3) implies (4); it remains to show that (4) implies (1). Let $\alpha$ be the parameter of $\phi$ and let us assume that
for some $f\in A[\undt{}]\cap \mathbb T_s^\times$, we have $\phi[f]\not =\{0\}$;
let $g\in \mathbb T_s\setminus\{0\}$ be such that  $\phi_f(g) =0.$ 

We can write, in $\mathbb T_s[\tau]$:
$$\phi_{f} =\sum_{i=0}^dc_i\tau_\alpha^i=\sum_{i\geq 0}^d \alpha\cdots \tau^{i-1}(\alpha) c_i\tau  ^i,$$
where for $i=0, \ldots, d,$ $c_i \in A[\undt{}],$ and $c_0=f, c_d\in \FF^\times.$ We get:
$$\sum_{i=1}^d \alpha\cdots \tau^{i-1}(\alpha)c_i \tau^i(g)= -fg.$$
Since $f\in \mathbb T_s^\times,$ we get $g=\alpha g_1,$ $g_1\in \mathbb T_s\setminus\{0\}.$ Thus (we recall that $\mathbb{T}_s$ is a 
unique factorization domain):
$$\sum_{i=1}^d \tau (\alpha)\cdots \tau^i (\alpha) c_i \tau^i(g_1)= -fg_1.$$
Therefore $g_1=\tau (\alpha) g_2,$ and $\alpha \tau (\alpha)$ divides $g$ in $\mathbb T_s.$  Thus for any $n\geq 1,$ $\alpha \cdots \tau^n(\alpha)$ divides $g$ in $\mathbb T_s.$ Therefore $\alpha \in \mathbb T_s^\times$ and $\phi$ is uniformizable by Proposition \ref{proposition1}. 
\end{proof}

\begin{Remark}{\em The definition of uniformizable $A[\undt{}]$-module is motivated by Anderson's result \cite[Theorem 4]{AND1}. It is an interesting question to 
characterize higher rank ``uniformizable" Drinfeld modules over $\mathbb{T}_s$, that is, Drinfeld modules over $\mathbb{T}_s$ which have surjective associated exponential function.}\end{Remark}

\subsection {The elements $\omega_\alpha$} \label{omega}
Let $\alpha\in \mathbb{T}_s(K_\infty)^\times$. Then, there exists $\gamma\in\mathbb{T}_s(K_\infty)^\times$ monic (as a power series in $\theta^{-1}$)
such that $\alpha=\rho \gamma$ for some $\rho \in \FF^\times$.
 
The function $\omega_{\alpha}$ defined in (\ref{omegaalpha}) is determined up to a factor in $\FF^\times$. 
Let  $x=\rho\theta^r$ with $r=-v_\infty(\alpha).$ Then $\|\alpha-x\|<\|\alpha\|.$ Therefore:
\begin{equation}\label{factorizationomega}
\omega_{\alpha}=\widetilde{\rho}\lambda_\theta^r\prod_{i\geq 0} \left(\frac{\tau^i(\alpha)}{\rho \theta^{rq^i}}\right)^{-1},
\end{equation}
where $\widetilde{\rho}\in \FF^{ac}$ is such that $\widetilde{\rho}^{q-1}=(-1)^r\rho.$ From this it is apparent that
$$\omega_\alpha\in\widetilde{\rho}\lambda_\theta^r\mathbb{T}_s(K_\infty)^\times$$ and that
\begin{equation}\label{normomega}
\|\omega_\alpha\|=q^{\frac{r}{q-1}}.\end{equation}

\begin{Remark}\label{uniquenessofomega}{\em 
Observe that $\omega_{\alpha}$ is defined up to the multiplication by an element in $\FF^\times.$ When $\rho =(-1)^r,$ we choose $\widetilde{\rho}=1.$ From now on, we will 
always use this normalization.}\end{Remark}

The proof of the next Lemma is easy and left to the reader.
\begin{lemma}\label{lemmainjectivityKinfty}
Let $\alpha$ be in $\mathbb{T}_s(K_\infty)^\times$. The following conditions are equivalent:
\begin{enumerate}
\item $\frac{\widetilde{\pi}}{\omega_\alpha}\in\mathbb{T}_s(K_\infty),$
\item If $r=-v_\infty(\alpha)$, $r\equiv1\pmod{q-1}$ and $-\alpha$ is monic. 
\end{enumerate}
\end{lemma}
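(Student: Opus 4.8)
The plan is to reduce everything to the explicit product expansion (\ref{factorizationomega}) of $\omega_\alpha$, combined with two structural facts about the coefficient ring: that the Gauss valuation $v_\infty$ is $\ZZ$-valued on $\mathbb{T}_s(K_\infty)$ (indeed $\mathbb{T}_s(K_\infty)=\FF[\undt{s}]((\theta^{-1}))$, whence also $\CC_\infty\cap\mathbb{T}_s(K_\infty)=K_\infty$) and that $\FF^{ac}\cap K_\infty=\FF$. Throughout, write $r=-v_\infty(\alpha)$ and $\alpha=\rho\gamma$ with $\rho\in\FF^\times$ the leading coefficient of $\alpha$ as a Laurent series in $\theta^{-1}$ and $\gamma$ monic, so that ``$-\alpha$ is monic'' means precisely $\rho=-1$.

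First I would use (\ref{factorizationomega}) (and the line following it) to write $\omega_\alpha=\widetilde{\rho}\,\lambda_\theta^{\,r}\,u$ with $u\in\mathbb{T}_s(K_\infty)^\times$ and $\widetilde{\rho}\in\FF^{ac}$ satisfying $\widetilde{\rho}^{\,q-1}=(-1)^r\rho$. Dividing by the unit $u$, condition (1) becomes $\widetilde{\pi}/(\widetilde{\rho}\,\lambda_\theta^{\,r})\in\mathbb{T}_s(K_\infty)$; since this element lies in $\CC_\infty$ and $\CC_\infty\cap\mathbb{T}_s(K_\infty)=K_\infty$, condition (1) is equivalent to $\widetilde{\pi}/(\widetilde{\rho}\,\lambda_\theta^{\,r})\in K_\infty$. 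Next I would dispose of the case $r\not\equiv1\pmod{q-1}$: by (\ref{normomega}) and $|\widetilde{\pi}|=q^{q/(q-1)}$ one gets $v_\infty(\widetilde{\pi}/\omega_\alpha)=(r-q)/(q-1)\notin\ZZ$, so (1) fails, and (2) fails too since it demands $r\equiv1\pmod{q-1}$. So I may assume $r\equiv1\pmod{q-1}$. Then I fix $\pi_0$ to be the $(q-1)$-th root of $-\theta$ occurring in (\ref{productpi}), so that $\widetilde{\pi}=a\pi_0$ with $a\in K_\infty^\times$; from $\lambda_\theta^{\,q-1}=-\theta=\pi_0^{\,q-1}$ the ratio $\lambda_\theta/\pi_0$ is a $(q-1)$-th root of unity, hence lies in $\FF^\times$, say $\lambda_\theta=\zeta\pi_0$. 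Therefore
$$\frac{\widetilde{\pi}}{\widetilde{\rho}\,\lambda_\theta^{\,r}}=\frac{a}{\widetilde{\rho}\,\zeta^{\,r}}\,\pi_0^{\,1-r},$$
and since $r\equiv1\pmod{q-1}$ we have $\pi_0^{\,1-r}=(-\theta)^{(1-r)/(q-1)}\in K^\times\subset K_\infty^\times$; as $a\zeta^{-r}\in K_\infty^\times$ as well, this element lies in $K_\infty$ if and only if $\widetilde{\rho}\in K_\infty$, i.e. (using $\FF^{ac}\cap K_\infty=\FF$) if and only if $\widetilde{\rho}\in\FF$.

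Finally I would interpret $\widetilde{\rho}\in\FF$: a nonzero element of $\FF^{ac}$ lies in $\FF$ exactly when its $(q-1)$-th power is $1$, so $\widetilde{\rho}\in\FF\iff(-1)^r\rho=1$; under the standing assumption $r\equiv1\pmod{q-1}$ one has $(-1)^r=-1$ in $\FF$ (trivially in characteristic $2$; otherwise $q-1$ is even, forcing $r$ odd), hence $\widetilde{\rho}\in\FF\iff\rho=-1\iff-\alpha=-\rho\gamma=\gamma$ is monic. Chaining these equivalences yields (1)$\iff$(2). The only delicate point --- the ``main obstacle'', such as it is for a lemma of this kind --- is the bookkeeping with the constant $\widetilde{\rho}$: the pure valuation argument already shows that $r\equiv1\pmod{q-1}$ is necessary, but it is blind to the monic condition, which is exactly the requirement that the unavoidable $(q-1)$-th root $\widetilde{\rho}$ descend to $\FF$ (equivalently to $K_\infty$) rather than to a strictly larger constant field, and one must simultaneously pin down the sign $(-1)^r$ correctly in both odd and even characteristic.
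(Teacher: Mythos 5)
Your proof is correct, and since the paper leaves this lemma to the reader it is exactly the argument the surrounding text sets up: you use the factorization (\ref{factorizationomega}) to reduce to $\widetilde{\pi}/(\widetilde{\rho}\lambda_\theta^{r})$, the norm computation (\ref{normomega}) (integrality of the Gauss valuation on $\mathbb{T}_s(K_\infty)=\FF[\undt{s}]((\theta^{-1}))$) to force $r\equiv1\pmod{q-1}$, and the relation $\lambda_\theta^{q-1}=-\theta$ together with $\FF^{ac}\cap K_\infty=\FF$ to translate condition (1) into $\widetilde{\rho}\in\FF$, i.e.\ $(-1)^r\rho=1$, i.e.\ $-\alpha$ monic. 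The bookkeeping with $\widetilde{\rho}$ and the sign $(-1)^r$ in both even and odd characteristic is handled correctly, so no gaps remain.
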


\subsection{Examples}\label{examplesalphabeta}
If $s=1$ and $\alpha=t-\theta$, we have an important example:
\begin{equation}\label{omegaandersonthakur}
\omega_\alpha=\omega=\lambda_\theta\prod_{i\geq 0}\left(1-\frac{t}{\theta^{q^i}}\right)^{-1}\in\lambda_\theta \mathbb T(K_\infty).
\end{equation}
This function, introduced in Anderson and Thakur paper \cite[Proof of Lemma 2.5.4 p. 177]{AND&THA}, was also used 
extensively in \cite{PEL2,ANG&PEL}.
For general $s$, it is important to also consider the function $\omega_\alpha$ associated with the choice of 
$\alpha=\beta(t_1-\theta)\cdots(t_s-\theta)$, $\beta \in \FF^\times$. In this case,
\begin{equation}\label{omegaalphast}\omega_\alpha=\widetilde{\beta}\omega(t_1)\cdots\omega(t_s),\end{equation}
where $\widetilde{\beta}^{q-1}= \beta.$

\section{Uniformizable Drinfeld modules of rank one defined over $A[t_1, \ldots,t_s]$}\label{definedoverA}

In this section, we fix  $\phi$ a uniformizable Drinfeld $A[\undt{s}]$-module of rank one  {\em defined over $A[\undt{s}].$} By Proposition \ref{proposition1}, its parameter $\alpha$ lies in $A[\undt{s}]\cap\mathbb{T}_s^\times$ and we have a factorization $\alpha =\beta (x_1-\theta)\cdots (x_r-\theta),$  with $x_1, \cdots,x_r\in \FF(\undt{s})^{ac},$ $\beta\in\FF^\times$ and  $r=-v_{\infty}(\alpha)\in\ZZ_{\geq 0}.$

\subsection{The  torsion case}

In this subsection, we assume that $\beta=1$ and $r\equiv 1\pmod{q-1}$. If $r=0$ then $q=2$ and $L(1, \phi)=\zeta_C(1);$ in this case $\exp_C(\zeta_C(1))=1$ is a torsion point for the Carlitz module.
 We begin with  the case $r=1$. We  then have $\alpha=x-\theta$ with $x\in\FF[\undt{s}]$, so that $\rho_\alpha(a)=a(x)$ for $a\in A$, and in particular, $\rho_\alpha(\theta)=x$.
\begin{lemma}\label{corr=beta=1}
If $\beta=1$, $r=1$ and $\alpha =x-\theta,$ $x\in \FF[\undt{s}],$ we have the identity:
$$L(1, \phi)\, \omega_{\alpha}=\frac{\widetilde{\pi}}{\theta-x}.$$
 \end{lemma}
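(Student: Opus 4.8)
The plan is to turn the claimed identity into a statement about $\operatorname{Ker}(\exp_\phi)$, and then to remove the only remaining ambiguity, which will be a scalar in $A[\undt{s}]$. Since $\alpha=x-\theta$ with $x\in\FF[\undt{s}]$, we have $\alpha\in A[\undt{s}]\cap\mathbb T_s^\times$; hence $\phi$ is uniformizable by Proposition \ref{proposition1}, and by Remark \ref{remarkabelian} the exponential function has kernel $\operatorname{Ker}(\exp_\phi)=\frac{\widetilde\pi}{\omega_\alpha}A[\undt{s}]$. Moreover $r=1\le q-1$, so Remark \ref{caserinfq-1} applies and gives $\exp_\phi(L(1,\phi))=1$.

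Next I would compute $\exp_\phi\big((\theta-x)L(1,\phi)\big)$. Using the identity $\phi_\theta\exp_\phi=\exp_\phi\,\theta$ from \S\ref{explog} together with the $\FF[\undt{s}]$-linearity of $\exp_\phi$ (recall $x\in\FF[\undt{s}]$),
$$\exp_\phi\big((\theta-x)L(1,\phi)\big)=\phi_\theta\big(\exp_\phi(L(1,\phi))\big)-x\,\exp_\phi(L(1,\phi))=\phi_\theta(1)-x=0,$$
since $\phi_\theta(1)=\theta+\alpha=x$. Hence $(\theta-x)L(1,\phi)\in\operatorname{Ker}(\exp_\phi)=\frac{\widetilde\pi}{\omega_\alpha}A[\undt{s}]$, so that $(\theta-x)L(1,\phi)=\frac{\widetilde\pi}{\omega_\alpha}\,c$ for some $c\in A[\undt{s}]$; the lemma is exactly the assertion $c=1$.

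To prove $c=1$ I would first compare valuations: $v_\infty\big((\theta-x)L(1,\phi)\big)=-1$ since $x\in\FF[\undt{s}]$ and $\|L(1,\phi)\|=1$, while $v_\infty(\widetilde\pi/\omega_\alpha)=-\frac{q}{q-1}+\frac{1}{q-1}=-1$ by $|\widetilde\pi|=q^{q/(q-1)}$ and (\ref{normomega}); thus $v_\infty(c)=0$, so $c\in\FF[\undt{s}]\setminus\{0\}$. Then I would show both sides are monic as power series in $\theta^{-1}$. On the left, $L(1,\phi)\in1+\mathfrak m_{\mathbb T_s(K_\infty)}$ (Remark \ref{caserinfq-1}) gives $(\theta-x)L(1,\phi)\in\theta\,(1+\mathfrak m_{\mathbb T_s(K_\infty)})$. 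On the right, with the normalization $\widetilde\rho=1$ --- legitimate here because $\rho=-1=(-1)^r$, cf. Remark \ref{uniquenessofomega} --- and $\tau^i(\alpha)=x-\theta^{q^i}$, formula (\ref{factorizationomega}) yields $\omega_\alpha=\lambda_\theta\prod_{i\ge0}(1-x\theta^{-q^i})^{-1}$, while $\lambda_\theta=\exp_C(\widetilde\pi/\theta)$ and the product expansion of $\exp_C$ give $\frac{\widetilde\pi}{\lambda_\theta}=\theta\prod_{a\in A\setminus\{0\}}(1-\frac{1}{\theta a})^{-1}\in\theta\,(1+\mathfrak m_{K_\infty})$; hence $\frac{\widetilde\pi}{\omega_\alpha}=\frac{\widetilde\pi}{\lambda_\theta}\prod_{i\ge0}(1-x\theta^{-q^i})\in\theta\,(1+\mathfrak m_{\mathbb T_s(K_\infty)})$ too. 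Dividing the two expressions, $c\in1+\mathfrak m_{\mathbb T_s(K_\infty)}$; but a nonzero element of $A[\undt{s}]$ has Gauss norm $\ge1$, so $c=1$, and therefore $(\theta-x)L(1,\phi)=\frac{\widetilde\pi}{\omega_\alpha}$, i.e. $L(1,\phi)\,\omega_\alpha=\frac{\widetilde\pi}{\theta-x}$.

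The valuation bookkeeping is routine; the point that requires care --- and the main obstacle --- is the last step, namely that the normalizations built into $\omega_\alpha$ (the choice of $\widetilde\rho$) and into $\widetilde\pi$ (via the product formula for $\exp_C$) conspire to make both sides of $(\theta-x)L(1,\phi)=\frac{\widetilde\pi}{\omega_\alpha}c$ monic, forcing the scalar $c\in\FF[\undt{s}]$ to be $1$ rather than merely an element of $\FF^\times$.
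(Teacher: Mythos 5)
Your proof is correct and takes essentially the same route as the paper: both use $\exp_\phi(L(1,\phi))=1$ to get $\exp_\phi\bigl((\theta-x)L(1,\phi)\bigr)=0$, place $(\theta-x)L(1,\phi)$ in $\operatorname{Ker}(\exp_\phi)=\frac{\widetilde{\pi}}{\omega_\alpha}A[\undt{s}]$, and then identify the resulting factor in $A[\undt{s}]$ by reducing modulo $\mathfrak{m}_{\mathbb{T}_s(K_\infty)}$. The only cosmetic difference is that you check $\widetilde{\pi}/\lambda_\theta\in\theta\,(1+\mathfrak{m}_{K_\infty})$ via the product expansion of $\exp_C$, where the paper obtains the same congruence from the series $\sum_{i\geq 0}\widetilde{\pi}^{q^i-1}/(D_i\theta^{q^i-1})$.
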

\begin{proof}  We recall that $\lambda_{\theta}=\exp_C(\frac{\widetilde{\pi}}{\theta})$ and that we have the infinite product (\ref{productpi}) which converges to $\widetilde{\pi}$. This shows:
$$\frac{\theta \lambda_{\theta}}{\widetilde{\pi}}=\sum_{i\geq 0} \frac{\widetilde{\pi}^{q^ i-1}}{D_i\theta^{q^i-1}}\in K_{\infty}.$$
Observe that for $i\geq 1:$
$$v_{\infty}\left(\frac{\widetilde{\pi}^{q^ i-1}}{D_i\theta^{q^i-1}}\right)=q^i\left(i-\frac{1}{q-1}\right)+\frac{1}{q-1}>0.$$
Therefore:
$$\frac{\theta\lambda_{\theta}}{\widetilde{\pi}} \equiv 1 \pmod{\frac{1}{\theta}\FF\left[\left[\frac{1}{\theta}\right]\right]}.$$
We recall that $\omega_{\alpha}= \lambda_{\theta}\prod_{i\geq 0}(1-\frac{x}{\theta^{q^i}})^{-1}.$ We get:
$$(x-\theta)\omega_\alpha\widetilde{\pi}^{-1}=\frac{-\theta\lambda_\theta}{\widetilde{\pi}}\prod_{i\geq 1}\left(1-\frac{x}{\theta^{q^i}}\right)^{-1} \equiv\frac{-\theta\lambda_\theta}{\widetilde{\pi}}\equiv -1 \pmod{ {\mathfrak{m}_{\mathbb{T}_s(K_\infty)}}}.$$ Hence, if we write
$$F=(x-\theta) L(1,\phi) \omega_{\alpha}\widetilde {\pi}^{-1},$$
 where $L(1, \phi)=\sum_{a\in A_+}\frac{a(x)}{a},$ we have:
 $$F\equiv -1 \pmod{\mathfrak{m}_{\mathbb{T}_s(K_\infty)}}.$$
Now, we have that $\exp_{\phi}(L(1, \phi))=1$ (Remark \ref{caserinfq-1}), thus:
$$\exp_{\phi}((x-\theta)L(1, \phi))= \phi_{x-\theta}(\exp_{\phi}(L(1, \phi)))= \phi_{x-\theta}(1)= x-\phi_{\theta}(1)=0.$$
Therefore  $(x-\theta)L(1, \phi)\in \operatorname{Ker}(\exp_{\phi})=\frac{\widetilde{\pi}}{\omega_{\alpha}}A[\undt{s}] $ so that $F\in A[\undt{s}].$ But $v_{\infty} (F+1)>0$  which implies  $F=-1$.
\end{proof}
The above Lemma implies \cite[Theorem 1]{PEL2}.
\begin{proposition}\label{lemma3bis}
Let $\phi$ be the Drinfeld $A[\undt{s}]$-module of rank one of parameter $\alpha= (x_1-\theta)\cdots (x_r-\theta)\in A[\undt{s}],$  with $x_1, \ldots,x_r\in \FF(\undt{s})^{ac},$   $r\geq 1, r\equiv 1\pmod{q-1}.$ The following properties hold.
 \begin{enumerate}
\item If $r\geq q$, $U_\phi=\frac{\widetilde{\pi}}{\omega_{\alpha}}A[\undt{s}]$ and $\exp_\phi(U_\phi)=0$.
\item The module $H_{\phi}$ is a free  $\FF[\undt{s}]$-module of rank $u(\alpha).$ 
 Moreover  $U_\phi/U^c_\phi$ is isomorphic to $H_{\phi}$ as a $\FF[\undt{s}]$-module.
 \end{enumerate}
 \end{proposition}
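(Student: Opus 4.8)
The plan is to pin down the unit module $U_\phi$ exactly, trapping it between $\operatorname{Ker}(\exp_\phi)$ and the regulator datum supplied by the class number formula; parts (1) and (2) then both drop out of a single valuation computation.

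\emph{Preliminary step.} Since $\beta=1$, the $\theta$-leading coefficient of $-\alpha$ is $(-1)^{r+1}$, which equals $1$ because $r\equiv 1\pmod{q-1}$ forces $r$ odd when $p$ is odd (and the sign is immaterial when $p=2$); thus $-\alpha$ is monic and Lemma~\ref{lemmainjectivityKinfty} gives $\widetilde{\pi}/\omega_\alpha\in\mathbb{T}_s(K_\infty)$. Comparing $|\widetilde{\pi}|=q^{q/(q-1)}$ with (\ref{normomega}) one gets $v_\infty(\widetilde{\pi}/\omega_\alpha)=\frac{r-q}{q-1}$, an integer because $r\equiv 1\pmod{q-1}$; and inspecting the leading coefficients in the defining products of $\widetilde{\pi}$, $\lambda_\theta$ and $\omega_\alpha$ (which involve compatibly chosen $(q-1)$-st roots) shows this leading coefficient is $(-1)^{(1-r)/(q-1)}\in\FF^\times$. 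Hence $\widetilde{\pi}/\omega_\alpha$ is a \emph{unit} of $\mathbb{T}_s(K_\infty)=\FF[\undt{}]((1/\theta))$, and by Remark~\ref{remarkabelian} $\operatorname{Ker}(\exp_\phi)=\frac{\widetilde{\pi}}{\omega_\alpha}A[\undt{}]$ lies in $\mathbb{T}_s(K_\infty)$, hence in $U_\phi$.

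\emph{The squeeze.} Write $U_\phi=\xi A[\undt{}]$ (Corollary~\ref{remarkfloric}); as $L(1,\phi)\in U_\phi$ is a unit of the domain $\mathbb{T}_s(K_\infty)$, its divisor $\xi$ is a unit too, and I normalize $\xi$ monic. Then $\calu_\phi=\xi\ring{}$, so $[\ring{}:\calu_\phi]_{\ring{}}=\xi$, and Theorem~\ref{theorem2} becomes $L(1,\phi)=[\calh_\phi]_{\ring{}}\,\xi$; applying $v_\infty$, using $v_\infty(L(1,\phi))=0$ and that $[\calh_\phi]_{\ring{}}$ is monic in $\theta$ of degree $\dim_{\FF(\undt{})}\calh_\phi$, gives $v_\infty(\xi)=\dim_{\FF(\undt{})}\calh_\phi$. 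On the other hand $\operatorname{Ker}(\exp_\phi)\subseteq U_\phi$ means $\widetilde{\pi}/\omega_\alpha=\xi c$ with $c\in A[\undt{}]\setminus\{0\}$; being the ratio of two units of $\mathbb{T}_s(K_\infty)$, $c$ is a unit of $\mathbb{T}_s(K_\infty)$ lying in $A[\undt{}]$, hence monic in $\theta$ up to $\FF^\times$, so $v_\infty(c)=-\deg_\theta c\le 0$. Therefore $\frac{r-q}{q-1}-\dim_{\FF(\undt{})}\calh_\phi=v_\infty(c)\le 0$, and together with $0\le\dim_{\FF(\undt{})}\calh_\phi\le u(\alpha)$ (Corollary~\ref{corollaryHszero}) this forces $\dim_{\FF(\undt{})}\calh_\phi=u(\alpha)$ in all cases.

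\emph{Conclusions.} For (1), when $r\ge q$ one has $u(\alpha)=\frac{r-q}{q-1}$, so $v_\infty(c)=0$, i.e.\ $c\in\FF^\times$; hence $U_\phi=\xi A[\undt{}]=\frac{\widetilde{\pi}}{\omega_\alpha}A[\undt{}]=\operatorname{Ker}(\exp_\phi)$ and $\exp_\phi(U_\phi)=0$. For (2), $d:=\xi^{-1}L(1,\phi)\in A[\undt{}]$ is likewise monic in $\theta$ up to $\FF^\times$, so $U_\phi/U^c_\phi\cong A[\undt{}]/dA[\undt{}]$ is free over $\FF[\undt{}]$ of rank $\deg_\theta d=v_\infty(\xi)=u(\alpha)$; if $r<2q-1$ then $u(\alpha)=0$ and $H_\phi$ and $U_\phi/U^c_\phi$ both vanish, while if $r\ge 2q-1$ the equality $\dim_{\FF(\undt{})}\calh_\phi=u(\alpha)$ and the isomorphism (\ref{secondisom}) $\calv_\phi\cong\calh_\phi$ force $\exp_\phi(K(\undt{})_{\infty})\subseteq\ring{}+\mathfrak{m}_{K(\undt{})_{\infty}}^{u(\alpha)+1}$, hence $\exp_\phi(\mathbb{T}_s(K_\infty))\subseteq A[\undt{}]\oplus\nalpha$; combining this with $\exp_\phi(\mathbb{T}_s(K_\infty))\cap A[\undt{}]=\exp_\phi(U_\phi)=0$ (from (1)) and $\exp_\phi(\mathbb{T}_s(K_\infty))\supseteq\nalpha$ (the isometry of Lemma~\ref{Nalpha}, which fixes the $v_\infty$-levels) yields $\exp_\phi(\mathbb{T}_s(K_\infty))=\nalpha$, so that $H_\phi\cong\mathbb{T}_s(K_\infty)/(A[\undt{}]\oplus\nalpha)\cong M_\alpha$ is free of rank $u(\alpha)$, and in particular $H_\phi\cong U_\phi/U^c_\phi$.

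The main obstacle is the ``squeeze'' step: one must correctly identify $\widetilde{\pi}/\omega_\alpha$ as a \emph{unit} of $\mathbb{T}_s(K_\infty)$ — this is precisely where the hypotheses $\beta=1$ and $r\equiv 1\pmod{q-1}$ enter, through the leading-coefficient computation and the integrality of $\frac{r-q}{q-1}$ — and then carefully balance the valuations produced by the class number formula against those produced by the inclusion $\operatorname{Ker}(\exp_\phi)\subseteq U_\phi$; everything else is bookkeeping with the decomposition $\mathbb{T}_s(K_\infty)=A[\undt{}]\oplus M_\alpha\oplus\nalpha$ and the cited results.
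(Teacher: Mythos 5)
Your proof is correct, but it follows a genuinely different route from the paper's. The paper's own argument is purely local at $\infty$ and does not invoke the class number formula at all: from Lemma \ref{lemmainjectivityKinfty} and (\ref{normomega}) one sees that $\widetilde{\pi}/\omega_\alpha$ is a unit of $\mathbb{T}_s(K_\infty)$ of valuation $u(\alpha)$, hence $\mathbb{T}_s(K_\infty)=\frac{\widetilde{\pi}}{\omega_\alpha}A[\undt{s}]\oplus N_\alpha$; writing $f\in U_\phi$ as $f_1+f_2$ along this decomposition and using that $\exp_\phi$ is an isometric automorphism of $N_\alpha$ kills $f_2$ at once, giving $U_\phi=\frac{\widetilde{\pi}}{\omega_\alpha}A[\undt{s}]$, and the same decomposition gives $\exp_\phi(\mathbb{T}_s(K_\infty))=N_\alpha$ directly, whence $H_\phi\cong M_\alpha$ and the identification with $U_\phi/U_\phi^c$ via (\ref{anotherusefulexactsequence}). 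You instead pin down $U_\phi$ by a valuation squeeze through Theorem \ref{theorem2}: the monic generator $\xi$ of $\calu_\phi$ has $v_\infty(\xi)=\dim_{\FF(\undt{})}\calh_\phi\le u(\alpha)$ (Corollary \ref{corollaryHszero}), while $\operatorname{Ker}(\exp_\phi)\subseteq U_\phi$ forces $v_\infty(\xi)\ge\frac{r-q}{q-1}$, and equality for $r\ge q$ gives $U_\phi=\operatorname{Ker}(\exp_\phi)$; then you recover $\exp_\phi(\mathbb{T}_s(K_\infty))=N_\alpha$ from (\ref{secondisom}) by a dimension count rather than from the splitting. Both hinge on the same key computation (that $\widetilde{\pi}/\omega_\alpha$ is a unit of $\mathbb{T}_s(K_\infty)$ of the right valuation, where $\beta=1$ and $r\equiv1\pmod{q-1}$ enter), and there is no circularity in your use of Theorem \ref{theorem2}, Corollaries \ref{lemma2}, \ref{remarkfloric}, \ref{corollaryHszero} and (\ref{secondisom}), all of which precede the Proposition. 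What the paper's route buys is economy and independence from the appendix; what yours buys is that, as by-products, you re-derive $[\ring{}:\calu_\phi]_{\ring{}}\in\FF^\times\widetilde{\pi}/\omega_\alpha$ and $\deg_\theta[\calh_\phi]_{\ring{}}=u(\alpha)$, i.e.\ essentially Lemma \ref{lemma4} and part of Theorem \ref{theorem4}. Two steps are stated tersely but are fine when spelled out: the ``force'' step is the observation that the canonical surjection from $K(\undt{})_\infty/(\ring{}+\mathfrak{m}_{K(\undt{})_\infty}^{u(\alpha)+1})$, of dimension $u(\alpha)$, onto $\calv_\phi$, also of dimension $u(\alpha)$, must be injective; and your exact sign $(-1)^{(1-r)/(q-1)}$ for the leading coefficient is only needed up to $\FF^\times$, which is all your argument uses.
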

 \begin{proof} 
 \noindent (1). By Lemma \ref{lemmainjectivityKinfty} and the identity (\ref{normomega})
we see that $\|\widetilde{\pi}/\omega_\alpha\|=q^{u(\alpha)}$ so that 
\begin{equation}\label{usefuldecomposition}
\mathbb T_s(K_{\infty})=\frac{\widetilde{\pi}}{\omega_{\alpha}}A[\undt{s}]\oplus \nalpha .\end{equation}
Let $f$ be in $U_\phi$ and let us write $f=f_1+ f_2$ with $f_1\in \frac{\widetilde{\pi}}{\omega_{\alpha}}A[\undt{s}]$ and $f_2\in \nalpha $. Since 
$f_1$ is in the kernel of $\exp_\phi$,
we have $\exp_\phi(f)=\exp_\phi(f_2)\in \nalpha $. Since $\exp_\phi$
induces an isometric automorphism of $\nalpha $, the condition $\exp_\phi(f)\in A[\undt{s}]$ yields $f_2=0$.
This means that $U_\phi=\frac{\widetilde{\pi}}{\omega_\alpha}A[\undt{s}]$ as expected.

\medskip 

\noindent (2).  By (\ref{usefuldecomposition}),  $\nalpha  =\exp_{\phi}(\mathbb T_s(K_{\infty}))$ and therefore is 
an $A[\undt{s}]$-module via $\phi$.  
We have:
$$H_\phi=\frac{\phi(\mathbb{T}_s(K_\infty))}{\phi(A[\undt{s}])\oplus\phi(\nalpha )}.$$
In particular, $H_\phi$, as a $\FF[\undt{s}]$-module, is isomorphic to $A[\undt{s}]\theta^{-u(\alpha)}/A[\undt{s}],$ and hence is free of rank  $u(\alpha).$ Finally, in (\ref{anotherusefulexactsequence}), the third arrow maps to zero so that $U_\phi/U^c_\phi\cong H_\phi$ as a $\FF[\undt{s}]$-module. \end{proof}
We deduce the next Corollaries.
\begin{corollary}\label{corllaryr=q} Let $\alpha$ be as in (\ref{alpha2}) with $\beta=1$ and $r=q$. Let $\phi$ be the Drinfeld $A[\undt{s}]$-module of 
rank one with parameter $\alpha$. Then, 
the following formula holds:
$$L(1,\phi)=-\frac{\widetilde{\pi}}{\omega_\alpha}.$$
\end{corollary}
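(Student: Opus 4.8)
The plan is to read off the formula from the class number formula (Theorem~\ref{theorem2}) together with the structure of $U_\phi$ and $H_\phi$ established in Proposition~\ref{lemma3bis}. We are in the case $\alpha = (x_1-\theta)\cdots(x_q-\theta)$ with $\beta=1$ and $r=q\equiv 1\pmod{q-1}$, so $u(\alpha)=\max\{0,\lfloor\frac{q-q}{q-1}\rfloor\}=0$. By Corollary~\ref{corollaryHszero} (or Proposition~\ref{lemma3bis}(2)), this forces $H_\phi=\{0\}$, hence $\calh_\phi=\{0\}$ and $[\calh_\phi]_{\ring{}}=1$. The class number formula then reduces to
$$L(1,\phi)=[\ring{}:\calu_\phi]_{\ring{}},$$
the monic generator of the $\ring{}$-module $\calu_\phi$. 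By Proposition~\ref{lemma3bis}(1) (applicable since $r=q\geq q$), we have $U_\phi=\frac{\widetilde\pi}{\omega_\alpha}A[\undt{s}]$, so $\calu_\phi=\FF(\undt{})U_\phi=\frac{\widetilde\pi}{\omega_\alpha}\ring{}$.

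The remaining point is to pin down the sign, i.e.\ to compare $\frac{\widetilde\pi}{\omega_\alpha}$ with its monic normalization. First I would check that $\frac{\widetilde\pi}{\omega_\alpha}\in\mathbb{T}_s(K_\infty)$: by Lemma~\ref{lemmainjectivityKinfty} this holds precisely because $r\equiv 1\pmod{q-1}$ and $-\alpha$ is monic (here $-\alpha=(\theta-x_1)\cdots(\theta-x_q)$ is indeed monic in $\theta$ since $\beta=1$ and $r$ is odd... more precisely $(-1)^r=-1$ as $r\equiv 1\pmod{q-1}$ forces $r$ odd when $p$ is odd, and in characteristic $2$ signs are irrelevant). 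Next, using the normalization of $\omega_\alpha$ fixed in Remark~\ref{uniquenessofomega} together with the product expansion (\ref{factorizationomega}) and the product (\ref{productpi}) for $\widetilde\pi$, one computes the leading coefficient of $\frac{\widetilde\pi}{\omega_\alpha}$ as a Laurent series in $\theta^{-1}$. The computation is essentially the one carried out in the proof of Lemma~\ref{corr=beta=1}: there $\frac{\theta\lambda_\theta}{\widetilde\pi}\equiv 1$ and $(x-\theta)\omega_\alpha\widetilde\pi^{-1}\equiv -1 \pmod{\mathfrak{m}_{\mathbb{T}_s(K_\infty)}}$, which upon inverting gives that $\frac{\widetilde\pi}{\omega_\alpha}$ has leading term $-\theta^{u(\alpha)}=-1$. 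Hence $-\frac{\widetilde\pi}{\omega_\alpha}$ is monic, and
$$L(1,\phi)=[\ring{}:\calu_\phi]_{\ring{}}=-\frac{\widetilde\pi}{\omega_\alpha}.$$

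Alternatively, and perhaps more cleanly, I would bypass the class number formula entirely and argue directly from Remark~\ref{caserinfq-1} and the $f$-torsion picture. Since $r=q$ we do \emph{not} have $r\leq q-1$, so one cannot immediately say $\exp_\phi(L(1,\phi))=1$; but one can instead mimic the end of the proof of Lemma~\ref{corr=beta=1}. Applying $\phi_{-\alpha}=\phi_{(x_1-\theta)\cdots(x_q-\theta)}$ (note $-\alpha\in A[\undt{s}]$, monic of degree $q$) and using Corollary~\ref{lemma2} together with the fact that $\exp_\phi(L(1,\phi))\in A[\undt{s}]$ and the explicit shape of $\phi_{-\alpha}$, one checks $\exp_\phi((-\alpha)L(1,\phi))=0$, so $(-\alpha)L(1,\phi)\in\operatorname{Ker}(\exp_\phi)=\frac{\widetilde\pi}{\omega_\alpha}A[\undt{s}]$ by Remark~\ref{remarkabelian}. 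Comparing Gauss norms via (\ref{normomega}) — $\|\widetilde\pi/\omega_\alpha\|=q^{\frac{q}{q-1}-\frac{q}{q-1}}=1$ and $\|(-\alpha)L(1,\phi)\|=\|\alpha\|=q^q$ — one sees $L(1,\phi)$ generates $\frac{\widetilde\pi}{\omega_\alpha}A[\undt{s}]$ over $A[\undt{s}]$, and then the leading-coefficient computation above fixes the scalar to be $-1$.

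The main obstacle is purely bookkeeping: getting the sign right. Everything structural ($H_\phi=0$, $U_\phi=\frac{\widetilde\pi}{\omega_\alpha}A[\undt{s}]$) is already in place from Proposition~\ref{lemma3bis}, so the only real work is the normalization of $\omega_\alpha$ (Remark~\ref{uniquenessofomega}, with $(-1)^r\rho=1$ here since $\rho=1$ and $r\equiv1\pmod{q-1}$, so $\widetilde\rho=1$) and the short Laurent-series expansion showing $\widetilde\pi/\omega_\alpha=-1+(\text{lower order})$, exactly as in Lemma~\ref{corr=beta=1}. I expect no difficulty beyond carefully tracking these constants.
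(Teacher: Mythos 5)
Your main argument is correct and follows essentially the same route as the paper: since $r=q$ gives $u(\alpha)=0$, Proposition \ref{lemma3bis} yields $\calh_\phi=\{0\}$ and $\calu_\phi=\frac{\widetilde{\pi}}{\omega_\alpha}\ring{}$, so the class number formula reduces to identifying the monic generator, and the leading-coefficient computation (as in Lemma \ref{corr=beta=1} and in the proof of Theorem \ref{theorem4}, where $(-1)^{\frac{r-1}{q-1}}\widetilde{\pi}/\omega_\alpha$ is noted to be monic) gives the sign $-1$. Only your sketched ``alternative'' route is shaky — the claim that one can check $\exp_\phi((-\alpha)L(1,\phi))=0$ without already knowing $\exp_\phi(L(1,\phi))=0$ (i.e.\ without Proposition \ref{lemma3bis}(1)) is not substantiated — but your primary proof does not rely on it.
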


 \begin{corollary}\label{evencorollary}
 If $\beta=1$, $r\equiv1\pmod{q-1}$ and $r\geq q,$ then
 $$L(1,\phi)\in A[\undt{s}]\frac{\widetilde{\pi}}{\omega_\alpha}.$$
 \end{corollary}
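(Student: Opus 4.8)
The plan is to deduce this immediately from the class number formula together with the explicit description of the unit module obtained in Proposition~\ref{lemma3bis}. First I would recall, from Corollary~\ref{lemma2} applied to $\phi$ (whose parameter $\alpha$ lies in $A[\undt{s}]\setminus\{0\}$), that $\exp_\phi(L(1,\phi))\in A[\undt{s}]$. Since $L(1,\phi)$ is by construction a unit of norm one in $\mathbb{T}_s(K_\infty)$, this says exactly that $L(1,\phi)$ belongs to the unit module $U_\phi=\{f\in\mathbb{T}_s(K_\infty):\exp_\phi(f)\in A[\undt{s}]\}$.

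Next I would invoke Proposition~\ref{lemma3bis}(1). Its hypotheses are met: here $\beta=1$, so $\alpha=(x_1-\theta)\cdots(x_r-\theta)$ with $x_1,\ldots,x_r\in\FF(\undt{s})^{ac}$, and $r\equiv1\pmod{q-1}$ with $r\geq q$ (in particular $r\geq 1$). The Proposition then gives $U_\phi=\frac{\widetilde{\pi}}{\omega_\alpha}A[\undt{s}]$. Combining this with the first step yields $L(1,\phi)\in\frac{\widetilde{\pi}}{\omega_\alpha}A[\undt{s}]$, which is the assertion.

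There is essentially no obstacle here: the statement is a formal consequence of the two cited results. The only points that require checking are that $\phi$ is uniformizable, so that $\omega_\alpha$ is defined, which is the standing hypothesis of \S\ref{definedoverA}, and that $L(1,\phi)\in\mathbb{T}_s(K_\infty)$, which is part of its definition. For completeness one could instead argue directly, bypassing Proposition~\ref{lemma3bis}: by Lemma~\ref{lemmainjectivityKinfty} (using $r\equiv1\pmod{q-1}$ and the fact that $-\alpha$ is monic, since $(-1)^{r+1}=1$ in $\FF$) together with the norm computation $\|\omega_\alpha\|=q^{r/(q-1)}$ of (\ref{normomega}), one obtains $\|\widetilde{\pi}/\omega_\alpha\|=q^{u(\alpha)}$ and hence a direct sum decomposition $\mathbb{T}_s(K_\infty)=\frac{\widetilde{\pi}}{\omega_\alpha}A[\undt{s}]\oplus\nalpha$. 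Writing $L(1,\phi)=f_1+f_2$ along this decomposition, the summand $f_1$ lies in $\operatorname{Ker}(\exp_\phi)$ by Remark~\ref{remarkabelian}, so $\exp_\phi(L(1,\phi))=\exp_\phi(f_2)$; since $\exp_\phi$ restricts to an isometric automorphism of $\nalpha$ while $\exp_\phi(L(1,\phi))\in A[\undt{s}]$, and $A[\undt{s}]\cap\nalpha=\{0\}$, we conclude $f_2=0$, so $L(1,\phi)=f_1\in\frac{\widetilde{\pi}}{\omega_\alpha}A[\undt{s}]$.
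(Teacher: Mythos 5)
Your proposal is correct and follows exactly the route the paper intends: Corollary \ref{lemma2} puts $L(1,\phi)$ in $U_\phi$, and Proposition \ref{lemma3bis}(1) identifies $U_\phi$ with $\frac{\widetilde{\pi}}{\omega_\alpha}A[\undt{s}]$ under the stated hypotheses. Your ``direct'' variant is not genuinely different either --- it simply inlines the proof of Proposition \ref{lemma3bis}(1) (the decomposition $\mathbb{T}_s(K_\infty)=\frac{\widetilde{\pi}}{\omega_\alpha}A[\undt{s}]\oplus\nalpha$ and the isometry of $\exp_\phi$ on $\nalpha$), so there is nothing further to add.
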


\begin{Remark}\label{remar72}{\em The results of Lemma  \ref{corr=beta=1} and Corollary \ref{evencorollary} also justify the terminology {\em  torsion case} because  $\exp_{\phi}(L(1,\phi))$ is a torsion point for $\phi.$ And by Lemma \ref{lemmainjectivityKinfty},  for $\phi$ a uniformizable Drinfeld $A[\undt{s}]$-module of rank one  defined over $A[\undt{s}],$ $\exp_{\phi}(L(1, \phi))$ is a torsion point for $\phi$ if and only if $r\equiv 1\pmod{q-1}$ and $\beta=1.$

The terminology is also suggested by the behavior of the higher Carlitz zeta values $\zeta_C(n)=
\sum_{a\in A_+}a^{-n}$. In \cite{AND&THA}, Anderson and Thakur constructed a point $z_n\in\operatorname{Lie}(C^{\otimes n})(K_\infty)$ with last entry $\Pi(n-1)\zeta_C(n)$ such that $\operatorname{Exp}_n(z_n)=Z_n$,
where $\Pi$ denotes the {\em Carlitz factorial} (see \S \ref{evaluationpolynomials}), $\operatorname{Exp}_n$ denotes the exponential function of $C^{\otimes n}$ and $Z_n$
is a certain $A$-valued {\em special point} of $C^{\otimes n}$ explicitly constructed in loc. cit. We have that $Z_n$ is a torsion point for
$C^{\otimes n}$ if and only if $q-1$ divides $n$ (see Anderson and Thakur, \cite[Corollary 3.8.4]{AND&THA} and J. Yu, \cite[Corollary 2.6]{YU}).

The methods of \cite[Theorem 4]{ANG&PEL} can probably be used to show that, more generally, 
$\widetilde{\pi}^{-n}L(n,\phi)\omega_\alpha$ is rational if and only if $r\equiv n\pmod{q-1}$ and $-\alpha$ is monic. 
It would be nice to see if these are also 
related to torsion points for the tensor powers of the modules $\phi$ as in \cite{AND&THA} in the 
case $s=0$.
}\end{Remark}

  \subsubsection{The polynomials $\mathbb{B}_\phi$}\label{subsectionb}
If $\alpha$ is as in (\ref{alpha2}) with $r=-v_\infty(\alpha)$ such that 
$r\equiv 1\pmod{q-1}, $ $r\geq q,$ by  Corollary \ref{evencorollary}, we have that 
\begin{equation}\label{Bphi}
 \mathbb{B}_{\phi}:=(-1)^{\frac{r-1}{q-1}}L(1, \phi)\omega_{\alpha}\widetilde{\pi}^{-1}\in A[\undt{s}].
\end{equation}
We also set for  $r=1:$ 
$$\mathbb{B}_\phi=\frac{1}{\theta-x},$$ 
where $x\in \FF[\undt{s}]$ is the unique root of $\alpha$ as a polynomial in $\theta$.

The polynomials $\mathbb{B}_\phi\in\FF[\undt{s}][\theta]$ have already been studied in \cite{ANG&PEL} in the case of
 $\alpha=(t_1-\theta)\cdots(t_r-\theta)$ with $r=s$.
 If $r=q$, we can even deduce the exact value of $\mathbb{B}_\phi$ (see Corollary \ref{corllaryr=q}): $\mathbb{B}_\phi=1$.
More generally, we have the following, for $r>1$:
 \begin{lemma}
 \label{lemma4} The polynomial $\mathbb{B}_\phi\in \FF[\undt{s}][\theta]$ is a monic polynomial of degree $u(\alpha)=\frac{r-q}{q-1}$ in the indeterminate $\theta$.
 \end{lemma}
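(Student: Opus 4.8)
The plan is to extract the $\theta$-degree and the leading coefficient of $\mathbb{B}_\phi$ directly from its definition (\ref{Bphi}), that is, from $\mathbb{B}_\phi=(-1)^{(r-1)/(q-1)}L(1,\phi)\,\omega_\alpha\,\widetilde{\pi}^{-1}$, keeping in mind that $\mathbb{B}_\phi$ already lies in $A[\undt{s}]=\FF[\undt{s}][\theta]$ by Corollary \ref{evencorollary}. Throughout I will use the elementary remark that a nonzero $P\in\FF[\undt{s}][\theta]$ of $\theta$-degree $n$ satisfies $v_\infty(P)=-n$ and $\theta^{-n}P\equiv(\text{leading coefficient of }P)\pmod{\mathfrak{m}_{\mathbb{T}_s(K_\infty)}}$, since the coefficients of $P$ lie in $\FF[\undt{s}]$ and hence have Gauss norm $\leq 1$, with equality when nonzero.

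First I would pin down the degree via valuations. Here $r=-v_\infty(\alpha)\equiv1\pmod{q-1}$ and $r\geq q$, so $u(\alpha)=(r-q)/(q-1)$ is a nonnegative integer. Since $L(1,\phi)$ is a unit of Gauss norm one, $v_\infty(L(1,\phi))=0$; by (\ref{normomega}), $v_\infty(\omega_\alpha)=-r/(q-1)$; and $v_\infty(\widetilde\pi)=-q/(q-1)$. Adding, $v_\infty(\mathbb{B}_\phi)=-r/(q-1)+q/(q-1)=-u(\alpha)$, whence $\deg_\theta\mathbb{B}_\phi=u(\alpha)=(r-q)/(q-1)$.

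It then remains to show that the leading coefficient equals $1$, which I would do by computing $\theta^{-u(\alpha)}\mathbb{B}_\phi$ modulo $\mathfrak{m}_{\mathbb{T}_s(K_\infty)}$. In the series $L(1,\phi)=\sum_{a\in A_+}\rho_\alpha(a)a^{-1}$ the term $a=1$ equals $1$ and every other term has Gauss norm $\leq q^{-1}$, so $L(1,\phi)\equiv1$. For $\omega_\alpha$, write $\alpha=\beta(x_1-\theta)\cdots(x_r-\theta)$ with $\beta=1$, so that the leading coefficient of $\alpha$ in $\theta$ is $\rho=(-1)^r$ and, by the normalization of Remark \ref{uniquenessofomega}, we may take $\widetilde\rho=1$ in (\ref{factorizationomega}). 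Each factor $\tau^i(\alpha)/(\rho\,\theta^{rq^i})$ occurring there belongs to $\FF[\undt{s}][\theta^{-1}]$ and has leading term $1$, hence is congruent to $1$ modulo $\mathfrak{m}_{\mathbb{T}_s(K_\infty)}$; therefore (\ref{factorizationomega}) reads $\omega_\alpha=\lambda_\theta^r\,w$ with $w$ a $1$-unit. Setting $k=(r-1)/(q-1)=u(\alpha)+1$, the relation $\lambda_\theta^{q-1}=-\theta$ gives $\lambda_\theta^r=(-1)^k\theta^k\lambda_\theta$, and combining with the congruence $\theta\lambda_\theta\widetilde\pi^{-1}\equiv1$ recalled in the proof of Lemma \ref{corr=beta=1} we obtain $\theta^{-u(\alpha)}\omega_\alpha\widetilde\pi^{-1}=(-1)^k(\theta\lambda_\theta\widetilde\pi^{-1})\,w\equiv(-1)^k$. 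Hence $\theta^{-u(\alpha)}\mathbb{B}_\phi\equiv(-1)^{(r-1)/(q-1)}\cdot1\cdot(-1)^k=(-1)^{2k}=1$, so $\mathbb{B}_\phi$ is monic of $\theta$-degree $u(\alpha)=(r-q)/(q-1)$.

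The step I expect to be the main obstacle is precisely this last computation: the roots $x_j$ lie only in $\FF(\undt{s})^{ac}$, not in $\mathbb{T}_s(K_\infty)$, so the reductions modulo $\mathfrak{m}_{\mathbb{T}_s(K_\infty)}$ must be carried out on the genuine elements $\tau^i(\alpha)\in\FF[\undt{s}][\theta]$ instead of factorwise on the Anderson--Thakur functions $\omega(x_j)$; and one has to keep careful track both of the normalization of $\omega_\alpha$ and of the exact power of $-1$ produced by $\lambda_\theta^r\widetilde\pi^{-1}$. The two sign contributions cancel exactly because $(r-1)/(q-1)=u(\alpha)+1$, and of course the sign issue is vacuous in characteristic $2$.
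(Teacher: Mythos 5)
Your proof is correct and follows the paper's own route: the $\theta$-degree is read off from $v_\infty(\widetilde{\pi}^{-1}L(1,\phi)\omega_\alpha)=\frac{q-r}{q-1}$, and monicity comes from the leading coefficient of the expansion of $\widetilde{\pi}^{-1}L(1,\phi)\omega_\alpha$ in $\FF(\undt{s})((\theta^{-1}))$. The only difference is that the paper declares this last computation ``easy and left to the reader,'' whereas you carry it out explicitly (via $L(1,\phi)\equiv 1$, the $1$-unit form of $\omega_\alpha\lambda_\theta^{-r}$, $\lambda_\theta^{q-1}=-\theta$ and $\theta\lambda_\theta\widetilde{\pi}^{-1}\equiv 1$), with the signs handled correctly.
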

 \begin{proof} Let us write:
$$\mathbb{B}_\phi=\sum_{i=0}^m a_i \theta ^i,$$
where $a_i \in \FF[\undt{s}],$ and $a_m\not =0.$  
We have that $v_\infty(\widetilde{\pi}^{-1}L(1,\phi)\omega_\alpha)=v_\infty(\mathbb{B}_\phi)=\frac{q-r}{q-1}$, which implies that 
$$m=\frac{r-q}{q-1}$$ and $$a_m\in \FF^\times.$$

 To compute $a_m$ it suffices to compute the leading coefficient of the expansion of $\widetilde{\pi}^{-1}L(1,\phi)\omega_\alpha$
 as a series of $\FF(\undt{s})((\theta^{-1}))$. This computation is easy and left to the reader.
\end{proof}  

The importance of the polynomials $\mathbb{B}_\phi$ is dictated by the next Theorem.
  \begin{theorem}
 \label{theorem4}Let $r\geq q,$ then
 $$\operatorname{Fitt}_{A[\undt{s}]} (H_{\phi} )= \mathbb{B}_{\phi}A[\undt{s}].$$
   \end{theorem}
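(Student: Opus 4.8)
The plan is to deduce the statement from the class number formula — more precisely from its Corollary~\ref{theorem41} — combined with the explicit description of the unit module provided by Proposition~\ref{lemma3bis}. Throughout we use that, under the standing hypotheses of this subsection ($\beta=1$, $r\equiv1\pmod{q-1}$) together with $r\geq q$, one has $u(\alpha)=\frac{r-q}{q-1}$.

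First I would reduce $\operatorname{Fitt}_{A[\undt{s}]}(H_\phi)$ to a characteristic polynomial. By Proposition~\ref{lemma3bis}(2) the $\FF[\undt{s}]$-module $H_\phi$ is \emph{free} of rank $u(\alpha)$; fix an $\FF[\undt{s}]$-basis and let $\Theta\in\mathrm{M}_{u(\alpha)}(\FF[\undt{s}])$ be the matrix of the $\FF[\undt{s}]$-linear endomorphism of $H_\phi$ induced by $\phi_\theta$. Then the matrix $\theta I-\Theta\in\mathrm{M}_{u(\alpha)}(A[\undt{s}])$ presents $H_\phi$ as an $A[\undt{s}]$-module, i.e. $H_\phi\cong A[\undt{s}]^{u(\alpha)}/(\theta I-\Theta)A[\undt{s}]^{u(\alpha)}$; the verification that the obvious relations generate all relations is the same localization argument used in \S\ref{localfactors} for $F[\theta]$-modules, carried out now over $\FF[\undt{s}]$ in place of a field $F$, which is legitimate precisely because $H_\phi$ is free over $\FF[\undt{s}]$. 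Hence $\operatorname{Fitt}_{A[\undt{s}]}(H_\phi)=\big(\det(\theta I-\Theta)\big)$. Since the \emph{same} matrix $\Theta$ represents multiplication by $\theta$ on $\calh_\phi=\FF(\undt{s})\otimes_{\FF[\undt{s}]}H_\phi$, this determinant equals $[\calh_\phi]_{\ring{}}$; in particular $[\calh_\phi]_{\ring{}}\in A[\undt{s}]$ and it generates $\operatorname{Fitt}_{A[\undt{s}]}(H_\phi)$.

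It therefore suffices to show $[\calh_\phi]_{\ring{}}=\mathbb{B}_\phi$. By Corollary~\ref{theorem41} we have $[\calh_\phi]_{\ring{}}=[\calu_\phi:\calu_\phi^c]_{\ring{}}$, and both objects can be computed explicitly: Proposition~\ref{lemma3bis}(1) (applicable since $r\geq q$) gives $U_\phi=\frac{\widetilde{\pi}}{\omega_\alpha}A[\undt{s}]$, hence $\calu_\phi=\frac{\widetilde{\pi}}{\omega_\alpha}\ring{}$, while the definition~(\ref{Bphi}) of $\mathbb{B}_\phi$ reads $L(1,\phi)=(-1)^{\frac{r-1}{q-1}}\mathbb{B}_\phi\frac{\widetilde{\pi}}{\omega_\alpha}$, so $\calu_\phi^c=\ring{}L(1,\phi)=\mathbb{B}_\phi\frac{\widetilde{\pi}}{\omega_\alpha}\ring{}$. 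Multiplication by $\omega_\alpha/\widetilde{\pi}$ is an isomorphism of $\ring{}$-modules $\calu_\phi\xrightarrow{\sim}\ring{}$ carrying $\calu_\phi^c$ onto $\mathbb{B}_\phi\ring{}$, whence $\calu_\phi/\calu_\phi^c\cong\ring{}/\mathbb{B}_\phi\ring{}$ and $[\calu_\phi:\calu_\phi^c]_{\ring{}}=\mathbb{B}_\phi$ (it is monic in $\theta$ by Lemma~\ref{lemma4}). Putting the two steps together yields $\operatorname{Fitt}_{A[\undt{s}]}(H_\phi)=\big([\calh_\phi]_{\ring{}}\big)=(\mathbb{B}_\phi)=\mathbb{B}_\phi A[\undt{s}]$.

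I do not expect a genuine obstacle here: the only point deserving some care is the first step, namely passing from the $\FF(\undt{s})$-characteristic polynomial $[\calh_\phi]_{\ring{}}$ — which is what Corollary~\ref{theorem41} controls — back to an honest generator of the Fitting ideal over the smaller ring $A[\undt{s}]$, and this is exactly where the freeness of $H_\phi$ over $\FF[\undt{s}]$ from Proposition~\ref{lemma3bis}(2) is used in an essential way. Everything else is a direct unwinding of the class number formula and of the already-established identities $U_\phi=\frac{\widetilde{\pi}}{\omega_\alpha}A[\undt{s}]$ and $L(1,\phi)=(-1)^{(r-1)/(q-1)}\mathbb{B}_\phi\,\widetilde{\pi}/\omega_\alpha$.
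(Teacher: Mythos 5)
Your proposal is correct and follows essentially the same route as the paper: freeness of $H_\phi$ over $\FF[\undt{s}]$ (Proposition \ref{lemma3bis}(2)) identifies $\operatorname{Fitt}_{A[\undt{s}]}(H_\phi)$ with the characteristic polynomial of $\phi_\theta$, hence with $[\calh_\phi]_{\ring{}}$, and then Proposition \ref{lemma3bis}(1) together with Corollary \ref{theorem41} (i.e.\ the class number formula) and the definition (\ref{Bphi}) of $\mathbb{B}_\phi$ gives $[\calh_\phi]_{\ring{}}=\mathbb{B}_\phi$, monic by Lemma \ref{lemma4}. The only cosmetic difference is that the paper computes $[\ring{}:\calu_\phi]_{\ring{}}=(-1)^{\frac{r-1}{q-1}}\widetilde{\pi}/\omega_\alpha$ and invokes Corollary \ref{theorem41}, whereas you compute the index $[\calu_\phi:\calu_\phi^c]_{\ring{}}$ directly from the explicit generators; these are the same calculation.
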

 \begin{proof} Since $H_\phi$ is free of rank $u(\alpha)$ (part (2) of Proposition \ref{lemma3bis}), we have that
 $\operatorname{Fitt}_{A[\undt{s}]}(H_\phi)=FA[\undt{s}]$, where 
 $$F= \det{}_{\FF[\undt{s}]} (Z - \phi_{\theta}| _{H_{\phi}})|_{Z=\theta},$$
 and $F$ has degree $u(\alpha)$ as a
 polynomial in $\theta$.  Again by (1) of Proposition \ref{lemma3bis}, we have $U_{\phi}=\frac{\widetilde {\pi}}{\omega_{\alpha}}A[\undt{s}]$
 and $$[\ring{}: \calu_\phi]_{\ring{}} = \left[\ring{}: \frac{\widetilde {\pi}}{\omega_{\alpha}} \ring{}\right]_{\ring{}} = (-1) ^{\frac{r-1}{q-1}}\frac{\widetilde {\pi}}{\omega_{\alpha}}$$ (in the notation on \S \ref{aroundcalr}),
 because $(-1)^{\frac{r-1}{q-1}}\frac{\widetilde {\pi}}{\omega_{\alpha}}$ is monic. It remains to apply Corollary \ref{theorem41}.\end{proof}

We presently do not know much about the irreducible factors of the polynomials $\mathbb{B}_\phi$. However,
if $\alpha=(t_1-\theta)\cdots(t_s-\theta)$ (that is, if $\phi=C_s$) with $s\equiv1\pmod{q-1}$, more can be said.
\begin{lemma}\label{varyingzeta}
If $s\equiv1\pmod{q-1}$, $s\geq q,$ then $\mathbb{B}_{C_s}$ has no non-trivial divisor in $A$.
\end{lemma}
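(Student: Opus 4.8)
The plan is to show that $\mathbb{B}_{C_s}$, viewed as a polynomial in $\theta$ with coefficients in $\FF[\undt{s}]$, cannot be divisible by any prime $P \in A = \FF[\theta]$, by combining the specialization machinery with a degree count. First I would recall from Theorem \ref{theorem4} that $\mathbb{B}_\phi A[\undt{s}] = \operatorname{Fitt}_{A[\undt{s}]}(H_\phi)$ and from Lemma \ref{lemma4} that $\mathbb{B}_{C_s}$ is monic of degree $u(\alpha) = \frac{s-q}{q-1}$ in $\theta$; moreover, from Remark \ref{floric} we know $\mathbb{B}_{C_s} \in A[\undt{s}] \cap \mathbb{T}_s(K_\infty)^\times$, i.e. its coefficient of $\theta^0$ (equivalently, its value at any specialization of the $t_i$) is not forced to vanish in a way that would help a naive argument --- so the real content is ruling out an honest factor $P(\theta)$ that does not involve the $t_i$'s.

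Suppose, for contradiction, that $P \mid \mathbb{B}_{C_s}$ in $A[\undt{s}]$ for some prime $P$ of $A$ of degree $d$. The key step is to specialize the variables $t_1, \ldots, t_s$ to a well-chosen tuple of roots of $P$ in $\FF^{\mathrm{ac}}$. Concretely, under an evaluation map $\operatorname{ev}$ sending each $t_i$ to one of the $d$ conjugate roots of $P$ (the kind of substitution used in \S\ref{evaluationdirichletcharacters} and Corollary \ref{theorem6}), one has $\operatorname{ev}(\rho_\alpha(a)) = \prod_{i=1}^s a(\zeta_i)$, and the $L$-series value $L(1, C_s)$ specializes to a Goss abelian $L$-value attached to the product of Teichmüller-type characters mod $P$. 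The hypothesis $P \mid \mathbb{B}_{C_s}$ would force, via Theorem \ref{theorem4} and the specialization, that the corresponding $\chi$-isotypic component of the Taelman class module $H_P$ for the cyclotomic extension $K_P/K$ is non-trivial. But when $t_1 = \cdots = t_s$ are all sent to the \emph{same} root $\zeta$ of $P$, the relevant character is $\vartheta_P^s$ where $\vartheta_P$ is the Teichmüller character, and by the Herbrand--Ribet--Taelman criterion (Theorem \ref{theorem8}, with $\widetilde\chi = 1$, $N = s$) non-triviality is equivalent to $\operatorname{BC}_{q^d - s, 1} \equiv 0 \pmod P$. For $s$ in the range $q \le s < q^d$ (small primes $P$), and more generally by choosing $d$ with $q^d > s$, this Bernoulli--Carlitz number is a nonzero element of $A$ of controlled degree, so for all $P$ of sufficiently large degree it is a $P$-unit; for the finitely many small-degree $P$ one checks directly. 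The hard part will be making this dichotomy airtight: one must verify that the specialization of $\mathbb{B}_{C_s}$ really computes the Fitting ideal of $H_\chi$ after evaluation (this is where Corollary \ref{theorem6} is needed, and one should check its hypotheses apply when all $t_i$ collapse to one root), and then bound $\deg_\theta \operatorname{BC}_{q^d-s,1}$ to conclude it is a $P$-unit.

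An alternative, and perhaps cleaner, route avoids isotypic components entirely: specialize only \emph{one} variable, say $t_s \mapsto \zeta$ with $P(\zeta)=0$, leaving $t_1, \ldots, t_{s-1}$ generic. Then the parameter $\alpha = (t_1-\theta)\cdots(t_s-\theta)$ specializes to $(t_1-\theta)\cdots(t_{s-1}-\theta)\cdot P_1(\theta)$ where $P_1$ is a factor of $P$ over $\FF(\zeta)$, and $L(1,C_s)$ specializes to an $L$-value still living in a Tate algebra in $s-1$ variables. If $P \mid \mathbb{B}_{C_s}$, then after this partial specialization $P_1$ (or $P$ itself, over $\FF(\undt{s-1})$) divides the specialized polynomial, which by Theorem \ref{theorem4} would give a nonzero torsion submodule of $H_{\phi'}$ killed by $P$ for the specialized module $\phi'$; but $\phi'$ has parameter with a repeated-type structure, and one can use the explicit product expansion together with the fact (Remark \ref{floric}) that $[\calh_{\phi'}]_{\ring{}}$ lies in $\mathbb{T}_{s-1}(K_\infty)^\times$ to derive a contradiction from $P$ being a constant (in $\theta$-degree sense) obstruction. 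I expect the main obstacle either way is bookkeeping the compatibility of the class-module formation with specialization of the $t_i$ --- i.e. that $\operatorname{ev}(\mathbb{B}_{C_s})$ genuinely generates $\operatorname{Fitt}(\text{specialized class module})$ and does not pick up spurious factors or lose information --- so I would lead with a careful statement of that compatibility (citing Corollary \ref{theorem6} and Theorem \ref{theorem7}) before running the degree/Bernoulli--Carlitz estimate.
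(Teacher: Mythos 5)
Your proposal does not follow the paper's route, and as it stands it has genuine gaps. The paper's argument is much more elementary: since $s\equiv 1\pmod{q-1}$, one evaluates at $t_1=\cdots=t_s=\zeta$ with $\zeta$ ranging over the finite field $\FF$ itself. Then $L(1,C_s)|_{t_i=\zeta}=\sum_{a\in A_+}a(\zeta)/a$, which by Lemma \ref{corr=beta=1} equals $\widetilde{\pi}/\bigl((\theta-\zeta)\omega(\zeta)\bigr)$, so that $\mathbb{B}_{C_s}|_{t_i=\zeta}=\pm(\theta-\zeta)^{\frac{s-q}{q-1}}$. A divisor $a\in A\setminus\{0\}$ of $\mathbb{B}_{C_s}$ in $A[\undt{s}]$ would then divide $(\theta-\zeta)^{\frac{s-q}{q-1}}$ for every $\zeta\in\FF$, forcing $a\in\FF^\times$. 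No class modules, characters or Bernoulli--Carlitz numbers are needed.

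Concretely, your first route breaks at several points. First, the diagonal specialization ``all $t_i\mapsto$ the same root $\zeta_P$'' does not correspond to any Dirichlet character of type $s$ when $s\geq q$: in the evaluation maps of \S\ref{evaluationmapss} the number of variables sent to a given $\zeta_{P}^{q^j}$ is a base-$q$ digit $n_{j}\leq q-1$, so Corollary \ref{theorem6} and Theorem \ref{theorem8} simply do not apply to that specialization (the worry you flagged is fatal, not a technicality). Second, the prime $P$ is \emph{fixed} by the contradiction hypothesis, so you cannot ``choose $d$ with $q^d>s$''; and if $\deg P=d$ satisfies $d(q-1)<s$ there is no character of conductor $P$ and type $s$ at all, so you would be pushed to conductors $a=Pb$ and to generalized Bernoulli--Carlitz numbers $\operatorname{BC}_{q^d-N,\widetilde{\chi}^{-1}}$ with $\widetilde{\chi}\neq 1$. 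Third, even in the favourable case the argument hinges on $\operatorname{BC}_{q^d-s}\not\equiv 0\pmod P$, i.e.\ a regularity statement for the specific fixed $P$; this is not a consequence of degree bounds (the numbers lie in $K(\FF_a)$, not in $A$, and irregular primes are not excluded), and no such non-vanishing is available in the paper. Your second route is only a sketch: divisibility of $[\calh_{\phi'}]_{\ring{}}$ by $P$ is not in conflict with $[\calh_{\phi'}]_{\ring{}}\in\mathbb{T}_{s-1}(K_\infty)^\times$, since elements such as $\theta-\zeta$, and indeed every nonzero element of $A$, are already units of $\mathbb{T}_{s-1}(K_\infty)$, so no contradiction is derived there. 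Finally, note the intended logical order of the paper: Lemma \ref{varyingzeta} is used upstream (via Proposition \ref{propo7puntodue}) of the character-evaluation section, so even a repaired version of your argument would invert the architecture, whereas the direct specialization at $\zeta\in\FF$ keeps the lemma self-contained.
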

\begin{proof}
In this case, we have $\omega_\alpha=\omega(t_1)\cdots\omega(t_s)$ and $L(1,C_s)=L(\chi_{t_1}\cdots\chi_{t_s},1)$
in the notation of \cite{ANG&PEL}. We can evaluate at $t_1=\cdots=t_s=\zeta\in\FF$ and, by the fact that $s\equiv1\pmod{q-1}$,
$L(1,C_s)|_{t_i=\zeta}=\sum_{a\in A_+}\frac{a(\zeta)}{a}$. By using Lemma \ref{corr=beta=1}, we obtain 
$$\sum_{a\in A_+}\frac{a(\zeta)}{a}=\frac{\widetilde{\pi}}{(\theta-\zeta)\omega(\zeta)}.$$
Therefore,
$$\mathbb{B}_{C_s}|_{t_i=\zeta}=\widetilde{\pi}^{-1}L(1,C_s)\omega_\alpha|_{t_i=\zeta}=\omega(\zeta)^{r-1}(\theta-\zeta)^{-1}=(\theta-\zeta)^{\frac{r-q}{q-1}}\in A.$$
Now, if $a\in A\setminus\{0\}$ divides $\mathbb{B}_{C_s}$ in $A[\undt{s}]$, then $a$ divides $(\theta-\zeta)^{\frac{r-1}{q-1}}$
for all $\zeta\in\FF$ so that $a\in\FF^\times$.
\end{proof}

\begin{proposition}\label{propo7puntodue} Let us suppose that $s\geq 2q-1$ and $s\equiv1\pmod{q-1}$. Then the $A$-module $H_{C_s}$ is torsion-free and not finitely generated.
\end{proposition}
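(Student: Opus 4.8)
Here $C_s$ has parameter $\alpha=(t_1-\theta)\cdots(t_s-\theta)$, so $r=-v_\infty(\alpha)=s$, $\beta=1$, and by hypothesis $r=s\equiv 1\pmod{q-1}$ with $s\geq 2q-1\geq q$. Thus $C_s$ is in the torsion case of \S\ref{definedoverA}, and Proposition~\ref{lemma3bis}(2) tells us that $H_{C_s}$ is a \emph{free} $\FF[\undt{s}]$-module of rank $n:=u(\alpha)=\frac{s-q}{q-1}\geq 1$. Fixing an $\FF[\undt{s}]$-basis, let $M\in M_n(\FF[\undt{s}])$ be the matrix of $\phi_\theta$ acting on $H_{C_s}$; then, combining the description of $[\,\cdot\,]_R$ in \S\ref{localfactors} with Theorem~\ref{theorem4}, one gets $\det(\theta I-M)=\mathbb{B}_{C_s}$, so that $\mathbb{B}_{C_s}$ is the characteristic polynomial of $\phi_\theta$ on $\calh_{C_s}=\FF(\undt{s})\otimes_{\FF[\undt{s}]}H_{C_s}$, and $\mathbb{B}_{C_s}|_{\theta=0}=(-1)^n\det M$. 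The only arithmetic input beyond this is Lemma~\ref{varyingzeta}: since $s\equiv 1\pmod{q-1}$ and $s\geq q$, the polynomial $\mathbb{B}_{C_s}$ has no non-constant divisor lying in $A$ (equivalently, by Gauss's lemma, no prime of $A$ divides $\mathbb{B}_{C_s}$ in $\FF(\undt{s})[\theta]$).

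\emph{Torsion-freeness.} The plan is: since $H_{C_s}$ is free over $\FF[\undt{s}]$ it embeds, as an $A$-module, into $\calh_{C_s}$, so it suffices to prove $\calh_{C_s}$ is $A$-torsion-free, i.e. that $a(\phi_\theta)$ is injective on $\calh_{C_s}$ for every $a\in A\setminus\{0\}$. If it were not, I would pick $0\neq v\in\calh_{C_s}$ with $a(\phi_\theta)v=0$; since $\mathbb{B}_{C_s}$ annihilates $\calh_{C_s}$ by Cayley--Hamilton, the annihilator of $v$ in $\ring{}=\FF(\undt{s})[\theta]$ is generated by a polynomial of positive $\theta$-degree dividing both $a$ and $\mathbb{B}_{C_s}$. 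Hence $a$ and $\mathbb{B}_{C_s}$ have a common factor of positive $\theta$-degree in $\FF(\undt{s})[\theta]$. As $\FF$ is finite (hence perfect) and algebraically closed in $\FF(\undt{s})$, every prime of $A$ stays irreducible in $\FF(\undt{s})[\theta]$, so the irreducible factors of $a$ over $\FF(\undt{s})$ are, up to scalars, primes of $A$; therefore some prime $P$ of $A$ divides $\mathbb{B}_{C_s}$ in $\FF(\undt{s})[\theta]$, contradicting Lemma~\ref{varyingzeta}. So $a(\phi_\theta)$ is injective for all $a\neq 0$, and $H_{C_s}$ is $A$-torsion-free.

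\emph{Not finitely generated.} Suppose for contradiction that $H_{C_s}$ is finitely generated over $A=\FF[\theta]$. Being torsion-free (just shown) and non-zero over a principal ideal domain, it would be free: $H_{C_s}\cong A^m$ with $m\geq 1$, whence $H_{C_s}/\phi_\theta H_{C_s}=H_{C_s}\otimes_A A/\theta A\cong\FF^m$ is a non-zero, finite-dimensional $\FF$-vector space. On the other hand, in the $\FF[\undt{s}]$-basis above one has $H_{C_s}/\phi_\theta H_{C_s}\cong\operatorname{coker}\!\bigl(M\colon\FF[\undt{s}]^n\to\FF[\undt{s}]^n\bigr)$, whose $0$-th Fitting ideal is $(\det M)=(\mathbb{B}_{C_s}|_{\theta=0})$, so that its support is $V(\mathbb{B}_{C_s}|_{\theta=0})\subseteq\operatorname{Spec}\FF[\undt{s}]$. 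Now $\mathbb{B}_{C_s}|_{\theta=0}\neq 0$, since otherwise $\theta$ (a prime of $A$) would divide $\mathbb{B}_{C_s}$ in $A[\undt{s}]$, against Lemma~\ref{varyingzeta}. If $\mathbb{B}_{C_s}|_{\theta=0}\in\FF^\times$, then $M\in\mathrm{GL}_n(\FF[\undt{s}])$ and $\operatorname{coker}(M)=0\neq\FF^m$; if $\mathbb{B}_{C_s}|_{\theta=0}$ is non-constant, then (using $s\geq 2q-1\geq 2$) the hypersurface $V(\mathbb{B}_{C_s}|_{\theta=0})$ is infinite, so $\operatorname{coker}(M)$ does not have finite length and is therefore infinite-dimensional over $\FF$, again contradicting $\operatorname{coker}(M)\cong\FF^m$. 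Either way we reach a contradiction, so $H_{C_s}$ is not finitely generated over $A$.

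The routine parts are the structure theory over the PID $A$ and over the UFD $A[\undt{s}]$; the point requiring the most care is the identification $H_{C_s}/\phi_\theta H_{C_s}\cong\operatorname{coker}(M)$ and the control of its $\FF$-dimension through the support $V(\mathbb{B}_{C_s}|_{\theta=0})$, together with the reduction of both claims to the single arithmetic fact supplied by Lemma~\ref{varyingzeta} (and, for torsion-freeness, the standard fact that $\FF$ is algebraically closed in $\FF(\undt{s})$).
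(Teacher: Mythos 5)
Your proposal is correct, and while your torsion-freeness argument matches the paper's, your proof that $H_{C_s}$ is not finitely generated over $A$ takes a genuinely different route. For torsion-freeness the paper simply says it is "a consequence of Lemma \ref{varyingzeta}"; you supply exactly the details that citation is meant to cover: embed $H_{C_s}$ into $\calh_{C_s}$ using freeness over $\FF[\undt{s}]$, note that $\mathbb{B}_{C_s}$ is the characteristic polynomial of $\phi_\theta$ (Theorem \ref{theorem4}), and use that primes of $A$ stay irreducible over $\FF(\undt{s})$, so any $A$-torsion would force a prime of $A$ to divide $\mathbb{B}_{C_s}$, against Lemma \ref{varyingzeta}. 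For the second assertion the paper argues structurally: a non-trivial $\FF[\undt{s},\theta]$-module cannot be free of finite rank simultaneously over $\FF[\undt{s}]$ and over $A$ when $s>1$, because the commuting multiplications by $t_1,\dots,t_s$ would give algebraically independent elements of $\operatorname{End}_A(M)\cong\operatorname{Mat}_{n\times n}(\FF)[\theta]$, which is impossible; combined with torsion-freeness over the PID $A$ (to get freeness over $A$), this rules out finite generation with no further use of Lemma \ref{varyingzeta}. You instead compute $H_{C_s}/\phi_\theta H_{C_s}$ in two ways: over $A$ it would be $\FF^m$ with $m\geq 1$, while over $\FF[\undt{s}]$ it is the cokernel of the matrix of $\phi_\theta$, whose $0$-th Fitting ideal is generated by $\pm\,\mathbb{B}_{C_s}|_{\theta=0}$; you then invoke Lemma \ref{varyingzeta} a second time (through $\theta\nmid\mathbb{B}_{C_s}$, so this determinant is non-zero) and conclude by a support/dimension count (unit determinant gives zero cokernel, non-constant determinant gives a cokernel of infinite $\FF$-dimension). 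Both routes are valid: the paper's lemma is independent of the specific polynomial $\mathbb{B}_{C_s}$ and shows that freeness of finite rank over $\FF[\undt{s}]$ alone already obstructs finite generation over $A$ once torsion-freeness is known, whereas your argument is more concrete, stays entirely within the Fitting-ideal framework already set up in the paper, and makes the arithmetic input (non-divisibility of $\mathbb{B}_{C_s}$ by elements of $A$) do the work twice.
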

\begin{proof} 
Proposition \ref{lemma3bis} asserts that $H_{C_s}$ is a $\FF[\undt{s}]$-module free of rank $\frac{s-q}{q-1}\geq 1$. Thus the assertion that the $A$-module $H_{C_s}$ is torsion-free is a consequence of Lemma \ref{varyingzeta}.
Now, it is a general fact that a non-trivial $\FF[\undt{s}, \theta]$-module $M$ cannot be simultaneously free of finite rank over $\FF[\undt{s}]$ and over $A$ if $s>1$. Let us suppose by contradiction that $M$ is a non-trivial
$\FF[\undt{s}, \theta]$-module which is free of finite rank as a module over
$\FF[\undt{s}]$ and over $A=\FF[\theta]$. Then $\operatorname{End}_{A}(M)$ would be isomorphic to $\operatorname{Mat}_{n\times n}(\FF)[\theta]$ as an $A$-module. For $1\leq i \leq s$ the actions on $M$ of $t_i$ and of $\theta$ commute, and, for all $i$, the multiplication by $t_i$ defines an element $T_i\in \operatorname{End}_{A}(M)$. Since $M$ is free over $\FF[\undt{s}]$, we deduce that if $i\neq j$, $T_i$ and $T_j$ are algebraically independent over $\FF$. This is not possible. Now, for $s$ as in our hypotheses, the module $H_{C_s}$ is finitely generated over $\FF[\undt{s}]$ and non-trivial. 
Hence, it is not finitely generated over $A$.
\end{proof}

 \subsection{The non-torsion case} In this subsection, we consider the Drinfeld module $C_s$ (recall that this is the Drinfeld $A[\undt{s}]$-module of rank one  with parameter $(t_1-\theta)\cdots (t_s-\theta)$) and we assume that   $s\not\equiv1\pmod{q-1}, s\geq 2q-1$. Let  $M$ be the $A$-torsion submodule  of $H_{C_s}$. This also is an $A[\undt{s}]$-module, and we know that it is a finitely generated $\FF[\undt{s}]$-module (Corollary \ref{corollaryHszero}).
Moreover:
\begin{proposition}\label{propM}
The  $A[\undt{s}]$-submodule $M$ is a torsion $\FF[\undt{s}]$-module.
\end{proposition}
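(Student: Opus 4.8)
The plan is to reduce the assertion to the $A$-torsion-freeness of the finite-dimensional $\FF(\undt{s})$-vector space $\calh_{C_s}=\FF(\undt{s})\otimes_{\FF[\undt{s}]}H_{C_s}$, and then to prove that freeness by showing that $\phi_P$ acts injectively on $\calh_{C_s}$ for every prime $P$ of $A$ (throughout $\phi=C_s$, and $A$ acts through $\phi$).

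First I would note that localisation at $S=\FF[\undt{s}]\setminus\{0\}$ identifies $S^{-1}M$ with the $A$-torsion submodule of $S^{-1}H_{C_s}=\calh_{C_s}$: since each $\phi_a$ is $\FF[\undt{s}]$-linear, the image in $\calh_{C_s}$ of an element of $H_{C_s}$ killed by some $\phi_a$ ($0\neq a\in A$) is $A$-torsion, and conversely an $A$-torsion element of $\calh_{C_s}$ becomes, after multiplication by a suitable element of $S$, the image of such an element of $H_{C_s}$. Hence $M$ is $\FF[\undt{s}]$-torsion if and only if $\calh_{C_s}$ has no nonzero $A$-torsion. By the isomorphism (\ref{secondisom}) we may replace $\calh_{C_s}$ by $\calv_{C_s}$, which is finite-dimensional over $\FF(\undt{s})$ (Corollary \ref{corollaryHszerobis}); for such a module, having no $A$-torsion is equivalent to $\phi_P$ being injective on it for every prime $P$ of $A$, because for $0\neq a\in A$ the map $\phi_a$ is, up to a unit of $\FF$, a composition of the maps $\phi_P$ over the primes $P\mid a$.

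I would then prove this injectivity from $\calv_{C_s}=\phi(K(\undt{})_\infty)/\bigl(\phi(\ring{})+\exp_\phi(K(\undt{})_\infty)\bigr)$. The operator $\phi_P$ preserves $\phi(\ring{})+\exp_\phi(K(\undt{})_\infty)$, since $\phi_P(\ring{})\subseteq\ring{}$ and $\phi_P\exp_\phi(K(\undt{})_\infty)=\exp_\phi(P\,K(\undt{})_\infty)=\exp_\phi(K(\undt{})_\infty)$, $P$ being invertible in the field $K(\undt{})_\infty$. Now take $x\in K(\undt{})_\infty$ with $\phi_P(x)=\phi_P(g)+\exp_\phi(h)$, $g\in\ring{}$, $h\in K(\undt{})_\infty$. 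From $\phi_P\exp_\phi=\exp_\phi P$ and the invertibility of $P$ we get $\exp_\phi(h)=\phi_P\bigl(\exp_\phi(P^{-1}h)\bigr)$, hence $\phi_P\bigl(x-g-\exp_\phi(P^{-1}h)\bigr)=0$ with the argument in $K(\undt{})_\infty$. So it is enough to show $\phi_P$ is injective on $K(\undt{})_\infty$, i.e. $\phi[P]\cap K(\undt{})_\infty=\{0\}$: then $x=g+\exp_\phi(P^{-1}h)$ lies in $\phi(\ring{})+\exp_\phi(K(\undt{})_\infty)$.

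This last point is the crux, and it is where the hypothesis $s\not\equiv1\pmod{q-1}$ (forcing $q\geq3$) enters. The parameter $\alpha=(t_1-\theta)\cdots(t_s-\theta)$ is a unit of $\mathbb T_s$ (dividing it by $(-\theta)^s$ yields a series whose reduction modulo $\mathfrak m_{\mathbb T_s}$ is $1$), so $C_s$ is uniformizable by Proposition \ref{proposition1} and there is $\omega_\alpha\in\mathbb T_s^\times$ with $\tau(\omega_\alpha)=\alpha\omega_\alpha$ and $C_a\omega_\alpha=\omega_\alpha\phi_a$ for all $a$; consequently $\phi[P]=\omega_\alpha^{-1}C[P]$. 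The nonzero points of the Carlitz torsion $C[P]=\exp_C(\tfrac{\widetilde\pi}{P}A)$ have $v_\infty$ in $-\tfrac{q}{q-1}+\ZZ$, whereas $v_\infty(\omega_\alpha)=-\tfrac{s}{q-1}$ by (\ref{normomega}); hence the nonzero points of $\phi[P]$ have $v_\infty$ in $\tfrac{s-q}{q-1}+\ZZ$, a coset disjoint from $\ZZ$ exactly because $s\not\equiv q\equiv1\pmod{q-1}$. Since $v_\infty$ is $\ZZ$-valued on $K(\undt{})_\infty=\FF(\undt{s})((\tfrac1\theta))$, we conclude $\phi[P]\cap K(\undt{})_\infty=\{0\}$. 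Letting $P$ run over all primes of $A$ shows that $\calv_{C_s}\cong\calh_{C_s}$ has no $A$-torsion, so $M$ is a torsion $\FF[\undt{s}]$-module. The only subtle point is the valuation computation for $\phi[P]$, which rests on the uniformisability of $C_s$ and on $\widetilde\pi$ and $\omega_\alpha$ having non-integral valuation at $\infty$.
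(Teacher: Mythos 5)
Your reduction at the start is fine (localizing at $\FF[\undt{s}]\setminus\{0\}$, $M$ is $\FF[\undt{s}]$-torsion iff $\calh_{C_s}\cong\calv_{C_s}$ has no $A$-torsion, and this amounts to injectivity of each $\phi_P$ on $\calv_{C_s}$), and this matches the paper's reduction to showing that $[\calh_{C_s}]_{\ring{}}$ has no divisor in $A$. The gap is in the injectivity step. An element $\bar x\in\calv_{C_s}$ killed by $P$ only gives $\phi_P(x)=y+\exp_\phi(h)$ with $y\in\ring{}$ \emph{arbitrary}; you wrote $\phi_P(x)=\phi_P(g)+\exp_\phi(h)$, i.e. you silently assumed the $\ring{}$-component lies in $\phi_P(\ring{})$, which is a very small subset of $\ring{}$ (its elements have $\theta$-degrees dictated by $\deg P$ and $s$, so e.g. $1\notin\phi_P(\ring{})$). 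What your argument actually proves is the inclusion $\phi_P^{-1}\bigl(\phi_P(\ring{})+\exp_\phi(K(\undt{})_\infty)\bigr)\subset\ring{}+\exp_\phi(K(\undt{})_\infty)$, whereas injectivity on the quotient requires $\phi_P^{-1}(D)\subset D$ with $D=\ring{}+\exp_\phi(K(\undt{})_\infty)$. Injectivity of $\phi_P$ on $K(\undt{})_\infty$ does not yield this: compare multiplication by $2$ on $\QQ$ with $D=\ZZ$, which is injective and preserves $D$ yet has torsion on $\QQ/\ZZ$. Concretely, when $\ker\phi_P\cap K(\undt{})_\infty=\{0\}$ one only gets an embedding $\calv_{C_s}[P]\hookrightarrow D/\phi_P(D)$, and the vanishing of this torsion is exactly the nontrivial assertion $P\nmid[\calh_{C_s}]_{\ring{}}$; it cannot be extracted from the valuations of torsion points alone.

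Indeed the paper's proof is genuinely global at this point: divisibility of $[\calh_{C_s}]_{\ring{}}$ by $a\in A$ is converted, via the class number formula (Theorem \ref{theorem2}) and Proposition \ref{proposition2}, into the integrality statement $\exp_{C_s}(a^{-1}L(1,C_s))\in A[\undt{s}]$, which is then ruled out using the entireness of this function (Proposition \ref{entireness}), the vanishing of the parameter at $t_s=\theta$, and the nonvanishing of $L(0,C_{s-1})$, i.e. of $\zeta_C(1-s)$ (this is where $s\not\equiv1\pmod{q-1}$ really enters). None of this is replaceable by the local computation you propose. A secondary, fixable issue: the identity $\phi[P]=\omega_\alpha^{-1}C[P]$ describes torsion inside $\mathbb{T}_s$, while you need the kernel of $\phi_P$ acting on $K(\undt{})_\infty$, and neither of these rings contains the other; to justify $\ker\phi_P\cap K(\undt{})_\infty=\{0\}$ for $s\not\equiv1\pmod{q-1}$ you should argue directly in $K(\undt{})_\infty$ (e.g. a Newton-polygon/valuation argument, or a bound on solution spaces as in Lemma \ref{lemmaA5}) rather than transport torsion points from $\mathbb{T}_s$. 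But even with that repaired, the central step above is missing, so the proposal does not prove the proposition.
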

\begin{proof}
We must show that $M\otimes_{\FF[\undt{s}]}\FF(\undt{s})=\{0\}$. By the isomorphism (\ref{secondisom}), it is enough to show that
$[\calh_{C_s}]_{\ring{}}$ has no divisors in $A$, where $\ring{}=\FF(\undt{s})[\theta]$.
By part (1) of Proposition \ref{proposition2}, we know that $\calu_{C_s}=\FF(\undt{s})U_{C_s}$ is an $\ring{}$-module free of rank one. 
The class number formula, Theorem \ref{theorem2}, yields that 
$$[\calh_{C_s}]_{\ring{}}\calu_{C_s}=\ring{}L(1,{C_s}).$$
Let $a\in A\setminus\{0\}$ be a divisor of $[\calh_{C_s}]_{\ring{}}$. Then, $a^{-1}L(1,{C_s})\in \calu_{C_s}$. By part (2) of Proposition \ref{proposition2},
we have that $\exp_{C_s}(a^{-1}L(1,{C_s}))\in \ring{}$. Since we also have, at once, $\exp_{C_s}(a^{-1}L(1,{C_s}))\in \mathbb{T}_s(K_\infty)$, we obtain
that $$\exp_{C_s}(a^{-1}L(1,{C_s}))\in A[\undt{s}].$$
We claim that this is impossible unless $a\in\FF^\times$. To see this, we appeal to Proposition \ref{entireness} which
says us that $\exp_{C_s}(a^{-1}L(1,{C_s}))$ extends to an entire function in $s$ variables. 

It is here that we use the particular shape of the parameter $\alpha$.
Indeed, $\alpha$ vanishes at $t_s=\theta$. The evaluation at $t_s=\theta$ in $\exp_{C_s}(a^{-1}L(1,{C_s}))$ yields an
entire function in $s-1$ variables $t_1,\ldots,t_{s-1}$. Since
\begin{equation}\label{polynomialsvs}
\exp_{C_s}(a^{-1}L(1,{C_s}))=\sum_{k\geq 0}\sum_{i+j=k}\frac{\alpha\tau(\alpha)\cdots\tau^{i-1}(\alpha)}{a^{q^{i}}D_i}\sum_{b\in A_{+,j}}\frac{\chi_{t_1}(b)\cdots\chi_{t_s}(b)}{b^{q^j}},\end{equation}
evaluating at $t_s=\theta$ gives,
$$\exp_{C_s}(a^{-1}L(1,{C_s}))|_{t_s=\theta}=a^{-1}L(0,C_{s-1})\in a^{-1}\FF[\undt{s-1}]\cap \ring{s-1}$$ where $\ring{s-1}=\FF(\undt{s-1})[\theta]$ and $C_{s-1}$ is the Drinfeld module of rank one of parameter 
$$\alpha'=(t_1-\theta)\cdots(t_{s-1}-\theta).$$ If by contradiction $a\not\in\FF^\times$, then $a^{-1}\FF[\undt{s-1}]\cap \ring{s-1}=\{0\}$
and $$\exp_{C_s}(a^{-1}L(1,{C_s}))|_{t_s=\theta}=L(0,C_{s-1})=0.$$
However, $L(0,C_{s-1})\neq0$. Indeed, by hypothesis, $s-1\not\equiv0\pmod{q-1}$ and, by \cite{GOS}, page 278, line 4, we have: $$L(0,C_{s-1})|_{t_1=\cdots=t_{s-1}=\theta}=\zeta_C(1-s)\in A\setminus\{0\}$$  which yields a contradiction. Therefore, $a\in\FF^\times$.
\end{proof}
\section{On the log-algebraicity Theorem of Anderson}\label{anderson}

We first recall Anderson's  {\em log-algebraicity Theorem} for the Carlitz module (cf. \cite[Theorem 3]{AND2}; see also loc. cit. Proposition 8). Let $Y,z$ be two indeterminates over $\mathbb C_{\infty}.$ Let $$\tau: \mathbb C_{\infty}[Y][[z]]\rightarrow \mathbb C_{\infty}[Y][[z]]$$ be the map $f\mapsto f^q.$ G. Anderson proved that for $n\in \mathbb N:$
$$\exp_C\left(\sum_{d\geq 0}\sum_{a\in A_{+,d}}\frac{C_a(Y)^n}{a} z^{q^d}\right)\in A[Y,z].$$
It turns out that the class number formula (Theorem \ref{theorem2}) implies a refined version of Anderson's log-algebraicity Theorem in the case of the Carlitz module as we will explain below.

We consider, for $r\in \mathbb N,$ and  for all $1 \le j \le r, i\in \mathbb N$, ``symbols" $X_1,\ldots,X_r,Z$, $\tau(X_1), \dots\tau(X_r), \dots, \tau^i(X_j), \dots$ Let us consider the polynomial ring in infinitely many indeterminates
$$\mathcal{B}_r=\mathbb C_{\infty}[X_1,\ldots,X_r,\tau(X_1),\ldots,\tau(X_r),\tau^2(X_1),\ldots,\tau^2(X_r),\ldots].$$
We extend the action of $\tau$ to $\mathcal{B}_r$ by setting $$\tau(\tau^m(X))=\tau^{m+1}(X)$$ with $X=X_1,\ldots,X_r$, and $\tau(c)=c^q$ for $c\in\CC_\infty$.
 We use the ring $\mathcal{B}_r$ (and the so far unused indeterminate $Z$) to construct yet another ring, non-commutative,
denoted by $\mathcal{A}_r$. This is the set of infinite series $$\sum_{i\geq 0} c_i\tau^i(Z)$$
with the coefficients $c_i\in\mathcal{B}_r$, the sum being the usual one  and the product is given by the following rule. For
$$F=\sum_{i\geq0}f_i\tau^i(Z),\quad G=\sum_{j\geq0}g_j\tau^j(Z),$$
we set
$$F\cdot G:=\sum_{k\geq0}\left(\sum_{i+j=k}f_i\tau^i(g_j)\right)\tau^k(Z).$$
Note that the action of $\tau$ on  $\mathcal{B}_r$ extends to an action of $\tau$  on  $\mathcal{A}_r$  by setting $$\tau(\tau^i(Z))=\tau^{i+1}(Z).$$
We identify $A[X_1,\ldots,X_r,Z,\tau(X_1),\ldots,\tau(X_r),\tau(Z),\ldots]$ with the subring of $\mathcal{A}_r$ consisting of elements $\sum_{i\geq 0} c_i\tau^i(Z)$ where the sequence of coefficients 
$c_i \in A[\tau^i(X_j) ; \linebreak[1] 1\le j\le r, i\ge 0] \subset \mathcal{B}_r$ is ultimately $0$.
The series 
$$\mathcal{L}_r(X_1,\ldots,X_r;Z)=\sum_{d\geq 0}\left(\sum_{a\in A_{+,d}}C_a(X_1)\cdots C_a(X_r)a^{-1}\right)\tau^d(Z)$$
defines an element of ${\mathcal{A}_r}$. 
Let ${\exp}_C=\sum_{i\geq 0}D_i^{-1}\tau^i$ be the operator associated to Carlitz's exponential. Obviously, $$S_r(X_1,\ldots,X_r;Z):=\exp_C(\mathcal{L}_r(X_1,\ldots,X_r;Z))$$ is an element of ${\mathcal{A}_r}$.
But more is true.

\begin{theorem}\label{theoanderson} We have that
$$S_r(X_1,\ldots,X_r;Z)\in A[X_1,\ldots,X_r,Z,\tau(X_1),\ldots,\tau(X_r),\tau(Z),\ldots].$$\label{logalg}\end{theorem}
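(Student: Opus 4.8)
The plan is to derive Theorem~\ref{logalg} from the class number formula, used in the concrete shape of Corollary~\ref{lemma2}: for any Drinfeld $A[\undt{s}]$-module $\phi$ of rank one with parameter in $A[\undt{s}]\setminus\{0\}$ one has $\exp_\phi(L(1,\phi))\in A[\undt{s}]$. The bridge to the universal identity in $\mathcal A_r$ is the substitution of the Anderson--Thakur functions $X_j\mapsto\omega(t_j)$. Fix independent variables $t_1,\dots,t_r$, put $\alpha=(t_1-\theta)\cdots(t_r-\theta)$ and $\omega_\alpha=\omega(t_1)\cdots\omega(t_r)$, and let $\operatorname{ev}$ be the $\tau$-equivariant $\CC_\infty$-algebra homomorphism with $\operatorname{ev}(\tau^i(X_j))=\tau^i(\omega(t_j))$ and, for the moment, $\operatorname{ev}(\tau^k(Z))=1$. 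From $\tau(\omega(t_j))=(t_j-\theta)\omega(t_j)$ and $C_\theta=\theta+\tau$ one gets $C_a(\omega(t_j))=a(t_j)\,\omega(t_j)=\chi_{t_j}(a)\,\omega(t_j)$ for every $a\in A$, hence $\prod_j C_a(\omega(t_j))=\omega_\alpha\,\rho_\alpha(a)$ and
\[
\operatorname{ev}\bigl(\mathcal L_r(X_1,\dots,X_r;Z)\bigr)\big|_{Z=1}=\omega_\alpha\sum_{a\in A_+}\frac{\rho_\alpha(a)}{a}=\omega_\alpha\,L(1,C_r).
\]

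Since $\operatorname{ev}$ commutes with $\tau$ and fixes each $D_i^{-1}\in\CC_\infty$, it commutes with the operator $\exp_C$; and since $C_r$ is isomorphic to the Carlitz module via $\omega_\alpha$ (Proposition~\ref{proposition1}), i.e. $\exp_C\,\omega_\alpha=\omega_\alpha\exp_{C_r}$ in $\mathbb T_r[[\tau]]$, this yields
\[
\operatorname{ev}\bigl(S_r(X_1,\dots,X_r;Z)\bigr)\big|_{Z=1}=\omega_\alpha\,\exp_{C_r}\bigl(L(1,C_r)\bigr)\in\omega_\alpha\,A[\undt{r}]
\]
by Corollary~\ref{lemma2}. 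To restore the dependence on the symbol $Z$, I would rerun this with one (or several) extra variables $u$ and the further substitution $X_{r+1}\mapsto\omega(u)$, so that $\operatorname{ev}(\mathcal L_{r+1})|_{Z=1}=\omega_\alpha\,\omega(u)\,L(1,C_{r+1})$ and Corollary~\ref{lemma2} for $C_{r+1}$ gives $\operatorname{ev}(S_{r+1})|_{Z=1}\in\omega_\alpha\,\omega(u)\,A[\undt{r},u]$; expanding in powers of $\tau^l(X_{r+1})$ and using that $\tau^l(\omega(u))=\bigl(\prod_{m<l}(u-\theta^{q^m})\bigr)\omega(u)$, a product of polynomials of pairwise distinct $u$-degree, together with the transcendence of $\omega(u)$ over $\CC_\infty(u)$, lets one read off, one $\tau^l(X_{r+1})$ at a time, both $A$-integrality of the coefficients and their vanishing for large index. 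I expect this bookkeeping linking the $\tau^l(X_{r+1})$-expansion back to the $\tau^k(Z)$-coefficients of $S_r$ (via the triangular relation $C_a(X_{r+1})=\tau^{\deg_\theta a}(X_{r+1})+(\text{lower order})$) to be one of the two delicate points.

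The second, and the main obstacle, is the descent from these specialized identities back to $\mathcal A_r$: $\operatorname{ev}$ is \emph{not} injective on $\mathcal B_r$ --- for instance $X_j\tau^2(X_j)-\tau(X_j)^2-(\theta-\theta^q)X_j\tau(X_j)$ lies in $\ker\operatorname{ev}$ --- so a single substitution does not see all coefficients. I would overcome this by letting $\underline t=(t_1,\dots,t_r)$ vary, exploiting the algebraic independence of $\omega(t_1),\dots,\omega(t_r)$ over $\CC_\infty(t_1,\dots,t_r)$, in combination with the Frobenius substitution $\tau^i(X_j)\mapsto X_j^{q^i}$ (and $\tau^i(Z)\mapsto Z^{q^i}$), checking that jointly these separate all monomials of $\mathcal B_r$ and detect any pole in $\theta$. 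Granting this, the coefficients of $S_r$ lie in $A$ and only finitely many of the symbols $\tau^k(Z)$ occur, which is Theorem~\ref{logalg}; finally, specializing $X_1=\dots=X_r=X$ with $\tau^i(X)=X^{q^i}$ and $\tau^i(Z)=Z^{q^i}$ recovers Anderson's original statement $\exp_C\bigl(\sum_{a\in A_+}Z^{q^{\deg_\theta a}}a^{-1}C_a(X)^r\bigr)\in A[X,Z]$.
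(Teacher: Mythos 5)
Your first half is sound and close in spirit to the paper: the substitution $X_j\mapsto\omega(t_j)$, the identity $C_a(\omega(t_j))=a(t_j)\,\omega(t_j)$, the intertwining $\exp_C\omega_\alpha=\omega_\alpha\exp_{C_r}$ and Corollary \ref{lemma2} do give $\operatorname{ev}(S_r)|_{Z=1}\in\omega_\alpha A[\undt{r}]$. The gap is the descent back to $\mathcal{A}_r$, which is the actual content of the theorem, and the mechanism you propose does not work as stated. Knowing the images of $S_r$ under $\operatorname{ev}$ and under the Frobenius substitution only controls $S_r$ modulo the intersection of the two kernels, and that intersection is not zero: for instance $\bigl(\tau(X_1)-X_1^q\bigr)\bigl(X_1\tau^2(X_1)-\tau(X_1)^2-(\theta-\theta^q)X_1\tau(X_1)\bigr)$ is a nonzero element of $\mathcal{B}_1$ killed by both specializations (the second factor is your own example of an element of $\ker\operatorname{ev}$, the first is killed by $\tau^i(X_1)\mapsto X_1^{q^i}$). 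So ``jointly separating monomials'' (true only in the weak sense that distinct monomials have distinct images) is not the property you need; you need injectivity on a subspace known in advance to contain the coefficients of $S_r$, and the two specializations together are not injective on $\mathcal{B}_r$. A repair along your lines exists: $S_r$ is multilinear in each group $\{\tau^i(X_j)\}_{i\geq0}$, and on this multilinear span $\operatorname{ev}$ alone is injective, because the products $b_{i_1}(t_1)\cdots b_{i_r}(t_r)$, with $b_i(t)=(t-\theta)\cdots(t-\theta^{q^{i-1}})$, are linearly independent over $\CC_\infty$ and multiplication by $\omega(t_1)\cdots\omega(t_r)$ is injective in $\TT_r$. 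Even granting this, your treatment of the $Z$-direction through $S_{r+1}$ and the triangular relation is exactly where the integrality of each $\tau^k(Z)$-coefficient and the finiteness of the expansion must be extracted, and it is left as unverified bookkeeping.

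The paper avoids the descent altogether: it makes $\mathcal{A}_r$ itself into a $K[\undt{r+1}]$-module (extended to $K[\undt{r}][[t_{r+1}]]$) in which $t_i$ acts on the symbols by $t_i.\tau^m(X_i)=\tau^m(C_\theta(X_i))$ and $t_{r+1}$ shifts $Z$, so that $\mathcal{L}_r=L(1,\phi).(X_1\cdots X_rZ)$ and $\exp_C(\mathcal{L}_r)=\bigl(\exp_\phi(L(1,\phi))\bigr).(X_1\cdots X_rZ)$ for the Drinfeld module $\phi$ of parameter $t_{r+1}(t_1-\theta)\cdots(t_r-\theta)$; Corollary \ref{lemma2} then yields integrality directly at the level of the symbols, the extra variable $t_{r+1}$ doing precisely the job of your problematic $Z$. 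To complete your argument you should either adopt this operator-level formulation or restrict the evaluation argument to the multilinear subspace and carry out the $Z$-bookkeeping explicitly.
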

\begin{proof}
Let $\phi$ be the Drinfeld $A[\undt{r+1}]$-module of rank one  whose parameter
is $$\alpha'=t_{r+1}(t_1-\theta)\cdots(t_r-\theta)\in A[\undt{r+1}].$$  For $i=1, \ldots, r+1,$ 
there is a unique homomorphism of $\CC_\infty$-algebras
$$ t_i: \mathcal{A}_r\rightarrow \mathcal{A}_r$$
defined by the following table of multiplication, for $m\in\NN$:
\bigskip
\begin{center}
\begin{tabular}{|l|l|}
\hline
$t_i.\tau^m(X_j)=\tau^m(X_j)$ & $i\neq j$ \\

$t_i.\tau^m(Z)=\tau^m(Z)$ & $i\neq r+1$ \\

$t_i.\tau^m(X_i)=\tau^m(C_\theta(X_i))$ & $i\leq r$ \\

\hline
$t_{r+1}.\tau^m(X_i)=\tau^m(X_i)$ & $i\leq r$ \\

$t_{r+1}.\tau^m(Z)=\tau^{m+1}(Z)$ & \\
\hline
\end{tabular}
\end{center}
\bigskip
(we notice on the way the identity $\tau^m(C_\theta(X_i))= \tau^{m+1}(X_i)+\theta^{q^m} \tau^m(X_i)$).
We can endow  $\mathcal{A}_r$ with the structure of a $K[\undt{r+1}]$-algebra. The underlying 
$K[\undt{r+1}]$-module structure can be described as follows. If $f\in\mathcal{A}_r$ and if
$g\in K[\undt{r+1}]$ has expansion
$$g=\sum_{i_1, \ldots i_{r+1} \in \mathbb N}g_{i_1, \ldots,i_{r+1}} t_1^{i_1}\cdots t_{r+1}^{i_{r+1}},\quad g_{i_1, \ldots,i_{r+1}}\in K,$$
then we have
$$g.f=\sum_{i_1, \ldots i_r \in \mathbb N}g_{i_1, \ldots,i_{r}} (t_1^{i_1}.f)\cdots (t_{r+1}^{i_{r+1}}.f).$$
We deduce, from the above multiplication table, the identity, for $1\leq i\leq r,$ $j\geq 0$ and $m_1, \ldots,m_{r}\in \mathbb N$:
\begin{eqnarray*}
\lefteqn{t_i^j.(\tau^{m_1}(X_1)\cdots \tau^{m_r}(X_r))=}\\ &=&\tau^{m_1}(X_1)\cdots \tau^{m_{i-1}}(X_{i-1}) \tau^{m_{i}}(C_{\theta^j}(X_{i}))\tau^{m_{i+1}}(X_{i+1})\cdots 
\tau^{m_r}(X_r).\end{eqnarray*}
Thus:
$$(a(t_1)\cdots a(t_r)).\tau^m(X_1\cdots X_r)=\tau^m(C_a(X_1)\cdots C_a(X_r)),\quad a\in A.$$
In fact, the action of $K[\undt{r+1}]$ extends to an action of $K[\undt{r}][[t_{r+1}]]$ in the following way.
If $F=\sum_{i\ge 0} F_it_{r+1}^i\in K[\undt{r}][[t_{r+1}]]$, we set:
$$F.(X_1\dots X_rZ) = \sum_{i\ge 0} F_it_{r+1}^i.(X_1\dots X_rZ) = \sum_{i\ge 0} F_i.(X_1\dots X_r\tau^i(Z))\in \mathcal A_r.$$
We observe that:
$$L(1, \phi)=\sum_{n\geq 0} \sum_{a\in A_{+,n}} \frac{a(t_1)\cdots a(t_r)}{a} t_{r+1}^n\in\TT_{r+1}\cap K[\undt{r}][[t_{r+1}]].$$
Therefore, the multiplication $L(1, \phi).(X_1\dots X_rZ)$ is well defined and we have:
$$L(1, \phi).(X_1\dots X_rZ) = \mathcal L_r(X_1, \dots, X_r ; Z).$$

We also recall that:
$$\exp_{\phi} = 1+\sum_{i\geq 1} \frac{\alpha '\cdots \tau^{i-1}(\alpha ')}{D_i} \tau ^i.$$
This defines an element of $\mathcal{A}_r$ again denoted by $\exp_\phi$ and, for all $F\in\mathcal{A}_r$, 
we have $\exp_\phi\cdot F=\exp_\phi(F)$, which justifies that we are using the same notation for a series of $\mathcal{A}_r$ and a series of $K[\undt{r+1}][[\tau]]$.
We claim that 
$$ \exp_C( \mathcal L_r(X_1, \dots, X_r ; Z))=(\exp_{\phi}(L(1, \phi))).(X_1\cdots X_rZ).$$
Indeed, if we choose $1\leq i\leq r,$ and integers $m_1, \ldots, m_{i-1}, m_{i+1},\ldots, m_r, n\in \mathbb N,$ we have, for $j\geq 1$, that the element of $\mathcal{A}_r$
$$((t_i-\theta)\cdots (t_i-\theta^{q^{j-1}})).(\tau^{m_1}(X_1)\cdots\tau^{m_{i-1}}(X_{i-1}) X_i\tau^{m_{i+1}}(X_{i+1})\cdots \tau^{m_r}(X_r) \tau^{n}( Z))$$ is equal to:
$$\tau^{m_1}(X_1)\cdots \tau^{m_{i-1}}(X_{i-1}) \tau^j(X_i)\tau^{m_{i+1}}(X_{i+1})\cdots \tau^{m_r}(X_r) \tau^{n}( Z).$$
This implies that for $i\geq 1$:
$$\alpha'\tau(\alpha')\cdots\tau^{i-1}(\alpha').(X_1\cdots X_rZ)=\tau^i(X_1\cdots X_rZ),$$
from which we deduce the claim.

But Corollary \ref{lemma2} implies that $$\exp_\phi(L(1,\phi))\in A[\undt{r+1}],$$ thus we can conclude that:
\begin{eqnarray*}
\lefteqn{S_r(X_1, \dots, X_r ; Z) =}\\ &=&\exp_C( \mathcal L_r(X_1, \dots, X_r ; Z))\in A[X_1,\ldots,X_r,Z,\tau(X_1),\ldots,\tau(X_r),\tau(Z),\ldots].\end{eqnarray*}

\end{proof}
The above Theorem implies a multivariable version of Anderson's log-algebraicity Theorem:
\begin{corollary}\label{corollaryAnderson}
Let $r\geq 0$ be an integer and let $Y_1, \cdots, Y_r, z$ be $r+1$ indeterminates over $\mathbb C_{\infty}.$ Let $\tau: \mathbb C_{\infty}[Y_1, \ldots, Y_r][[z]]\rightarrow \mathbb C_{\infty}[Y_1, \ldots, Y_r][[z]], f\mapsto f^q.$ Then:
$$\exp_C\left(\sum_{d\geq 0}\sum_{a\in A_{+,d}}\frac{C_a(Y_1)\cdots C_a(Y_r)}{a} z^{q^d}\right)\in A[Y_1,\ldots, Y_r,z].$$
\end{corollary}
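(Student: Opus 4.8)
The plan is to derive the Corollary from Theorem~\ref{theoanderson} by specializing the ``symbols''. Since $\mathcal{B}_r=\CC_\infty[X_1,\ldots,X_r,\tau(X_1),\ldots]$ is a polynomial ring over $\CC_\infty$ in the countably many indeterminates $\tau^m(X_j)$ ($1\le j\le r$, $m\ge 0$), there is a unique $\CC_\infty$-algebra homomorphism $\epsilon\colon\mathcal{B}_r\to\CC_\infty[Y_1,\ldots,Y_r]$ with $\epsilon(\tau^m(X_j))=Y_j^{q^m}$. By construction $\epsilon$ intertwines the action of $\tau$ on $\mathcal{B}_r$ with the $q$-power Frobenius $f\mapsto f^q$ on $\CC_\infty[Y_1,\ldots,Y_r]$, and, since $C_a(X_j)$ is by definition a polynomial in $X_j,\tau(X_j),\ldots$ with coefficients in $A$ coming from the Carlitz action, one gets $\epsilon(C_a(X_j))=C_a(Y_j)$ for all $a\in A$.

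Next I would extend $\epsilon$ to a map $\operatorname{sp}\colon\mathcal{A}_r\to\CC_\infty[Y_1,\ldots,Y_r][[z]]$ by $\operatorname{sp}(\sum_{i\ge0}c_i\tau^i(Z))=\sum_{i\ge0}\epsilon(c_i)z^{q^i}$; this is well defined since the exponents $q^i$ are pairwise distinct. The map $\operatorname{sp}$ is additive, is $\CC_\infty$-linear for the evident structures, and intertwines the operator $\tau$ of $\mathcal{A}_r$ with $g\mapsto g^q$ on $\CC_\infty[Y_1,\ldots,Y_r][[z]]$. Hence, writing $\exp_C(F)=\sum_{i\ge0}D_i^{-1}\tau^i(F)$ for the action of the Carlitz exponential operator on $\mathcal{A}_r$, we obtain $\operatorname{sp}(\exp_C(F))=\sum_i D_i^{-1}\operatorname{sp}(F)^{q^i}=\exp_C(\operatorname{sp}(F))$, where on the right $\exp_C$ denotes the operator $g\mapsto\sum_i D_i^{-1}g^{q^i}$ on $\CC_\infty[Y_1,\ldots,Y_r][[z]]$. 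Applying $\operatorname{sp}$ to $\mathcal{L}_r$ and using $\epsilon(C_a(X_j))=C_a(Y_j)$ gives
\[
\operatorname{sp}\bigl(\mathcal{L}_r(X_1,\ldots,X_r;Z)\bigr)=\sum_{d\ge0}\Bigl(\sum_{a\in A_{+,d}}\frac{C_a(Y_1)\cdots C_a(Y_r)}{a}\Bigr)z^{q^d},
\]
which is exactly the argument of $\exp_C$ in the statement.

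Consequently $\operatorname{sp}(S_r(X_1,\ldots,X_r;Z))=\exp_C\bigl(\sum_{d\ge0}\sum_{a\in A_{+,d}}C_a(Y_1)\cdots C_a(Y_r)a^{-1}z^{q^d}\bigr)$. On the other hand, Theorem~\ref{theoanderson} gives $S_r\in A[X_1,\ldots,X_r,Z,\tau(X_1),\ldots,\tau(X_r),\tau(Z),\ldots]$, i.e.\ $S_r=\sum_{i=0}^{N}c_i\tau^i(Z)$ with each $c_i$ a polynomial in the $\tau^m(X_j)$ with coefficients in $A$; then $\operatorname{sp}(S_r)=\sum_{i=0}^N\epsilon(c_i)z^{q^i}$ with each $\epsilon(c_i)\in A[Y_1,\ldots,Y_r]$, so $\operatorname{sp}(S_r)\in A[Y_1,\ldots,Y_r,z]$. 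This is the desired conclusion (the case $r=0$ being included, with the empty product $C_a(Y_1)\cdots C_a(Y_r)$ read as $1$).

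There is no essential difficulty beyond Theorem~\ref{theoanderson}; the one point deserving care is that $\operatorname{sp}$ is \emph{not} a ring homomorphism on all of $\mathcal{A}_r$, because of the $\tau$-twist in its multiplication. Thus the compatibility $\operatorname{sp}\circ\exp_C=\exp_C\circ\operatorname{sp}$ should be justified directly from the formula $\exp_C(F)=\sum_i D_i^{-1}\tau^i(F)$ together with the additivity, the $\CC_\infty$-linearity and the $\tau$-equivariance of $\operatorname{sp}$, rather than by invoking multiplicativity.
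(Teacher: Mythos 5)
Your proof is correct and takes essentially the same route as the paper: the paper defines exactly this specialization map $\psi:\mathcal{A}_r\to\CC_\infty[Y_1,\ldots,Y_r][[z]]$ with $\psi(\tau^m(X_i))=Y_i^{q^m}$ and $\psi(\tau^m(Z))=z^{q^m}$, notes that it intertwines $\tau$ with $f\mapsto f^q$, and concludes from Theorem \ref{theoanderson}. Your additional caution that the map is not multiplicative on $\mathcal{A}_r$, so that $\operatorname{sp}\circ\exp_C=\exp_C\circ\operatorname{sp}$ must be checked directly from additivity, $\CC_\infty$-linearity and $\tau$-equivariance, is a sound precision on a point the paper passes over by simply calling $\psi$ a morphism of $\CC_\infty$-algebras.
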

\begin{proof} Let $\psi: \mathcal{A}_r \rightarrow \mathbb C_{\infty}[Y_1, \ldots, Y_r][[z]]$ be the morphism of $\mathbb C_{\infty}$-algebras given by: for $m\in \mathbb N, \psi (\tau^m(X_i))= Y_i^{q^m}, 1\leq i\leq r,$ and $\psi (\tau^m(Z))=z^{q^m}.$ Then,
$$\text{ for all } f\in \mathcal {A}_r, \psi(\tau(f))=\tau(\psi(f)).$$
The Corollary follows from Theorem \ref{logalg}.
\noindent
\end{proof}
\begin{Remark}{\em Even though it only applies to the Carlitz module, Theorem \ref{logalg} has an advantage if compared to Anderson original result \cite[Theorem 3]{AND2}, and this, even if we forget the occurrence of the distinct variables $X_1,\ldots,X_r$. Indeed, these variables can vary in the Tate algebra $\mathbb{T}_s$, while 
Anderson's result holds if the variable is chosen in $\CC_\infty$. Let us assume, for the sake of simplicity,
that $X_1=\cdots=X_r=X$. In \cite[\S 4.3]{AND2}, Anderson also provides a table of {\em special polynomials} of small order for small values of $q$. For example, if $q=3$ and $r=4$, we have the formula (cf. loc. cit. p. 191):
\begin{equation}\label{fromandersontables}
\exp_C\left(\sum_{k\geq 0}Z^{q^k}\sum_{a\in A_{+,k}}\frac{C_a(X)^4}{a}\right)=ZX^4-Z^3X^6.
\end{equation} This formula has to be understood {\em with the variables $X,Z$ varying in $\CC_\infty$ so that $|Z|$ is small enough to ensure convergence}. If the variables are chosen in $\mathbb{T}_s$, the formula no longer holds. 
It can be proved, with an explicit computation, that,
again for $q=3$,

\begin{eqnarray*}
\lefteqn{
S_4(X_1,\ldots,X_4;Z)}\\ &=&ZX_1\cdots X_4-\tau(Z)(X_1X_2X_3\tau(X_4)+\\
& &X_1X_2\tau(X_3)X_4+X_1\tau(X_2)X_3X_4+
\tau(X_1)X_2X_3X_4).\end{eqnarray*}
If we choose $X_1=\cdots=X_4=X$, then we get
$$S_4(X,\ldots,X;Z)=ZX^4-\tau(Z)X^3\tau(X),$$
so that, if $X,Z\in\CC_\infty$, we recover the original entry of Anderson table (\ref{fromandersontables}). Of course, further information about the polynomials $S(X_1,\ldots,X_r;Z)$ can be made explicit 
in the same spirit of \cite[Proposition 8]{AND2}; we refer the interested reader to a forthcoming work of the authors.
}\end{Remark}


\section{Evaluation at Dirichlet characters}\label{evaluationdirichletcharacters}
In this Section, more involved than the previous ones, we prove a generalization of Herbrand-Ribet-Taelman Theorem \cite{TAE1}. The difficulties to overcome are due to the evaluations at roots of unity that we have to 
control, in order to extract information about the  Taelman's class modules associated to cyclotomic function field extensions of $K$ (see  below) 
from the structure of the ``generic class modules" $H_{C_s}$ studied in \S
\ref{subsectionb}.

We recall that, for $a\in A^+$, we have set $\lambda_a=\exp_C(\frac{\widetilde{\pi}}{a})$ and we have denoted by $K_a=K(\lambda_a)$ the $a$-th cyclotomic field extension of $K$.
Let $P$ be a prime of $A.$ Then Taelman's class module associated to the extension $K_P/K$ and to the Carlitz module is the finite dimensional $\FF$-vector space:
$$H(C/A[\lambda_P])= \frac{K_P\otimes_KK_{\infty}}{\exp_C(K_P\otimes_KK_{\infty})+A[\lambda_P]}.$$
We notice that $A[\lambda_P]$ is the integral closure of $A$ in $K_P.$
 This $\FF$-vector space is equipped with a structure of $A$-module via $C$ and $\Delta_P={\rm Gal}(K_P/K)$ acts on this module. Since $\Delta_P$ is abelian of order prime to $p,$ one can study the isotypic components of $$H(C/A[\lambda_P])\otimes_{\FF} \frac{A}{PA}.$$ This is precisely what is done in \cite{ANG&TAE}. In particular, in loc. cit., the authors prove an ``equivariant class number formula"  and with the help of such a formula they are able to recover an analogue of the Herbrand-Ribet Theorem  which was originally obtained by L. Taelman by using different methods of proof (see \cite{TAE3}).

The basic idea in this section has its origins in \cite{PEL2} and  \cite{TAE3}. Let $s\geq 1$ be an integer. If $f\in \mathbb T_s(K_{\infty}),$ we can evaluate $f$ at the points in $(\FF^{ac})^s.$ Now let $\chi$ be a Dirichlet character of type $s$ (see \S \ref{somesettings}), to such a character we can associate a point $\underline{\zeta}_{\chi}\in  (\FF^{ac})^s,$ and we therefore have a morphism (see \S \ref{evaluationmapss}) of $K_{\infty}$-algebras $$\operatorname{ev}_{\chi}: \mathbb T_s(K_{\infty})\rightarrow K_{\infty}(\chi),\quad  f\mapsto f(\underline{\zeta}_{\chi}),$$ where $K_{\infty}(\chi)$ is the field obtained by adjoining to $K_{\infty}$ the values of the character $\chi.$ For example:
$$\operatorname{ev}_{\chi} (L(1, C_s))=L(1, \chi),$$
where we recall that $C_s$ is the Drinfeld $A[\undt{s}]$-module of rank one whose parameter is $(t_1-\theta)\cdots (t_s-\theta),$ and  $$L(1,\chi)=\sum_{a\in A_{+}}\frac{\chi(a)}{a}\in K_{\infty}(\chi)$$ is the value at one of the Goss abelian $L$-function associated to the character $\chi.$

Let $a$ be the conductor of the Dirichlet character $\chi$. Then we prove (see \S \ref{484}) the crucial fact that the map $\operatorname{ev}_{\chi}$ induces a surjective morphism of $A$-modules between $H_{C_s}$ and the $\chi$-isotypic component of  Taelman's class module $H(C/A[\lambda_a]),$  where $A[\lambda_a]$ is the integral closure of $A$ in the $a$-th cyclotomic function field. This enables us to study isotypic components of Taelman's class modules in families with the help of the results obtained in the previous sections and the recent results  in \cite{ANG&PEL,ANG&PEL2,ANG&TAE}.

\subsection{Setting.}\label{somesettings} Let $P$ be a prime of $A$ of degree $d\geq 1$. We recall that $\KPhat$ denotes the $P$-adic completion of $K$. Let $\mathbb C_P$ be the completion of a fixed algebraic closure ${\KPhat}^{ac}$ of $\KPhat.$ Since we are going to study certain congruences modulo $P,$ we choose once and for all a $K$-embedding 
$$\iota_P:K^{ac}\rightarrow \mathbb{C}_P.$$
We  normalize the valuation $v_P$ at the place $P$ by setting $v_P(P)=1$. We recall that $\Delta_P$ is the Galois group of the finite abelian extension $K(\lambda_P)/K$ where $\lambda_P=\exp_C\left(\frac{\widetilde{\pi}}{P}\right)$, and that there is an isomorphism $\left(A/PA\right)^\times \simeq \Delta_P$ given by $b\mapsto \sigma_b$ where $\sigma_b$ satisfies \eqref{sigma}.

The {\em Teichm\"uller character} is the unique morphism $\vartheta_P:\Delta_P\rightarrow(\FF^{\text{ac}})^{\times}$ such that, for all $a\in A\setminus AP$,
$$v_P(\iota_P(\vartheta_P(\sigma_a))-a)\geq 1.$$
Note that  there exists a unique element  $\zeta_P$  in $\FF^{\text{ac}}$ such that $v_P(\theta-\iota_P(\zeta_P))>0.$ Observe also that $P(\zeta_P)=0.$ Furthermore,  for  $\sigma_b\in\Delta_P$ with  $b\in A\setminus PA,$  we have $\vartheta_P(\sigma_b)=b(\zeta_P)$.

Then, every character $\chi\in\widehat{\Delta_P}=\operatorname{Hom}(\Delta_P,(\FF^{ac})^\times)$ is a power of $\vartheta_P$; we can write 
 \begin{equation}\label{primitivechar}
 \chi=\vartheta_P^{i},\quad 0\leq i\leq q^d-2.\end{equation} More generally, if $a\in A_+$, we recall (see \S \ref{Carlitztorsion}) that we have defined $K_a=K(\lambda_a)$ and $\Delta_a=\operatorname{Gal}(K_a/K)$.
 If $a$ is non-constant and squarefree, we  can write $a=P_1\cdots P_r$ with $P_1,\ldots,P_r$ distinct primes of respective degrees $d_1,\ldots,d_r$, and 
 we have that $\widehat{\Delta_a}\cong\widehat{\Delta_{P_1}}\times\cdots\times\widehat{\Delta_{P_r}}$, so that
 for every character $\chi\in\widehat{\Delta_a}$, 
 \begin{equation}\label{factdirichletfirstkind}
 \chi=\vartheta_{P_1}^{N_1}\cdots\vartheta_{P_r}^{N_r}\end{equation}
 where the integers $N_i$ are such that $0\leq N_i\leq q^{d_i}-2$ for all $i$. We call a character like $\chi$ in (\ref{factdirichletfirstkind}), a {\em Dirichlet character} (\footnote{In fact we should call such characters {\em tame Dirichlet characters} because the extensions of $K$ associated to such characters are tamely ramified. However, since this is the only  kind of characters to be considered in this  section,  we adopt the terminology {\em Dirichlet characters} for simplicity.}) and its conductor is the product  $\prod_{N_i\not =0}P_i$ (note that the trivial character has conductor $1$). We define $\FF_a$ as the subfield of $\FF^{ac}$ generated over $\FF$ by the roots of $a$ (and we set $\FF_1=\FF$).
 Observe that if $b\in A_+$ ($b$ need not be square-free), any homomorphism $ \Delta_b\rightarrow \mathbb (\FF^{\text{ac}})^\times$ comes from a Dirichlet character.
 
 \subsubsection{Gauss-Thakur sums.} If $P$ is a prime of $A$, the Gauss-Thakur sum associated to a character $\chi=\vartheta_P^{q^j}\in\widehat{\Delta_P}$
 is defined (see Thakur \cite[Section 9.8]{THA2}) by 
 $$g(\vartheta_P^{q^j})=-\sum_{\delta\in\Delta_P}\vartheta_P(\delta^{-1})^{q^j}\delta(\lambda_P)\in\FF_PK_P,$$
where $\FF_PK_P$ is the compositum of $\FF_P$ and $K_P=K(\lambda_P)$ in $K^{ac}$ (we have that $\FF_P\cap K_P=\FF$). Observe that the Gauss-Thakur sums $g(\vartheta_P^{q^j})$ do not depend on the choice of $\iota_P$ although they  depend on the choice  of a $(q-1)$th root of $\theta-\theta^q$ in order to choose $\widetilde{\pi}$ in (\ref{productpi}).
  Let $\chi$ be a character of $\widehat{\Delta_P}$. We define $g(\chi)$ by using (\ref{primitivechar}) in the following way. We expand $i=i_0+i_1q+\cdots+i_{d-1}q^{d-1}$ in base $q$ 
 ($i_0,\ldots,i_r\in\{0,\ldots,q-1\}$), and along with this expansion, we define, 
 $$g(\chi)=\prod_{j=0}^{d-1}g(\vartheta_P^{q^j})^{i_j}.$$
 Observe that in general $g(\chi)$ depends also on the choice of the embedding $\iota_P$ used to define $\vartheta_P.$
 We recall (see \S \ref{omega}) that
 $$\omega(t)=\lambda_{\theta}\prod_{i\geq 0}\left(1-\frac{t}{\theta^{q^i}}\right)^{-1}\in \lambda_{\theta}\mathbb T(K_{\infty}).$$
 By  \cite[Theorem 2.9]{ANG&PEL2},  we have:
 \begin{equation}\label{equagauss}
 g(\vartheta_P^{q^j})=P'(\zeta_P)^{-q^j}\omega(\zeta_P^{q^j}),\quad j=0,\ldots,d-1.
 \end{equation}
 
Now let $\chi$ be a general Dirichlet character whose factorization is of the form  $
 \chi=\vartheta_{P_1}^{N_1}\cdots\vartheta_{P_r}^{N_r}, $ where $P_1,\ldots,P_r$ are distinct primes of respective degrees $d_1,\ldots,d_r$,
 and where the integers $N_i$ are such that $0\leq N_i\leq q^{d_i}-2$ for all $i$. We copy below from \cite[Section 2.3]{ANG&PEL} the definition of the Gauss-Thakur sum associated to such a character. We set $a=P_1\cdots P_r.$ Let us expand in base $q$: 
 \begin{equation}\label{expansionni}
 N_i=\sum_{j=0}^{d_i-1}n_{i,j} q^j,\quad i=1,\ldots,r,\end{equation}
with $n_{i,j} \in \{0, \ldots, q-1\}.$ For a positive integer $N$ we denote by $\ell_q(N)$ the sum of the digits of the expansion in base $q$ of $N$ so that 
$\ell_q(N_i)=\sum_{j=0}^{d_i-1}n_{i,j}$. We also set $s=\sum_i\ell_q(N_i)$. The integer $s$ is called the {\em type of $\chi$}.  We point out that the type $s$ of $\chi$ does not depend of the embeddings $\iota_{P_i}.$
Note that the trivial character is the unique Dirichlet character of type zero.
 The Gauss-Thakur sum associated to $\chi$ is defined as follows:
 $$g(\chi)=\prod_{i=1}^r g(\vartheta_{P_i}^{N_i})\in \FF_aK_a,$$ where $\FF_aK_a$ denotes the compositum  of $\FF_a$ and $K_a=K(\lambda_a)$
 in $K^{ac}$.
\subsubsection{$\FF_a[\Delta_a]$-modules.}\label{modules} 
 For $v$ a place of $K_a$, let us denote by 
 $\widehat{K_{a,v}}$ the completion of $K_a$ at $v$. If $v$ divides $\infty$ and $a\neq 1$, then
 $$\widehat{K_{a,v}}\cong K_\infty(\widetilde{\pi}).$$
 We set $K_{a,\infty}=K_a\otimes_KK_\infty$. We have $K_{1,\infty}=K_\infty$ and if $a$ is non-constant, we have an isomorphism of  $K_\infty$-algebras: 
  $$K_{a,\infty}\cong \prod_{v\in S_\infty(K_a)}K_\infty(\widetilde{\pi}),$$ where 
 $S_\infty(K_a)$ is the set of places of $K_a$ dividing $\infty$.
There is an action of $\Delta_a$ on $K_{a,\infty}$; if $\sigma\in\Delta_a$ and 
 $x\otimes y\in K_a\otimes_KK_\infty$, then:
 $$\sigma(x\otimes y):=\sigma(x)\otimes y.$$ The operator $\tau$ acts on $K_{a,\infty}$ 
 by exponentiation by $q$ (explicitly, $\tau(x\otimes y)=x^q\otimes y^q$) and the actions of $\Delta_a$ and $\tau$
 commute). 
 We set
 $$\Omega_a=K_{a,\infty}\otimes_\FF\FF_a.$$
 We endow $\Omega_a$ with a structure of $\FF_a[\Delta_a]$-module by setting (as above), for $\sigma\in\Delta_a$
and $x\otimes y\in \Omega_a=K_{a,\infty}\otimes_\FF\FF_a$,
$$\sigma(x\otimes y)=\sigma(x)\otimes y.$$ This action commutes with the $\FF_a$-linear 
extension $\varphi$ of the operator $\tau$ on $\Omega_a$. If $x\otimes y\in \Omega_a$, we have
$\varphi(x\otimes y)=\tau(x)\otimes y$. If we identify $\FF_aK_{\infty}$ with $K_{\infty}\otimes _{\FF}\FF_a$ which is in a natural way a  $K_{\infty}$-subalgebra of $\Omega_a,$  if 
$x=\sum_{i}x_i\theta^{-i}\in\FF_a((\theta^{-1}))=\FF_aK_\infty$, then 
$$\varphi(x)=\sum_{i}x_i\theta^{-iq}.$$

 \subsubsection{Idempotents}\label{idempotents} Here we assume that $a$ is square-free. We identify $\FF_aK_a$ with $K_a\otimes_{\FF}\FF_a$ which is a $K$-subalgebra of $\Omega_a$.  We denote by $O_a$ the integral closure of $A$ in $K_a$; then $O_a[\FF_a]=O_a\otimes_{\FF}\FF_a$ is the integral closure of $A[\FF_a]=A\otimes_{\FF}\FF_a$ in $\FF_aK_a$.  
 To a character $\chi\in\widehat{\Delta_a}$ as in (\ref{factdirichletfirstkind}) we associate  an idempotent $e_\chi\in \FF_a[\Delta_a]$, defined as follows:
 $$e_{\chi} =|\Delta_a|^{-1}\sum_{\delta\in \Delta_a}\delta^{-1}\chi(\delta).$$
By  \cite[Lemma 16]{ANG&PEL}, we have:
 $$e_\chi(\FF_a K_a)=(\FF_aK)g(\chi),\quad \text{and}\;e_{\chi}(O_a[\FF_a]) = A[\FF_a]g(\chi).$$
Therefore:
 \begin{equation}\label{image}\Omega_a=\bigoplus_{\chi\in\widehat{\Delta_a}}e_\chi(\Omega_a),\quad
 \text{and }
 e_{\chi}(\Omega_a) =  \FF_aK_{\infty}g(\chi),\quad \chi\in\widehat{\Delta_a}.\end{equation}

\subsubsection{Evaluation map}\label{evaluationmapss} Let $\chi$ be a Dirichlet character of conductor $a=\prod_{i=1}^r P_i,$ $\chi=
\vartheta_{P_1}^{N_1}\cdots\vartheta_{P_r}^{N_r},$ $1\leq N_i\leq q^{d_i}-2,$ where $d_i$ is the degree of $P_i.$ We recall that the type of $\chi$ is $s=\sum_i\ell_q(N_i)$.
Consider an $s$-tuple of variables
 $$\undt{s}=(\underbrace{\underbrace{t_{1,0,1},\ldots,t_{1,0,n_{1,0}}}_{n_{1,0}},\ldots,\underbrace{t_{1,d_0-1,1},\ldots,t_{1,d_0-1,n_{1,d_0-1}}}_{n_{1,d_0-1}}}_{\ell_q(N_1)},\;\;\ldots\;\;\underbrace{\ldots\underbrace{\ldots,t_{r,d_{r}-1,n_{r,d_r-1}}}_{n_{r,d_{r}-1}}}_{\ell_q(N_r)}).$$
 We define the $\CC_\infty$-algebra homomorphism ``evaluation" at $\chi$: $$\operatorname{ev}_\chi:\mathbb{T}_s\rightarrow\CC_\infty$$
 by setting $$\operatorname{ev}_\chi(t_{i,j,k})=\zeta_{P_i}^{q^j}$$ for all $i,j,k$ (with $\zeta_{P_j}=\vartheta_{P_j}(\theta)$).
If we restrict 
 $\operatorname{ev}_\chi$ to the subring of $\mathbb{T}_s$ whose
 elements are symmetric in $t_1,\ldots,t_s$, then $\operatorname{ev}_\chi$ 
 only depends on $\chi$, that is, it does not depend of the choice of the embeddings $\iota_{P_i}$ and it does not depend of the order of indexation of the primes $P_i$.
 Observe that, by \eqref{equagauss}:
\begin{eqnarray}
g(\chi)&=&g(\vartheta_{P_1}^{N_1})\cdots g(\vartheta_{P_r}^{N_r})\nonumber\\
&=&P_1'(\zeta_{P_1})^{-N_1}\cdots P_r'(\zeta_{P_r})^{-N_r}\prod_{i=1}^r\prod_{j=0}^{d_i-1}\omega(\zeta_{P_i}^{q^j})^{n_{i,j}}\nonumber\\
&=&\operatorname{ev}_\chi\left(\prod_{i=1}^r\prod_{j=0}^{d_i-1}\prod_{k=1}^{n_{i,j}}P_{i}'(t_{i,j,k})^{-1}\omega(t_{i,j,k})\right).\label{gchi}
\end{eqnarray}

\subsubsection{An equivariant isomorphism.}\label{484} We recall that we have introduced, in \S \ref{modules},
 a $\FF_a$-linear endomorphism $\varphi$ of $\Omega_a$. We choose a Dirichlet character $$\chi=\vartheta_{P_1}^{N_1}\cdots\vartheta_{P_r}^{N_r}$$
as in (\ref{factdirichletfirstkind}) and we expand the integers $N_i$ in base $q$ as in (\ref{expansionni});
$$N_i=\sum_{j=0}^{d_i-1}n_{i,j}q^j.$$

By (\ref{gchi}) and by the functional equation $\tau(\omega)=(t-\theta)\omega$ of the function $\omega$ of Anderson and Thakur, we see that 
$$\varphi(g(\chi)) =\left( \prod_{i=1} ^{r} \prod_{j=0}^{d_i-1}(\vartheta_{P_i}(\theta)^{q^j}-\theta)^{n_{i,j}}\right)g(\chi)=\left( \prod_{i=1} ^{r} \prod_{j=0}^{d_i-1}(\zeta_{P_i}^{q^j}-\theta)^{n_{i,j}}\right)g(\chi).$$
We now come back to the identity (\ref{image}) and we consider the isomorphism 
$$\nu_\chi:e_\chi(\Omega_a)\rightarrow\FF_aK_\infty$$
defined by $\nu_\chi(y)=yg(\chi)^{-1}$. Then,
$$\nu_\chi(\varphi(x))=\widetilde{\varphi}(\nu_\chi(x)),$$
where 
$$\widetilde{\varphi}(x):=\left( \prod_{i=1} ^{r} \prod_{j=0}^{d_i-1}(\zeta_{P_i}^{q^j}-\theta)^{n_{i,j}}\right)\varphi(x).$$

The Taelman class module associated to the Carlitz module and relative to the extension $K_a/K$ (see \cite{TAE1} and \cite{ANG&TAE}) is defined by
 \begin{equation}\label{taelmanclassmodule}
 H_a=\frac{C(K_a\otimes_KK_{\infty})}{\exp_C(K_a\otimes_KK_{\infty})+C(O_a)}.\end{equation}
 Let $C^\varphi:\FF_a\otimes_\FF A\rightarrow\Omega_a[\varphi]$ be the homomorphism of $\FF_a$-algebras defined by
 $$C^\varphi_\theta=\theta+\varphi.$$ Let us additionally set 
 $$\exp_C^\varphi=\sum_{i\geq 0}D_i^{-1}\varphi^i,$$ which gives rise to a $\FF_a$-linear continuous function $\Omega_a\rightarrow\Omega_a.$

 We then have an isomorphism of $A[\FF_a]$-modules:
 $$ H_a\otimes_{\FF}\FF_a \cong \frac{C^\varphi(\Omega_a)}{\exp_C^\varphi(\Omega_a)+C^\varphi(O_a[\FF_a])}.$$

 Now, we consider, with an analogue meaning of the symbols, the  $\FF_a$-linear endomorphisms $C^{\widetilde{\varphi}}$ and $\exp_C^{\widetilde{\varphi}}$
 of $\FF_aK_\infty$,
and the $A[\FF_a]$-module:
 $$H_\chi=\frac{C^{\widetilde{\varphi}}(\FF_aK_\infty)}{\exp_C^{\widetilde{\varphi}}(\FF_aK_\infty)+C^{\widetilde{\varphi}}(A[\FF_a])}.$$
 The previous discussions  imply the next Lemma.
 \begin{lemma}\label{lemma10}
 The map $\nu_\chi$ induces an isomorphism of $A[\FF_a]$-modules
 $$e_\chi( H_a\otimes_{\FF}\FF_a)\cong H_\chi.$$ 
 \end{lemma}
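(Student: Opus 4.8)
The idea is that everything has already been set up so that the proof is a formal matching of two quotient descriptions via the map $\nu_\chi$. First I would recall the isomorphism of $A[\FF_a]$-modules
$$H_a\otimes_\FF\FF_a\cong\frac{C^\varphi(\Omega_a)}{\exp_C^\varphi(\Omega_a)+C^\varphi(O_a[\FF_a])}$$
established just above, and apply the idempotent $e_\chi\in\FF_a[\Delta_a]$ to it. Since $e_\chi$ is an idempotent in a commutative ring acting $A[\FF_a]$-linearly and commuting with $\varphi$ (the $\Delta_a$-action commutes with $\tau$, hence with its $\FF_a$-linear extension $\varphi$, and with the $A$-action coming from $C^\varphi$), the functor $e_\chi(-)$ is exact and commutes with $\exp_C^\varphi$, $C^\varphi_\theta$, and with taking images. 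Therefore
$$e_\chi(H_a\otimes_\FF\FF_a)\cong\frac{e_\chi C^\varphi(\Omega_a)}{e_\chi\exp_C^\varphi(\Omega_a)+e_\chi C^\varphi(O_a[\FF_a])}
=\frac{C^\varphi(e_\chi\Omega_a)}{\exp_C^\varphi(e_\chi\Omega_a)+C^\varphi(e_\chi O_a[\FF_a])}.$$

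Next I would transport this quotient through $\nu_\chi:e_\chi(\Omega_a)\to\FF_aK_\infty$, $y\mapsto y\,g(\chi)^{-1}$, which by \eqref{image} is an isomorphism of $\FF_aK_\infty$-modules (hence in particular of $\FF_a$-modules and of $A[\FF_a]$-modules once we twist the operator). The key compatibility is the identity $\nu_\chi(\varphi(x))=\widetilde\varphi(\nu_\chi(x))$ proved just above, where $\widetilde\varphi$ is $\varphi$ rescaled by $\prod_{i,j}(\zeta_{P_i}^{q^j}-\theta)^{n_{i,j}}$; from this one gets immediately $\nu_\chi\circ\exp_C^\varphi=\exp_C^{\widetilde\varphi}\circ\nu_\chi$ (apply $\nu_\chi$ term by term to $\sum_i D_i^{-1}\varphi^i$, using that $\nu_\chi$ is $\FF_aK_\infty$-linear so commutes with the scalars $D_i^{-1}$ and with the convergence of the series) and likewise $\nu_\chi\circ C^\varphi_\theta=C^{\widetilde\varphi}_\theta\circ\nu_\chi$. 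I would also need that $\nu_\chi$ carries $e_\chi(O_a[\FF_a])$ onto $A[\FF_a]$, which is exactly the second identity $e_\chi(O_a[\FF_a])=A[\FF_a]g(\chi)$ cited from \cite[Lemma 16]{ANG&PEL}, and that $\nu_\chi$ carries $e_\chi(\Omega_a)$ onto $\FF_aK_\infty$, which is \eqref{image}. Combining these, $\nu_\chi$ sends the numerator $C^\varphi(e_\chi\Omega_a)$ onto $C^{\widetilde\varphi}(\FF_aK_\infty)$ and the denominator $\exp_C^\varphi(e_\chi\Omega_a)+C^\varphi(e_\chi O_a[\FF_a])$ onto $\exp_C^{\widetilde\varphi}(\FF_aK_\infty)+C^{\widetilde\varphi}(A[\FF_a])$, hence induces the claimed isomorphism $e_\chi(H_a\otimes_\FF\FF_a)\cong H_\chi$.

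The routine points that still need a word are the exactness of $e_\chi(-)$ and its commutation with the relevant operators — this is standard since $|\Delta_a|$ is prime to $p$ so $\FF_a[\Delta_a]$ is a product of fields and $e_\chi$ is a genuine idempotent, and all the operators in sight ($\varphi$, $C^\varphi_\theta$, $\exp_C^\varphi$) are $\FF_a[\Delta_a]$-linear — and the convergence/termwise manipulation of $\exp_C^\varphi$, which is justified because $\exp_C^{\widetilde\varphi}$ is visibly the exponential series attached to the Drinfeld-type datum $\theta+\widetilde\varphi$ and converges on $\FF_aK_\infty$ by the same Newton-polygon estimate as in \S\ref{explog}. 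I do not expect a serious obstacle here: the statement is essentially a bookkeeping consequence of \eqref{image}, the displayed functional equation for $g(\chi)$ under $\varphi$, and the identity $\nu_\chi\varphi=\widetilde\varphi\nu_\chi$ already recorded. The only mild subtlety is making sure the $A[\FF_a]$-module structure on $H_\chi$ (via $C^{\widetilde\varphi}$) is the one matching $e_\chi$ applied to the $C^\varphi$-structure on $H_a\otimes_\FF\FF_a$; this is exactly what the twist by $\prod_{i,j}(\zeta_{P_i}^{q^j}-\theta)^{n_{i,j}}$ in the definition of $\widetilde\varphi$ is designed to achieve, and it is forced by $\nu_\chi\varphi=\widetilde\varphi\nu_\chi$.
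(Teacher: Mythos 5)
Your proposal is correct and is exactly the argument the paper intends: its proof of Lemma \ref{lemma10} is the single sentence ``The previous discussions imply the next Lemma,'' and you have simply written out those discussions — exactness of the idempotent $e_\chi$ (licit since $|\Delta_a|$ is prime to $p$ and all operators are $\Delta_a$-equivariant), the identities $e_\chi(\Omega_a)=\FF_aK_\infty g(\chi)$ and $e_\chi(O_a[\FF_a])=A[\FF_a]g(\chi)$, and the intertwining $\nu_\chi\varphi=\widetilde{\varphi}\nu_\chi$, which transports numerator and denominator as claimed. No gap; this matches the paper's route.
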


 \subsubsection{Link between $H_{C_s}$ and $H_\chi$}  
 We recall that we are examining $\chi$ a Dirichlet character of conductor $a=\prod_{i=1}^r P_i,$ $\chi=
\vartheta_{P_1}^{N_1}\cdots\vartheta_{P_r}^{N_r},$ $1\leq N_i\leq q^{d_i}-2,$ where $d_i$ is the degree of $P_i.$ We write $N_i$  in base $q$: $ N_i=\sum_{j=0}^{d_i-1}n_{i,j} q^j,$ with $n_{i,j} \in \{0, \ldots, q-1\}.$

With $s=\sum_{i=1}^r\ell_q(N_i)$ the type of $\chi$  we consider the uniformizable Drinfeld module of rank one $C_s$,
 with parameter 
\begin{equation}\label{alphaparameter}\alpha=\prod_{i=1} ^{r} \prod_{j=0}^{d_i-1}\prod_{k=1}^{n_{i,j}}(t_{i,j,k}-\theta),\end{equation} of degree $s$.
We notice  that $\operatorname{ev}_\chi(\FF[\undt{s}])=\FF_a$ and that we have a field isomorphism
 $\FF[\undt{s}]/I_\chi\cong\FF_a$,
 where $$I_\chi=\operatorname{Ker}(\operatorname{ev}_\chi)\cap\FF[\undt{s}]$$
 is the ideal of $\FF[\undt{s}]$ generated by the polynomials $P_i(t_{i,j,k})$,
 which yields an isomorphism
\begin{equation}\label{isoIchi} 
\frac{A[\undt{s}]}{I_{\chi}A[\undt{s}]}\cong A[\FF_a]=\FF_a[\theta].\end{equation}

\begin{proposition}
 \label{proposition3}
The evaluation map $\operatorname{ev}_{\chi}$ induces an  isomorphism of  $A[\FF_a]$-modules
 $$\psi_{\chi}: \frac{H_{C_s}}{I_{\chi}H_{C_s}}\rightarrow H_{\chi}.$$
 \end{proposition}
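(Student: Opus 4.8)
The plan is to show that the restriction of the evaluation homomorphism $\operatorname{ev}_\chi$ to $\mathbb T_s(K_\infty)$ transports the defining presentation of $H_{C_s}$ onto that of $H_\chi$, so that it induces a surjection $H_{C_s}\twoheadrightarrow H_\chi$ whose kernel is exactly $I_\chi H_{C_s}$. First I would establish the relevant properties of $\operatorname{ev}_\chi$ on $\mathbb T_s(K_\infty)$. Since each value $\zeta_{P_i}^{q^j}$ lies in $\FF_a$ and satisfies $|\zeta_{P_i}^{q^j}|\le 1$, the restriction $\operatorname{ev}_\chi\colon\mathbb T_s(K_\infty)\to\FF_aK_\infty$ is a well-defined continuous $K_\infty$-algebra homomorphism which is $A[\undt{s}]$-equivariant (the target being an $A[\undt{s}]$-module through the composite $A[\undt{s}]\to A[\FF_a]\subset\FF_aK_\infty$), and the crucial point is that it induces an isometric isomorphism
\begin{equation*}
\frac{\mathbb T_s(K_\infty)}{I_\chi\,\mathbb T_s(K_\infty)}\ \xrightarrow{\ \sim\ }\ \FF_aK_\infty .
\end{equation*}
To see this I would start from the isomorphism \eqref{isoIchi}, tensor it with $K_\infty$ over $\FF$ to obtain $K_\infty[\undt{s}]/I_\chi K_\infty[\undt{s}]\cong\FF_aK_\infty$, and then pass to completions for the Gauss norm: the ideal $I_\chi\mathbb T_s(K_\infty)$ is finitely generated, hence closed in the Tate algebra $\mathbb T_s(K_\infty)$ (cf.\ \cite[Ch.\ 3]{PUT}), while $\FF_aK_\infty=\FF_a((\tfrac1\theta))$ is already complete, so completing the quotient norm yields the displayed isomorphism. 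In particular $\operatorname{ev}_\chi$ is surjective onto $\FF_aK_\infty$ with kernel $I_\chi\mathbb T_s(K_\infty)$.

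Next I would record the compatibility of $\operatorname{ev}_\chi$ with the two module structures. On $\mathbb T_s$ the operator $\tau$ fixes the variables $t_{i,j,k}$ and raises $\CC_\infty$-coefficients to the $q$-th power; since a coefficient of an element of $\mathbb T_s(K_\infty)$ lies in $K_\infty=\FF((\tfrac1\theta))$, taking $q$-th powers there amounts to the substitution $\theta\mapsto\theta^q$, and a direct computation then gives $\operatorname{ev}_\chi\circ\tau=\varphi\circ\operatorname{ev}_\chi$ with $\varphi$ the operator of \S\ref{modules}. Evaluating the parameter gives $\operatorname{ev}_\chi(\alpha)=\prod_{i=1}^r\prod_{j=0}^{d_i-1}(\zeta_{P_i}^{q^j}-\theta)^{n_{i,j}}$, which is precisely the multiplier occurring in $\widetilde\varphi$; hence $\operatorname{ev}_\chi\circ\tau_\alpha=\widetilde\varphi\circ\operatorname{ev}_\chi$, and therefore $\operatorname{ev}_\chi\circ(C_s)_\theta=C^{\widetilde\varphi}_\theta\circ\operatorname{ev}_\chi$ and, by continuity of $\operatorname{ev}_\chi$, $\operatorname{ev}_\chi\circ\exp_{C_s}=\exp_C^{\widetilde\varphi}\circ\operatorname{ev}_\chi$ (the series $\exp_C^{\widetilde\varphi}$ converges on $\FF_aK_\infty$ by the same estimate that makes $\exp_{C_s}$ converge, since $\|\operatorname{ev}_\chi(\alpha)\|=\|\alpha\|=q^s$). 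Together with $\operatorname{ev}_\chi(A[\undt{s}])=A[\FF_a]$ and the surjectivity from the first paragraph, this shows that $\operatorname{ev}_\chi$ maps $C_s(\mathbb T_s(K_\infty))$ onto $C^{\widetilde\varphi}(\FF_aK_\infty)$, carrying $\exp_{C_s}(\mathbb T_s(K_\infty))$ onto $\exp_C^{\widetilde\varphi}(\FF_aK_\infty)$ and $C_s(A[\undt{s}])$ onto $C^{\widetilde\varphi}(A[\FF_a])$. Consequently $\operatorname{ev}_\chi$ induces a surjection of $A[\FF_a]$-modules $H_{C_s}\twoheadrightarrow H_\chi$; since $I_\chi\subset\FF[\undt{s}]$ acts on $H_{C_s}$ through $\phi=C_s$ as ordinary multiplication and $\operatorname{ev}_\chi$ kills $I_\chi$, this surjection annihilates $I_\chi H_{C_s}$ and hence factors through a surjection $\psi_\chi\colon H_{C_s}/I_\chi H_{C_s}\to H_\chi$.

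Finally I would prove $\psi_\chi$ is injective by computing the kernel of $H_{C_s}\to H_\chi$. Given $f\in\mathbb T_s(K_\infty)$ with $\operatorname{ev}_\chi(f)=\exp_C^{\widetilde\varphi}(y)+b$ for some $y\in\FF_aK_\infty$ and $b\in A[\FF_a]$, I would lift (by surjectivity) $y$ to $\widetilde y\in\mathbb T_s(K_\infty)$ and $b$ to $\widetilde b\in A[\undt{s}]$; then $\operatorname{ev}_\chi\bigl(f-\exp_{C_s}(\widetilde y)-\widetilde b\bigr)=0$, so $f-\exp_{C_s}(\widetilde y)-\widetilde b\in I_\chi\mathbb T_s(K_\infty)$ by the kernel identification, and therefore the class of $f$ in $H_{C_s}$ lies in $I_\chi H_{C_s}$ (the quotient map $\mathbb T_s(K_\infty)\twoheadrightarrow H_{C_s}$ sends $I_\chi\mathbb T_s(K_\infty)$ onto $I_\chi H_{C_s}$). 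The reverse containment is clear, so $\psi_\chi$ is injective, hence an isomorphism of $A[\FF_a]$-modules. The step I expect to be the main obstacle is the first one: checking carefully that $\operatorname{ev}_\chi$ realizes the isomorphism $\mathbb T_s(K_\infty)/I_\chi\mathbb T_s(K_\infty)\cong\FF_aK_\infty$, because this is where one must control the passage from the polynomial ring $K_\infty[\undt{s}]$ to its Tate completion and the closedness of the ideal $I_\chi\mathbb T_s(K_\infty)$; once that is in hand, the remaining steps are formal.
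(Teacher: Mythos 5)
Your proposal is correct and follows essentially the same route as the paper: exploit the equivariance $\operatorname{ev}_\chi\circ\tau_\alpha=\widetilde\varphi\circ\operatorname{ev}_\chi$ (hence compatibility with $\phi_{\widetilde b}$ and $\exp_{C_s}$), the surjectivity of $\operatorname{ev}_\chi$ on $\mathbb T_s(K_\infty)$ with kernel $I_\chi\mathbb T_s(K_\infty)$, and the identification $H_{C_s}/I_\chi H_{C_s}\cong \mathbb T_s(K_\infty)/\bigl(I_\chi\mathbb T_s(K_\infty)+\exp_{C_s}(\mathbb T_s(K_\infty))+A[\undt{s}]\bigr)$. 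Your extra care in justifying the kernel identification via closedness of the finitely generated ideal $I_\chi\mathbb T_s(K_\infty)$ is a welcome elaboration of a point the paper treats by definition, but it does not change the argument.
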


 \begin{proof}
The evaluation map $\operatorname{ev}_\chi:\mathbb{T}_s(K_\infty)\rightarrow\FF_a K_\infty$
 satisfies:
 $$\operatorname{ev}_\chi(\tau_\alpha(f))=\widetilde{\varphi}(\operatorname{ev}_\chi(f)),\quad f\in\mathbb{T}_s(K_\infty).$$
In particular, for all $b\in A[\FF_a]$, $\widetilde{b}\in A[\undt{s}]$
such that $b=\operatorname{ev}_\chi(\widetilde{b})$ 
and $f$ in $\TT_s(K_\infty)$, we have (with $\phi={C_s}$):
\begin{equation}\label{firstrem}
C_b^{\widetilde{\varphi}}(\operatorname{ev}_\chi(f))=\operatorname{ev}_\chi(\phi_{\widetilde{b}}(f))\end{equation}
and 
\begin{equation}\label{secrem}\exp_C^{\widetilde{\varphi}}(\operatorname{ev}_\chi(f))=\operatorname{ev}_\chi(\exp_\phi(f)).\end{equation}

We consider the composition $w=\operatorname{pr}\circ\operatorname{ev}_\chi$ of two surjective $\FF$-linear maps:
$$\mathbb{T}_s(K_\infty)\rightarrow\FF_a K_\infty\rightarrow\frac{\FF_a K_\infty}{\exp_C^{\widetilde{\varphi}}(\FF_a K_\infty)+A[\FF_a]},$$
where the first map is $\operatorname{ev}_\chi$ and the second map $\operatorname{pr}$ is the projection.
We deduce from (\ref{firstrem}) that, with $b$ and $\widetilde{b}$ such that
$b=\operatorname{ev}_\chi(\widetilde{b})$, $$w(\phi_{\widetilde{b}}(f))=C_b^{\widetilde{\varphi}}(w(f))$$ for all $f\in\TT_s(K_\infty)$ and $b\in A[\undt{s}]$.

Let $f$ be an element of $\TT_s(K_\infty)$. We have $w(f)=0$ if and only if 
$$\operatorname{ev}_\chi(f)\in\exp_C^{\widetilde{\varphi}}(\FF_aK_\infty)+A[\FF_a].$$ By (\ref{secrem}),
we have $$\exp_C^{\widetilde{\varphi}}(\FF_aK_\infty)+A[\FF_a]=\operatorname{ev}_\chi(\exp_{C_s}(\TT_s(K_\infty))+A[\undt{s}])$$ which means that
$w(f)=0$ if and only if $$f\in I_\chi\TT_s(K_\infty)+\exp_{C_s}(\TT_s(K_\infty))+A[\undt{s}],$$
where 
$$I_\chi\mathbb{T}_s(K_\infty)=\left\{\sum_{i\geq m}x_i\theta^{-i};x_i\in I_\chi,m\in\ZZ\right\}.$$
Now, we notice the isomorphism of $\FF$-vector spaces
$$\frac{H_{C_s}}{I_\chi H_{C_s}}=
\frac{\TT_s(K_\infty)}{I_\chi\TT_s(K_\infty)+\exp_{C_s}(\TT_s)+A[\undt{s}]}$$
 which shows, with (\ref{isoIchi}), that the map of the Proposition is an isomorphism of $A[\FF_a]$-modules.
 \end{proof}

 \begin{corollary}
 \label{theorem6}
Let $\chi$ be a Dirichlet character of type $s\geq q,$ with $s\equiv 1\pmod{q-1}$ and with conductor $a$. Then:
 $$\operatorname{Fitt}_{A[\FF_a]}(H_{\chi} )= \operatorname{ev}_{\chi}(\mathbb{B}_{C_s}) A[\FF_a].$$
 \end{corollary}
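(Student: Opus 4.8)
The plan is to combine Proposition~\ref{proposition3}, which identifies $H_\chi$ with the base change of $H_{C_s}$ along the evaluation map, with Theorem~\ref{theorem4}, which computes the Fitting ideal of $H_{C_s}$, using the standard fact that the initial Fitting ideal is compatible with base change.

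First I would record that $C_s$, with the variables relabelled as the $t_{i,j,k}$, is a uniformizable Drinfeld $A[\undt{s}]$-module of rank one defined over $A[\undt{s}]$, whose parameter $\alpha$ in (\ref{alphaparameter}) is as in (\ref{alpha2}) with $\beta=1$ and $r=-v_\infty(\alpha)=s$. Since by hypothesis $s\geq q$ and $s\equiv1\pmod{q-1}$, the polynomial $\mathbb{B}_{C_s}$ is defined and Theorem~\ref{theorem4} applies, giving $\operatorname{Fitt}_{A[\undt{s}]}(H_{C_s})=\mathbb{B}_{C_s}A[\undt{s}]$. Note that $H_{C_s}$ is finitely generated over $\FF[\undt{s}]$ by Proposition~\ref{lemma3bis}, hence over $A[\undt{s}]$, so this Fitting ideal is well defined.

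Next, by Proposition~\ref{proposition3} there is an isomorphism of $A[\FF_a]$-modules $H_\chi\cong H_{C_s}/I_\chi H_{C_s}$. Unwinding (\ref{isoIchi}) and the definition of $I_\chi$, the restriction of $\operatorname{ev}_\chi$ to $A[\undt{s}]=A\otimes_\FF\FF[\undt{s}]$ has image $A[\FF_a]$ and kernel exactly $I_\chi A[\undt{s}]$, so that the quotient map $A[\undt{s}]\twoheadrightarrow A[\undt{s}]/I_\chi A[\undt{s}]\cong A[\FF_a]$ is $\operatorname{ev}_\chi|_{A[\undt{s}]}$, and $H_{C_s}/I_\chi H_{C_s}=H_{C_s}\otimes_{A[\undt{s}]}A[\FF_a]$. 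Invoking the base-change property of the initial Fitting ideal, namely $\operatorname{Fitt}_{S}(M\otimes_R S)=\operatorname{Fitt}_R(M)S$ for a ring homomorphism $R\to S$ (see \cite[Chapter XIX]{LAN}), one obtains
$$\operatorname{Fitt}_{A[\FF_a]}(H_\chi)=\operatorname{Fitt}_{A[\FF_a]}\bigl(H_{C_s}\otimes_{A[\undt{s}]}A[\FF_a]\bigr)=\operatorname{ev}_\chi\bigl(\operatorname{Fitt}_{A[\undt{s}]}(H_{C_s})\bigr)A[\FF_a]=\operatorname{ev}_\chi(\mathbb{B}_{C_s})A[\FF_a],$$
which is the asserted equality.

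This argument is essentially formal; the only point requiring a little care is verifying that the ring homomorphism realising the base change in Proposition~\ref{proposition3} is exactly $\operatorname{ev}_\chi|_{A[\undt{s}]}$ with kernel $I_\chi A[\undt{s}]$, so that the Fitting ideal of $H_{C_s}$ pushes forward to $\operatorname{ev}_\chi(\mathbb{B}_{C_s})A[\FF_a]$; this is guaranteed by (\ref{isoIchi}) together with the definition $I_\chi=\operatorname{Ker}(\operatorname{ev}_\chi)\cap\FF[\undt{s}]$. Beyond this bookkeeping there is no substantial obstacle, since the real content has already been established in Theorem~\ref{theorem4} and Proposition~\ref{proposition3}.
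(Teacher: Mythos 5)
Your proposal is correct and follows essentially the same route as the paper: the paper's own proof also applies Theorem \ref{theorem4} together with base change of the Fitting ideal along $A[\undt{s}]\twoheadrightarrow A[\undt{s}]/I_\chi A[\undt{s}]\cong A[\FF_a]$, and then concludes via the isomorphism of Proposition \ref{proposition3}. Your additional bookkeeping about $\operatorname{ev}_\chi|_{A[\undt{s}]}$ and $I_\chi$ just makes explicit what the paper leaves implicit.
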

\begin{proof} By Theorem \ref{theorem4}, we get:
 $$\operatorname{Fitt}_{A[\FF_a]} \left(\frac{H_{C_s}}{I_{\chi} H_{C_s}}\right)= \operatorname{ev}_{\chi}(\mathbb{B}_{C_s}) A[\FF_a].$$
 We conclude by applying Proposition \ref{proposition3}.
 \end{proof}
 The author of the appendix to the present paper,
Florent Demeslay, informed us that, using ideas similar to that developed in the appendix, he has  obtained an equivariant class number formula for the Carlitz module  similar to Theorem A of \cite{ANG&TAE} for the  extension $K_a/K$ when $a$ is square-free (the case where $a$ is a prime is treated in \cite{ANG&TAE}).  By similar arguments as those used in \cite{ANG&TAE},  he proved that the above  result  can also be  deduced from such an equivariant class number formula. Such an equivariant class number formula  is a special case of a more general result concerning $L$-series of Anderson's $t$-modules defined over Tate algebras  recently obtained by Demeslay and using  ideas developed in this article (this will appear in a forthcoming work of Demeslay).

\subsection{On the structure of $H_{C_s}$ and the isotypic components $H_\chi$}

We consider the set $\mathcal{E}_s$ of all the Dirichlet characters $\chi$ of type $s$, namely, the set of Dirichlet characters $\chi$ which can be written in the form
$$\chi=\vartheta_{P_1}^{N_1}\cdots\vartheta_{P_r}^{N_r},$$
for distinct primes $P_1,\ldots,P_r$ and for integers $N_1,\ldots,N_r$ such that 
$\ell_q(N_1)+\cdots+\ell_q(N_r)=s$ (with $r$ depending on $\chi$). There is a map (depending on the embeddings $\iota_{P_j}$):
$$\mathcal{E}_s\xrightarrow{\mu_s}(\FF^{ac})^s$$ defined by $\chi\mapsto\underline{\zeta}_{\chi}$ 
where $\underline{\zeta}_\chi$ is the $s$-tuple with entries $\zeta_{P_i}^{q^j}$ (see \S \ref{evaluationmapss}).

\begin{Definition}\label{almostall}{\em 
Let $\mathcal{P}$ be a property over $\mathcal{E}_s$.
We say that $\mathcal{P}$ holds for {\em almost all characters of type $s$}
if $$\mu_s(\{\chi;\mathcal{P}(\chi)\text{ holds}\})\supset\mathcal{O},$$ where $\mathcal{O}$ is a non-empty Zariski-open 
subset of $(\FF^{ac})^s$.}
\end{Definition}

\begin{Remark}{\em (1). Let us suppose that $s\geq 1.$ Observe that if a property $\mathcal P$ is true for almost all Dirichlet characters of type $s,$ then it is true for infinitely many Dirichlet characters of type $s.$ Indeed, there exist infinitely many $s$-tuples of  primes $(P_1, \ldots, P_s)$ with $P_i\not = P_j $ for $i\not = j,$ such that, given $F\in\FF[\undt{s}]$, $F(\zeta_{P_1}, \ldots, \zeta_{P_s})\not = 0,$ and   $\chi=\vartheta_{P_1}\cdots \vartheta_{P_s}$ is of type $s.$

(2). Definition \ref{almostall} does not depend on the choice of the embeddings $\iota_{P}$ for $P$ prime. Indeed, if $\mu_s(\{\chi;\mathcal{P}(\chi)\text{ holds}\})\supset\mathcal{O},$ we can always find  a non-empty Zariski-open  $\mathcal{O}' \subset \mathcal{O}$ such that $\mu_s(\{\chi;\mathcal{P}(\chi)\text{ holds}\})\supset\mathcal{O}'$ for any choice of the embeddings $\iota_{P}$ for $P$ prime.

(3). There are such properties $\mathcal{P}$ which hold for almost all Dirichlet characters of type $s$ but which fail for infinitely many such characters. This of course depends on the fact that there are strictly closed subsets of $(\FF^{ac})^s$ which are infinite. For example, one can consider the property over $\mathcal{E}_2$ determined by the non-vanishing on $(\FF^{ac})^2$ of the polynomial $t_1^q-t_2$, or the polynomial $(t_1^q-t_2)(t_2^q-t_1)$ (in the last case, the property does not depend on the embeddings $\iota_P$).}
\end{Remark}

Let $R$ be a commutative ring and $M$ an $R$-module. Let $f$ be an element of $R$. We denote  the $f$-torsion submodule of $M$ by $M[f]=\{ m\in M, f.m=0\}.$

\begin{proposition}
 \label{propo7}
 Let us suppose that $s\geq 1$ and let $f$ be in $A_+$. Then, for almost all Dirichlet characters of type $s$, we have
 $H_{\chi}[f]=\{0\}$.
\end{proposition}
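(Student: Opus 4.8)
The plan is to reduce the statement to a fact about the single, $\chi$-independent ``generic'' module $H_{C_s}$, prove a generic invertibility statement there, and then spread it out over a Zariski-open locus of specializations.

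First I would use Proposition~\ref{proposition3}: if $a$ is the conductor of $\chi$, then $H_\chi\cong H_{C_s}/I_\chi H_{C_s}$ as $A[\FF_a]$-modules, where $I_\chi=\operatorname{Ker}(\operatorname{ev}_\chi)\cap\FF[\undt{s}]$ is a maximal ideal with residue field $\FF[\undt{s}]/I_\chi\cong\FF_a$. So it suffices to show that, for almost all $\chi$ of type $s$, multiplication by $f$ is injective on $H_{C_s}\otimes_{\FF[\undt{s}]}(\FF[\undt{s}]/I_\chi)$. If $s<2q-1$ then $H_{C_s}=\{0\}$ by Remark~\ref{exactseq1}, hence $H_\chi=\{0\}$ for every $\chi$ of type $s$ and there is nothing to prove; from now on $s\geq 2q-1$, and I write $R=\FF[\undt{s}]$ and $N=H_{C_s}$, a finitely generated $R$-module (Corollary~\ref{corollaryHszero}) carrying an action of $\theta$ which makes it an $A[\undt{s}]$-module.

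Next I would prove that $f(\theta)$ acts invertibly on the finite-dimensional $\FF(\undt{s})$-vector space $\calh_{C_s}=\FF(\undt{s})\otimes_R N$. By Cayley--Hamilton, the monic polynomial $p:=[\calh_{C_s}]_{\ring{}}\in\ring{}=\FF(\undt{s})[\theta]$ kills $\calh_{C_s}$, so it is enough to check $\gcd(f,p)=1$ in $\FF(\undt{s})[\theta]$. The key input, already available in the paper, is that $p$ has \emph{no non-constant divisor lying in $A$}: when $s\equiv1\pmod{q-1}$ this is Lemma~\ref{varyingzeta} together with the identity $p=\mathbb{B}_{C_s}$ (both are the monic generator of the Fitting ideal of $\calh_{C_s}$ over $\ring{}$, which by base change equals $\mathbb{B}_{C_s}\ring{}$ by Theorem~\ref{theorem4} and Lemma~\ref{lemma4}); when $s\not\equiv1\pmod{q-1}$ it is exactly what is established inside the proof of Proposition~\ref{propM}. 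Since $\FF$ is algebraically closed in $\FF(\undt{s})$, any monic divisor in $\FF(\undt{s})[\theta]$ of $f\in A$ has its coefficients in $\FF^{ac}\cap\FF(\undt{s})=\FF$, hence lies in $A$; therefore $\gcd(f,p)\in A$ divides $p$ and so is a unit, which gives the claimed invertibility (equivalently, a Bézout relation $uf+vp=1$ in $\FF(\undt{s})[\theta]$ that, evaluated on $\calh_{C_s}$ with $p(\theta)|_{\calh_{C_s}}=0$, yields $u(\theta)f(\theta)=\mathrm{id}$).

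Finally I would spread this out. The kernel and cokernel of $f\colon N\to N$ are finitely generated $R$-modules, and tensoring the complex $N\xrightarrow{f}N$ with the flat $R$-algebra $\operatorname{Frac}(R)=\FF(\undt{s})$ shows both vanish after that base change, because $f$ is invertible on $\calh_{C_s}$; hence there is a single nonzero $h\in R$ annihilating both, so $f$ is bijective on $N[1/h]$ as an $R[1/h][\theta]$-module. Then $\mathcal{O}=\{x\in(\FF^{ac})^s:h(x)\neq0\}$ is a non-empty Zariski-open set, and for every Dirichlet character $\chi$ of type $s$ with $\underline{\zeta}_\chi\in\mathcal{O}$ one has $h\notin I_\chi$, so $H_\chi\cong N/I_\chi N\cong N[1/h]/I_\chi N[1/h]$, on which $f$ acts invertibly, in particular injectively; thus $H_\chi[f]=\{0\}$, and by Definition~\ref{almostall} this holds for almost all characters of type $s$. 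The point that needs the most care — and what I would regard as the crux — is precisely this last uniformity step: ``almost all'' refers to the image under $\mu_s$ in $(\FF^{ac})^s$, so one must produce a single polynomial $h\in\FF[\undt{s}]$ independent of $\chi$ witnessing the generic invertibility, rather than merely invertibility after inverting an element depending on each individual $\chi$.
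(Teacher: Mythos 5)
Your proof is correct, and its overall architecture is the one the paper uses: establish a generic statement about $H_{C_s}$ after tensoring with $\FF(\undt{s})$, then transfer it to almost all $\chi$ through Proposition~\ref{proposition3} by exhibiting a single nonzero polynomial of $\FF[\undt{s}]$ whose non-vanishing locus does the job. The difference lies in how the generic step is reached. The paper quotes Proposition~\ref{propo7puntodue} (when $s\equiv 1\pmod{q-1}$, $H_{C_s}$ is $A$-torsion-free, so $H_{C_s}[f]=\{0\}$) and Proposition~\ref{propM} (when $s\not\equiv 1\pmod{q-1}$) to get $H_{C_s}[f]\otimes_{\FF[\undt{s}]}\FF(\undt{s})=\{0\}$, then tensors the four-term exact sequence $0\to H_{C_s}[f]\to H_{C_s}\to H_{C_s}\to H_{C_s}/fH_{C_s}\to 0$ with $\FF(\undt{s})$ to see that $H_{C_s}/fH_{C_s}$ is $\FF[\undt{s}]$-torsion, producing one $F_f\in\FF[\undt{s}]\setminus\{0\}$ with $F_fH_{C_s}\subset fH_{C_s}$; injectivity of $f$ on $H_\chi$ then comes from surjectivity together with finiteness of $\dim_{\FF_a}H_\chi$. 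You instead prove that $f$ is coprime to the monic Fitting generator $[\calh_{C_s}]_{\ring{}}$ — using Lemma~\ref{varyingzeta} and Theorem~\ref{theorem4} in the torsion case, the claim established inside the proof of Proposition~\ref{propM} otherwise, and the fact that $\FF$ is algebraically closed in $\FF(\undt{s})$ so that any monic divisor of $f$ in $\ring{}$ lies in $A$ — and then annihilate both kernel and cokernel of $f$ by one $h\in\FF[\undt{s}]\setminus\{0\}$ and invert it. Since Lemma~\ref{varyingzeta} is also what drives Proposition~\ref{propo7puntodue}, the underlying inputs coincide; your route is slightly longer but more explicit (it tracks the kernel as well as the cokernel and spells out the localization and the passage to $H_\chi$), whereas the paper's is shorter because the torsion statements are already packaged as Propositions~\ref{propo7puntodue} and~\ref{propM}. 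Your final appeal to Definition~\ref{almostall} via ``every $\chi$ with $\underline{\zeta}_\chi\in\mathcal{O}$ works'' is exactly the reading the paper itself uses in its own conclusion, so there is no gap there either.
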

\begin{proof} We can suppose that $s\geq 2q-1$.
 By Proposition \ref{propo7puntodue}, if $s\equiv1\pmod{q-1}$, $H_{C_s}[f]=\{0\}$. If $s\not\equiv1\pmod{q-1}$, then, by Proposition \ref{propM}, $H_{C_s}[f]\otimes_{\FF[\undt{s}]}\FF(\undt{s})=\{0\}$. Hence, in all cases, 
 $$H_{C_s}[f]\otimes_{\FF[\undt{s}]}\FF(\undt{s})=\{0\}.$$
 We now consider the exact sequence of $\FF[\undt{s}]$-modules of 
 finite type (the middle map is the multiplication by $f$):
 $$0\rightarrow H_{C_s}[f]\rightarrow H_{C_s}\rightarrow H_{C_s}\rightarrow \frac{H_{C_s}}{fH_{C_s}}\rightarrow0.$$ Taking the tensor product of the above exact sequence
 with $\FF(\undt{s}),$ we get:
   $$\frac{H_{C_s}}{fH_{C_s}}\otimes_{\FF[\undt{s}]}\FF(\undt{s})=\{0\}.$$
 In particular, the $\FF[\undt{s}]$-module of finite type $\frac{H_{C_s}}{fH_{C_s}}$ is 
 torsion and there exists $F_f\in\FF[\undt{s}]\setminus\{0\}$ such that
 $F_fH_{C_s}\subset fH_{C_s}$. It remains to apply Proposition \ref{proposition3}.
 \end{proof}
We now suppose that $s\geq 1$ is such that $s\not \equiv 1\pmod{q-1}$.  Recall that, by Remark \ref{floric}, the element $$m_s=[\calh_{C_s}]_{\FF(\undt{s})[\theta]}$$
belongs to $A[\undt{s}]$ and is monic as a polynomial in $\theta$. 

\begin{theorem}\label{theorem7}
For almost all characters $\chi$ of type $s$, we have that
$$\operatorname{Fitt}_{A[\FF_a]} (H_{\chi} )= \operatorname{ev}_{\chi}(m_s)A[\FF_a],$$
where $a$ is the conductor of $\chi$.
 \end{theorem}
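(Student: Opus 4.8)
The plan is to transfer the computation of $\operatorname{Fitt}_{A[\FF_a]}(H_\chi)$ to the generic class module $H_{C_s}$, following the pattern of the proof of Corollary~\ref{theorem6} but carefully tracking the $\FF[\undt{s}]$-torsion of $H_{C_s}$, which in the range $s\not\equiv 1\pmod{q-1}$ need not vanish. Set $\mathfrak{b}=\operatorname{Fitt}_{A[\undt{s}]}(H_{C_s})$. First I would invoke Proposition~\ref{proposition3} together with the ring isomorphism (\ref{isoIchi}): the evaluation map $\operatorname{ev}_\chi$ identifies $A[\FF_a]$ with $A[\undt{s}]/I_\chi A[\undt{s}]$ and $H_\chi$ with $H_{C_s}\otimes_{A[\undt{s}]}A[\FF_a]$, so that, since Fitting ideals commute with base change (\cite[Chapter~XIX]{LAN}),
$$\operatorname{Fitt}_{A[\FF_a]}(H_\chi)=\operatorname{ev}_\chi(\mathfrak{b})A[\FF_a].$$
It then remains to show that, for almost all $\chi$ of type $s$, one has $\operatorname{ev}_\chi(\mathfrak{b})A[\FF_a]=\operatorname{ev}_\chi(m_s)A[\FF_a]$.

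Next I would pin down $\mathfrak{b}$ up to an ideal of $\FF[\undt{s}]$. Base-changing along the localization $A[\undt{s}]\hookrightarrow\ring{}=\FF(\undt{s})[\theta]$ (which inverts $\FF[\undt{s}]\setminus\{0\}$) and using $H_{C_s}\otimes_{A[\undt{s}]}\ring{}=\calh_{C_s}$, one finds $\mathfrak{b}\,\ring{}=\operatorname{Fitt}_{\ring{}}(\calh_{C_s})=m_s\ring{}$, the last equality being the definition of $m_s=[\calh_{C_s}]_{\ring{}}$ from \S\ref{localfactors} (recall that $\calh_{C_s}$ is a finite-dimensional $\FF(\undt{s})$-vector space by Corollary~\ref{corollaryHszerobis}). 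Since $m_s\in A[\undt{s}]$ is monic in $\theta$, division by $m_s$ in $\FF[\undt{s}][\theta]$ yields $m_s\ring{}\cap A[\undt{s}]=m_s A[\undt{s}]$, hence $\mathfrak{b}\subseteq m_s A[\undt{s}]$; and from $m_s\in m_s\ring{}=\mathfrak{b}\,\ring{}$ one extracts $F\in\FF[\undt{s}]\setminus\{0\}$ with $Fm_s\in\mathfrak{b}$. Thus
$$Fm_sA[\undt{s}]\;\subseteq\;\mathfrak{b}\;\subseteq\;m_sA[\undt{s}].$$

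To conclude I would fix $\chi$ of type $s$ with $\operatorname{ev}_\chi(F)=F(\underline{\zeta}_\chi)\neq 0$: then $\operatorname{ev}_\chi(F)$ is a non-zero element of the field $\FF_a$, hence a unit in $A[\FF_a]=\FF_a[\theta]$, and applying $\operatorname{ev}_\chi$ to the chain of ideals above gives
$$\operatorname{ev}_\chi(m_s)A[\FF_a]=\operatorname{ev}_\chi(Fm_s)A[\FF_a]\subseteq\operatorname{ev}_\chi(\mathfrak{b})A[\FF_a]\subseteq\operatorname{ev}_\chi(m_s)A[\FF_a],$$
so all three coincide and $\operatorname{Fitt}_{A[\FF_a]}(H_\chi)=\operatorname{ev}_\chi(m_s)A[\FF_a]$. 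The condition $F(\underline{\zeta}_\chi)\neq 0$ fails only when $\underline{\zeta}_\chi$ lies on the hypersurface $\{F=0\}$, so it holds for almost all characters of type $s$ in the sense of Definition~\ref{almostall} (with $\mathcal{O}=\{x\in(\FF^{ac})^s:F(x)\neq 0\}$), exactly as in the last step of the proof of Proposition~\ref{propo7}.

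I expect the middle step to be the main obstacle: because $H_{C_s}$ is merely finitely generated over $\FF[\undt{s}]$ and, in the non-torsion range, carries non-trivial $\FF[\undt{s}]$-torsion (Proposition~\ref{propM}), the ideal $\mathfrak{b}=\operatorname{Fitt}_{A[\undt{s}]}(H_{C_s})$ is in general not the principal ideal $m_sA[\undt{s}]$; the two differ by an ideal of $\FF[\undt{s}]$ which is invisible generically but may become proper under special evaluations. Making the two-sided squeeze $Fm_sA[\undt{s}]\subseteq\mathfrak{b}\subseteq m_sA[\undt{s}]$ precise — above all the inclusion $\mathfrak{b}\subseteq m_sA[\undt{s}]$, for which the monicity of $m_s$ in $\theta$ is essential — is the technical crux, and it is this discrepancy that forces the statement to be phrased for almost all, rather than all, characters of type $s$.
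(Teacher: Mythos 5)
Your argument is correct, and it follows the paper's overall strategy (reduce to $H_{C_s}$ via Proposition \ref{proposition3}, base-change Fitting ideals along $A[\undt{s}]\to A[\undt{s}]/I_\chi\cong A[\FF_a]$, compare with the generic generator $m_s$, and encode ``almost all'' by the non-vanishing of one auxiliary polynomial $F\in\FF[\undt{s}]$), but the technical middle step is handled differently. The paper first passes to the quotient $\widetilde{H_{C_s}}=H_{C_s}/M$ by the $\FF[\undt{s}]$-torsion submodule $M$, observes that $M\subset I_\chi H_{C_s}$ for almost all $\chi$ (so that $\widetilde{H_{C_s}}/I_\chi\widetilde{H_{C_s}}\cong H_\chi$), and then base-changes $\operatorname{Fitt}_{A[\undt{s}]}(\widetilde{H_{C_s}})$, leaving the final comparison with $\operatorname{ev}_\chi(m_s)$ as ``follows easily''. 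You instead keep $H_{C_s}$ intact, note that base change of Fitting ideals plus Proposition \ref{proposition3} gives $\operatorname{Fitt}_{A[\FF_a]}(H_\chi)=\operatorname{ev}_\chi(\mathfrak{b})A[\FF_a]$ for \emph{every} $\chi$ of type $s$, and isolate the generic information in the two-sided squeeze $Fm_sA[\undt{s}]\subseteq\mathfrak{b}\subseteq m_sA[\undt{s}]$: the upper inclusion via $\mathfrak{b}\ring{}=m_s\ring{}$ and $m_s\ring{}\cap A[\undt{s}]=m_sA[\undt{s}]$ (where monicity of $m_s$ in $\theta$ is exactly what makes the division argument work), the lower one by clearing denominators. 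This buys you two things: you never need the statement $M\subset I_\chi H_{C_s}$ (the torsion is absorbed into the single polynomial $F$, which becomes a unit of $A[\FF_a]$ once $\operatorname{ev}_\chi(F)\neq0$), and the step the paper leaves implicit is written out in full. What the paper's route buys is a cleaner conceptual picture: the obstruction is located precisely in the torsion module $M$ of Proposition \ref{propM}, and the torsion-free quotient is the object whose Fitting ideal is ``generically'' $m_s$. Both proofs rely in the same way on the paper's convention (Definition \ref{almostall}) that the locus $\{\operatorname{ev}_\chi(F)\neq0\}$ accounts for almost all characters, so no additional gap arises there.
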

 \begin{proof} 
 Let us denote by $M$ the torsion sub-$\FF[\undt{s}]$-module of $H_{C_s}$: $$M=\{m\in H_{C_s} ; \exists f\in  \FF[\undt{s}], f.m=0\}.$$
 We set:
 $$\widetilde{H_{C_s}}:=\frac{H_{C_s}}{M}$$
 Note that the $\FF(\undt{s})[\theta]$-module
 $\widetilde{ H_{C_s}}\otimes_{\FF[\undt{s}]}\FF(\undt{s})$ is isomorphic to $\calh_{C_s}$ and therefore its Fitting ideal over $\FF(\undt{s})[\theta]$ is  generated by $m_s$.
Since $M$ is a finitely generated and torsion $\FF[\undt{s}]$-module, we notice that for almost all Dirichlet characters $\chi$ of type $s$, we have that
 $M\subset I_{\chi} H_{C_s}.$ Thus,  for almost all the Dirichlet characters $\chi$ of type $s$, the $A[\undt{s}]/I\chi A[\undt{s}]$-module $$\frac{\widetilde{H_{C_s}}}{I_\chi\widetilde{H_{C_s}}}$$ is well defined.
For almost all Dirichlet characters $\chi$ of type $s$, we have that
 $$\operatorname{Fitt}_{\frac{A[\undt{s}]}{I_\chi A[\undt{s}]}}\left(\frac{\widetilde{H_{C_s}}}{I_{\chi}\widetilde{H_{C_s}}} \right)=
 \frac{\operatorname{Fitt}_{A[\undt{s}]} (\widetilde{ H_{C_s}}) +I_{\chi} A[\undt{s}]}{I_{\chi} A[\undt{s}]}.$$
But by Proposition \ref{proposition3}, for almost all characters $\chi$ of type $s,$ we also have an isomorphism of $A[\FF_a]$-modules
 $\frac{\widetilde{H_{C_s}}}{I_{\chi}\widetilde{H_{C_s}}}\cong H_{\chi}$. The Theorem follows easily.
 
 \end{proof}

\subsection{Pseudo-null and pseudo-cyclic modules}\label{pseudo}

\begin{Definition}{\em Let $M$ be a finitely generated $A[\undt{s}]$-module which also is finitely generated as a $\FF[\undt{s}]$-module. We  say that $M$ is {\em pseudo-null} if $$M\otimes_{\FF[\undt{s}]}\FF(\undt{s})=\{ 0\}.$$ We say that $M$ is {\em pseudo-cyclic} if there exists $m\in M$ such that $\frac{M}{mA[\undt{s}]}$ is pseudo-null. }
\end{Definition}

In this Section, we investigate the property of pseudo-cyclicity and pseudo-nullity for the modules $H_{C_s}$ (that is, in the case of the Drinfeld module ${C_s}$ of parameter $\alpha=(t_1-\theta)\cdots(t_s-\theta)$).
We are concerned by the following questions which we leave open.
\begin{question}
  \label{question1} Is $H_{C_s}$ pseudo-cyclic?
  \end{question}
  \begin{question}
  \label{question2} Assuming that $s\not \equiv 1\pmod{q-1},$ is $H_{C_s}$ pseudo-null?
  \end{question}

\subsubsection{The  torsion case}

 We are here in the case $s\equiv 1\pmod{q-1}.$ Recall that $H_{C_s}=\{0\}$ for $s=1,q.$ We can suppose without loss of generality, that $s\geq 2q-1$.
\begin{proposition}
 \label{proposition4even} The following conditions are equivalent.
\begin{enumerate}
\item $H_{C_s}$ is pseudo-cyclic,
\item For almost all Dirichlet characters $\chi$ of type $s$, $H_{\chi}$ is a cyclic $\FF_a[\theta]$-module, where $a$ is the conductor of $\chi$,
\item There exists a Dirichlet character $\chi$ of type $s$ such that $H_{\chi}$ is a cyclic $\FF_a[\theta]$-module, where $a$ is the conductor of $\chi$.
\end{enumerate}
\end{proposition}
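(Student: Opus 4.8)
The plan is to reduce each of (1), (2), (3) to a single linear-algebra statement about the $\FF[\undt{s}]$-linear endomorphism $e_\theta$ of $H_{C_s}$ given by multiplication by $\theta$ (through ${C_s}$). We are in the torsion case $s\equiv 1\pmod{q-1}$ and may assume $s\geq 2q-1$ (for $s\in\{1,q\}$ the module $H_{C_s}$ is zero and everything is vacuous), so that by Proposition \ref{lemma3bis} the module $H_{C_s}$ is free over $\FF[\undt{s}]$ of rank $u:=u(\alpha)=\frac{s-q}{q-1}\geq 1$. First I would observe that $H_{C_s}$ is pseudo-cyclic if and only if $\calh_{C_s}=H_{C_s}\otimes_{\FF[\undt{s}]}\FF(\undt{s})$ is a cyclic $\ring{}$-module: if $m\in H_{C_s}$ is such that $H_{C_s}/mA[\undt{s}]$ is pseudo-null, tensoring with $\FF(\undt{s})$ (right exact, and killing the pseudo-null quotient) gives $\calh_{C_s}=\ring{}\,\bar m$; conversely an $\ring{}$-generator of $\calh_{C_s}$ can be cleared of its denominator to an $m\in H_{C_s}$ with $H_{C_s}/mA[\undt{s}]$ pseudo-null. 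Since $\calh_{C_s}$ is a $u$-dimensional $\FF(\undt{s})$-vector space on which $\theta$ acts as $e_\theta\otimes 1$, cyclicity over $\ring{}=\FF(\undt{s})[\theta]$ just means that there is $v\in H_{C_s}$ with $v,e_\theta v,\dots,e_\theta^{\,u-1}v$ linearly independent over $\FF(\undt{s})$. Fixing an $\FF[\undt{s}]$-basis $v_1,\dots,v_u$ of $H_{C_s}$ and writing $v=x_1v_1+\cdots+x_uv_u$ with indeterminates $x_1,\dots,x_u$, this is the non-vanishing in $\FF[\undt{s}][x_1,\dots,x_u]$ of the determinant
$$D(x_1,\dots,x_u)=\det\bigl(M_0,M_1,\dots,M_{u-1}\bigr),$$
where $M_i$ is the coordinate vector of $e_\theta^{\,i}v$ in the basis $v_1,\dots,v_u$; this $D$ is a form of degree $u$ in the $x_j$. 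Hence \textit{(1) is equivalent to $D\neq 0$}.

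Next I would carry out the same bookkeeping fibrewise. For any Dirichlet character $\chi$ of type $s$, Proposition \ref{proposition3} and the isomorphism \eqref{isoIchi} identify $H_\chi$ with $H_{C_s}/I_\chi H_{C_s}$, a free $\FF_a$-module of rank $u$ on which $\theta$ acts as the reduction $\overline{e_\theta}$ of $e_\theta$; the images $\overline{v_1},\dots,\overline{v_u}$ form an $\FF_a$-basis, and the determinant computed with respect to them is exactly $\operatorname{ev}_\chi(D)\in\FF_a[x_1,\dots,x_u]$ (apply $\operatorname{ev}_\chi$ to the coefficients of $D$). Therefore $H_\chi$ is a cyclic $\FF_a[\theta]$-module if and only if $\overline{e_\theta}$ is nonderogatory, if and only if $\operatorname{ev}_\chi(D)\neq 0$ in $\FF_a[x_1,\dots,x_u]$. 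Here I would flag the routine point that a nonderogatory operator on a finite-dimensional vector space always admits a cyclic vector rational over the base field (its module being $\FF_a[\theta]/(\text{char.\ poly.})$), so non-vanishing of $\operatorname{ev}_\chi(D)$ as a polynomial is genuinely equivalent to cyclicity even though $\FF_a$ is finite. (Alternatively one can phrase this via minimal polynomials, using Corollary \ref{theorem6}, which identifies $\operatorname{Fitt}_{A[\FF_a]}(H_\chi)$ with $\operatorname{ev}_\chi(\mathbb B_{C_s})A[\FF_a]$.)

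Granting these two reformulations, the equivalences follow at once. For (2) $\Rightarrow$ (3) it suffices that there is at least one character of type $s$, which holds since there are infinitely many (Remark following Definition \ref{almostall}). For (3) $\Rightarrow$ (1): if $\operatorname{ev}_\chi(D)\neq 0$ for some $\chi$ then \emph{a fortiori} $D\neq 0$, i.e.\ (1). For (1) $\Rightarrow$ (2): assume $D\neq 0$ and pick a nonzero coefficient $F\in\FF[\undt{s}]\setminus\{0\}$ of $D$ viewed as a polynomial in the $x_j$; by the same argument used in the proof of Proposition \ref{propo7} (cf.\ the Remark after Definition \ref{almostall}), $\operatorname{ev}_\chi(F)\neq 0$ for almost all characters $\chi$ of type $s$, and for such $\chi$ the polynomial $\operatorname{ev}_\chi(D)$ has a nonzero coefficient, hence $\operatorname{ev}_\chi(D)\neq 0$ and $H_\chi$ is cyclic. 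This proves (2).

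The main obstacle, and the step deserving the most care, is the first one: pinning down that pseudo-cyclicity of $H_{C_s}$, cyclicity of the generic fibre $\calh_{C_s}$ over $\ring{}$, and cyclicity of the special fibres $H_\chi$ over $\FF_a[\theta]$ are all governed by the one determinant $D$ — together with the (standard, but needing a clean statement) passage from $D\neq 0$ to ``$\operatorname{ev}_\chi(D)\neq 0$ for almost all $\chi$'', where Definition \ref{almostall} and the explicit shape of the evaluation maps of \S\ref{evaluationmapss} come in.
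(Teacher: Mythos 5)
Your proof is correct, but it takes a genuinely different route from the paper's, most visibly in the direction (3) $\Rightarrow$ (1). The paper characterizes pseudo-cyclicity by the equality $\operatorname{Fitt}_{\ring{}}(\calh_{C_s})=\operatorname{Ann}_{\ring{}}(\calh_{C_s})$, then invokes Theorem \ref{theorem4} to identify $\operatorname{Fitt}_{\ring{}}(\calh_{C_s})$ with $\mathbb{B}_{C_s}\ring{}$ and Corollary \ref{theorem6} to identify $\operatorname{Fitt}_{A[\FF_a]}(H_\chi)$ with $\operatorname{ev}_{\chi}(\mathbb{B}_{C_s})A[\FF_a]$; cyclicity of a single $H_\chi$ forces the monic generator $m$ of $\operatorname{Ann}_{\ring{}}(\calh_{C_s})$ (a divisor of $\mathbb{B}_{C_s}$ lying in $A[\undt{s}]$, since $H_{C_s}$ is $\FF[\undt{s}]$-free) to satisfy $\operatorname{ev}_{\chi}(\mathbb{B}_{C_s})\mid\operatorname{ev}_{\chi}(m)$, whence $m=\mathbb{B}_{C_s}$ by comparing degrees in $\theta$. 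You instead encode all three conditions in the universal cyclic-vector determinant $D$, so that (1) becomes $D\neq0$, cyclicity of $H_\chi$ becomes $\operatorname{ev}_{\chi}(D)\neq0$, and (3) $\Rightarrow$ (1) is immediate; the finite-field subtlety (non-vanishing of $\operatorname{ev}_{\chi}(D)$ as a polynomial versus existence of an $\FF_a$-rational cyclic vector) is correctly disposed of via nonderogatority and the invariance of the minimal polynomial under base change. Your (1) $\Rightarrow$ (2), specializing a nonzero coefficient of $D$, is parallel to the paper's use of Proposition \ref{proposition3}, and it rests on exactly the same implicit convention as the paper's own ``almost all'' arguments (Proposition \ref{propo7}, Theorem \ref{theorem7}), namely that the non-vanishing locus of a nonzero element of $\FF[\undt{s}]$ is hit by $\mu_s$ on a set containing a non-empty Zariski open, so you assume nothing beyond what the paper does. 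What your approach buys is elementarity and independence from the Fitting-ideal and class-number-formula apparatus: you only need Proposition \ref{lemma3bis} (freeness of $H_{C_s}$) and Proposition \ref{proposition3}. What the paper's approach buys is the sharper by-product that pseudo-cyclicity is equivalent to $\operatorname{Ann}_{\ring{}}(\calh_{C_s})=\mathbb{B}_{C_s}\ring{}$, which fits the Fitting-ideal theme used elsewhere, for instance in the non-torsion analogue, Proposition \ref{proposition4odd}.
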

\begin{proof}
The first condition implies the second by means of the equivariant isomorphism of the Proposition \ref{proposition3}. The second condition
obviously implies the third. It remains to show that the third condition implies the first. Write $\ring{}$ for $\FF(\undt{s})[\theta]$ recall that $\calh_{C_s}=H_{C_s}\otimes_{\FF[\undt{s}]}\FF(\undt{s})$. Since $\calh_{C_s}$ is finitely generated and torsion over $\ring{}$, it is isomorphic to $\prod_{i=1}^n\ring{}/r_i\ring{}$ for some $r_1, \dots, r_n$ monic in $\ring{}$. 
Since $\operatorname{Fitt}_{\ring{}}(\calh_{C_s})$ is generated by $\prod_{i=1}^n r_i$ and $\operatorname{Ann}_{\ring{}}(\calh_{C_s})$ by 
the least common multiple of $r_1, \dots, r_n$, the condition that $H_{C_s}$ is pseudo-cyclic is equivalent to
$$\operatorname{Fitt}_{\ring{}}(\calh_{C_s})=\operatorname{Ann}_{\ring{}}(\calh_{C_s}).$$
 By Theorem \ref{theorem4}, the polynomial $\mathbb{B}_{C_s}$ is the monic generator 
of $\operatorname{Fitt}_{A[\undt{s}]}(H_{C_s})$ and $\operatorname{Fitt}_{\ring{}}(\calh_{C_s})=\mathbb{B}_{C_s} \ring{}$. Let $m$ be the monic generator of $\operatorname{Ann}_{\ring{}}(\calh_{C_s}),$ observe that $m\in A[\undt{s}]$ and   
$m$ divides $\mathbb{B}_{C_s}$. By Corollary \ref{theorem6}, 
$$\operatorname{Ann}_{A[\FF_a]}(H_\chi)=\operatorname{ev}_\chi(\mathbb{B}_{C_s})A[\FF_a].$$
Thus, $\operatorname{ev}_\chi(\mathbb{B}_{C_s})$ divides $\operatorname{ev}_\chi(m)$ and 
therefore $\mathbb{B}_{C_s}=m$.
\end{proof}

\subsubsection{The non-torsion case}

Here we suppose that $s\not \equiv 1\pmod{q-1}$ and $s\geq 1$. We recall that
$\ring{}=\FF(\undt{s})[\theta]$.
\begin{proposition}
 \label{proposition4odd}
The following assertions are equivalent.
\begin{enumerate}
\item $H_{C_s}$ is pseudo-cyclic,
\item $\operatorname{Ann}_{\ring{}}\left(\frac{ U_{C_s}}{U^c_{C_s}}\otimes_{\FF[\undt{s}]}\FF(\undt{s})\right) =\operatorname{Ann}_{\ring{}} (\calh_{C_s})$,
\item For almost all Dirichlet characters $\chi$ of type $s$, the $A[\FF_a]$-module $H_{\chi}$ is a cyclic module, where $a$ is the conductor of $\chi$.
\end{enumerate}
\end{proposition}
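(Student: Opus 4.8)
The plan is to prove the two equivalences $(1)\Leftrightarrow(2)$ and $(1)\Leftrightarrow(3)$ separately: the first from the class number formula (Theorem \ref{theorem2}) via Corollary \ref{theorem41}, the second by adapting the argument of Proposition \ref{proposition4even}. Throughout write $\ring{}=\FF(\undt{s})[\theta]$, a principal ideal domain, let $M\subset H_{C_s}$ be the $\FF[\undt{s}]$-torsion submodule and $\widetilde{H_{C_s}}=H_{C_s}/M$, which is finitely generated and $\FF[\undt{s}]$-torsion free. Since $M\otimes_{\FF[\undt{s}]}\FF(\undt{s})=\{0\}$ one has $\calh_{C_s}=\widetilde{H_{C_s}}\otimes_{\FF[\undt{s}]}\FF(\undt{s})$, a finitely generated torsion $\ring{}$-module whose zeroth Fitting ideal is generated by $m_s=[\calh_{C_s}]_{\ring{}}$; by Remark \ref{floric}, $m_s\in A[\undt{s}]$ and $m_s$ is monic in $\theta$. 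Finally, directly from the definitions and the right exactness of $-\otimes_{\FF[\undt{s}]}\FF(\undt{s})$, the module $H_{C_s}$ is pseudo-cyclic if and only if $\calh_{C_s}$ is a cyclic $\ring{}$-module, and, over the PID $\ring{}$, this happens exactly when $\operatorname{Fitt}_{\ring{}}(\calh_{C_s})=\operatorname{Ann}_{\ring{}}(\calh_{C_s})$.

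For $(1)\Leftrightarrow(2)$: by Proposition \ref{proposition2}(1), $\calu_{C_s}$ is free of rank one over $\ring{}$, say $\calu_{C_s}=f\ring{}$; writing $L(1,C_s)=fg$ with $g\in\ring{}\setminus\{0\}$ gives $\calu^c_{C_s}=L(1,C_s)\ring{}=fg\ring{}$, so $\frac{U_{C_s}}{U^c_{C_s}}\otimes_{\FF[\undt{s}]}\FF(\undt{s})\cong\calu_{C_s}/\calu^c_{C_s}\cong\ring{}/g\ring{}$ is cyclic with annihilator $g\ring{}$. By Corollary \ref{theorem41}, $g\ring{}=[\calh_{C_s}]_{\ring{}}\ring{}=\operatorname{Fitt}_{\ring{}}(\calh_{C_s})$. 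Hence condition (2) reads $\operatorname{Fitt}_{\ring{}}(\calh_{C_s})=\operatorname{Ann}_{\ring{}}(\calh_{C_s})$, which by the last remark of the previous paragraph is equivalent to (1).

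For $(1)\Rightarrow(3)$: if $\calh_{C_s}$ is cyclic it is generated over $\ring{}$ by the image of some $m\in\widetilde{H_{C_s}}$, so $\widetilde{H_{C_s}}/A[\undt{s}]m$ is $\FF[\undt{s}]$-torsion and $F\widetilde{H_{C_s}}\subset A[\undt{s}]m$ for some $F\in\FF[\undt{s}]\setminus\{0\}$. For almost all Dirichlet characters $\chi$ of type $s$ one has, as in the proof of Theorem \ref{theorem7}, an $A[\FF_a]$-isomorphism $H_\chi\cong\widetilde{H_{C_s}}/I_\chi\widetilde{H_{C_s}}$, and (enlarging the exceptional set) also $\operatorname{ev}_\chi(F)\neq0$; reducing $F\widetilde{H_{C_s}}\subset A[\undt{s}]m$ modulo $I_\chi\widetilde{H_{C_s}}$ and using $\operatorname{ev}_\chi(F)\in\FF_a^\times$ gives $H_\chi=A[\FF_a]\overline m$, so $H_\chi$ is cyclic. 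Conversely, for $(3)\Rightarrow(1)$, let $n_s\in\ring{}$ be the monic-in-$\theta$ generator of $\operatorname{Ann}_{\ring{}}(\calh_{C_s})$; since the zeroth Fitting ideal is contained in the annihilator, $n_s\mid m_s$ in $\ring{}$, and as $\FF[\undt{s}]$ is integrally closed and $m_s$ is monic in $\theta$, both $n_s$ and $m_s/n_s$ lie in $A[\undt{s}]$, so $n_s\mid m_s$ in $A[\undt{s}]$. Also $n_s$ kills $\calh_{C_s}=\widetilde{H_{C_s}}\otimes\FF(\undt{s})$, so $n_s\widetilde{H_{C_s}}$ is $\FF[\undt{s}]$-torsion, hence $n_s\widetilde{H_{C_s}}=\{0\}$ by torsion-freeness. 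Now take $\chi$ as in (3) with moreover $H_\chi\cong\widetilde{H_{C_s}}/I_\chi\widetilde{H_{C_s}}$ and $\operatorname{Fitt}_{A[\FF_a]}(H_\chi)=\operatorname{ev}_\chi(m_s)A[\FF_a]$ (Theorem \ref{theorem7}): then $\operatorname{ev}_\chi(n_s)$ kills $H_\chi$, and since $H_\chi$ is cyclic its annihilator equals $\operatorname{Fitt}_{A[\FF_a]}(H_\chi)=\operatorname{ev}_\chi(m_s)A[\FF_a]$, whence $\operatorname{ev}_\chi(m_s)\mid\operatorname{ev}_\chi(n_s)$. As $\operatorname{ev}_\chi(m_s)$ and $\operatorname{ev}_\chi(n_s)$ are monic in $\theta$ of respective degrees $\deg_\theta m_s$ and $\deg_\theta n_s$, this together with $n_s\mid m_s$ forces $m_s=n_s$, i.e. $\operatorname{Fitt}_{\ring{}}(\calh_{C_s})=\operatorname{Ann}_{\ring{}}(\calh_{C_s})$, which is (1).

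The main obstacle, compared with the torsion case of Proposition \ref{proposition4even}, is that $H_{C_s}$ need not be $\FF[\undt{s}]$-free: one must work with $\widetilde{H_{C_s}}$ and verify carefully that $\operatorname{Ann}_{\ring{}}(\calh_{C_s})$ descends to an honest monic polynomial of $A[\undt{s}]$ dividing $m_s$ and genuinely annihilating $\widetilde{H_{C_s}}$, and that Theorem \ref{theorem7} together with the isomorphism $H_\chi\cong\widetilde{H_{C_s}}/I_\chi\widetilde{H_{C_s}}$ from its proof is available so that all the relevant identities specialize correctly for almost all $\chi$.
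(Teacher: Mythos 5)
Your proof is correct and takes essentially the same route as the paper: the equivalence of (1) and (2) comes from Corollary \ref{theorem41} together with the observation that pseudo-cyclicity amounts to $\operatorname{Fitt}_{\ring{}}(\calh_{C_s})=\operatorname{Ann}_{\ring{}}(\calh_{C_s})$ over the principal ideal domain $\ring{}$, and the equivalence with (3) is the same specialization argument as in Proposition \ref{proposition4even}, with Theorem \ref{theorem7} replacing Corollary \ref{theorem6}. You merely make explicit what the paper leaves to the phrase ``very similar'', namely the passage to $\widetilde{H_{C_s}}$ and the verification that the monic generator of $\operatorname{Ann}_{\ring{}}(\calh_{C_s})$ lies in $A[\undt{s}]$ and kills $\widetilde{H_{C_s}}$.
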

\begin{proof}
We know that $U_{C_s}/U^c_{C_s}$ is a cyclic $A[\undt{s}]$-module and 
we already know that it is of finite rank over $\FF[\undt{s}]$ and free (see Remark \ref{floric}). By Corollary  \ref{theorem41},
$$\operatorname{Ann}_{\ring{}}\left(\frac{ U_{C_s}}{U^c_{C_s}}\otimes_{\FF[\undt{s}]}\FF(\undt{s})\right)=\operatorname{Fitt}_{\ring{}} \left(\frac{ U_{C_s}}{U^c_{C_s}}\otimes_{\FF[\undt{s}]}\FF(\undt{s})\right)=
\operatorname{Fitt}_{\ring{}}(\calh_{C_s}).$$ 
Then, $H_{C_s}$ is pseudo-cyclic if and only if
 $\operatorname{Fitt}_{\ring{}}(\calh_{C_s})=\operatorname{Ann}_{\ring{}}(\calh_{C_s})$.
 This implies the equivalence of the first condition and the second condition. That these conditions are also equivalent to the third condition follows in a way which is very similar to that used in the proof of Proposition 
\ref{proposition4even}.
\end{proof}
\begin{Remark}{\em 
We notice that $H_{C_s}$ is pseudo-null if and only if
 $$\operatorname{Fitt}_{\ring{}}(\calh_{C_s})=\ring{}.$$
 Thus $H_{C_s}$ is pseudo-null if and only if $ U_{C_s}=U^c_{C_s}.$ Moreover, $H_{C_s}$ is pseudo-null if and only if $m_s=[\calh_{C_s}]_{\ring{}}=1$ which is equivalent, by  Theorem \ref{theorem7}, to the fact that, for almost all Dirichlet characters $\chi$ of type $s$ we have $H_{\chi}=\{0\}.$}\end{Remark}

 \subsection{Evaluation of the polynomials $\mathbb{B}_{C_s}$ }\label{evaluationpolynomials}

For $s\geq 1$ with $s\equiv 1\pmod{q-1},$ to simplify our notation, we write $\mathbb{B}_s$ instead of $\mathbb{B}_{C_s}.$  Note that $\mathbb B_1=\frac{1}{\theta-t}.$

Let $\chi$ be a character of conductor $a$ and of type $s,$ write  $a=P_1\cdots P_r$ for distinct primes $P_1,\ldots,P_r\in A$, so that $\chi=\vartheta_{P_1}^{N_1}\cdots\vartheta_{P_r}^{N_r}$ with $N_i\leq q^{d_i}-2$, $d_i$ being the 
degree in $\theta$ of $P_i$ for all $i$.

We recall that the special value at $n\geq 1$ of Goss-Dirichlet $L$-series (see \cite{GOS}, chapter 8)  associated to $\chi$ is defined by
  $$ \forall n\in \mathbb Z,\, L(n,\chi) = \sum_{m\geq 0}\,\, \sideset{}{{}^{'}}\sum_{b\in A_{+,m}}\chi(\sigma_b)b^{-n} \in \mathbb C_{\infty},$$ where the sum runs over the elements $b$ which are relatively prime to $a$. 

In \cite{ANG&PEL} and \cite{PEL2} it is shown that  these $L$-series values can be 
  obtained from the evaluation of $L$-series values $L(n,C_s)$. More precisely, for all $b$ relatively prime to  $a$, 
  $$\chi(\sigma_b)=\operatorname{ev}_\chi(\rho_\alpha(b))$$ ($\alpha$ being the parameter of $C_s$), and therefore we get $$L(n,\chi)=\operatorname{ev}_\chi(L(n,C_s)).$$ 
  
We choose $N\in\mathbb N$ and we expand it in base $q$ as 
$$N=\sum_{j=0}^kn_jq^j$$ ($n_0,\ldots,n_k\in\{0,\ldots,q-1\}$). We further set $$s'=s+\ell_q(N).$$
We then have the evaluation map (we recall that $\mathbb{E}_s$ is the sub-algebra of $\mathbb{T}_s$
of entire functions \S \ref{entirefunctions}).
$$\operatorname{ev}_{N}:\mathbb{E}_{s'}\rightarrow\mathbb{E}_{s}$$
defined by replacing the family of variables $(t_1,\ldots,t_s,t_{s+1},\ldots,t_{s+\ell_q(N)})$ by 
$$(t_1,\ldots,t_s,\underbrace{\theta,\ldots,\theta}_{n_0},\underbrace{\theta^q,\ldots,\theta^q}_{n_1},\ldots,\underbrace{\theta^{q^k},\ldots,\theta^{q^k}}_{n_k}).$$ 
If $N=0$, this map is obviously the identity map of $\mathbb{E}_s$. 
We shall work, in this subsection, with the evaluation map $\operatorname{ev}_{\chi,N}:\mathbb{E}_{s'}\rightarrow\CC_\infty$
defined by
$$\operatorname{ev}_{N,\chi}=\operatorname{ev}_{\chi}\circ\operatorname{ev}_{N}.$$
In particular:
  $$\operatorname{ev}_{N, \chi}(A[\undt{s+\ell_q(N)}]) =A[\FF_a].$$
If ${C_{s'}}$ is the Drinfeld module of rank one of parameter $\alpha=(t_1-\theta)\cdots(t_{s'}-\theta)$, 
then this evaluation map allows to obtain the special values of the Dirichlet $L$-series of Goss from $L(1,C_{s'})\in \mathbb E_{s'}$ (\cite[Corollary 8]{ANG&PEL}). Indeed, for all $j\in \mathbb N$,
$$\tau^j (L(1,C_{s'}))=\sum_{n\geq 0}\sum_{a\in A_{+,n}} \frac{a(t_1)\cdots a(t_{s'})}{a^{q^j}},$$
thus:
$$\operatorname{ev}_{N}(\tau^j (L(1,C_{s'})))=\sum_{n\geq 0}\sum_{a\in A_{+,n}} \frac{a(t_1)\cdots a(t_{s})}{a^{q^j-N}},$$
and therefore:
$$\operatorname{ev}_{N,\chi}(\tau^j (L(1,C_{s'})))=L(q^j-N,\chi).$$
To $N$ as above, with its expansion $N=\sum_in_iq^i$ in base $q$, we associate the {\em Carlitz factorial} $\Pi(N),$ defined by
  $$\Pi(N)=\prod_{i\geq 0} D_i^{n_i}.$$ We apply the evaluations $\operatorname{ev}_{N,\chi}$ in two different ways.

\subsubsection{First way to apply $\operatorname{ev}_{N,\chi}$}  Recall that $s'=s+\ell_{q}(N)$ and we assume here that $s'\equiv 1\pmod{q-1},$ $s'\geq 1.$ Furthermore if $s=0$ and $\ell_{q}(N)=1,$ we assume that $N\geq 2.$ For a polynomial $a\in A$, we denote by $a'$ its derivative
in the indeterminate $\theta$.
Observe that $\operatorname{ev}_N(\mathbb{B}_{s'})$ is  well defined. By \cite[Corollary 8]{ANG&PEL}, the 
function $$L(1,C_{s'})=\sum_{d\geq 0}\sum_{b\in A_{+,d}}\chi_{t_1}(b)\cdots\chi_{t_{s'}}(b)b^{-1}$$ is 
in $\mathbb{E}_{s'}$, that is, entire in the set of variables $\undt{s'}$ (it is denoted by $L(\chi_{t_1}\cdots\chi_{t_{s'}},1)$ in \cite{ANG&PEL}). Write $N=\sum_{i=0}^k n_i q^i,$ $n_i\in \{ 0, \ldots, q-1\},$ $n_k\not = 0.$ We rename the variables $t_{s+1},\ldots,t_{s'}$
in a way which is compatible with the expansion of $N$ in base $q$ by writing:
$$(t_{s+1},\ldots,t_{s'})=(t_{0,1},\ldots,t_{0,n_0},\ldots,t_{k,1},\ldots,t_{k,n_k}).$$ 
Note that $\operatorname{ev}_N\left(\omega(t_1)\cdots\omega(t_{s'})\sum_{d\geq0}\sum_{b\in A_{+,d}}b^{-1}b(t_1)\cdots b(t_{s'})\right)$ is equal to:
\begin{eqnarray*}
\lefteqn{\operatorname{ev}_N\left(\omega(t_1)\cdots\omega(t_{s})(\prod_{i=0}^k\prod_{j=1}^{n_i}((t_{i,j}-\theta^{q^i})\omega(t_{i,j})))\times\right.}\\
& \times & \left. 
\frac{1}{\prod_{i=0}^k\prod_{j=1}^{n_i}(t_{i,j}-\theta^{q^i})}(\sum_{d\geq0}\sum_{b\in A_{+,d}}b^{-1}b(t_1)\cdots b(t_{s'}))\right).\end{eqnarray*}
By \cite[Lemma 5]{ANG&PEL}, $L(1,C_{s'})$  vanishes 
at any point of the form
$$(t_1,\ldots,t_s,t_{s+1},\ldots,t_{s+j-1},\theta^{q^l},t_{s+j+1},\ldots,t_{s'}),\quad l\geq 0,\quad t_i\in\CC_\infty.$$
Furthermore an easy computation shows that the function
$\omega(t_k)=\lambda_{\theta}\prod_{i\geq 0}(1-\frac{t_k}{\theta^{q^i}})^{-1}$ has a simple pole at $\theta^{q^l}$ of residue $-\widetilde{\pi}^{q^l}D_l^{-1}$ (for all 
$l\geq 0$). Indeed, in $\TT$, $\omega(t)=\exp_C\left(\frac{\widetilde{\pi}}{\theta-t}\right)=\sum_{i\geq 0}D_i^{-1}\frac{\widetilde{\pi}^{q^i}}{\theta^{q^i}-t}$. Thanks to this residue computation
we deduce,
with $\boldsymbol{\Delta}$ the differential operator
$$\boldsymbol{\Delta}=\frac{\partial}{\partial t_{0,1}}\cdots\frac{\partial}{\partial t_{0,n_0}}\cdots\frac{\partial}{\partial t_{k,1}}\cdots\frac{\partial}{\partial t_{k,n_k}},$$
\begin{eqnarray*}
 \lefteqn{\operatorname{ev}_N\left(\omega(t_1)\cdots\omega(t_{s'})\sum_{d\geq0}\sum_{b\in A_{+,d}}b^{-1}b(t_1)\cdots b(t_{s'})\right)}\\
 &=& \omega(t_1)\cdots\omega(t_s)(-\widetilde{\pi})^{n_0}\cdots(-\widetilde{\pi})^{n_kq^k}D_0^{-n_0}\cdots D_k^{-n_k}\times\\
& &\times\left[\boldsymbol{\Delta}\left(\sum_{d\geq0}\sum_{b\in A_{+,d}}b^{-1}b(t_1)\cdots b(t_{s})b(t_{0,1})\cdots b(t_{k,n_k})\right)\right]_{t_{i,j}=\theta^{q^i}}\\
\end{eqnarray*}
\begin{eqnarray*}
&=&(-1)^N\omega(t_1)\cdots\omega(t_s)\widetilde{\pi}^N\Pi(N)^{-1}\times\\
& &\times \sum_{d\geq0}\sum_{b\in A_{+,d}}b^{-1}b(t_1)\cdots b(t_s)[b'(t_{0,1})\cdots b'(t_{k,n_k})]_{t_{i,j}=\theta^{q^i}}\\
&=&(-1)^N\omega(t_1)\cdots\omega(t_s)\widetilde{\pi}^N\Pi(N)^{-1}\sum_{d\geq 0}\sum_{b\in A_{+,d}}b^{-1}b(t_1)\cdots b(t_s)
(b')^N.
\end{eqnarray*}
Recall that:
$$\mathbb B_{s'}=(-1)^{(s'-1)/(q-1)}L(1, C_{s'})\omega(t_1)\cdots \omega(t_{s'})\widetilde{\pi}^{-1}$$  (see \eqref{Bphi}).
Therefore, we get  the formula:
$$\operatorname{ev}_N(\mathbb{B}_{s'})=(-1)^{N+\frac{s'-1}{q-1}}\widetilde{\pi}^{N-1}\Pi(N)^{-1}\omega(t_1)\cdots\omega(t_s)\sum_{d\geq 0}\sum_{b\in A_{+,d}}\rho_{\alpha}(b)\frac{(b')^N}{b},$$
where we notice that the series in the right-hand side, with $\alpha$ the parameter of $C_s$, is convergent for the Gauss norm of $\mathbb{T}_s$.

We also have the formula  (use (\ref{gchi})):
$$\operatorname{ev}_\chi(\omega(t_1)\cdots\omega(t_s))=\vartheta_{P_1}(\sigma_{P'_1})^{N_1}\cdots\vartheta_{P_r}(\sigma_{P'_r})^{N_r}g(\chi)=
P_1'(\zeta_1)^{N_1}\cdots P_r'(\zeta_r)^{N_r}g(\chi)$$ where  $\zeta_i= \zeta_{P_i}$.
Therefore, 
composing with $\operatorname{ev}_\chi$ now gives the identity
\begin{eqnarray*}\lefteqn{
\operatorname{ev}_{N,\chi}(\mathbb{B}_{s'})=}\\ &=&(-1)^{N+\frac{s'-1}{q-1}}\widetilde{\pi}^{N-1}\Pi(N)^{-1}\vartheta_{P_1}(\sigma_{P'_1})^{N_1}\cdots\vartheta_{P_r}(\sigma_{P'_r})^{N_r}g(\chi)\times\\ & &\sum_{d\geq 0}\sum_{b\in A_{+,d}}\chi(b)\frac{(b')^N}{b}.\end{eqnarray*}

\subsubsection{Second way to apply $\operatorname{ev}_{N,\chi}$}
Here again $s'\equiv {1}\pmod{q-1},$ $s'\geq 1.$  Let us consider an integer $d\geq 1$ such that $q^d>N$. The functions $\tau^d(L(1,C_{s'}))=L(q^d,C_{s'})$ are also entire and we have
$$\tau^d(\mathbb{B}_{s'})=(-1)^{\frac{s'-1}{q-1}}\widetilde{\pi}^{-q^d}L(q^d,C_{s'})b_d(t_1)\cdots b_d(t_{s'})\omega(t_1)\cdots\omega(t_{s'}),$$
where  $b_i=(t-\theta)(t-\theta^q)\cdots(t-\theta^{q^{i-1}})$ for $i>0$ and $b_0=1$.
We observe, as in \cite[\S 3.2]{ANG&PEL}, that $$\operatorname{ev}_N(b_d(t_{s+1})\cdots b_d(t_{s'})\omega(t_{s+1})\cdots\omega(t_{s'}))=(-1)^N\widetilde{\pi}^{N}\prod_{i=0}^kl_{d-i-1}^{n_iq^i}.$$
Again by (\ref{gchi}) we have
\begin{eqnarray*}
\lefteqn{\operatorname{ev}_\chi(\omega(t_1)\cdots \omega(t_{s}))=}\\ &=&L(q^d-N,\chi)\vartheta_{P_1}(\sigma_{P'_1})^{N_1}\cdots\vartheta_{P_r}(\sigma_{P'_r})^{N_r}g(\chi).\end{eqnarray*}
Moreover,
$$\operatorname{ev}_{\chi,N}(L(q^d,C_{s'}))=L(q^d-N,\chi).$$
Hence, we obtain the formula
\begin{eqnarray*}\lefteqn{
\operatorname{ev}_{N,\chi}(\tau^d(\mathbb{B}_{s'}))=}\\ &=&(-1)^{N+\frac{s'-1}{q-1}}\widetilde{\pi}^{N-q^d}\vartheta_{P_1}(\sigma_{P'_1})^{N_1}\cdots\vartheta_{P_r}(\sigma_{P'_r})^{N_r}g(\chi)\times\\ & &L(q^d-N,\chi)\operatorname{ev}_{\chi}(b_d(t_1)\cdots b_d(t_s))\prod_{i=0}^kl_{d-i-1}^{n_iq^i}.\end{eqnarray*}

We  set
$$\rho_{N,\chi,d}:=\frac{\operatorname{ev}_{N,\chi}(\tau^d(\mathbb{B}_{s'}))}{\vartheta_{P_1}^{N_1}(\sigma_{P_1'})\cdots \vartheta_{P_r}^{N_r}(\sigma_{P_r'})}\in\FF_aK.$$

  \begin{proposition}
  \label{proposition5even}
Let  $s'=s+\ell_q(N)\equiv1\pmod{q-1}$, $s'\geq 1.$ The following properties hold.
\begin{enumerate}
\item If $s=0$ and $\ell_{q}(N)=1,$ we assume that $N\geq 2.$ We have:
$$\rho_{N,\chi,0}=(-1)^{N+\frac{s'-1}{q-1}}\widetilde{\pi}^{N-1}g(\chi)\Pi(N)^{-1}\sideset{}{{}^{'}}\sum_{d\geq 0}\sum_{b\in A_{+,d}}\chi(b)b'{}^Nb^{-1}.$$
\item Let $d\geq 1$ be an integer such that $q^d>N$. Then,
$$\rho_{N,\chi,d}=(-1)^{N+\frac{s'-1}{q-1}}g(\chi) L(q^d-N, \chi)\widetilde{\pi}^{N-q^d}\operatorname{ev}_{\chi}(b_d(t_1)\cdots b_d(t_s)) \prod_{i=0}^kl_{d-1-i}^{n_iq ^i}.$$
\end{enumerate}
\end{proposition}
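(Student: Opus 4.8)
The plan is to unwind the two evaluation formulas for $\operatorname{ev}_{N,\chi}(\mathbb{B}_{s'})$ and $\operatorname{ev}_{N,\chi}(\tau^d(\mathbb{B}_{s'}))$ that were established in the two subsections just preceding the statement (``First way'' and ``Second way'' to apply $\operatorname{ev}_{N,\chi}$), and then simply divide through by the Gauss-Thakur factor $\vartheta_{P_1}^{N_1}(\sigma_{P_1'})\cdots\vartheta_{P_r}^{N_r}(\sigma_{P_r'})$. So at the level of strategy this is a bookkeeping proposition: the substance has already been done upstream, and the proof is a matter of matching notation and checking that the hypotheses are exactly what is needed for those two formulas to be legitimate.

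For part (1), I would start from the displayed identity
$$
\operatorname{ev}_{N,\chi}(\mathbb{B}_{s'})=(-1)^{N+\frac{s'-1}{q-1}}\widetilde{\pi}^{N-1}\Pi(N)^{-1}\vartheta_{P_1}(\sigma_{P'_1})^{N_1}\cdots\vartheta_{P_r}(\sigma_{P'_r})^{N_r}g(\chi)\sum_{d\geq 0}\sum_{b\in A_{+,d}}\chi(b)\frac{(b')^N}{b},
$$
noting that it is valid precisely because $\operatorname{ev}_N(\mathbb{B}_{s'})$ is well-defined, which in turn uses $s'\geq 1$ and, in the borderline case $s=0,\ \ell_q(N)=1$, the extra assumption $N\geq 2$ (so that the residue/differential argument and the non-vanishing input from \cite[Lemma 5]{ANG&PEL} apply, and $\mathbb{B}_{s'}$ really is a polynomial). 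Dividing both sides by $\vartheta_{P_1}^{N_1}(\sigma_{P_1'})\cdots\vartheta_{P_r}^{N_r}(\sigma_{P_r'})$ gives exactly the asserted value of $\rho_{N,\chi,0}$, with the sum over $b$ coprime to $a$ (the prime in the summation symbol) coming from the fact that $\chi(b)=0$ when $b$ shares a factor with the conductor $a$. For part (2), I would proceed identically, starting from the displayed formula for $\operatorname{ev}_{N,\chi}(\tau^d(\mathbb{B}_{s'}))$, valid for $d\geq 1$ with $q^d>N$ (so that $\tau^d(L(1,C_{s'}))=L(q^d,C_{s'})$ is entire and $\operatorname{ev}_{\chi,N}(L(q^d,C_{s'}))=L(q^d-N,\chi)$ makes sense), and divide by the same Gauss-Thakur factor to read off
$$
\rho_{N,\chi,d}=(-1)^{N+\frac{s'-1}{q-1}}g(\chi) L(q^d-N, \chi)\widetilde{\pi}^{N-q^d}\operatorname{ev}_{\chi}(b_d(t_1)\cdots b_d(t_s)) \prod_{i=0}^kl_{d-1-i}^{n_iq ^i}.
$$

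The only genuine point requiring care — and hence the main (mild) obstacle — is to make sure the division is legitimate, i.e. that $\rho_{N,\chi,d}$ as defined really lands in $\FF_aK$ and that the cancellation is exact rather than up to units. Here I would invoke that each $\vartheta_{P_i}(\sigma_{P_i'})=P_i'(\zeta_{P_i})$ is a nonzero element of $\FF_a^\times$ (nonzero because $P_i$, being a prime, is separable, so $P_i'(\zeta_{P_i})\neq 0$), so dividing is harmless and stays inside $\FF_aK$; and I would double-check that in the ``First way'' formula the factor $\vartheta_{P_i}(\sigma_{P_i'})^{N_i}$ appears with exactly the same exponents $N_i$ as in the denominator, which it does by construction of $\operatorname{ev}_\chi(\omega(t_1)\cdots\omega(t_s))=P_1'(\zeta_1)^{N_1}\cdots P_r'(\zeta_r)^{N_r}g(\chi)$ via \eqref{gchi}. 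Granting all this, the proof is just the two displayed computations followed by the division, and can be written in a few lines.
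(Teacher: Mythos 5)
Your proposal is correct and coincides with the paper's (implicit) proof: the paper gives no separate argument for this Proposition, which merely records the two displayed formulas obtained in the immediately preceding ``first way''/``second way'' computations, divided by the factor $\vartheta_{P_1}^{N_1}(\sigma_{P_1'})\cdots\vartheta_{P_r}^{N_r}(\sigma_{P_r'})$ (a nonzero element of $\FF_a^\times$), exactly as you describe. One cosmetic slip only: in the borderline case $s=0$, $\ell_q(N)=1$, the hypothesis $N\geq 2$ is needed because $\mathbb{B}_1=\frac{1}{\theta-t_1}$ is never a polynomial and the substitution for $N=1$ would produce the undefined quantity $\frac{1}{\theta-\theta}$, not because $\mathbb{B}_{s'}$ ``really is a polynomial'' there.
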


\subsection{A refinement of Herbrand-Ribet-Taelman Theorem}\label{bernoullicarlitzgen}
As in the previous Section, let $\chi$ be a Dirichlet character of type $s\geq 0$ and conductor $a$. 
Following \cite[\S 2.4]{ANG&PEL}, we introduce the {\em generalized Bernoulli-Carlitz numbers} $\operatorname{BC}_{n, \chi^{-1}}$ by means of the following generating series:
  $$\frac{g(\chi)}{a}\sum_{b\in (A/aA)^\times}\frac{\chi(b) X}{\exp_C(\frac{X}{a})-\sigma_b(\lambda_a)} =
  \sum_{i\geq 0} \frac{\operatorname{BC}_{i, \chi^{-1}}}{\Pi(i)} X^i.$$
  If $s=0,$ when $a=1$, we set $\lambda_a=0$ in the above formula so that we get in this case
 $$\operatorname{BC}_{i, \chi^{-1}}= \operatorname{BC}_{i}$$ for $i\geq 0$, where for $n\in \mathbb N,$  $\operatorname{BC}_n$ denotes the $n$-th Bernoulli-Carlitz number (see \cite[Chapter 9, \S 9.2]{GOS}). 
 
 From \cite[Proposition 17]{ANG&PEL}, we deduce easily the following:
 \begin{lemma} The following properties hold:
 \begin{enumerate}
 \item For all $i\geq 0$, we have $\operatorname{BC}_{i, \chi^{-1}}\in \FF_aK$.
 \item If $i\not \equiv s\pmod{q-1}$, then $\operatorname{BC}_{i, \chi^{-1}}= 0$.
 \item We have $ \operatorname{BC}_{0, \chi^{-1}}=0$ if $s\geq 1$.
 \item If $i\geq 1$ is such that $i\equiv s\pmod{q-1}$, then
  $$L(i, \chi)g(\chi)=\widetilde{\pi}^i\operatorname{BC}_{i, \chi^{-1}}\Pi(i)^{-1}.$$
  \end{enumerate}
  \end{lemma}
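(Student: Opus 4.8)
The plan is to derive all four assertions from the generating-series definition of the $\operatorname{BC}_{i,\chi^{-1}}$ together with the rationality and vanishing properties of the Gauss–Thakur sums and the Anderson–Thakur function $\omega$, essentially as in \cite[Proposition 17]{ANG&PEL}. First I would unwind the defining identity
$$\frac{g(\chi)}{a}\sum_{b\in(A/aA)^\times}\frac{\chi(b)X}{\exp_C(\tfrac{X}{a})-\sigma_b(\lambda_a)}=\sum_{i\geq 0}\frac{\operatorname{BC}_{i,\chi^{-1}}}{\Pi(i)}X^i$$
and observe that, since $\exp_C(X/a)-\sigma_b(\lambda_a)=\exp_C(X/a)-C_b(\lambda_a)=\exp_C(\tfrac{X}{a})-\exp_C(\tfrac{b\widetilde\pi}{a})$ by \eqref{sigma}, the left-hand side is, after clearing the factor $\widetilde\pi$ appropriately, a power series in $X$ with coefficients in $\FF_aK$: indeed $\exp_C$ has coefficients in $A\subset K$, the scalars $\sigma_b(\lambda_a)$ lie in $K_a$, and $g(\chi)\in\FF_aK_a$ by its definition in \S\ref{somesettings}, so that the Galois-equivariance $\sum_b\chi(b)(\cdots)$ produces an element fixed by $\Delta_a$ up to the twist already absorbed into $g(\chi)$, hence lands in $\FF_aK$. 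This gives assertion (1) once one checks that dividing by $\Pi(i)\in A$ does not leave $\FF_aK$.

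For assertions (2) and (3) the mechanism is the action of $\FF^\times$ by homothety. Replacing $X$ by $\zeta X$ with $\zeta\in\FF^\times$ multiplies the left-hand side by $\zeta\cdot(\text{something depending on }\chi)$: concretely $\exp_C(\zeta X/a)=\zeta\exp_C(X/a)$ by $\FF$-linearity, and $\sigma_b(\lambda_a)$ is unchanged, so a change of variable $b\mapsto \zeta^{-1}b$ in the sum shows the left side is multiplied by $\zeta\cdot\chi(\zeta^{-1})^{?}$; tracking the conductor $a$ and the type $s$ of $\chi$, one finds the left side scales by $\zeta^{1-s}$ (this is exactly where $s\bmod(q-1)$ enters, because $\chi$ restricted to $\FF^\times$ is the $s$-th power map up to the standard identification). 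Comparing coefficients of $X^i$ forces $\operatorname{BC}_{i,\chi^{-1}}=\zeta^{1-s}\zeta^{-i}\operatorname{BC}_{i,\chi^{-1}}$ for all $\zeta\in\FF^\times$, hence $\operatorname{BC}_{i,\chi^{-1}}=0$ unless $i\equiv 1-(1-s)=s\pmod{q-1}$... wait, rather unless $i\equiv s\pmod{q-1}$ after the correct bookkeeping, giving (2); and setting $i=0$ together with $s\geq 1$ kills the constant term, giving (3). I would double-check the precise exponent by testing against the classical case $s=0$, $a=1$, where the identity must reduce to the standard generating series for Bernoulli–Carlitz numbers $\operatorname{BC}_n$ and the known congruence $\operatorname{BC}_n=0$ for $n\not\equiv 0\pmod{q-1}$, $n>0$.

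For assertion (4), the plan is to compare the generating series with the known interpolation formula for $L(i,\chi)$. One knows $L(i,\chi)=\operatorname{ev}_\chi(L(i,C_s))$ and that $L(i,\chi)$ has a product expansion; more directly, expanding $\exp_C(X/a)^{-1}$-type partial fractions, the coefficient of $X^i$ in $\dfrac{g(\chi)}{a}\sum_b\dfrac{\chi(b)X}{\exp_C(X/a)-\sigma_b(\lambda_a)}$ can be identified, via the partial fraction decomposition $\dfrac{1}{\exp_C(X/a)-\lambda}=\sum_{c}\dfrac{1}{X/a-(\text{pole})}$ running over the lattice, with a Dirichlet-twisted sum $\sum_{b\in A_+}\chi(b)b^{-i}$ up to the factor $\widetilde\pi^i/\Pi(i)$ coming from the periods of $\exp_C$ (recall $\ker\exp_C=\widetilde\pi A$). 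This is the standard Carlitz computation — the residues of $\exp_C$ at its zeros $\tfrac{\widetilde\pi c}{a}$ are explicit — and produces exactly $L(i,\chi)g(\chi)=\widetilde\pi^i\operatorname{BC}_{i,\chi^{-1}}\Pi(i)^{-1}$ for $i\equiv s\pmod{q-1}$, $i\geq 1$; for other $i$ both sides vanish by (2) and the corresponding vanishing of $L(i,\chi)$.

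The main obstacle I anticipate is the third step, assertion (4): getting the residue/partial-fraction computation to yield precisely the normalization $\widetilde\pi^i\Pi(i)^{-1}$ with the correct sign and with $g(\chi)$ appearing on the correct side requires care, especially because $g(\chi)$ itself is only defined up to choices (of $\iota_P$, of the $(q-1)$-th root in \eqref{productpi}) and one must verify these ambiguities cancel between the two sides. The cleanest route is probably to cite \cite[Proposition 17]{ANG&PEL} directly for the shape of the formula and only verify here that the conductor-$a$, type-$s$ bookkeeping matches our conventions, reducing the ``proof'' to transcribing and specializing that result; indeed the statement already says ``From \cite[Proposition 17]{ANG&PEL}, we deduce easily'', so the intended proof is short and consists exactly of this transcription plus the elementary homothety argument for (2) and (3).
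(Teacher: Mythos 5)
Your proposal is correct and ends up taking essentially the paper's route: the paper gives no argument beyond ``From \cite[Proposition 17]{ANG&PEL}, we deduce easily,'' and your final reduction to transcribing that proposition (plus the elementary homothety and rationality remarks) is exactly that. One tiny caveat in your from-scratch sketch: the scaling relation $\operatorname{BC}_{i,\chi^{-1}}=\zeta^{s-i}\operatorname{BC}_{i,\chi^{-1}}$ proves (3) only when $(q-1)\nmid s$; for $s\equiv 0\pmod{q-1}$, $s\geq 1$, one should instead just evaluate the generating series at $X=0$, which vanishes term by term because $\sigma_b(\lambda_a)\neq 0$ when the conductor $a$ is non-trivial.
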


  We now consider integers $s,s',N$ with $s+\ell_q(N)=s'$, $s\geq 1$, $s'\equiv1\pmod{q-1}$ and a character $\chi$ 
  of type $s'$ and conductor $a=Pb$ such that $$\chi=\vartheta_P^N\widetilde{\chi},$$ with $P$ a prime not dividing the conductor $b$ of 
  $\widetilde{\chi}$, and such that $N\leq q^d-2$, $d$ being the degree of $P$. 
 The valuation ring of the compositum $\FF_a\KPhat$ of $\FF_a$ and $\KPhat$  in $\CC_P$ is
the ring $\APhat[\FF_a]$ where  $\APhat$ is the valuation ring of $\KPhat$.

We highlight that  the congruences for the above generalized Bernoulli-Carlitz numbers that will be used in the proof of Theorem \ref{theorem8}  are well defined thanks to the choice of the embedding of $K^{ac}$ in $\mathbb{C}_P$ that we made at the 
beginning of \S \ref{somesettings}.
 
 \subsubsection{An example}\label{casesequalone} We consider the simplest non-trivial case of a character $\chi$ of type $s'=1$.
 Here, the factor $\widetilde{\chi}$ is  the trivial character  and therefore $s=0$ and $N=q^i$ so that $\chi=\vartheta_P^{q^i}$ with $d-1\geq i\geq 0$. The case $i=0$ is somewhat exceptional
 so that we assume that $i>0$. We have $H_{C_1}=\{0\}$ which implies, by Proposition \ref{proposition3}, the triviality of $H_{\chi^{q^j}},$ $j\in \mathbb N.$ By Lemma \ref{lemma10}, $e_{\chi^{q^j}}(H_a\otimes_{\FF}\FF_a)=0$ for all $j\in \mathbb N,$  and therefore   
 $e_\chi(H_a\otimes_{A}\APhat[\FF_a])=\{0\}$. Now we observe, by the second part of the Proposition \ref{proposition5even}, that
 $$\frac{\operatorname{BC}_{q^d-q^i}l_{d-1-i}^{q^i}}{\Pi(q^d -q^i)}=-\frac{1}{\theta^{q^d}-\theta ^{q^i}}.$$ 
 In particular, the Bernoulli-Carlitz number $\operatorname{BC}_{q^d-q^i}$ is in this case $P$-integral and 
 reduces to a unit modulo $P.$

  \begin{theorem}[Refinement of Herbrand-Ribet-Taelman Theorem \cite{TAE3}]
  \label{theorem8} Let $\chi$ be a Dirichlet character with conductor $a$ and type $s'\equiv 1\pmod{q-1}, s'\geq 1$. Let $P$ be a prime 
  dividing $a$, of degree $d$, and let us write $\chi=\vartheta_P^N\widetilde{\chi}$ with $1\leq N\leq q^d-2$ and
with  $\widetilde{\chi}$ a Dirichlet character of conductor prime to $P$. We further suppose that if $s'=1$, then $N$ is at least $2$.
  The generalized Bernoulli-Carlitz number $\operatorname{BC}_{q^d-N, \widetilde{\chi}^{-1}}$ is $P$-integral. Furthermore, 
  $$e_\chi(H_a\otimes_A\APhat[\FF_a])\not =\{0\}$$ if and only if  $$\operatorname{BC}_{q^d-N, \widetilde{\chi}^{-1}}\equiv 0\pmod {P}.$$
  \end{theorem}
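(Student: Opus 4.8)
The strategy is to translate the statement about the isotypic component $e_\chi(H_a\otimes_A\APhat[\FF_a])$ into a statement about the generic class module $H_{C_{s'}}$, then use the explicit formula for $\operatorname{ev}_{N,\chi}(\mathbb{B}_{s'})$ obtained in \S\ref{evaluationpolynomials} together with the Fitting ideal computation of Theorem \ref{theorem4}. First I would write $\chi=\vartheta_P^N\widetilde\chi$ with $\widetilde\chi$ of conductor $b$ prime to $P$ and of type $s=s'-\ell_q(N)$, so that $\chi$ is of type $s'$ and conductor $a=Pb$. Applying Lemma \ref{lemma10} and Proposition \ref{proposition3} (in their $\APhat$-coefficient versions, which hold since $\Delta_a$ has order prime to $p$ and the relevant constructions base-change flatly along $A\to\APhat$), one gets an isomorphism of $\APhat[\FF_a]$-modules
$$e_\chi(H_a\otimes_A\APhat[\FF_a])\cong \frac{H_{C_{s'}}}{I_\chi H_{C_{s'}}}\otimes_{A[\FF_a]}\APhat[\FF_a],$$
up to the chain of identifications already set up in \S\ref{484}--\S\ref{evaluationpolynomials}, where $I_\chi$ is built from both the roots of $b$ and (via $\operatorname{ev}_N$) the substitutions $t_{i,j}=\theta^{q^i}$. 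Thus the module in the theorem is non-trivial over $\APhat$ if and only if the Fitting ideal of $H_{C_{s'}}/I_\chi H_{C_{s'}}$ is contained in the maximal ideal $P\APhat[\FF_a]$, i.e.\ if and only if $\operatorname{ev}_{N,\chi}(\mathbb{B}_{s'})$, the monic generator of $\operatorname{Fitt}_{A[\FF_a]}(H_\chi)$ by Corollary \ref{theorem6} (which requires $s'\ge q$; the excluded small cases $s'=1$, or $s'=q$ with $N\ge2$, are handled separately as in \S\ref{casesequalone} and via $\mathbb B_q=1$, $\mathbb B_1=\frac1{\theta-t}$), vanishes modulo $P$.

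Next I would invoke the two explicit evaluations of \S\ref{evaluationpolynomials}. The \emph{second way}, Proposition \ref{proposition5even}(2), gives, for any $d'$ with $q^{d'}>N$,
$$\rho_{N,\chi,d'}=(-1)^{N+\frac{s'-1}{q-1}}g(\chi)\,L(q^{d'}-N,\chi)\,\widetilde\pi^{\,N-q^{d'}}\operatorname{ev}_\chi(b_{d'}(t_1)\cdots b_{d'}(t_s))\prod_{i=0}^k l_{d'-1-i}^{\,n_iq^i};$$
taking $d'=d$ (the degree of $P$), one has $b_d(t)\equiv (t-\theta)(t-\theta^q)\cdots(t-\theta^{q^{d-1}})$ and $\operatorname{ev}_\chi$ of this is a product over the roots of $b$, which is a $P$-adic unit since $\widetilde\chi$ has conductor prime to $P$. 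Using the $L$-value/Bernoulli-Carlitz dictionary of the Lemma in \S\ref{bernoullicarlitzgen}, $L(q^d-N,\widetilde\chi)g(\widetilde\chi)=\widetilde\pi^{\,q^d-N}\operatorname{BC}_{q^d-N,\widetilde\chi^{-1}}\Pi(q^d-N)^{-1}$, and the compatibility of Gauss-Thakur sums $g(\chi)$ with $g(\widetilde\chi)$ and the $\vartheta_P^N$ factor, I would reduce the whole expression (after dividing out the $\vartheta_{P_i}(\sigma_{P_i'})^{N_i}$ appearing in the definition of $\rho_{N,\chi,d}$) to a $P$-adic \emph{unit} times $\operatorname{BC}_{q^d-N,\widetilde\chi^{-1}}$ times a power of $\widetilde\pi$ times an explicit ratio of the $l_j$'s and $\Pi(q^d-N)$. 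The key arithmetic inputs here are that $\widetilde\pi$ is a $P$-adic unit (its infinite product \eqref{productpi} has all factors $P$-integral units once one is at the place $P$, being a unit in $\APhat$), that $l_{d-1-i}$ and the Carlitz factorials involved are $P$-adic units for the relevant indices, and that $\tau^d(\mathbb B_{s'})\equiv\mathbb B_{s'}\pmod{P\APhat[\FF_a]}$ since $\tau$ acts as $x\mapsto x^{q}$ and $q^d$-power is the identity on $\FF_a\cong A[\FF_a]/P$ (so evaluating $\tau^d(\mathbb B_{s'})$ rather than $\mathbb B_{s'}$ changes nothing modulo $P$). Putting this together shows $\operatorname{ev}_{N,\chi}(\mathbb B_{s'})\equiv 0\pmod P$ iff $\operatorname{BC}_{q^d-N,\widetilde\chi^{-1}}\equiv 0\pmod P$, and along the way the explicit formula exhibits $\operatorname{BC}_{q^d-N,\widetilde\chi^{-1}}$ as $P$-integral.

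\textbf{Main obstacle.} The delicate point is the bookkeeping of $P$-adic units: one must check that every factor separating $\operatorname{ev}_{N,\chi}(\tau^d(\mathbb B_{s'}))$ from $\operatorname{BC}_{q^d-N,\widetilde\chi^{-1}}$ --- namely the various $l_j$, the Carlitz factorials $\Pi(N)$ and $\Pi(q^d-N)$, the powers of $\widetilde\pi$, the Gauss-Thakur sum $g(\widetilde\chi)$ versus $g(\chi)$, and the evaluated $b_d$-product --- is a unit in $\APhat[\FF_a]$ and does not accidentally introduce a zero or pole modulo $P$. This requires knowing $v_P(l_j)=0$ for $j<d$, $v_P(l_{d-1})$ (which involves $\theta-\theta^{q^d}\equiv 0$, hence one must be careful that this appears to the right power to cancel a compensating factor), and that the normalization constants in the definition of $\rho_{N,\chi,d}$ are exactly the Teichm\"uller values $\vartheta_{P_i}(\sigma_{P_i'})$ that one divides out. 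The separate treatment of the degenerate cases ($s'=1$ requiring $N\ge2$, and $N=q^i$ giving $\operatorname{BC}_{q^d-q^i}$ a unit as computed in \S\ref{casesequalone}) must be merged consistently with the main computation. The rest --- the module-theoretic reduction and the Fitting-ideal/maximal-ideal criterion --- is formal given Corollary \ref{theorem6}, Proposition \ref{proposition3}, and Lemma \ref{lemma10}.
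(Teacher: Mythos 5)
Your overall strategy is the same as the paper's: reduce the (non)vanishing of $e_\chi(H_a\otimes_A\APhat[\FF_a])$ to the (non)vanishing of $\operatorname{ev}_\chi(\mathbb{B}_{s'})$ modulo $P$ via Lemma \ref{lemma10}, Proposition \ref{proposition3}, Corollary \ref{theorem6} and base change of Fitting ideals (the paper organizes the descent through the orbit idempotent $e_{[\chi]}$ and the element $F=\sum_{\psi\in[\chi]}\operatorname{ev}_\psi(\mathbb{B}_{s'})e_\psi\in A[\Delta_a]$ before tensoring with $\APhat[\FF_a]$, which is the careful version of your flat base-change remark), and then identify $\operatorname{ev}_\chi(\mathbb{B}_{s'})$ mod $P$ with a unit multiple of $\operatorname{BC}_{q^d-N,\widetilde{\chi}^{-1}}$ using the congruences $\tau^d(\mathbb{B}_{s'})\equiv\mathbb{B}_{s'}\pmod{P}$ and $\operatorname{ev}_{N,\widetilde{\chi}}\equiv\operatorname{ev}_\chi\pmod{P}$ together with Proposition \ref{proposition5even}(2), exactly as in \S\ref{evaluationpolynomials}.

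Two points of your $P$-adic bookkeeping are, however, off and need repair. First, Proposition \ref{proposition5even}(2) must be applied to the prime-to-$P$ character $\widetilde{\chi}$ (of type $s$), not to $\chi$: in the identity for $\rho_{N,\widetilde{\chi},d}$, combined with $L(q^d-N,\widetilde{\chi})g(\widetilde{\chi})=\widetilde{\pi}^{\,q^d-N}\operatorname{BC}_{q^d-N,\widetilde{\chi}^{-1}}\Pi(q^d-N)^{-1}$, only quantities attached to $\widetilde{\chi}$ occur and the powers of $\widetilde{\pi}$ cancel exactly ($\widetilde{\pi}^{\,N-q^d}\cdot\widetilde{\pi}^{\,q^d-N}=1$), so that $\rho_{N,\widetilde{\chi},d}$ lies in $\FF_aK$ and the congruence modulo $P$ makes sense. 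Your displayed formula instead goes through $g(\chi)$ and $L(q^d-N,\chi)$ for the full character and then a ``compatibility'' of $g(\chi)$ with $g(\widetilde{\chi})$ and the $\vartheta_P^N$ factor; on that route the argument breaks, because $g(\vartheta_P^N)$ is \emph{not} a $P$-adic unit and the Euler factor at $P$ also intervenes, so the claimed reduction to ``a $P$-adic unit times $\operatorname{BC}$'' is not available without redoing precisely the congruence trick of the paper. Second, the assertion that ``$\widetilde{\pi}$ is a $P$-adic unit'' is not meaningful: $\widetilde{\pi}\notin K^{ac}$, so $\iota_P$ does not apply to it; what makes the step work is the exact cancellation of the $\widetilde{\pi}$-powers just noted. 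Two smaller slips: the factors $l_{d-1-i}$, $0\leq i\leq d-1$, only involve $\theta-\theta^{q^m}$ with $m\leq d-1$ and hence are prime to $P$ (there is no $\theta-\theta^{q^d}$ to compensate), and $A[\FF_a]/PA[\FF_a]$ is not $\FF_a$: the congruence $\tau^d(\mathbb{B}_{s'})\equiv\mathbb{B}_{s'}\pmod{P}$ is simply $c^{q^d}\equiv c\pmod{P}$ applied to the coefficients $c\in A$, the variables $t_i$ being fixed by $\tau$. With these corrections your outline coincides with the paper's proof.
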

  \begin{proof} We have already considered the case $s'=1$ in \S \ref{casesequalone} so we may now suppose that $s'\geq 2$.
  We work in $\FF_a\KPhat.$ Note that the Dirichlet  character $\widetilde {\chi}$  is of type $s\geq 0$ and that we have the congruence
  $\tau^d(\mathbb{B}_{s'})\equiv \mathbb{B}_{s'}\pmod{P}$.
  Since obviously,
  $\operatorname{ev}_{N,\widetilde{\chi}}(\mathbb{B}_{s'})\equiv \operatorname{ev}_{\chi} (\mathbb{B}_{s'})\pmod {P}$,
  we have that
  $$e_{N, \widetilde{\chi}}(\tau^d (\mathbb{B}_{s'}))\equiv \operatorname{ev}_{\chi} (\mathbb{B}_{s'})\pmod {P}.$$
  Let us write now $\widetilde{\chi}=\vartheta_{P_1}^{N_1}\cdots \vartheta_{P_r}^{N_r},$ where $b=P_1\cdots P_r$ is the conductor of $\widetilde{\chi}$ (we recall that $N=\sum_{i=0}^{d-1} n_i q ^i, $ $n_i\in \{ 0, \ldots, q-1\}$). By the second part of the Proposition \ref{proposition5even}, we have:
  $$\rho_{N,\widetilde{\chi},d}= (-1)^{N+\frac{s'-1}{q-1}}\Pi(q^d-N)^{-1}\operatorname{BC}_{q^d-N, \widetilde{\chi}^{-1}}\operatorname{ev}_{\widetilde{\chi}}(b_d(t_1)\cdots b_d(t_s)) \prod_{i=0}^{d-1} l_{d-1-i}^{n_iq ^i}.$$
  This implies that $\operatorname{BC}_{q^d-N, \widetilde{\chi}^{-1}}$ is $P$-integral.
  Moreover, $\operatorname{BC}_{q^d-N, \widetilde{\chi}^{-1}}\equiv 0\pmod{P}$ if and only if $\operatorname{ev}_{\chi} (\mathbb{B}_{s'})\equiv 0\pmod{P}$.
 We now set $$[\chi]=\{ \chi^{q^i}, i\geq 0\}$$ and we consider the element in $A[\FF_a][\Delta_a]$:
 $$F= \sum_{\psi \in [\chi]}\operatorname{ev}_{\psi}(\mathbb{B}_{s'})e_{\psi}.$$ In fact, 
 we have that $F\in A[\Delta_a]$.
 We also set:
 $$e_{[\chi]}= \sum_{\psi \in [\chi]} e_{\psi}\in \FF[\Delta_a].$$ 
 We deduce from Corollary \ref{theorem6} that
 $$\operatorname{Fitt}_{e_{[\chi]}A[\Delta_a]} e_{[\chi]} (H_a)= Fe_{[\chi]}A[\Delta_a].$$
 This implies that
 $$\operatorname{Fitt}_{e_{[\chi]}\APhat[\FF_a][\Delta_a]} e_{[\chi]} (H_a\otimes_A \APhat[\FF_a])= Fe_{[\chi]}\APhat[\FF_a][\Delta_a].$$
 Therefore,
 $$\operatorname{Fitt}_{\APhat[\FF_a]}e_\chi(H_a\otimes_A \APhat[\FF_a])= \operatorname{ev}_{\chi} (\mathbb{B}_s)\APhat[\FF_a].$$
  \end{proof}
\begin{remark}\label{Remark HR} {\em Our approach in the above proof finds its origins in \cite{ANG&TAE} where an alternative proof of the Herbrand-Ribet Theorem for function fields \cite{TAE3} is given, based on an equivariant class number formula and furnishing the description of the Fitting ideals of certain Taelman class modules in terms of generalized Bernoulli-Carlitz numbers.
Certain congruences modulo $P$ for these numbers are used to complete the proof. As we have seen in the previous sections, the "generic" class number formula gives, by specialization, the Fitting ideal of these class modules. In particular, we do not need an equivariant class number formula and furthermore, again by a specialization argument,  we additionally get  congruences  
for the generalized Bernoulli-Carlitz numbers.}
 \end{remark}
 
\section{Link with other types of $L$-series}\label{connection}
In this Section we shortly explain the link between our $L$-series values
and the global $L$-functions of Goss, Taguchi-Wan and B\"ockle-Pink
(see \cite{BOE, BOE&PIN, TAG&WAN}). We also refer the reader to the recent lecture notes written by Taelman \cite{TAE4}. The notation of this Section
slightly differs from the notation of the rest of this paper.

The Carlitz module is usually seen as a functor from $A$-algebras to 
$A$-modules. In fact, it can also be viewed more naturally as a functor defined over the 
larger category of {\em $\tau$-modules}. We present this construction.

\subsection{The Carlitz functor over $\tau$-modules}
We consider a $\FF$-algebra
$\boldsymbol{A}$ and the ring $R=A\otimes_{\FF}\boldsymbol{A}$, endowed with the $\boldsymbol{A}$-linear endomorphism $\tau$ defined by 
$\tau(a\otimes b)=a^q\otimes b$. 

\begin{Definition}{\em 
A {\em $\tau$-module} $M$ is an $R$-module $M$ of finite type together with 
an $\boldsymbol{A}$-linear endomorphism $\tau_M$ such that, for 
$a\in R$ and $m\in M$, $\tau_M(am)=\tau(a)\tau_M(m)$ (we say that
$\tau_M$ is {\em semi-linear}). A {\em morphism} $f:M_1\rightarrow M_2$ of $\tau$-modules $M_1,M_2$ is 
a morphism of $R$-modules which commutes with the $\tau$-module 
structures.}\end{Definition}

We define the {\em Carlitz functor $C$}
from the category of $\tau$-modules to the category of $R$-modules in the following way. Let $M$ be a $\tau$-module, with semi-linear endomorphism $\tau_M$. Then, $C(M)$ is the $R$-module having $M$ as underlying $\boldsymbol{A}$-module, and where the multiplication $C_\theta$ by $\theta$ is given by $C_\theta=\theta+\tau_M$ (depending on $M$). It is easy to show that this gives rise
to a functor. This functor is faithful, but it is not fully faithful.

\begin{Remark}{\em We can also define the Carlitz functor on certain {\em $\tau$-sheaves}.}\end{Remark}

Essentially, the basic case of this paper
is $\boldsymbol{A}=\FF[\undt{s}]$. But we also considered 
$\boldsymbol{A}=\FF(\undt{s})$ and $\boldsymbol{A}$
an algebraic extension of $\FF$. We have studied the case
of $M$ free of rank one, that is, $M=R=A\otimes_\FF\boldsymbol{A}$, with $\tau_M=\alpha\tau$,
with $\alpha\in R\setminus\{0\}$. Indeed, if $M=R=A[\undt{s}]$ and 
$\tau_M=\alpha\tau$ with $\alpha\in R\setminus\{ 0\}$, then $C(M)$ is  
the structure of $A[\undt{s}]$-module induced on $A[\undt{s}]$ by the Drinfeld module of rank 
one of parameter $\alpha$.

Note that this is in apparent conflict with Definition \ref{drinfelddef} where we have defined Drinfeld modules over Tate algebras (and the parameter varies
in Tate algebras).
In fact, in the settings of the present Section, Definition \ref{drinfelddef} corresponds, with $M$ as above, to an analytic realization (at the place infinity) of
$C(M)$, just as the Carlitz module structure on $\CC_\infty$ is usually considered as an analytic realization of the Carlitz module structure $C(A)$ over $A$. 

\subsection{Exponential functions} The ring $R$ is equipped with the norm $\|\cdot\|$ defined by 
$$\|\sum_ix_i\otimes y_i\|=\max_i\|x_i\|,$$ where the sum is finite and for all $i$, $x_i\in A$ and the $y_i$'s are linearly independent over $\FF$.
Let $M$ be a $\tau$-module. We suppose that it is endowed with a norm 
$\|\cdot\|_M$ such that $\|am\|_M=\|a\|\|m\|_M$, where $a\in R$ and $m\in M$. Then, the {\em exponential function} $\exp_M$ 
is the well defined, continuous, open map
$$\exp_M:\widehat{M}\rightarrow C(\widehat{M})$$ with $\widehat{M}=M\widehat{\otimes}_AK_\infty$,
defined by $\exp_M(m)=\sum_{i\geq 0}D_i^{-1}\tau_M^i(m)$. It satisfies
$$\exp_M(am)=a.\exp_M(m),$$ where the action of $a\in R$ in the right is that 
given by the module structure of $C(\widehat{M})$. This notably happens when
$R=A[\undt{s}]$ and 
$M=\TT_s\supset R$ with $\tau_M=\alpha\tau$, $\alpha\in
R\setminus\{0\}$ so we recover the exponential functions of \S \ref{explog}.

\subsection{$L$-series values revisited} We consider here a new variable $T$.
Let $M$ be a $\tau$-module which is free of finite rank over $R$ and $P$ 
a prime of $A$. Then, the module $M/PM$ is free of finite rank over $\boldsymbol{A}$. The {\em $L$-series value at one} of $M$ is:
$$L(M,1)=\prod_P\det_{\boldsymbol{A}[T]}(1-T\tau_M|M/PM)^{-1}|_{T^{\deg_\theta(P)}\mapsto P^{-1}}\in1+\theta^{-1}\boldsymbol{A}[[\theta^{-1}]].$$ 
By \cite[Lemma 8.2]{BOE2}
we note that for all $P$, $\det_{\boldsymbol{A}[T]}(1-T\tau_M|M/PM)^{-1}\in1+T^d\boldsymbol{A}[[T^d]]$, where $d$ is the degree of $P$. Hence,
we can replace $T^d$ by $P^{-1}$ in the above formal series and we 
get $\det_{\boldsymbol{A}[T]}(1-T\tau_M|M/PM)^{-1}|_{T^d=P^{-1}}\in1+P^{-1}\boldsymbol{A}[[P^{-1}]]$. Since for all $d$ there are finitely many
primes $P$ with degree $d$, the product defining $L(M,1)$ converges
in $K_\infty\widehat{\otimes}_{\FF}\boldsymbol{A}$ to an element 
which belongs to $1+\theta^{-1}\boldsymbol{A}[[\theta^{-1}]]$.
This is a variant of the 
value of the {\em global $L$-function of $M$ at one}, following Goss, Taguchi-Wan, and B\"ockle-Pink (see \cite{BOE&PIN} for the 
definition of the global $L$-function associated to a $\tau$-sheaf).

With such a module $M$, we have that for all $P$,
$C(M/PM)$ is also free of finite rank over $\boldsymbol{A}$ (observe that the identity map $M\rightarrow C(M)$ induces  an isomorphism of $\boldsymbol{A}$-modules between $M/PM$ and $C(M/PM)$).
The {\em $\mathcal{L}$-value at one} of $C(M)$ is 
$$\mathcal{L}(C(M),1)=\prod_{P}\frac{\left[\frac{M}{PM}\right]_R}{\left[C(\frac{M}{PM})\right]_R}\in1+\theta^{-1}\boldsymbol{A}[[\theta^{-1}]].$$
Here, $[N]_R$ denotes the monic generator (in $\theta$) of the 
Fitting ideal of an $R$-module $N$ which is free and finitely generated over
$\boldsymbol{A}$. This is essentially the $L$-series value of Taelman in \cite{TAE2};
the product converges as a consequence of the next Lemma:
\begin{lemma}
Let $M$ be a $\tau$-module which is free of finite rank over $R$.
Then, $L(M,1)=\mathcal{L}(C(M),1)$.
\end{lemma}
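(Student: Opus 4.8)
The plan is to compare the two Euler products factor by factor, exactly as in the proof of Proposition \ref{prodexpansion}. Both $L(M,1)$ and $\mathcal L(C(M),1)$ are convergent infinite products over the primes $P$ of $A$, so it suffices to show that for every prime $P$ of degree $d$ one has
$$
\det{}_{\boldsymbol A[T]}(1-T\tau_M\mid M/PM)^{-1}\big|_{T^d\mapsto P^{-1}}
\;=\;
\frac{[M/PM]_R}{[C(M/PM)]_R}.
$$
The right-hand side is a ratio of monic polynomials in $\theta$; the first step is to identify both $[M/PM]_R$ and $[C(M/PM)]_R$ with characteristic polynomials of $\boldsymbol A$-linear endomorphisms of the finite free $\boldsymbol A$-module $N:=M/PM$, using the description of $[\,\cdot\,]_R$ recalled in \S\ref{localfactors} (namely $[N]_R=\det_R(Z-e_\theta\mid N)|_{Z=\theta}$, valid since $N$ is free and finitely generated over $\boldsymbol A$ and $R=A\otimes_\FF\boldsymbol A=\boldsymbol A[\theta]$).

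Concretely, on $N=M/PM$ the multiplication-by-$\theta$ map defining $C(N)$ is $e_\theta=\theta\cdot\mathrm{id}+\tau_M$ (here $\theta$ acts through the $R$-module structure of $N$, i.e.\ via $\theta\bmod P$), while the multiplication-by-$\theta$ map on $N$ itself, viewed as an $R$-module, is just the original action of $\theta$. The key local computation is the identity, in $\boldsymbol A[T]$,
$$
\det{}_{\boldsymbol A}\!\big(1-T\tau_M\mid N\big)\Big|_{T^d\mapsto P^{-1}}
=\frac{[N]_R}{[C(N)]_R},
$$
which I would prove by the same argument as Lemma \ref{lemmaA3} and Lemma \ref{propositionA1}: since $P$ has degree $d$, the Carlitz-type relations force $\phi_P$ (here the ``multiplication by $P$'' in $C(N)$) to be congruent to a scalar times $\tau_M^d$ modulo $P$, so that $C(N)\cong R/(P-c)R$ for the appropriate $c\in\boldsymbol A$, and correspondingly $N\cong R/(P-0)$-type description gives $[N]_R=P$. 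Taking $\det_{\boldsymbol A}$ of the Artin--Schreier-type operator $\tau_M$ acting $\boldsymbol A$-linearly on the $d$-dimensional (over $\boldsymbol A$, if $M$ has rank one) space $N$ and substituting $T^d\mapsto P^{-1}$ recovers $(1-cP^{-1})^{-1}\cdot P/P=P/(P-c)=[N]_R/[C(N)]_R$; the higher-rank case follows by the multiplicativity of determinants and Fitting ideals of free modules.

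\textbf{Main obstacle.} The routine part is the convergence bookkeeping, already handled by the citation to \cite[Lemma 8.2]{BOE2} in the statement and by the estimates preceding it. The genuinely delicate point is the local identity at each $P$: one must check that $\tau_M^d$, not merely $\tau_M$, governs the reduction mod $P$ and that the resulting scalar $c$ is exactly the one appearing in $[C(N)]_R$, i.e.\ a clean analogue of Lemma \ref{lemmaA3} in the $\tau$-module language where the parameter $\alpha$ is replaced by the semi-linear operator $\tau_M$ on a free $R$-module of possibly higher rank. I would organize this by first treating $M$ free of rank one (reducing immediately to the setup of \S\ref{localfactors} with $\alpha\in R\setminus\{0\}$ and invoking Lemma \ref{propositionA1} verbatim), and then deducing the general rank case by choosing a basis, writing $\tau_M$ as a matrix over $R$, and noting that both sides are multiplicative under the filtration of $N$ by $\tau_M$-stable submodules (or, more simply, that both $\det_{\boldsymbol A[T]}(1-T\tau_M\mid N)$ and the ratio of Fitting-ideal generators are unchanged under $R[T]$-linear base change and are computed from the same companion-type data). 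Once the per-prime equality holds, multiplying over all $P$ and comparing with the definitions of $L(M,1)$ and $\mathcal L(C(M),1)$ finishes the proof.
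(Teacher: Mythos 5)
Your overall strategy---compare the two Euler products prime by prime and establish, for each prime $P$ of degree $d$, the local identity $\det_{\boldsymbol{A}[T]}(1-T\tau_M\mid M/PM)\big|_{T^{d}\mapsto P^{-1}}=[C(M/PM)]_R\,[M/PM]_R^{-1}$---is exactly what the paper intends: the paper in fact omits the proof, saying only that it follows the same lines as Proposition \ref{prodexpansion} and the arguments of \S \ref{localfactors}, and your rank-one discussion is a faithful reconstruction of that route. Two points, however, are not covered by your sketch. The smaller one: Lemma \ref{lemmaA3} and Lemma \ref{propositionA1} are proved for $\ring{}=\FF(\undt{s})[\theta]$, i.e.\ over the \emph{field} $\FF(\undt{s})$ (Lemma \ref{propositionA1} rests on Lemma \ref{lemmaA5}, which requires a field with an automorphism), whereas here $\boldsymbol{A}$ is an arbitrary $\FF$-algebra such as $\FF[\undt{s}]$; to invoke them ``verbatim'' you need an explicit base-change step, which is available because $M/PM$ and $C(M/PM)$ are free over $\boldsymbol{A}$, the brackets $[\,\cdot\,]_R$ are characteristic polynomials, and both sides of the local identity are compatible with a flat extension of coefficients such as $\FF[\undt{s}]\to\FF(\undt{s})$---but this should be said.

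The more serious issue is your passage from rank one to arbitrary rank, which does not work as stated. For $M$ free of rank $n>1$ over $R$, the reduction $N=M/PM$ is a module over $(A/PA)\otimes_\FF\boldsymbol{A}$ equipped with the semilinear operator $\tau_M$, and such an object need not admit any filtration by $\tau_M$-stable submodules with rank-one quotients (already for $\boldsymbol{A}=\FF$ the corresponding Frobenius module over $A/PA$ can be irreducible of rank bigger than one), so ``multiplicativity along a filtration'' has nothing to apply to, and the parenthetical appeal to ``companion-type data'' is not an argument. The higher-rank case requires a direct determinant computation rather than a reduction to rank one: one uses that $\det_{\boldsymbol{A}[T]}(1-T\tau_M\mid N)$ lies in $1+T^{d}\boldsymbol{A}[[T^{d}]]$ and descends to a determinant of the $(A/PA)\otimes_\FF\boldsymbol{A}$-linear operator $\tau_M^{d}$ (this semilinear descent is precisely what the citation of \cite[Lemma 8.2]{BOE2} encodes), combined with the matrix analogue of the congruence of Lemma \ref{lemmaA3} for $C_P=P(\theta+\tau_M)$ on $N$ and the identification of $[N]_R$ and $[C(N)]_R$ with the characteristic polynomials of $\theta$ and of $\theta+\tau_M$ acting on the free $\boldsymbol{A}$-module $N$. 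Until you supply this higher-rank argument (or restrict the statement to $M$ of rank one, which is all your plan actually proves), the proof is incomplete.
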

We omit the proof as this follows essentially the same lines of 
various other proofs in our paper. In particular, if $\phi$ is a Drinfeld
$A[\undt{s}]$-module of rank one of parameter $\alpha\in A[\undt{s}]\setminus \{ 0\}$, $L(1,\phi)=\mathcal{L}(C(A[\undt{s}]),1)$
can also be constructed starting from the Euler factors of the $L$-series $L(M,1)$
of the $\tau$-module $M=A[\undt{s}]$ with $\tau_M=\alpha\tau$. 
In this respect, the $L$-series values that we introduce in this paper
can be viewed as first examples of an alternative way of defining 
global $L$-functions, which moreover are rigid analytic, in contrast with the Goss' $L$-functions.

\subsection{The log-algebraic Theorem again} As a final remark,
we point out that also Theorem \ref{logalg} can be viewed as 
a statement of integrality of the value of an exponential function
associated to a certain $\tau$-module.

We consider again
the algebra $\mathcal{A}_r$ of \S \ref{anderson}; with the $\FF$-endomorphism $\tau_{\mathbb{M}}:=\tau$ and with the structure of $R$-module defined there (with $R=A[\undt{r+1}]$), it becomes a $\tau$-module 
that we denote by $\mathbb{M}$. But this is not sufficient 
to interpret Theorem \ref{logalg}.

Now, as we have seen, the structure of $A[\undt{r+1}]$-module 
extends to a structure of $\TT_{r+1}$-module. We then have the exponential function
$\exp_{\mathbb{M}}:\mathbb{M}\rightarrow C(\mathbb{M})$ and we have proved that $$\exp_{\mathbb{M}}(L(1,\phi).[ZX_1\cdots X_r])=\exp_\phi(L(1,\phi)).[ZX_1\cdots X_r],$$
where $\phi$ is the Drinfeld module of rank one of parameter $\alpha=t_{r+1}(t_1-\theta)\cdots(t_r-\theta)$, $\exp_\phi$ its exponential function, and $L(1,\phi)$
its $L$-series value at one. Of course, this is just a way to rephrase Theorem \ref{logalg}.

\section*{Acknowledgement}
The authors sincerely thank David Goss, Matthew Papanikolas, Rudolph Perkins and Lenny Taelman for interesting discussions, hints and useful remarks on earlier versions of this text. The authors thank the referee for his corrections and suggestions that helped us to improve the original text.

\section{Appendix by Florent Demeslay.}\label{appendix}

We shall work with Drinfeld $\FF(\undt{s})[\theta]$-modules rather than with $A[\undt{s}]$-modules. 
We set, from now on, $\ring{}=\FF(\undt{s})[\theta]$.
As we have already seen, the benefit 
of this assumption comes from the fact that $\ring{}$ is a principal ideal domain.
We keep using the same notation adopted in the previous sections.

Let us choose $\alpha \in \ring{}\setminus \{ 0\}$ and let us consider the Drinfeld $\ring{}$-module of rank one and parameter $\alpha$, that is,
the injective homomorphism of $\FF(\undt{s})$-algebras
$$\phi: \ring{}\rightarrow \operatorname{End}_{\FF(\undt{s})-\text{lin.}}(K(\undt{s})_{\infty})$$ given by $\phi_{\theta}= \theta +\alpha \tau,$ where we recall that $\tau:K(\undt{s})_{\infty}\rightarrow K(\undt{s})_{\infty}$ is the continuous (for the $\frac{1}{\theta}$-adic topology) morphism of $\FF(\undt{s})$-algebras given by $\tau(\theta)=\theta^q.$
Let $M$ be an $\ring{}$-algebra together with a $\FF(\undt{s})$-endomorphism $\tau_M:M\rightarrow M$ such that $$\tau_M(fm)= \tau(f) \tau_M(m),\quad f\in \ring{},\quad m\in M.$$ We denote by $\phi(M)$ the $\FF(\undt{s})$-vector space $M$ equipped with the structure of $\ring{}$-module induced by $\phi,$ e.g. $\forall m\in  \phi(M):$
$$\theta.m= \alpha \tau_M(m)+\theta m.$$ 
We recall that we have the exponential function associated to $\phi$, which is a $\FF(\undt{s})$-linear endomorphism of $K(\undt{s})_{\infty}$ defined by
$$\exp_{\phi} =\sum_{i\geq 0}\frac{1}{D_i}\tau_{\alpha}^i,$$
where $\tau_{\alpha}=\alpha \tau.$  Note that $\exp_{\phi}$ is a morphism of $\ring{}$-modules from $K(\undt{s})_{\infty}$ to $\phi (K(\undt{s})_{\infty}). $ 
We recall that if $u(\alpha)$ is the maximum of  the integer part of $\frac{r-q}{q-1}$  and $0$, with $r=-v_\infty(\alpha)$, then $\exp_{\phi}$
induces an isometric $\FF(\undt{s})$-linear automorphism $\mathfrak{m}_{K(\undt{s})_{\infty}}^{u(\alpha)+1} \rightarrow\mathfrak{m}_{K(\undt{s})_{\infty}}^{u(\alpha)+1}$. Notice that $\mathfrak{m}_{K(\undt{s})_{\infty}}=\frac{1}{\theta}\FF(\undt{s})[[\frac{1}{\theta}]]$.

Observe that a sub-$\FF(\undt{s})$-vector space $M$ of $K_{s, \infty}$ is discrete (for the $\frac{1}{\theta}$-adic topology) if and only  if there exists an integer $n\geq 1$ such that $M\cap \mathfrak{m}_{K(\undt{s})_{ \infty}}^n=\{0\}$ (note that this is equivalent to the fact that  the intersection $M\cap\mathfrak{m}_{K(\undt{s})_{\infty}}$ is a finite dimensional $\FF(\undt{s})$-vector space).

\begin{lemma}
\label{lemmaA1}
Let $M\not =\{0\}$ be a sub-$\ring{}$-module of $K(\undt{s})_{\infty}.$ The following assertions are equivalent:
\begin{enumerate}
\item $M$ is discrete,
\item $M$ is a free $\ring{}$-module of rank one.
\end{enumerate}
\end{lemma}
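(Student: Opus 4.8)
The plan is to prove both implications, with the non-trivial direction being (1)$\Rightarrow$(2). For (2)$\Rightarrow$(1): if $M=f\ring{}$ for some $f\in K(\undt{s})_\infty\setminus\{0\}$, then since $\ring{}$ is a field times the polynomial ring, every nonzero element of $M$ has the form $gf$ with $g\in\ring{}\setminus\{0\}$, hence $\|gf\|=\|g\|\|f\|\geq\|f\|$ (because $\|g\|\geq 1$ for a nonzero polynomial in $\theta$ over $\FF(\undt{s})$, as $v_\infty(g)\leq 0$). Therefore $M\cap\mathfrak{m}^n_{K(\undt{s})_\infty}=\{0\}$ as soon as $q^{-n}<\|f\|$, so $M$ is discrete.

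For the forward direction (1)$\Rightarrow$(2), I would mimic the argument already used in the proof of Proposition \ref{proposition2}(1). Since $M\neq\{0\}$ and $M$ is discrete, the set of norms $\{\|g\|:g\in M\setminus\{0\}\}$ is bounded below away from $0$; because $K(\undt{s})_\infty=\FF(\undt{s})((\frac1\theta))$ has value group $q^{\ZZ}$, this set of norms is a subset of $q^{\ZZ}$ bounded below, so it attains its infimum. Pick $f\in M$ with $\|f\|>0$ minimal. I claim $M=f\ring{}$. First, $f\ring{}\subseteq M$ since $M$ is an $\ring{}$-module. Conversely, let $g\in M$. Using the direct sum decomposition $K(\undt{s})_\infty=\ring{}\oplus\mathfrak{m}_{K(\undt{s})_\infty}$ (recorded in \S\ref{aroundcalr}), write $g/f$... more carefully: since $f\in K(\undt{s})_\infty^\times$, there is a unique $h\in\ring{}$ with $\|g-hf\|<\|f\|$; indeed, expanding $g f^{-1}\in K(\undt{s})_\infty$ in powers of $\theta^{-1}$, let $h$ be its ``polynomial part'' (the sum of the terms with non-negative $\theta$-exponent), so that $gf^{-1}-h\in\mathfrak{m}_{K(\undt{s})_\infty}$ and hence $\|g-hf\|=\|gf^{-1}-h\|\,\|f\|<\|f\|$. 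Set $b=g-hf$. Then $b\in M$ since $g\in M$ and $hf\in f\ring{}\subseteq M$, and $\|b\|<\|f\|$; by minimality of $\|f\|$ this forces $b=0$, so $g=hf\in f\ring{}$. Thus $M=f\ring{}$, and this $\ring{}$-module is free of rank one because $f$ is not a zero-divisor (it is a nonzero element of the field $K(\undt{s})_\infty$), so $\ring{}\xrightarrow{\sim}f\ring{}$, $c\mapsto cf$, is an isomorphism of $\ring{}$-modules.

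The main obstacle is purely bookkeeping: one must be sure that ``minimal positive norm is attained'' genuinely uses discreteness (a non-discrete submodule, such as $\FF(\undt{s})$ itself inside $K(\undt{s})_\infty$, would have infimum of norms equal to $1$ attained, yet fail to be free of rank one over $\ring{}$ — the point is that such a module is not closed under the $\ring{}$-action in a way compatible with the norm bound, or rather it simply is not discrete, since $\theta^{-n}\FF(\undt{s})\subset$ its $\ring{}$-span meets every $\mathfrak{m}^n$). So the only subtlety is to note that discreteness is exactly what guarantees that the approximation step ``$\|b\|<\|f\|\Rightarrow b=0$'' closes the argument, and that no further hypothesis is needed. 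Everything else is the standard principal-ideal-domain-over-a-complete-discretely-valued-field argument, and I would present it in the two short paragraphs above.
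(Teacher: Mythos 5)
Your proof is correct, but it follows a different route than the one the appendix actually takes for Lemma \ref{lemmaA1}. You argue as in Proposition \ref{proposition2}(1): discreteness bounds the norms of nonzero elements of $M$ away from $0$, the value group $q^{\ZZ}$ is discrete so a nonzero $f\in M$ of minimal norm exists, and then the decomposition $K(\undt{s})_{\infty}=\ring{}\oplus\mathfrak{m}_{K(\undt{s})_{\infty}}$ gives a division-with-remainder step ($g=hf+b$ with $\|b\|<\|f\|$, hence $b=0$) that exhibits $f$ as a generator; this is an elementary, self-contained ``shortest vector'' argument that produces the generator explicitly and uses nothing beyond the multiplicativity of the Gauss norm and the fact that $\|h\|\geq 1$ for $h\in\ring{}\setminus\{0\}$. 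The paper instead scales so that $\ring{}\subset f^{-1}M$, uses the same decomposition together with discreteness to see that $M/\ring{}$ is a finite-dimensional $\FF(\undt{s})$-vector space, hence a torsion $\ring{}$-module, so $rM\subset\ring{}$ for some $r\neq 0$; noetherianity then gives finite generation, and the structure theory over the principal ideal domain $\ring{}$ (torsion-free of rank one) gives freeness. Your version is arguably more economical and mirrors an argument already in the text; the paper's version is more structural and avoids any appeal to attainment of a minimal norm. Both hinge on the same key fact, namely the splitting $K(\undt{s})_{\infty}=\ring{}\oplus\mathfrak{m}_{K(\undt{s})_{\infty}}$, and your treatment of the easy direction (2)$\Rightarrow$(1) is fine.

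One caveat: the parenthetical ``obstacle'' remark is off the mark and should be dropped. The space $\FF(\undt{s})$ is not a sub-$\ring{}$-module of $K(\undt{s})_{\infty}$ (it is not stable under multiplication by $\theta$), and its $\ring{}$-span is $\ring{}$ itself, which is discrete and free of rank one and certainly does not meet every $\mathfrak{m}_{K(\undt{s})_{\infty}}^n$; so it illustrates neither a failure of discreteness nor a failure of the lemma. This aside is not load-bearing, and the actual argument you give is complete as written.
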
 
\begin{proof}
The fact that the property (2) implies the property  (1) is clear. Let us prove that the property (1) implies the property (2). Let $f$ be a non-zero element of $M$. Then, $\ring{}\subset f^{-1} M$ and $f^{-1}M$ is discrete in $K(\undt{s})_{ \infty}$. Thus, we can assume that $\ring{}\subset M.$ We now observe that we have a direct sum of 
$\FF(\undt{s})$-vector spaces:
$$K(\undt{s})_{\infty} =\ring{}\oplus \mathfrak{m}_{K(\undt{s})_{,\infty}}=\ring{}\oplus\frac{1}{\theta}\FF(\undt{s})\left[\left[\frac{1}{\theta}\right]\right].$$
Since $M$ is discrete we deduce from the above decomposition that $M/\ring{}$ is a finite $\FF(\undt{s})$-vector space. But $M/\ring{}$ is also a $\ring{}$-module, hence a torsion $\ring{}$-module. Therefore there exists $r\in \ring{}\setminus\{0\}$ such that $rM\subset \ring{}.$ Since $\ring{}$ is a noetherian ring, we obtain that $M$ is a finitely generated $\ring{}$-module of rank $1$. Since $\ring{}$ is a principal ideal domain we deduce that $M$, as an $\ring{}$-module, is free of rank one.
\end{proof}

\begin{Remark}{\em We recall from \S \ref{calv} the following $\ring{}$-module:
$$\calv_{\phi}= \frac{\phi(K(\undt{s})_{\infty})}{\phi(\ring{})+\exp_{\phi}(K(\undt{s})_{ \infty})},$$
which is a $\FF(\undt{s})$-vector space of dimension $\leq u(\alpha)$.
We notice that $\ring{}+\mathfrak{m}_{K(\undt{s})_{\infty}}^{u(\alpha)+1}\subset \ring{}+\exp_{\phi}(K(\undt{s})_{ \infty})$.  
We observe that $\ring{}$ and $\operatorname{Ker}(\exp_{\phi})$ are discrete sub-$\ring{}$-modules of $K(\undt{s})_{\infty}$ which implies that $\exp_{\phi}^{-1}(\ring{})$ is a discrete sub-$\ring{}$-module of $K(\undt{s})_{\infty}$.
The exponential $\exp_{\phi}$ then produces an exact sequence of $\FF(\undt{s})$-vector spaces
$$0\rightarrow \frac{K(\undt{s})_{\infty}}{\exp_{\phi}^{-1}(\ring{})+\mathfrak{m}_{K(\undt{s})_{\infty}}^{u(\alpha)+1}} \rightarrow \frac{K(\undt{s})_{\infty}}{\ring{}+\mathfrak{m}_{K(\undt{s})_{\infty}}^{u(\alpha)+1}}\rightarrow \calv_{\phi} \rightarrow 0.$$
In particular, $\exp_{\phi}^{-1}(\ring{})\not =\{0\}$ and we obtain that $\exp_\phi^{-1}(\ring{})$ is free of rank one
by using Lemma \ref{lemmaA1}.}
\end{Remark}
\subsection{$L$-series} 
Let $P$ be a prime of $A$. Then the $\ring{}$-module $\phi(\frac{\ring{}}{P\ring{}})$ is finitely generated and torsion. One can show that the  product over the primes of $A$
$$\mathcal{L}(\phi/\ring{}) =\prod_{P}\frac{[\frac{\ring{}}{P\ring{}}]_{\ring{}}}{[\phi(\frac{\ring{}}{P\ring{}})]_{\ring{}}}$$
converges in $K(\undt{s})_{\infty}.$ We will only give a sketch of 
proof of the next Theorem, as the proof is very close to ideas developed by Taelman in \cite{TAE2}. 
\begin{theorem}[class number formula] The following identity holds in $K(\undt{s})_{\infty}$:
\label{theoremA1}
$$\mathcal{L}(\phi/\ring{}) =[\calv_{\phi}]_{\ring{}}[\ring{}:\exp_{\phi}^{-1}(\ring{})]_{\ring{}}.$$
\end{theorem}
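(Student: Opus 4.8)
The plan is to follow Taelman's strategy from \cite{TAE2}, adapted to the principal ideal domain $\ring{}=\FF(\undt{s})[\theta]$. The key idea is to compare, via a suitable ``trace formula'' or direct computation, two ways of measuring the failure of $\exp_\phi$ to be an isomorphism between well-chosen lattices in $K(\undt{s})_\infty$. First I would fix a uniformizing parameter $\pi=\theta^{-1}$ and work with the compact-open $\FF(\undt{s})$-subspaces $V_n=\mathfrak{m}_{K(\undt{s})_\infty}^n$ for $n\geq u(\alpha)+1$, on which $\exp_\phi$ induces an isometric automorphism (as recalled just before the statement). For such $n$, both $\ring{}\oplus V_n$ and $\exp_\phi^{-1}(\ring{})\oplus V_n$ are $\ring{}$-lattices in $K(\undt{s})_\infty$, commensurable with $\ring{}$, and I would define for a pair of such lattices $M_1\subset M_2$ (or more generally commensurable lattices) the monic generator $[M_2:M_1]_{\ring{}}\in K(\undt{s})_\infty^\times$ of the $\ring{}$-module $M_2/M_1$ when $M_1\subset M_2$, extended multiplicatively to commensurable lattices. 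The two quantities on the right of the theorem are, by the exact sequences in the Remark preceding the statement and by part (2) of Proposition \ref{proposition2} together with Lemma \ref{lemmaA1}, exactly the ``index'' $[\ring{}\oplus V_n : \exp_\phi^{-1}(\ring{})\oplus V_n]_{\ring{}}$ up to the correction $[\calv_\phi]_{\ring{}}$ coming from the cokernel.

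Next I would set up the ``nuclear operator'' / Fredholm-determinant formalism: on the $\ring{}$-module (really $K(\undt{s})_\infty$-vector space completed appropriately) $K(\undt{s})_\infty$ one considers the operator $\exp_\phi=\sum_i D_i^{-1}\tau_\alpha^i$, which differs from the identity by the locally contracting operator $\sum_{i\geq 1}D_i^{-1}\tau_\alpha^i$. The crux is a computation of the ``regularized determinant'' of $\exp_\phi$ acting modulo the lattice $\ring{}$, and showing it equals $\mathcal{L}(\phi/\ring{})$ on the one hand and the lattice index on the other. Concretely, for each prime $P$ one has from Lemma \ref{propositionA1} that $[\phi(\ring{}/P\ring{})]_{\ring{}}=P-\rho_\alpha(P)$ and $[\ring{}/P\ring{}]_{\ring{}}=P$, so the local factor is $P(P-\rho_\alpha(P))^{-1}=(1-\rho_\alpha(P)/P)^{-1}$; I would reinterpret this local factor as the contribution of the finite place $P$ to the determinant, and then a global summation/telescoping argument (comparing $\exp_\phi$ on $V_n$ for varying $n$ and using that $\exp_\phi$ is the identity on the associated graded in high enough valuation) yields the equality with the lattice index. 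The correction term $[\calv_\phi]_{\ring{}}$ enters precisely because $\exp_\phi$ need not be surjective onto $\phi(K(\undt{s})_\infty)$ modulo $\phi(\ring{})$ — its cokernel is $\calv_\phi$, a finite-dimensional $\FF(\undt{s})$-vector space by Corollary \ref{corollaryHszerobis}, whose characteristic polynomial of multiplication by $\theta$ is $[\calv_\phi]_{\ring{}}$.

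Assembling: from the exact sequence
$$0\rightarrow \frac{K(\undt{s})_{\infty}}{\exp_{\phi}^{-1}(\ring{})+V_n} \rightarrow \frac{K(\undt{s})_{\infty}}{\ring{}+V_n}\rightarrow \calv_{\phi} \rightarrow 0$$
(valid for $n\geq u(\alpha)+1$, from the Remark before the statement), passing to Fitting ideals / characteristic polynomials of the $\theta$-action over $\ring{}$, which is multiplicative in short exact sequences of finite-length $\ring{}$-modules, one gets
$$\Big[\tfrac{K(\undt{s})_\infty}{\ring{}+V_n}\Big]_{\ring{}}=[\calv_\phi]_{\ring{}}\cdot\Big[\tfrac{K(\undt{s})_\infty}{\exp_\phi^{-1}(\ring{})+V_n}\Big]_{\ring{}}.$$
The left side is $[\ring{}:\,?\,]$-type data that, after cancelling the common lattice $V_n$, gives $[\ring{}:\exp_\phi^{-1}(\ring{})]_{\ring{}}^{-1}$ (with appropriate monic normalization), while the Euler product identity is obtained by computing $[\ring{}:\exp_\phi^{-1}(\ring{})]_{\ring{}}$ place-by-place: this is where one invokes Lemma \ref{propositionA1} and Lemma \ref{lemmaA3} to identify each local contribution with $(1-\rho_\alpha(P)/P)$. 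The convergence of the product in $K(\undt{s})_\infty$ (asserted in the statement) follows as in Proposition \ref{prodexpansion}. The main obstacle I anticipate is making the ``global'' step rigorous: one must justify that the infinite product over primes, suitably ordered by degree, reconstructs the single index $[\ring{}:\exp_\phi^{-1}(\ring{})]_{\ring{}}$ — this requires a careful estimate (in the Gauss/$\infty$-adic norm) showing that the partial products over primes of degree $\leq N$ differ from the index by something of valuation tending to $+\infty$, which is precisely the analogue of Taelman's trace-formula estimate and is the technical heart of \cite{TAE2}. Since the statement explicitly says only a sketch will be given, I would reference Taelman's argument for this estimate and carry out only the bookkeeping specific to the ring $\ring{}$ and the parameter $\alpha$.
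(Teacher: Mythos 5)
Your overall architecture --- local Euler factors computed via characteristic polynomials on $\ring{}/P\ring{}$, a Taelman-style trace formula as the bridge between the finite places and the place $\infty$, and bookkeeping with the exponential exact sequence producing the correction $[\calv_\phi]_{\ring{}}$ --- is indeed the architecture of the paper's proof, and, like the paper, you defer the analytic heart to \cite{TAE2}. But two of your concrete steps would not go through as written. First, the nuclear-determinant formalism cannot be applied to $\exp_\phi$ itself, and the index $[\ring{}:\exp_\phi^{-1}(\ring{})]_{\ring{}}$ admits no ``place-by-place'' computation: $\exp_\phi=\sum_i D_i^{-1}\tau_\alpha^i$ has the denominators $D_i$ and does not act on the finite residue modules $\ring{}/P\ring{}$, so there is nothing to compare locally. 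In the actual proof the operator fed into the trace formula (Theorem \ref{theoremA2}) is $F=\frac{1-\phi_\theta Z}{1-\theta Z}-1\in \ring{}[\tau][[Z]]\tau Z$; its integrality is exactly what lets it act both on each $\ring{}/P\ring{}$ (where its determinant at $Z=\theta^{-1}$ is $(P-\rho_\alpha(P))/P$, by Lemma \ref{propositionA1}) and on $K(\undt{s})_{\infty}/\ring{}$, and the exponential enters only at $\infty$, as a map infinitely tangent to the identity used to conjugate the $\theta$-action.

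Second, your final bookkeeping step is invalid as stated: in the truncated sequence $0\to K(\undt{s})_{\infty}/(\exp_\phi^{-1}(\ring{})+V_n)\to K(\undt{s})_{\infty}/(\ring{}+V_n)\to\calv_\phi\to 0$, with $V_n=\mathfrak{m}_{K(\undt{s})_{\infty}}^n$, the two outer quotients are only $\FF(\undt{s})$-vector spaces, because $V_n$ is not stable under multiplication by $\theta$; they carry no $\ring{}$-module structure, so one cannot ``pass to Fitting ideals / characteristic polynomials of the $\theta$-action'' and use multiplicativity there. This is precisely the difficulty the regularized-determinant formalism is designed to circumvent: the paper splits the exact sequence of genuine $\ring{}$-modules $0\to K(\undt{s})_{\infty}/\exp_\phi^{-1}(\ring{})\to\phi(K(\undt{s})_{\infty})/\phi(\ring{})\to\calv_\phi\to0$ (possible because the first term is divisible and $\ring{}$ is a principal ideal domain), obtains an $\FF(\undt{s})$-linear isomorphism $f:K(\undt{s})_{\infty}/\exp_\phi^{-1}(\ring{})\times\calv_\phi\to K(\undt{s})_{\infty}/\ring{}$ which is infinitely tangent to the identity, writes $1+F=\frac{1-f\theta f^{-1}Z}{1-\theta Z}$ on $\bigl(K(\undt{s})_{\infty}/\ring{}\bigr)[[Z]]$, and then invokes the determinant identity $\det_{\FF(\undt{s})[[Z]]}(1+\Delta_f)|_{Z=\theta^{-1}}=[M_1:M_2]_{\ring{}}[H_2]_{\ring{}}[H_1]_{\ring{}}^{-1}$ to obtain $[\calv_\phi]_{\ring{}}[\ring{}:\exp_\phi^{-1}(\ring{})]_{\ring{}}$. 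Without replacing your two steps by these (the integral ratio operator in the trace formula, and the splitting plus determinant identity in place of Fitting-ideal multiplicativity on non-$\ring{}$-modules), the sketch does not assemble into a proof, even granting Taelman's estimates.
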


\subsection{Nuclear operators and determinants }
This Section is inspired by \cite[Section 2]{TAE2}. Let $(V,\|.\|)$ be a $\FF(\undt{s})$-vector space equipped with a non-archimedean absolute value which is trivial on $\FF(\undt{s})$ and such that every open $\FF(\undt{s})$-subspace $U\subset V$ is of finite $\FF(\undt{s})$-co-dimension. Let's give a typical example of such objects: let $M$ be a  non-trivial, discrete $\ring{}$-submodule of $K(\undt{s})_{ \infty},$ then $V=\frac{K(\undt{s})_{ \infty}}{M}$ satisfies the above hypothesis.

Let $f$ be a continuous endomorphism of $V,$ we say that $f$ is {\em locally contracting} if there exists an open subspace  $U\subset V$ and a real number $0<c<1$ such that, for all $v\in U$, $$\|f(v)\| \leq c\|v\|.$$ Any such open subspace $U\subset V$ which moreover satisfies $f(U)\subset U$ is called a nucleus for $f.$ Observe that any locally contracting continuous endomorphism of $V$ has a nucleus.  Let's give an example: the map $$\tau_{\alpha}: \frac{K(\undt{s})_{ \infty}}{\ring{}} \rightarrow \frac{K(\undt{s})_{\infty}}{\ring{}} $$ is locally contracting and the image of $\mathfrak{m}_{K(\undt{s})_{\infty}}^{u(\alpha)+2}$ in $\frac{K(\undt{s})_{\infty}}{\ring{}} $ is a nucleus (just observe that for $f\in K(\undt{s})_{\infty},$  $v_{\infty}(\tau_{\alpha}(f))\geq v_{\infty}(f)+1$ if and only if $v_{\infty}(f) \geq \frac{r-1}{q-1}$).

Observe that any finite collection of locally contracting endomorphisms of $V$ has a common nucleus (see for example \cite{TAE2}, Proposition 6). Furthermore if $f$ and $g$ are locally contracting, then so are the sum $f+g$ and the composition $fg.$

For any integer $N\geq 0,$ we set:
$$\frac{V[[Z]]}{Z^N}= V\otimes_{\FF(\undt{s})}\frac{\FF(\undt{s})[[Z]]}{Z^N},$$
and we denote by $V[[Z]]$ the inverse limit of the $V[[Z]]/Z^N$ equipped with the limit topology. Observe that any continuous $\FF(\undt{s})[[Z]]$-endomorphism $$F: V[[Z]]\rightarrow V[[Z]]$$ is of the form:
$$F=\sum_{n\geq 0} f_n Z^n,$$
where $f_n$ is a continuous $\FF(\undt{s})$-endomorphism of $V.$ 

Similarly, any continuous $\frac{\FF(\undt{s})[[Z]]}{Z^N}$-linear endomorphism of $\frac{V[[Z]]}{Z^N}$ is of the form $\sum_{n=0}^{N-1} f_n Z^n.$ We say that a continuous $\FF(\undt{s})[[Z]]$-linear endomorphism $F$ of $V[[Z]]$ (resp. of $\frac{V[[Z]]}{Z^N}$) is {\em nuclear} if for all $n$ (resp. for all $n<N$), the $\FF(\undt{s})$-endomorphism $f_n$ of $V$ is locally contracting.
\noindent Let $F$ be a nuclear endomorphism of $\frac{V[[Z]]}{Z^N}.$ Let $U_1$ and $U_2$ be common nuclei for the $f_n,$ $n<N.$ Since  Proposition 8 in \cite{TAE2} is valid in our context,  $$\det{}_{\frac{\FF(\undt{s})[[Z]]}{Z^N}}\left(1+F|_{\frac{V}{U_i}[[Z]]/Z^N}\right)\in \frac{\FF(\undt{s})[[Z]]}{Z^N}$$ is independent of $i\in \{ 1,2\}.$ We denote this determinant by $$\det{}\;_{\frac{\FF(\undt{s})[[Z]]}{Z^N}}\left(1+F|_V\right).$$ If $F$ is a nuclear endomorphism of $V[[Z]]$, then we denote by $\det_{\FF(\undt{s})[[Z]]} (1+F|_V)$ the unique power series that reduces to $\det_{\frac{\FF(\undt{s})[[Z]]}{Z^N}}(1+F|_V)$ modulo $Z^N$ for any $N.$
\noindent Note that Proposition 9, Proposition 10, Theorem 2 and Corollary 1 of \cite{TAE2} are also valid in our context.

\subsection{Taelman's trace formula}
Observe that any element in $\ring{}[\tau] \tau$ induces a $\FF(\undt{s})$-linear continuous endomorphism of $\frac{K(\undt{s})_{ \infty}}{\ring{}}$ which is locally contracting. Denote by $\ring{}[\tau][[Z]]$ the ring of formal power series in $Z$ with coefficients in $\ring{}[\tau],$ the variable $Z$ being central (i.e. $\tau Z=Z\tau$). 
 Let $P_1, \cdots, P_n$ be $n$ distinct primes of $A. $ Set $S=\{ P_1, \ldots,P_n\}.$  Let us set $$R=\ring{}\left[\frac{1}{P_1}, \ldots, \frac{1}{P_n}\right].$$
 Let $P$ be a monic prime of $A.$ Let $K(\undt{s})_{P}$ be the $P$-adic completion of $\FF(\undt{s})(\theta)$ (with respect to the $P$-adic valuation on $\FF(\undt{s})(\theta)$ which is trivial on $\FF(\undt{s})$ and the usual one on $K$). Observe that every element of $K(\undt{s})_{P}$ can be written in a unique way:
$$\sum_{i\geq m} x_i P^i,$$
where $m\in \mathbb Z,$ $ x_i \in \ring{}$ of degree in $\theta$ strictly less than $\deg_{\theta} P.$

\noindent We also define:
$$K(\undt{s})_{S}=K(\undt{s})_{\infty}\times K(\undt{s})_{ P_1}\times\cdots \times  K(\undt{s})_{P_n}.$$
Observe that $R$ is discrete in $K(\undt{s})_{ S}.$ Let $P$ be a prime of $A,$ $P\not = P_1, \cdots, P_n.$ 
 Let $R_{P}$ be the valuation ring of $K(\undt{s})_{ P}.$ Then:
$$K(\undt{s})_{P} = R_{P}+ R[1/P].$$
Furthermore, the inclusion $\ring{}\subset R$ induces an isomorphism:
$$\frac{\ring{}}{P\ring{}}\simeq \frac{R}{PR}.$$
Let $F \in R[\tau][[Z]] \tau Z.$ Then $F$ defines $\FF(\undt{s})$-endomorphisms of $\frac{K(\undt{s})_{S}}{R}[[Z]],$ $\frac{R}{PR}[[Z]],$ $\frac{K(\undt{s})_{ S}\times K(\undt{s})_{ P}}{R[1/P]}[[Z]].$ Now Taelman's  localization Lemma (\cite{TAE2} Lemma 1) remains valid in our case: 
\begin{lemma}
\label{lemmaA6} Let us choose $F \in R[\tau][[Z]] \tau Z.$ Then:
$$\det{}_{\FF(\undt{s})[[Z]]}\left(1+F|_{\frac{R}{PR}}\right) =\frac{\det_{\FF(\undt{s})[[Z]]}\left(1+F|_{\frac{K(\undt{s})_{S}\times K(\undt{s})_{P}}{R[1/P]}}\right)}{\det_{\FF(\undt{s})[[Z]]}\left(1+F|_{\frac{K(\undt{s})_{S}}{R}}\right)}.$$
\end{lemma}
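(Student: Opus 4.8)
The plan is to follow the structure of Taelman's original argument in \cite[Lemma 1]{TAE2}, which transplants verbatim to the present setting since all the auxiliary statements (Propositions 6--10, Theorem 2, Corollary 1 of loc. cit.) have already been asserted to hold in our context. The starting point is the short exact sequence of topological $\FF(\undt{s})$-vector spaces
\begin{equation}\label{sesA6}
0\rightarrow \frac{R[1/P]}{R}\rightarrow \frac{K(\undt{s})_S\times K(\undt{s})_P}{R}\rightarrow \frac{K(\undt{s})_S\times K(\undt{s})_P}{R[1/P]}\rightarrow 0.
\end{equation}
First I would observe that $F\in R[\tau][[Z]]\tau Z$ acts as a nuclear endomorphism on each of the three quotients: the factor $\tau Z$ guarantees that each coefficient $f_n$ lies in $R[\tau]\tau$, hence is locally contracting on $K(\undt{s})_S/R$ (as noted just before the statement, any element of $\ring{}[\tau]\tau$ is locally contracting on $K(\undt{s})_\infty/\ring{}$, and the same holds at the finite places for $R_P$), and the $Z$ kills the constant term so the relevant determinants are honest power series in $1+Z\FF(\undt{s})[[Z]]$. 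Then the multiplicativity of the nuclear determinant in short exact sequences (the analogue of \cite[Theorem 2]{TAE2}) applied to \eqref{sesA6} yields
$$\det{}_{\FF(\undt{s})[[Z]]}\Bigl(1+F\Bigm|\tfrac{K(\undt{s})_S\times K(\undt{s})_P}{R}\Bigr)=\det{}_{\FF(\undt{s})[[Z]]}\Bigl(1+F\Bigm|\tfrac{R[1/P]}{R}\Bigr)\cdot\det{}_{\FF(\undt{s})[[Z]]}\Bigl(1+F\Bigm|\tfrac{K(\undt{s})_S\times K(\undt{s})_P}{R[1/P]}\Bigr).$$

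Next I would split off the $P$-adic factor from the left-hand module. Using $K(\undt{s})_P=R_P+R[1/P]$ together with the discreteness of $R[1/P]$ in $K(\undt{s})_S\times K(\undt{s})_P$, one gets an isomorphism of topological $\FF(\undt{s})$-vector spaces
$$\frac{K(\undt{s})_S\times K(\undt{s})_P}{R}\;\cong\;\frac{K(\undt{s})_S}{R}\;\times\;\frac{R_P}{R_P\cap R}\;=\;\frac{K(\undt{s})_S}{R}\times\frac{R}{PR},$$
where the last identification is $R_P/R_P\cap R\cong R/PR$ together with the already-recorded isomorphism $\ring{}/P\ring{}\cong R/PR$; and $F$ respects this decomposition up to the error terms controlled by nucleus arguments, so that its determinant on the left factors as $\det(1+F|K(\undt{s})_S/R)\cdot\det(1+F|R/PR)$. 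Similarly $R[1/P]/R\cong \ring{}[1/P]/\ring{}$ is a torsion $\FF(\undt{s})$-vector space on which the locally contracting operator $F$ has trivial determinant (its image is discrete, hence a common nucleus is the whole space minus a finite-dimensional piece, forcing $\det=1$). Combining these two observations with the displayed factorization and dividing by $\det(1+F|K(\undt{s})_S/R)$ gives exactly the claimed identity.

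The main obstacle, as in \cite{TAE2}, is the careful bookkeeping of nuclei: one must check that a single open subspace simultaneously serves as a common nucleus for all the coefficient operators $f_n$ on all the vector spaces appearing in \eqref{sesA6} and its refinement, so that the determinants are well defined and independent of the choice, and that the isomorphisms above are not merely $\FF(\undt{s})$-linear but compatible with the $F$-actions up to operators supported on a nucleus (which do not affect the determinant). This is where the hypothesis $F\in R[\tau][[Z]]\tau Z$ — rather than the larger ring $R[\tau][[Z]]$ — is used decisively, since the factor $\tau$ is what makes every coefficient locally contracting in the first place. Once these nucleus compatibilities are in place, the computation is purely formal and mirrors \cite[proof of Lemma 1]{TAE2} line by line.
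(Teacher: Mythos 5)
Your overall strategy (reduce to a short exact sequence and invoke multiplicativity of the nuclear determinant) is the right one, but the specific sequence and identifications you use break down at two points. First, the splitting $\frac{K(\undt{s})_S\times K(\undt{s})_P}{R}\cong\frac{K(\undt{s})_S}{R}\times\frac{R_P}{R_P\cap R}$ with $\frac{R_P}{R_P\cap R}=\frac{R}{PR}$ is false: since $P\notin S$ we have $R\subset R_P$, so $R_P\cap R=R$, and $R$ is dense in $R_P$, so $R_P/R$ is nothing like $R/PR$; moreover $R$ sits diagonally in $K(\undt{s})_S\times K(\undt{s})_P$, and the kernel of the projection $\frac{K(\undt{s})_S\times K(\undt{s})_P}{R}\to\frac{K(\undt{s})_S}{R}$ is a copy of all of $K(\undt{s})_P$ (via $(x,y)\mapsto y-x$ on $\{x\in R\}$), not of $R_P$ or $R/PR$. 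Second, and more structurally, the two new spaces you introduce, $\frac{K(\undt{s})_S\times K(\undt{s})_P}{R}$ and $R[1/P]/R$, are not admissible for the determinant formalism: the formalism requires every open $\FF(\undt{s})$-subspace to have finite codimension, whereas $R[1/P]/R$ is an infinite-dimensional discrete space and it embeds discretely into $\frac{K(\undt{s})_S\times K(\undt{s})_P}{R}$, which therefore has open subspaces of infinite codimension. Hence the multiplicativity theorem cannot be applied to your sequence, and the assertion that $\det\bigl(1+F|_{R[1/P]/R}\bigr)=1$ because ``a common nucleus is the whole space minus a finite-dimensional piece'' is not a legitimate step (a nucleus is an open $F$-stable subspace $U$ on which $F$ contracts, and the determinant is computed on the finite-dimensional quotient $V/U$; no such $U$ exists here).

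The argument the paper relies on (Taelman's proof of Lemma 1 in \cite{TAE2}, which transports verbatim) avoids these spaces altogether. Using $K(\undt{s})_P=R_P+R[1/P]$ and $R[1/P]\cap R_P=R$, one has the short exact sequence of admissible spaces
$$0\rightarrow R_P\rightarrow \frac{K(\undt{s})_S\times K(\undt{s})_P}{R[1/P]}\rightarrow \frac{K(\undt{s})_S}{R}\rightarrow 0,$$
where the first map sends $y$ to the class of $(0,y)$ and the second sends the class of $(x,y)$ to the class of $x-r$, with $r\in R[1/P]$ chosen so that $y-r\in R_P$. All three terms satisfy the finite-codimension hypothesis, so multiplicativity gives $\det\bigl(1+F|_{(K(\undt{s})_S\times K(\undt{s})_P)/R[1/P]}\bigr)=\det\bigl(1+F|_{R_P}\bigr)\cdot\det\bigl(1+F|_{K(\undt{s})_S/R}\bigr)$. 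Finally, because $F\in R[\tau][[Z]]\tau Z$, each coefficient maps $PR_P$ into itself and contracts there (the $P$-adic valuation is multiplied by $q$ under $\tau$, and coefficients from $R$ are $P$-integral), so $PR_P$ is a common nucleus and $\det\bigl(1+F|_{R_P}\bigr)=\det\bigl(1+F|_{R_P/PR_P}\bigr)=\det\bigl(1+F|_{R/PR}\bigr)$, using $R/PR\cong\ring{}/P\ring{}$. This yields the lemma; your proposal would need to be rebuilt around this sequence rather than the one through $\frac{K(\undt{s})_S\times K(\undt{s})_P}{R}$.
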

\noindent We also have  in our case:
\begin{theorem}
\label{theoremA2}
Let $F \in \ring{}[\tau][[Z]] \tau Z.$ Then we have an equality in $\FF(\undt{s})[[Z]]:$

$$\prod_{ P\, {\rm monic \, prime\,  of }\, A}\det{}_{\FF(\undt{s})[[Z]]}\left(1+F|_{\frac{\ring{}}{P\ring{}}}\right) = \det{}_{\FF(\undt{s})[[Z]]}\left(1+F|_{\frac{K(\undt{s})_{ \infty}}{\ring{}}}\right)^{-1}.$$
\end{theorem}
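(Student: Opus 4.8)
The plan is to transport Taelman's proof of the trace formula \cite{TAE2} to the present coefficient ring: one combines the localization Lemma~\ref{lemmaA6} with a telescoping product over an increasing finite set of primes of $A$, and then lets that set exhaust all primes. Fix an enumeration $P_1,P_2,\dots$ of the monic primes of $A$ and set, for $n\ge 0$, $S_n=\{P_1,\dots,P_n\}$ and $R_n=\ring{}[1/P_1,\dots,1/P_n]$, so that $S_0=\emptyset$, $R_0=\ring{}$ and $K(\undt{s})_{S_0}=K(\undt{s})_{\infty}$. Applying Lemma~\ref{lemmaA6} with the set $S_{n-1}$ and the prime $P=P_n$, and using the identifications $K(\undt{s})_{S_{n-1}}\times K(\undt{s})_{P_n}=K(\undt{s})_{S_n}$, $R_{n-1}[1/P_n]=R_n$ and the isomorphism $R_{n-1}/P_nR_{n-1}\cong\ring{}/P_n\ring{}$, one gets for every $n\ge1$
$$\det{}_{\FF(\undt{s})[[Z]]}\left(1+F|_{\frac{\ring{}}{P_n\ring{}}}\right)=\frac{\det{}_{\FF(\undt{s})[[Z]]}\left(1+F|_{K(\undt{s})_{S_n}/R_n}\right)}{\det{}_{\FF(\undt{s})[[Z]]}\left(1+F|_{K(\undt{s})_{S_{n-1}}/R_{n-1}}\right)}.$$
Multiplying these identities for $n=1,\dots,N_0$ makes the right-hand side telescope, so that
$$\prod_{m=1}^{N_0}\det{}_{\FF(\undt{s})[[Z]]}\left(1+F|_{\frac{\ring{}}{P_m\ring{}}}\right)=\frac{\det{}_{\FF(\undt{s})[[Z]]}\left(1+F|_{K(\undt{s})_{S_{N_0}}/R_{N_0}}\right)}{\det{}_{\FF(\undt{s})[[Z]]}\left(1+F|_{K(\undt{s})_{\infty}/\ring{}}\right)}.$$
Thus the theorem follows once we show that, as $N_0\to\infty$, the left-hand product converges $Z$-adically in $\FF(\undt{s})[[Z]]$ and the numerator on the right tends to $1$.

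For the convergence I would fix $N\ge1$ and reduce everything modulo $Z^{N}$. Since $F\in\ring{}[\tau][[Z]]\,\tau Z$, its reduction $\overline F$ is a finite sum of terms $c_{k,a}(\theta)\,\tau^{a}Z^{k}$ with $1\le k\le N-1$, $a\ge1$ and $c_{k,a}\in\ring{}$, so the coefficient of $Z^{k}$ in $\det{}_{\FF(\undt{s})[[Z]]/Z^N}\left(1+F|_{\ring{}/P\ring{}}\right)$ is a universal polynomial in nuclear traces of compositions of at most $k$ of these terms, each composition being of the form $c(\theta)\tau^{a}$ with $a\ge1$ and $\deg_{\theta}c$ bounded only in terms of $F$ and $N$. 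On the residue field $\ring{}/P\ring{}$, a degree-$\deg_{\theta}P$ extension of $\FF(\undt{s})$ on which $\tau$ acts as the $q$-power Frobenius, the computation of traces of Frobenius powers carried out in \cite{TAE2} shows that each such coefficient vanishes as soon as $\deg_{\theta}P$ exceeds a bound depending only on $F$ and $N$. Consequently all but finitely many factors of the product are $\equiv1\pmod{Z^{N}}$, the infinite product converges, and by the telescoped identity so does $\det{}_{\FF(\undt{s})[[Z]]}\left(1+F|_{K(\undt{s})_{S_n}/R_n}\right)$.

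It remains to identify this limit with $1$, which is the crux of the argument and the main obstacle. Working again modulo $Z^{N}$, one uses that $\tau$ strictly raises the valuation at $\infty$ and at every finite place of $A$ --- for a finite place, $\tau(P_m)=P_m^{q}$ since $P_m$ has coefficients in $\FF$, so $v_{P_m}(\tau f)=q\,v_{P_m}(f)$ --- together with the fact that $R_n$ absorbs the constant subspace $\FF(\undt{s})$ on which $\tau$ is the identity. Since $\overline F$ is divisible by $\tau$ and the compact spaces $K(\undt{s})_{S_n}/R_n$ carry, in the limit, an action of $\overline F$ which is topologically nilpotent, the analogues available here (by the discussion preceding the statement) of Propositions 6, 8, 9, 10 and of Theorem 2 and Corollary 1 of \cite{TAE2} --- independence of the determinant from the nucleus, multiplicativity along short exact sequences of admissible spaces, stability of nuclei --- should yield that the nuclei of $\overline F$ on $K(\undt{s})_{S_n}/R_n$ can be enlarged, as $n\to\infty$, up to the whole space, whence $\det{}_{\FF(\undt{s})[[Z]]/Z^N}\left(1+\overline F|_{K(\undt{s})_{S_n}/R_n}\right)\to1$; as $N$ is arbitrary the limit in $\FF(\undt{s})[[Z]]$ is $1$, and substituting into the telescoped identity gives the asserted equality. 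Making this last step precise requires simultaneous control of the valuations at the auxiliary completions $K(\undt{s})_{P_m}$ and a verification that Taelman's determinant formalism transports verbatim from $\FF$-coefficients to $\FF(\undt{s})$-coefficients; a convenient way to organize the computation is to peel off the finite places via the short exact sequence $0\to R_n/\ring{}\to K(\undt{s})_{S_n}/\ring{}\to K(\undt{s})_{S_n}/R_n\to 0$ together with the decomposition $R_n/\ring{}\cong\bigoplus_{m\le n}K(\undt{s})_{P_m}/R_{P_m}$ of $\ring{}[\tau]$-modules, reducing the matter to a place-by-place trace-of-Frobenius computation of the same shape as the one used for convergence. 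By contrast, Lemma~\ref{lemmaA6}, the telescoping, and the elementary identifications of rings and of completions are purely formal.
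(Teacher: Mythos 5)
Your overall skeleton -- reduce modulo $Z^N$, telescope Lemma \ref{lemmaA6} over an increasing chain of finite sets of primes, show that all but finitely many local factors are $\equiv 1\pmod{Z^N}$, and identify the leftover determinant -- is exactly the route the appendix takes: its proof consists in invoking Lemma \ref{lemmaA6} together with the proof of Theorem 3 of \cite{TAE2}, transported to $\FF(\undt{s})$-coefficients. The convergence half of your argument is essentially right, although your formulation is slightly off: the coefficients of $\det_{\FF(\undt{s})[[Z]]/Z^N}(1+F|_{\ring{}/P\ring{}})$ are not polynomials in traces of compositions (in characteristic $p$ one cannot recover determinant coefficients from traces); the clean argument is the matrix computation after base change to a splitting field of $\ring{}/P\ring{}$, where $\tau$ becomes a cyclic permutation matrix and every non-identity permutation term in the determinant forces a total $\tau$-degree at least $\deg_\theta P$, hence a $Z$-degree at least $\deg_\theta P/A\geq N$ once $\deg_\theta P\geq NA$, with $A$ the maximal $\tau$-degree occurring in $F$ modulo $Z^N$. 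So almost all factors are indeed $\equiv1\pmod{Z^N}$ and the product converges.

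The genuine gap is the step you yourself call the main obstacle: the claim that $\det_{\FF(\undt{s})[[Z]]}\bigl(1+F|_{K(\undt{s})_{S_n}/R_n}\bigr)\rightarrow 1$. Your proposed mechanism -- that because $F$ is divisible by $\tau$ and ``topologically nilpotent'', the nuclei can be enlarged up to the whole space -- does not work: a nucleus must be an open subspace on which the contraction estimate $\|f(v)\|\leq c\|v\|$ holds, and on $K(\undt{s})_{S_n}/R_n$ this estimate fails globally (for instance on classes whose representatives have absolute value $1$ at a finite place of $S_n$, where $c\tau^a$ does not contract, and on classes whose component at $\infty$ is not small compared with $\deg_\theta c$), so the whole space is never a nucleus and no soft nilpotence argument gives determinant $1$. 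In Taelman's proof, and hence in the paper's, the vanishing of this auxiliary determinant is a quantitative statement: one first chooses $D$ in terms of $N$ and of the $\theta$- and $\tau$-degrees of the finitely many coefficients of $F$ modulo $Z^N$, and then proves that $\det_{\FF(\undt{s})[[Z]]/Z^N}\bigl(1+F|_{K(\undt{s})_{S}/R_S}\bigr)\equiv 1$ as soon as the localized ring $R_S$ has no maximal ideal $PR_S$ with $\dim_{\FF(\undt{s})}(R_S/PR_S)<D$ -- precisely the hypothesis the appendix says must replace Taelman's original one at the corresponding point of \cite{TAE2}. Your argument neither identifies such a threshold $D$ nor proves the triviality for such $S$; as written, the limit-equals-one claim is asserted rather than proved, and it is the heart of the theorem.
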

\begin{proof} This is a consequence of Lemma \ref{lemmaA6} and the proof of Theorem 3 in \cite{TAE2}. Note that in our case  in \cite {TAE2} page 383 line $-5$ we replace the original assumption of Taelman by the assumption that $R$ has no maximal  ideal of the form  $PR,$ $P$ monic prime of $A,$ such that $\dim_{\FF(\undt{s})}\left(\frac{R}{PR}\right)< D$.
\end{proof}

\subsection{Fitting ideals}
Let $f:\, K(\undt{s})_{\infty}\rightarrow K(\undt{s})_{ \infty}$ be a continuous $\FF(\undt{s})$-linear map. Let $M_1$ and $M_2$ be two  free  $\ring{}$-modules of rank one in $K(\undt{s})_{\infty}$ such that $f(M_1)\subset M_2.$ 
Then $f$ induces a $\FF(\undt{s})$-continuous linear map $$f: \frac{K(\undt{s})_{\infty} }{M_1} \rightarrow 
\frac{K(\undt{s})_{\infty} }{M_2}.$$ We say that $f$ is {\em infinitely tangent to the identity} on $K(\undt{s})_{ \infty}$ if for any $N\geq 0$ there exists an open $\FF(\undt{s})$-subspace $U_N\subset K(\undt{s})_{ \infty}$ such that the following 
properties hold:
\begin{enumerate}
\item $U_N \cap M_1=U_N\cap M_2=\{ 0\},$
\item $f$ restricts to an isometry between the images of $U_N,$
\item For all $u\in U_N, \, v_{\infty} (f(u)-u)\geq N+ v_{\infty}(u).$
\end{enumerate}

If $f \in K(\undt{s})_{\infty}[[\tau]]$ is such that  this power series is convergent on $K(\undt{s})_{\infty}$ and satisfies  that $f(M_1)\subset M_2,$ for some free  $\ring{}$-modules of rank one $M_1$ and $M_2,$ then, by the proof of Proposition 12 in \cite{TAE2}, $f$ is infinitely tangent to the identity on $K(\undt{s})_{\infty}.$ A typical example is given by: $M_1=\exp_{\phi}^{-1} (\ring{}), $ $M_2=\ring{}$ and $f=\exp_{\phi}.$  
\noindent Now let $M_1, M_2$ be two free  $\ring{}$-modules of rank one in $K(\undt{s})_{\infty}.$ Let $H_1, H_2$ be two finite dimensional $\FF(\undt{s})$-vector spaces  that are also $\ring{}$-modules. Set:
$$L_i=\frac{K(\undt{s})_{\infty}}{M_i}\times H_i.$$
Let $f: L_1\rightarrow L_2$ be a $\FF(\undt{s})$-linear map which is bijective and continuous. We shall write:
$$\Delta_f= \frac{1-f^{-1}\theta f Z}{1-\theta Z}-1= \sum_{n\geq 1} (\theta - f^{-1}\theta f )\theta^{n-1} Z^{n}.$$
We observe that $\Delta_f$ defines a $\FF(\undt{s})$-endomorphism of $L_1[[Z]].$ 
Let's assume that $f$ induces (see \cite{TAE2} page 385 line $-6$) a continuous $\FF(\undt{s})$-linear map  $$\frac{K(\undt{s})_{ \infty}}{M_1}\rightarrow \frac{K(\undt{s})_{ \infty}}{M_2}$$ which is infinitely tangent to the identity on $K(\undt{s})_{\infty}.$  Then, by the proof of Theorem 4 in  \cite{TAE2}, we get that $\Delta_f$ is nuclear, and
$$\det{}_{\FF(\undt{s})[[Z]]}(1+\Delta_f|_{L_1})_{Z=\theta^{-1}}=[M_1:M_2]_{\ring{}} \frac{[H_2]_{\ring{}}}{[H_1]_{\ring{}}}.$$

\subsection{Proof of Theorem \ref{theoremA1}}
We set, as in \cite{TAE2} \S 5:
$$F= \frac{1-\phi_{\theta} Z}{1-\theta Z}-1= \sum_{n\geq 1} (\theta-\phi_{\theta})\theta^{n-1}Z^n\in \ring{}[\tau][[Z]] \tau Z.$$
We have:
$$\mathcal{L}(\phi/\ring{})= \prod_{P{\rm monic\, prime \, in \,} A} \left(\det{}_{\FF(\undt{s})[[Z]]}(1+F|_{\frac{\ring{}}{P\ring{}}})\right)_{Z=\theta^{-1}}^{-1}.$$
By Theorem \ref{theoremA2}, we get:
$$\mathcal{L}(\phi/\ring{}) = \det{}_{\FF(\undt{s})[[Z]]}\left( 1+F|_{\frac{K(\undt{s})_{\infty}}{\ring{}}}\right)|_{Z=\theta^{-1}}.$$
We consider the short exact sequence of $\ring{}$-modules induced by $\exp_{\phi}:$
$$0\rightarrow \frac{K(\undt{s})_{\infty}}{\exp_{\phi}^{-1}(\ring{})}\rightarrow \frac{\phi( K(\undt{s})_{\infty})}{\phi (\ring{})} \rightarrow \calv_{\phi} \rightarrow 0\,.$$
Since $\ring{}$ is a principal ideal domain and the $\ring{}$-module  $\frac{K(\undt{s})_{\infty}}{\exp_{\phi}^{-1}(\ring{})}$ is divisible, this sequence splits. The choice of a section gives an $\ring{}$-isomorphism:
$$\frac{K(\undt{s})_{\infty}}{\exp_{\phi}^{-1}(\ring{})}\times \calv_{\phi} \simeq  \frac{\phi( K(\undt{s})_{\infty})}{\phi (\ring{})}.$$
This isomorphism gives rise to an isomorphism of $\FF(\undt{s})$-vector space:
$$\frac{K(\undt{s})_{\infty}}{\exp_{\phi}^{-1}(\ring{})}\times \calv_{\phi} \simeq  \frac{K(\undt{s})_{\infty}}{ \ring{}}.$$
We denote this map by $f.$ Then, by the proof of Lemma 6 in \cite{TAE2}, $f$ is infinitely tangent to the identity on $K(\undt{s})_{\infty}.$ But observe that  on $\frac{K(\undt{s})_{\infty}}{\ring{}}[[Z]]:$
$$1+F = \frac{1-f\theta f^{-1} Z}{1-\theta Z}.$$
Thus:
$$\det{}_{\FF(\undt{s})[[Z]]}\left(1+F |_{\frac{K(\undt{s})_{\infty}}{\ring{}}}\right)|_{Z=\theta^{-1}}= [\calv_{\phi}]_{\ring{}}[\ring{}:\exp_{\phi}^{-1} (\ring{})]_{\ring{}}.$$ The proof of our Theorem follows.

  \end{document}